\numberwithin{equation}{section}
\newcommand{\I}{\mathbf{I}}
\newcommand{\dd}{\mathbf{d}}
\newcommand{\y}{\mathbf{y}}
\newcommand{\W}{\mathcal{W}}
\renewcommand{\S}{\mathbf{S}}
\newcommand{\bs}{\boldsymbol}
\newcommand{\Imm}{\mathrm{Im}}
\newtheorem{assum}{Assumption}
\newtheorem{defn}{Definition}
\numberwithin{equation}{section}
\begin{document}

\title{On the asymptotic behavior of the eigenvalue distribution of block correlation matrices of high-dimensional time series}

\author{Philippe Loubaton}
\address{Laboratoire d'Informatique Gaspard Monge, UMR 8049, Universit\'e Paris-Est Marne la Vall\'ee \\
5 Bd. Descartes, Cite Descartes, 77454 Marne la Vallée Cedex 2, France}

\author{Xavier Mestre}
\address{Centre Tecnol\`ogic de Telecomunicacions de Catalunya\\
Av. Carl Friedrich Gauss, 7, Parc Mediterrani de la Tecnologia\\
08860 Castelldefels, Spain}

	\setcounter{page}{1}

\maketitle

\begin{abstract}
We consider linear spectral statistics built from the block-normalized correlation matrix of a set of $M$ mutually independent scalar time series. This matrix is composed of $M \times M$ blocks that contain the sample cross correlation between pairs of time series. In particular, each block has size $L \times L$ and contains the sample cross-correlation measured at $L$ consecutive time lags between each pair of time series. Let $N$ denote the total number of consecutively observed windows that are used to estimate these correlation matrices. We analyze the asymptotic regime where $M,L,N \rightarrow +\infty$ while $ML/N \rightarrow c_\star$, $0<c_\star<\infty$. We study the behavior of linear statistics of the eigenvalues of this block correlation matrix under these asymptotic conditions and show that the empirical eigenvalue distribution converges to a Marcenko-Pastur distribution.
Our results are potentially useful in order to address the problem of testing whether a large number of time series are uncorrelated or not. 
\end{abstract}

\keywords{Large random matrices; Stieltjes transform; Correlated time series; Sample block correlation matrices}

\section{Introduction}

\subsection{Problem addressed and motivation}

We consider a set of $M$ jointly stationary zero mean complex-valued scalar time series,
denoted as $y_{1,n} ,\ldots,y_{M,n}$, where $n\in
\mathbb{Z}$. We assume that the joint distribution of $\left((y_{m,n})_{n \in \mathbb{Z}}\right)_{m=1, \ldots, M}$ is the circularly symmetric complex Gaussian law\footnote{
Any finite linear combination $z = \sum_{m=1}^{M} \sum_{j=1}^{J} \alpha_j y_{m,n_j}$ of the random 
variables $((y_{m,n})_{n \in \mathbb{Z}})_{m=1, \ldots, M}$ is distributed according to the distribution 
$\mathcal{N}_\mathbb{C}(0, \delta^{2})$, i.e. $\mathrm{Re} z$ and $\mathrm{Im} z$ are independent and 
$\mathcal{N}(0, \delta^{2}/2)$ distributed, where $\delta^{2} > 0$ is the corresponding variance}. In this paper, 
we study the behaviour of linear statistics of the eigenvalues of a certain large random matrix built from the available data when the $M$ time series $(y_m)_{m=1, \ldots, M}$ are uncorrelated (i.e. independent), assuming that both the number of available samples and the number of series are large. Our results are potentially useful in order to address the problem of testing whether a large number of time series are uncorrelated or not. 

In order to introduce the large random matrix models that we will address in the following, we consider a column vector gathering $L$
consecutive observations of the $m$th time series starting at time $n$, namely
\begin{equation*}
\mathbf{y}^{L}_{m,n}=\left[ y_{m,n},\ldots,y_{m,n+L-1}\right] ^{T}
\end{equation*}
and from this build an $ML$-dimensional column vector
\begin{equation*}
\mathbf{y}^{L}_n=\left[ \left(\mathbf{y}_{1,n}^{L}\right)^{T},\ldots,\left(\mathbf{y}_{M,n}^{L}\right)^{T}\right] ^{T}.
\end{equation*}
We will denote by $\mathcal{R}_{L}$ the $ML\times ML$
covariance matrix of this random vector, i.e. $\mathcal{R}_{L}=\mathbb{E}
\left[ \mathbf{y}^{L}_{n} \left(\mathbf{y}^{L}_n\right)^{H}\right]$ where $($\textperiodcentered$)^{H}$ stands for transpose conjugate. This matrix is sometimes referred to as the spatio-temporal covariance matrix.  Clearly, the $M$ series 
$(y_m)_{m=1, \ldots, M}$ are uncorrelated, to be referred to as the hypothesis $\mathrm{H}_0$ in the following, if and only if, for each integer $L$, matrix $\mathcal{R}_{L}$ is block-diagonal, namely
\begin{equation*}
\mathcal{R}_{L}=\mathrm{Bdiag}\left( \mathcal{R}_{L}\right)
\end{equation*}
where, for an $ML \times ML$ matrix $\mathbf{A}$, $\mathrm{Bdiag}\left( \mathbf{A}\right) $ is the block-diagonal matrix
of the same dimension whose $L\times L$ blocks are those of $\mathbf{A}$. We notice that 
the $L \times L$ diagonal blocks of $\mathrm{Bdiag}\left( \mathcal{R}_{L}\right)$ are the $L \times L$ Toeplitz matrices $\mathcal{R}_{m,L}$, $m=1,\ldots,M$, defined by
\begin{equation}
\label{eq:defRmL}
\left\{ \mathcal{R}_{m,L}\right\} _{k,k^{\prime}}=r_{m}\left(k-k^{\prime}\right).
\end{equation}
Here, $r_{m}\left( k\right) $, $k\in\mathbb{Z}$, is the covariance sequence
of the $m$th time series, defined as
\begin{equation}
r_{m}\left( k\right) =\int_{0}^{1}\mathcal{S}_{m}\left( \nu\right) \mathrm{e}
^{2\pi\mathrm{i}\nu k}d\nu   \label{eq:defcovarianceseq}
\end{equation}
where, for each $m$, $\mathcal{S}_{m}$ represents the spectral density of 
$(y_{m,n})_{n \in \mathbb{Z}}$. 
We will denote by $\mathcal{R}_{\mathrm{corr},L}$ the block correlation matrix defined by 
\begin{equation}
\label{eq:def-generalized-correlation-matrix}
\mathcal{R}_{\mathrm{corr},L} = \mathcal{B}^{-1/2}_{L} \mathcal{R}_{L} \mathcal{B}^{-1/2}_{L}
\end{equation}
where 
\begin{equation*}
    \mathcal{B}_L = \mathrm{Bdiag}(\mathcal{R}_L).
\end{equation*}
Consequently, $\mathcal{R}_{L}$ is block diagonal for each $L$ if and only if $\mathcal{R}_{\mathrm{corr},L} = \I_{ML}$ for each $L$. 

A possible way to test whether the time series $(y_m)_{m=1, \ldots, M}$ are uncorrelated thus consists in estimating 
$\mathcal{R}_{\mathrm{corr},L}$ for a suitable value of $L$, and subsequently comparing the corresponding estimate 
with $\I_{ML}$.  
We will assume from now on that, for each $m=1, \ldots, M$, the observations $y_{m,1}, \ldots, y_{m,N+L-1}$ are available  where $N$ represents the number of observations that are averaged to build the test statistic for each time lag.  
In the following, we consider the standard sample estimate $\widehat{\mathcal{R}}_{\mathrm{corr},L}$ defined by
\begin{equation}
\label{eq:def-hat-RcorrL}
\widehat{\mathcal{R}}_{\mathrm{corr},L}=\widehat{\mathcal{B}}^{-1/2}_{L} \widehat{\mathcal{R}}_{L} \widehat{\mathcal{B}}^{-1/2}_{L}
\end{equation}
where $\widehat{\mathcal{R}}_{L}$ is the empirical spatio-temporal covariance matrix given by 
\begin{equation}
\label{eq:def_st_corr_mtx}
\widehat{\mathcal{R}}_{L}=\frac{1}{N}\sum_{n=1}^{N}\mathbf{y}^{L}_{n} \left(\mathbf{y
}^{L}_n\right)^{H}
\end{equation}
and where $\widehat{\mathcal{B}}_{L}$ is the corresponding block diagonal
\begin{equation}
    \label{eq:defRhatmL}
    \widehat{\mathcal{B}}_{L}=\mathrm{Bdiag}(\widehat{\mathcal{R}}_{L}) = \left(\begin{array}{ccc}
         \widehat{\mathcal{R}}_{1,L} & &  \\
         & \ddots & \\
         & &\widehat{\mathcal{R}}_{M,L}
    \end{array}\right).
\end{equation}
with $\widehat{\mathcal{R}}_{m,L}$, $m=1,\ldots,M$, denoting the corresponding $L \times L$ diagonal blocks. The expression (\ref{eq:def_st_corr_mtx}) of $\widehat{\mathcal{R}}_{L}$ explains why we assume that $N+L-1$ samples are available, because if the sample size had been defined as $N$, $\widehat{\mathcal{R}}_{L}$ should have been defined by $\widehat{\mathcal{R}}_{L} = \frac{1}{N-L} \sum_{n=1}^{N-L} \mathbf{y}^{L}_{n} \left(\mathbf{y}^{L}_n\right)^{H}$, which would have complicated the notations. In any case, in the asymptotic regime considered in the paper, the ratio $\frac{L}{N}$ converges towards $0$. Therefore, the actual sample size $N+L-1$ can be written as $N(1+o(1))$. Changing $N$ with $N+L-1$ does therefore not modify the significance of the results of this paper. 
\begin{remark}
\label{rem:dimensions}
A relevant question here is how to choose the lag parameter $L$. 
On the one hand, $L$ should be sufficiently large, because this allows to identify correlations among samples in
different time series that are well spaced in time. For instance, two time
series chosen as copies of the same temporally white noise with a relative
delay higher than $L$ lags will be perceived as uncorrelated by examination
of $\widehat{\mathcal{R}}_{corr,L}$, which is of course far from true. 
On the other hand, $L$ should be chosen sufficiently low so that $ML/N \ll 1$ in order to make the
estimation error $\Vert \widehat{\mathcal{R}}_{\mathrm{corr},L} - \I_{ML}\Vert$
reasonably low under the hypothesis $\mathrm{H}_0$.
%
If the number $M$ of time series is large and that the number of observations $N$ is not 
unlimited, the condition $ML/N \ll 1$  requires the selection of a small value for $L$. Such a choice may thus reduce 
drastically the efficiency of the uncorrelation
tests based on  $\| \widehat{\mathcal{R}}_{\mathrm{corr},L} - \I_{ML} \|$. 
Finding statistics having a well defined behaviour under $\mathrm{H}_0$ when $ML$ and $N$ 
are of the same order of magnitude would allow to consider larger values of $L$, thus 
improving the performance of the corresponding tests. 

\end{remark}

In this paper, we propose 
to study the behavior of spectral statistics built from the eigenvalues of $\widehat{\mathcal{R}}_{\mathrm{corr},L}$, which
will be denoted by $(\hat{\lambda}_{k,N})_{k=1,\ldots ,ML}$. More
specifically, we will consider statistics of the form\footnote{ The application of a function $\phi$ to a Hermitian matrix should be understood as directly applied to its eigenvalues.}
\begin{equation}
\label{eq:defLSS}
\widehat{\phi }_{N}=\frac{1}{ML}\mathrm{Tr}\left[ \phi \left( \widehat{
\mathcal{R}}_{\mathrm{corr},L}\right) \right] =\frac{1}{ML}
\sum_{k=1}^{ML}\phi \left( \hat{\lambda}_{k,N}\right)
\end{equation}
where $\phi $ is assumed to be a suitable function, and will study
the behaviour of $\widehat{\phi }_{N}$ under $\mathrm{H}_0$ in asymptotic regimes where $M,N,L$ converge towards $+\infty$ in such a way that $c_N = \frac{ML}{N}$ converges towards a non zero constant $c_* \in (0,+\infty)$. 

The main result of this paper establishes the asymptotic conditions under which $\widehat{\phi }_{N}$ converges almost surely towards the integral of $\phi$ with respect to 
the Marcenko-Pastur distribution. 
In order to analyze the asymptotic behavior of the above class of
statistics, we use large random matrix methods that relate the quantity $\widehat{\phi}_{N}$ with the empirical
eigenvalue distribution of $\widehat{\mathcal{R}}_{\mathrm{corr},L}$,
denoted as
\begin{equation}
    \label{eq:def-edf-Rcorr}
    d\hat {\mu}_{N}(\lambda)=\frac{1}{ML}\sum_{k=1}^{ML}\delta_{\lambda-
\hat{\lambda}_{k,N}},
\end{equation} that is
\begin{equation*}
\widehat{\phi}_{N}=\int\phi\left( \lambda\right) d\hat{\mu}_{N}(\lambda).
\end{equation*}
We will establish the behavior of $\widehat{\phi}_{N}$ by studying the
empirical eigenvalue distribution $d\hat{\mu}_{N}(\lambda)$. 
\begin{defn}
\label{def:MPlaw} 
Let $\mu_{mp,d}$ denote the Marcenko-Pastur distribution of parameter $d$. We recall that for each $d > 0$, $\mu_{mp,d}$ is the limit of the empirical eigenvalue distribution of a large random matrix $\frac{1}{K} {\bf X} {\bf X}^{H}$ where ${\bf X}$ is a $J \times K$ random matrix
with zero mean unit variance i.i.d. entries and where both $J$ and $K$ converge towards $+\infty$ in such a way that $\frac{J}{K} \rightarrow d$. 
\end{defn}
We will prove that, under certain asymptotic assumptions, the statistic $\widehat{\phi}_N$ can be described (up to some error terms) as the integral of $\phi(\lambda)$ with respect to Marchenko-Pastur distribution of parameter $c_N$, in the sense that
\begin{equation}
    \label{eq:conv-MP-intro}
\widehat{\phi}_{N} -\int_{\mathbb{R}^{+}} \phi(\lambda) \, d \mu_{mp,c_N}(\lambda) \rightarrow 0
\end{equation}
almost surely. We will also characterize the rate of convergence to zero of the corresponding error term in (\ref{eq:conv-MP-intro}). This result will establish the conditions under which we can test whether the $M$ time series 
$y_1, \ldots, y_M$ are uncorrelated by comparing linear spectral statistics $\widehat{\phi}_{N}$ with the corresponding limits under $\mathrm{H}_0$ as established above. 

\subsection{On the literature}
\label{sec:literature}
Testing whether $M$ time series are uncorrelated is an important problem 
that has been extensively addressed in the past. Apart from a few works devoted to 
the case where the number of time series $M$ converges towards $+\infty$ (see below), the vast majority of published papers assumed that $M$ is a fixed integer. In this 
context, we can first mention spectral domain approaches based on the observation 
that the $M$ time series $(y_{1,n})_{n \in \mathbb{Z}} , \ldots, (y_{M,n})_{n \in \mathbb{Z}}$ are uncorrelated if and only 
the spectral coherence matrix of the $M$--variate time series $(\y_n)_{n \in \mathbb{Z}}$, where $\y_n = (y_{1,n}, \ldots, y_{M,n})^{T}$, is reduced to 
$\mathbf{I}_M$ at each frequency. Some examples following this approach are 
\cite{wahba1971some}, \cite{taniguchi1996nonparametric}, \cite{eichler2007frequency},
\cite{eichler2008testing}. A number of papers also proposed to develop 
lag domain approaches, e.g. \cite{haugh1976checking}, \cite{hong1996testing}, 
\cite{duchesne2003robust}, \cite{kim2005test} which considered test statistics based on empirical estimates of the autocorrelation coefficients between the residuals of the various time series. See also \cite{elhimdiduchesneroy2003} for a more direct 
approach. 

We next review the very few existing works devoted to the case where the number $M$ of time series converges towards $+\infty$. 
We are just aware of papers addressing the case where the observations $\y_1, \ldots, \y_N$ are independent identically distributed (i.i.d.) and where the ratio $\frac{M}{N}$
converges towards a constant $d \in (0,1)$. In particular, in contrast with the asymptotic regime considered in the present work, these papers assume that $M$ and $N$ are of the same order of magnitude. This is because, in this context, the time series are mutually uncorrelated if and only the covariance matrix $\mathbb{E}(\y_n \y_n^{H})$ is diagonal. Therefore, it is reasonable to consider test statistics that are functionals of the sample covariance matrix $\frac{1}{N} \sum_{n=1}^{N} \y_n \y_n^{H}$. In particular, when the observations are i.i.d. Gaussian random vectors, the generalized likelihood ratio test (GLRT) consists in comparing the test statistics $\log \mathrm{det}(\widehat{\mathcal{R}}_{\mathrm{corr}})$ to a threshold, where $\widehat{\mathcal{R}}_{\mathrm{corr}} = \widehat{\mathcal{R}}_{\mathrm{corr},1}$ represents the sample correlation matrix. 
\cite{jiang2004} proved that under $\mathrm{H}_0$, the empirical eigenvalue distribution of 
$\widehat{\mathcal{R}}_{\mathrm{corr}}$ converges almost surely towards the Marcenko-Pastur distribution $\mu_{mp,d}$ and therefore, that 
$\frac{1}{M} \mathrm{Tr}(\phi(\widehat{\mathcal{R}}_{\mathrm{corr}}))$ converges towards $\int \phi(\lambda) d\mu_{mp,d}(\lambda)$ for each bounded continuous function $\phi$. In the Gaussian case, \cite{jiangyang2013} also established a central limit theorem (CLT) for $\log \mathrm{det}(\widehat{\mathcal{R}}_{\mathrm{corr}})$ under $\mathrm{H}_{0}$ using the moment method. 
\cite{dette-dornemann-2020} remarked that, in the Gaussian real case,  $(\mathrm{det}(\widehat{\mathcal{R}}_{\mathrm{corr}}))^{M/2}$ is the product of independent beta distributed random variables. Therefore, $\log \mathrm{det}(\widehat{\mathcal{R}}_{\mathrm{corr}})$ 
appears as the sum of independent random variables, thus deducing the CLT. We finally mention \cite{mestre2017correlation} in which a CLT on linear statistics of the eigenvalues of 
$\widehat{\mathcal{R}}_{\mathrm{corr}}$ is established in the Gaussian case using large random matrix techniques when the covariance matrix $\mathbb{E}(\y_n \y_n^H)$ is not necessarily diagonal. This allows to study the asymptotic performance of the GLRT under certain class of alternatives. 

Regarding the asymptotic behaviour of the empirical eigenvalue distribution of the complete matrix $\widehat{\mathcal{R}}_L$, it seems relevant to highlight the work in \cite{loubaton2016} and \cite{loubaton-mestre-2017},
which also addressed in the asymptotic regime considered in the present paper. More specifically, \cite{loubaton2016} assumed that the $M$ mutually independent time series $y_1, \ldots, y_M$ 
are i.i.d. Gaussian and 
established that the empirical eigenvalue distribution of 
$\widehat{\mathcal{R}}_L$ converges towards the Marcenko Pastur distribution 
$\mu_{mp,c_*}$. Moreover, if $L = \mathcal{O}(N^{\beta})$ with $\beta < 2/3$, 
it is established that almost surely, for $N$ large enough, all the eigenvalues 
of $\widehat{\mathcal{R}}_L$ are located in a neighbourhood of the support 
of $\mu_{mp,c_*}$. In \cite{loubaton-mestre-2017}, the mutually independent time series 
$y_1, \ldots, y_M$ are no longer assumed i.i.d. and it is established that
the empirical eigenvalue distribution has a deterministic behaviour. 
The corresponding deterministic equivalent is characterized, and 
some results on the corresponding speed of convergence are given. 
As it will appear below, the present paper uses extensively in Sections \ref{sec:evalbarphi} and \ref{sec:approximation-MP} the 
tools developed in \cite{loubaton-mestre-2017}.  

We also mention \cite{loubaton-rosuel}, which developed large random matrix 
methods in order to test the hypothesis $\mathrm{H}_0$. However, the approach used in \cite{loubaton-rosuel} is based on the study of the asymptotic 
behaviour of the empirical eigenvalue distribution of a frequency smoothed estimator of the spectral coherence matrix. While the techniques developed in \cite{loubaton-rosuel} appear in general completely different from the technical content of the present paper, we mention that our Section \ref{sec:influenceblockmat} was inspired by Section 4.1 in \cite{loubaton-rosuel}, even though the technical problem solved in section \ref{sec:influenceblockmat} appears harder to solve than that in \cite[Section 4.1]{loubaton-rosuel}. 

We finally point out that a number of previous works addressed the behaviour of the estimated auto-covariance matrix $\hat{{\bf R}}_x(\tau) = \frac{1}{N} \sum_{n=1}^{N-\tau} {\bf x}_{n+\tau} {\bf x}_{n}^{H}$ of a $M$ dimensional time series ${\bf x} = ({\bf x}_n)_{n \in \mathbb{Z}}$ at a given lag $\tau$ in the asymptotic 
regime where $\frac{M}{N} \rightarrow d$ with $d > 0$. We can mention 
\cite{jin-bai-el-al-2014}, \cite{li-pan-yao-jmva-2015}, \cite{liu-aue-paul-2015}, \cite{batta-bose-2016}, 
\cite{nowak17}, which, under various assumptions on ${\bf x}$, study the behaviour of the empirical eigenvalue distribution of $\hat{{\bf R}}_x(\tau) + \hat{{\bf R}}^{H}_x(\tau)$, $\hat{{\bf R}}_x(\tau) \hat{{\bf R}}^{H}_x(\tau)$, symmetric polynomials of $(\hat{{\bf R}}_x(\tau), \hat{{\bf R}}^{H}_x(\tau))$, or of $\hat{{\bf R}}_x(\tau)$. We also mention the work in \cite{loubaton-tieplova-2020}, where the asymptotic behaviour of the singular values distribution of the estimated auto-covariance matrix 
between finite dimensional past and future of ${\bf x}$ (which, up to the end effects, depend 
on matrices $(\hat{{\bf R}}_x(\tau))_{\tau=1, \ldots, K}$ for a fixed integer $K$) is studied
when $\frac{M}{N} \rightarrow d$ with $d > 0$. These 
contributions are not directly related to the present paper in that they study the properties of 
$\hat{{\bf R}}_x(\tau)$ for a single value of $\tau$ (or for a finite number of values of $\tau$ in 
\cite{loubaton-tieplova-2020}) when $M$ and $N$ are of the same order of magnitude, while our random matrix model
depends, up to a block Toeplitzification of matrix $\widehat{\mathcal{R}}_L$, on $(\hat{{\bf R}}_y(\tau))_{\tau=0 \ldots, L}$, where, this time,
$M,N,L$ converge towards $+\infty$ in such a way that $\frac{ML}{N} \rightarrow c_\star$.


\subsection{Assumptions}

\begin{assum}
\label{assum:statistics}
The complex scalar time series $y_m$, $m \geq 1$, are mutually independent, stationary, zero mean and circularly symmetric Gaussian distributed with autocovariance sequence $r_m = (r_{m}(k))_{k\in\mathbb{N}}$ defined as $r_m(k) = \mathbb{E}[y_{m,n+k}y^\ast_{m,n}]$ and associated spectral densities  $(\mathcal{S}_m(\nu))_{m \geq 1}$.
\end{assum}
\begin{assum}
\label{as:asymptotic-regime}
All along the paper, we assume that  $M\rightarrow
+\infty,N\rightarrow+\infty$ in such a way that $c_{N}=\frac{ML}{N}
\rightarrow c_\star$, where $0<c_\star<+\infty$, and 
that $L = L(N) = \mathcal{O}(N^{\beta})$ for some constant $\beta \in (0,1)$. In order to shorten the notations, $
N \rightarrow +\infty$ should be understood as the above asymptotic regime.
\end{assum}

We will need that the spectral densities are bounded above and below
uniformly in $M$, namely
\begin{assum}
\label{ass:bounds-spectral-densities}
The spectral densities are such that
\begin{align}
\sup_{m \geq 1} \max_{\nu\in\lbrack0,1]}\mathcal{S}_{m}(\nu) = s_{max} & <+\infty
\label{eq:upperbound-S} \\
\inf_{m \geq 1} \min_{\nu\in\lbrack0,1]}\mathcal{S}_{m}(\nu) = s_{min} & >0.
\label{eq:lowerbound-S}
\end{align}
\end{assum}

Note that, for each $m=1, \ldots, M$, the matrix $\mathcal{R}_{m,L}$ can be seen as an $L \times L$ diagonal block of an infinite Toeplitz matrix with symbol $\mathcal{S}_m(\nu)$. Therefore, Assumption \ref{ass:bounds-spectral-densities} directly implies that, for each $N$, these matrices verify $s_{min} {\bf I}_L \leq \mathcal{R}_{m,L} \leq s_{max} {\bf I}_L$. This property will be used a number of times throughout the text. 

Let us denote by $\mathbf{r}_M$ the $M$-dimensional sequence of covariances, namely
\begin{equation}
    \mathbf{r}_M (k)= \left[r_1(k),\ldots,r_M(k)\right]^T 
    \label{eq:multivariate_cov}
\end{equation}
where $r_m(k)$, $m=1,\ldots,M$ are defined in (\ref{eq:defcovarianceseq}). We can consider the sequence of Euclidean norms $\left\{\Vert\mathbf{r}_M(k)\Vert\right\}_{k\in\mathbb{Z}}$. At some points, we will need the corresponding series to be of order $\mathcal{O}(\sqrt {M})$.
\begin{assum}
\label{as:norm-vector-r} 
The multivariate covariance sequence $\mathbf{r}_M$ defined in (\ref{eq:multivariate_cov}) is such that 
\begin{equation*}
    \sup_{M \geq 1} \frac{1}{\sqrt{M}} \sum_{k\in\mathbb{Z}} \Vert\mathbf{r}_M(k)\Vert < +\infty.
\end{equation*}
\end{assum}


On the other hand, we will also need to impose a certain rate of decayment of $\sup_{m \geq 1} \sum_{|k|  \geq n+1} |r_m(k)|$ when 
$n \rightarrow +\infty$. To that effect, we introduce the weighting sequence $\left(
\omega(n)\right) _{n\in\mathbb{Z}}$ defined as
\begin{equation*}
\omega(n)=\left( 1+\left\vert n\right\vert \right) ^{\gamma}
\end{equation*}
where $\gamma \geq 0$ is given. This sequence belongs to the class of strong
Beurling weights (see \cite{barry-simon-book}, Chapter 5), which are functions $\omega$ on $\mathbb{Z}$\ with
the properties: (i) $\omega(n)\geq1$, (ii) $\omega(n)=\omega(-n)$, (iii) 
$\omega(m+n)\leq\omega(m)\omega(n)$ for all $m,n\in\mathbb{Z}$ and (iv) 
$n^{-1}\log\omega(n) \rightarrow 0$ as $n \rightarrow \infty$. We define 
$\ell_{\omega}$ as the Banach space of two sided sequences 
$a=\left(a(n)\right) _{n\in\mathbb{Z}}$ such that
\begin{equation*}
\left\Vert a\right\Vert_{\omega}
=\sum_{n=-\infty}^{\infty}\omega(n)\left\vert a(n)\right\vert
=\sum_{n=-\infty}^{\infty}\left( 1+\left\vert n\right\vert \right)^{\gamma}\left\vert a(n) \right\vert < +\infty.
\end{equation*}
When $\gamma=0$, $\omega(n) = 1$ for each $n$, and  $\ell_{\omega}$ coincides with the Wiener algebra $\ell_{1} = \{ a=\left(a(n)\right) _{n\in\mathbb{Z}}, \left\Vert a\right\Vert _{1} < +\infty \}$. For each $\gamma \geq 0$, it holds that  $\left\Vert a\right\Vert _{1}\leq\left\Vert a\right\Vert _{\omega}$, and 
that $\ell_\omega$ is included in $\ell_1$. The function 
$\sum_{n \in \mathbb{Z}} a(n) e^{2 i \pi n \nu}$ is thus well defined and continuous on $[0,1]$, and we will identify the sequence $a$ to the above function. In particular, with a certain abuse of notation, $\sum_{n \in \mathbb{Z}} a(n) e^{2 i \pi n \nu}$ will be denoted by $a(e^{2 i \pi \nu})$ in the following.  We can of course define the convolution product of sequences in $\ell_{\omega}$, namely
\begin{equation*}
\left( a_{1}\ast a_{2}\right) \left( n\right) =\sum_{m\in\mathbb{Z}
}a_{1}(m)a_{2}(n-m)
\end{equation*}
which has the property that $\left\Vert a_{1}\ast a_{2}\right\Vert _{\omega
}\leq\left\Vert a_{1}\right\Vert _{\omega}\left\Vert a_{2}\right\Vert
_{\omega}$, and therefore $a_{1}\ast a_{2}\in\ell_{\omega}\,$. Under the
convolution product, we can see $\ell_{\omega}$ as an algebra (the Beurling
algebra) associated with the weight $\omega$. 

\begin{assum}
\label{as:norm-r-omega}
For some $\gamma_0 > 0$, the covariance sequence $r_{m}$ defined in (\ref{eq:defcovarianceseq})
belongs to $\ell_{\omega_0}$ for each $m$, where $\omega_0(n) = (1 + |n|)^{\gamma_0}$. Moreover, it is assumed that 
\begin{equation}
\label{eq:uniform-norm-omega0-rm}
\sup_{m \geq 1}\left\Vert r_{m}\right\Vert _{\omega_0}<\infty.
\end{equation}
\end{assum}
Note that the fact that $r_{m}\in\ell_{\omega_0}$ implies that, for each $0 \leq \gamma < \gamma_0$, we have $r_{m}\in\ell_{\omega}$, where $\omega(n) = (1 + |n|)^{\gamma}$. Moreover, (\ref{eq:uniform-norm-omega0-rm}) allows to control uniformly w.r.t. $m$ of the remainder $\sum_{|k| \geq n+1} |r_m(k)|$. Indeed, observe that we can write
$$
\|r_m\|_{\omega_0} \geq \sum_{|k| \geq n+1} (1 + |k|)^{\gamma_0} |r_m(k)| \geq n^{\gamma_0} \sum_{|k| \geq n+1} |r_m(k)|.
$$
Therefore, (\ref{eq:uniform-norm-omega0-rm}) implies that 
\begin{equation}
    \label{eq:uniform-control-reminder-rm}
 \sup_{m \geq 1} \sum_{|k| \geq n+1} |r_m(k)| \leq \frac{\kappa}{n^{\gamma_0}}   
\end{equation}
for some constant $\kappa$. \\

In order to provide some insights on the significance of Assumptions \ref{as:norm-vector-r} and \ref{as:norm-r-omega}, we provide examples and counterexamples. If there exists $\gamma > \gamma_0$ for which $\sup_{m} |r_m(n)| \leq \frac{\kappa}{n^{1+\gamma}}$ for each $n$ large enough and $\sup_{m} |r_m(0)| < +\infty$, then, Assumptions \ref{as:norm-vector-r} and \ref{as:norm-r-omega} hold. If one of the time series is such that $\sum_{n} |r_m(n)| = +\infty$, then neither Assumption \ref{as:norm-vector-r} nor Assumption \ref{as:norm-r-omega} hold true (we recall (see \cite{loubaton-mestre-2017}) that Assumption \ref{as:norm-vector-r} implies that for each $m$, $\sum_{n \in \mathbb{Z}} |r_m(n)| < +\infty$)) . Finally, if one of the time series (say $y_1$) verifies 
$|r_1(n)| \sim_{n \rightarrow +\infty}  \frac{\kappa}{n (\log n)^{1+\delta}}$ for $\delta > 0$ while 
$\sum_{m \geq 2} |r_m(n)| \leq \frac{\kappa}{n^{1+\gamma}}$ for $\gamma > \gamma_0$, then 
Assumption \ref{as:norm-vector-r} holds, but Assumption \ref{as:norm-r-omega} does not hold.


\subsection{Main Result}

The main objective of this paper is to establish the asymptotic conditions that guarantee that we can approximate the original statistic in $\widehat{\phi}_N$ by the corresponding integral with respect to the Marchenko-Pastur distribution as in (\ref{eq:conv-MP-intro}). To that effect, we will introduce two intermediate quantities that will provide some refined approximations of the original statistic $\widehat{\phi}_N$. 

In order to introduce the first intermediate quantity, we need to consider the matrix
\begin{equation}  \label{eq:def-Rcorrbar}
   \overline{\mathcal{R}}_{\mathrm{corr}, L} =   
   \mathcal{B}^{-1/2}_{L} \, \widehat{\mathcal{R}}_{L} \,  \mathcal{B}^{-1/2}_{L}. 
\end{equation}
Note that $\overline{\mathcal{R}}_{\mathrm{corr}, L}$ is matrix defined in the same way as  $\widehat{\mathcal{R}}_{\mathrm{corr}, L}$ by replacing the estimated block-diagonal autocorrelation matrix 
$\widehat{\mathcal{B}}_L = \mathrm{Bdiag}( \widehat{\mathcal{R}}_{L})$ by its true value 
$\mathcal{B}_L = \mathrm{Bdiag}( \mathcal{R}_{L})$, which in fact coincides with $ \mathcal{R}_{L}$ (we are assuming independent sequences). We define $\overline{\phi}_N$ as the modified linear statistic
\begin{equation}
\label{eq:defbarphi}
\overline{\phi}_N =  \frac{1}{ML} \sum_{k=1}^{ML} \phi(\overline{\lambda}_{k,N}) = \int_{\mathbb{R}^{+}} \phi(\lambda) d\overline{\mu}_N(\lambda)
\end{equation}
where $(\overline{\lambda}_{k,N})_{k=1, \ldots, ML}$ are the eigenvalues of matrix $\overline{\mathcal{R}}_{\mathrm{corr},L}$ and where $\overline{\mu}_N(\lambda)$ is the associated empirical eigenvalue distribution. 

In order to introduce the second intermediate quantity, we recall that, given an integer $K$, a $K \times K$ matrix-valued positive measure ${\boldsymbol \mu}$ is a $\sigma$--additive function from the Borel sets of $\mathbb{R}$ onto the set of all positive definite $K \times K$ matrices (see e.g. \cite[Chapter 1]{rozanov} for more details). 
\begin{defn}
\label{def:MStieltjes}
We denote by $\mathcal{S}_{ML}(\mathbb{R}^{+})$ the set of all $ML\times ML$
matrix valued functions defined on $\mathbb{C}\setminus\mathbb{R}^{+}$ by
\begin{equation*}
\mathcal{S}_{ML}(\mathbb{R}^{+})=\left\{ \int_{\mathbb{R}^{+}}\frac {1}{\lambda-z}\,d{\boldsymbol\mu}(\lambda)\right\}
\end{equation*}
where ${\boldsymbol\mu}$ is a positive $ML \times ML$ matrix-valued measure
carried by $\mathbb{R}^{+}$ satisfying ${\boldsymbol\mu}(\mathbb{R}^{+})=
\mathbf{I}_{ML}$.
\end{defn}
We will next introduce a deterministic scalar measure $\mu_N(\lambda)$ that will allow us to 
describe the asymptotic behavior of the modified statistic $\overline{\phi}_N$. To that effect,
we need to introduce some operators that were originally used in \cite{loubaton-mestre-2017}, which inherently depend on the covariance sequences $(r_{m})_{m \geq 1} $. In order to introduce these operators, for $\nu \in [0,1]$ and $R \in \mathbb{N}$, we define the column vector
\begin{equation}
\label{eq:def-d_R}
    {\bf d}_R(\nu) =  \left( 1, \mathrm{e}^{2 i \pi \nu}, 
\ldots, \mathrm{e}^{2 i \pi (R-1) \nu}\right)^{T}
\end{equation}
and let ${\bf a}_R(\nu)$ denote the corresponding normalized vector
\begin{equation}
    \label{eq:def_a_nu}
    {\bf a}_R(\nu) = \frac{1}{\sqrt{R} }{\bf d}_R(\nu).
\end{equation}
With these two definitions, we are now able to introduce the Toeplitzation operators used to define the above deterministic measure $\mu_N(\lambda)$.
\begin{defn}
\label{def:Toeplizationm}
For a given squared matrix $\mathbf{M}$ with dimensions $R\times R$, we
define $\Psi_{K}^{(m)}( \mathbf{M}) $, $m = 1,\ldots,M$, as the $K\times K$ Toeplitz matrix given by 
\begin{equation*}
\Psi_{K}^{(m)}\left( \mathbf{M}\right)  =\int_{0}^{1}\mathcal{S}_{m}\left( \nu\right) \mathbf{a}_{R}^{H}\left(
\nu\right) \mathbf{Ma}_{R}\left( \nu\right) \mathbf{d}_{K}\left( \nu\right)
\mathbf{d}_{K}^{H}\left( \nu\right) d\nu.
\end{equation*}
\end{defn}
The above operator is the key building block that defines $\Psi$ and $\overline{\Psi}$, which are the ones that determine the master equations that define $\mu_N(\lambda)$. 
\begin{defn}
\label{def:ToeplizationDerived}
Consider an $N\times N$ matrix $\mathbf{M}$. We define $\Psi\left(
\mathbf{M}\right) $ as an $ML\times ML$ block diagonal matrix with $m$th
diagonal block given by $\Psi_{L}^{(m)}\left( \mathbf{M}\right) $. Finally, consider an $ML\times ML$ matrix $\mathbf{M}$, and let $\mathbf{M}
_{m,m}$ denote its $m$th $L\times L$ diagonal block. We define $\overline {
\Psi}\left( \mathbf{M}\right) $ as the $N\times N$ matrix given by
\begin{equation}
\overline{\Psi}\left( \mathbf{M}\right) =\frac{1}{M}\sum_{m=1}^{M}\Psi
_{N}^{(m)}\left( \mathbf{M}_{m,m}\right).
\label{eq:def_Phi_average}
\end{equation}
\end{defn}
Having now introduced the above operators, we are now ready to present the master equations that define the deterministic measure $\mu_N(\lambda)$. Consider a $z \in \mathbb{C}^+$ and the following pair of equations in $\mathbf{T}_N(z), \widetilde{\mathbf{T}}_N(z)$:
\begin{align}
\mathbf{T}_N(z)  &  =-\frac{1}{z}\left(  \mathbf{I}_{ML}+\mathcal{B}_{L}
^{-1/2}\Psi\left(  \widetilde{\mathbf{T}}_N^{T}(z)\right)  \mathcal{B}
_{L}^{-1/2}\right)  ^{-1}\label{eq:canonical-T}\\
\widetilde{\mathbf{T}}_N(z)  &  =-\frac{1}{z}\left(  \mathbf{I}_{N}
+c_{N}\overline{\Psi}^{T}\left(  \mathcal{B}_{L}^{-1/2}\mathbf{T}_N
(z)\mathcal{B}_{L}^{-1/2}\right)  \right)  ^{-1}. \label{eq:canonical-tildeT}
\end{align}
We will see that there exists a unique pair of solutions $(\mathbf{T}_N(z),\widetilde{\mathbf{T}}_N(z))$ to the above equations in the set $\mathcal{S}_{ML}(\mathbb{R}^+) \times \mathcal{S}_{N}(\mathbb{R}^+)$. 
We will denote as ${\boldsymbol \mu}_N(\lambda)$ the matrix valued measure with Stieltjes transform $\mathbf{T}(z)$ and $\mu_N$ the probability measure
\begin{equation} 
\label{eq:defmumeasure}
\mu_N(\lambda) = \frac{1}{ML}\mathrm{Tr}({\boldsymbol \mu}_N(\lambda)).
\end{equation} With this, we have now all the ingredients to present the main result of this paper. 
\begin{theorem}
\label{thm:main_result}
Let Assumptions \ref{assum:statistics}-\ref{as:norm-r-omega} hold true. Then, $\hat{\mu}_N(\lambda)$ converges weakly almost surely to ${\mu}_{mp,c_\star}(\lambda)$. Furthermore:

(i) Consider $\widehat{\phi}_N$ and $\overline{\phi}_N$ defined in (\ref{eq:defLSS}) and (\ref{eq:defbarphi}) respectively and assume that $\phi$ is well defined and smooth on a open subset containing $[0,+\infty)$ to be defined in section \ref{sec:influenceblockmat}. For every small enough $\epsilon >0$, there exists a $\gamma>0$ independent of $N$ such that
\begin{equation}
    \label{eq:convergenceth(i)}
    \mathbb{P}\left( \left| \widehat{\phi}_N - \overline{\phi}_N \right| > N^\epsilon \max \left(\frac{1}{M},\frac{1}{L^{\gamma_0}}\right) \right) < \exp(-N^\gamma)
\end{equation}
for all $N$ sufficiently large. 

(ii) Let $\beta < 4/5$ and assume that $\phi$ is a smooth function with compact support. Then, for every small $\epsilon >0$ there exists a $\gamma > 0$ independent of $N$ such that
\begin{equation}
\label{eq:convergenceth(ii)}
    \mathbb{P} \left( \left| \overline{\phi}_N - \int_{\mathbb{R}^+} \phi(\lambda) d\mu_N(\lambda) \right| > N^\epsilon \max\left( \frac{1}{M\sqrt{L}}, \frac{1}{M^2} \right) \right) < \exp(-N^\gamma)
\end{equation}
for all $N$ sufficiently large. 

(iii) Consider the Marchenko-Pastur distribution with parameter $c_N = \frac{ML}{N}$ as given in Definition \ref{def:MPlaw}. Then, for every $\gamma < \gamma_0$, $\gamma \neq 1$ and every compactly supported smooth function $\phi$, we have
\begin{equation}
    \label{eq:convergencemu-MP}
    \left| \int_{\mathbb{R}^+} \phi(\lambda) d\mu_N(\lambda) - \int_{\mathbb{R}^+} \phi(\lambda) d\mu_{mp,c_N}(\lambda) \right| < \kappa \frac{1}{L^{2\min(\gamma,1)}}
\end{equation}
for some universal constant $\kappa>0$.
\end{theorem}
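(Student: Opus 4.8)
\emph{Overall strategy.}
The plan is to establish the three quantitative estimates (i), (ii), (iii) separately and then to read off the weak almost sure convergence of $\hat\mu_N$ from them. For a smooth compactly supported test function $\phi$ I would use the decomposition
\[
\widehat{\phi}_N-\int_{\mathbb{R}^+}\phi\,d\mu_{mp,c_\star}
=\big(\widehat{\phi}_N-\overline{\phi}_N\big)
+\Big(\overline{\phi}_N-\int_{\mathbb{R}^+}\phi\,d\mu_N\Big)
+\Big(\int_{\mathbb{R}^+}\phi\,d\mu_N-\int_{\mathbb{R}^+}\phi\,d\mu_{mp,c_N}\Big)
+\Big(\int_{\mathbb{R}^+}\phi\,d\mu_{mp,c_N}-\int_{\mathbb{R}^+}\phi\,d\mu_{mp,c_\star}\Big);
\]
the first three brackets are controlled by (i), (ii), (iii), and the last tends to $0$ because $c_N\to c_\star$ and $d\mapsto\mu_{mp,d}$ is weakly continuous. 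Combining this with the Borel--Cantelli lemma gives $\widehat{\phi}_N\to\int\phi\,d\mu_{mp,c_\star}$ almost surely for each such $\phi$, and a routine truncation/tightness argument (using that $\mu_{mp,c_\star}$ is compactly supported and that the same decomposition shows $\hat\mu_N$ carries asymptotically no mass outside a fixed compact interval) upgrades this to weak almost sure convergence against all bounded continuous functions. I expect that this qualitative statement only requires the deterministic equivalent of $\widehat{\mathcal{R}}_L$ from \cite{loubaton-mestre-2017} and hence holds for every $\beta\in(0,1)$, whereas the sharp rates in (ii) are what force $\beta<4/5$.

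\emph{Part (iii): the deterministic comparison.} This is the purely analytic step. Let $(t_{mp}(z),\widetilde t_{mp}(z))$ be the Stieltjes transform of $\mu_{mp,c_N}$ together with its companion, solving the scalar system $t_{mp}=-(z(1+\widetilde t_{mp}))^{-1}$, $\widetilde t_{mp}=-(z(1+c_N t_{mp}))^{-1}$. I would check that the scalar ansatz $\mathbf{T}_N(z)=t_{mp}(z)\,\mathbf{I}_{ML}$, $\widetilde{\mathbf{T}}_N(z)=\widetilde t_{mp}(z)\,\mathbf{I}_N$ solves the master equations (\ref{eq:canonical-T})--(\ref{eq:canonical-tildeT}) up to a small residual: using $\mathbf{a}_N^H(\nu)\mathbf{a}_N(\nu)=1$ one has $\Psi(\widetilde t\,\mathbf{I}_N)=\widetilde t\,\mathcal{B}_L$ exactly, so the only approximation enters through $\overline{\Psi}(\mathcal{B}_L^{-1/2}t\,\mathbf{I}_{ML}\mathcal{B}_L^{-1/2})$, where $\frac{1}{L}\mathbf{d}_L^H(\nu)\mathcal{R}_{m,L}^{-1}\mathbf{d}_L(\nu)$ must be replaced by $\mathcal{S}_m(\nu)^{-1}$; after integration against $\mathcal{S}_m(\nu)$ this reproduces the scalar MP system plus a controlled Szegő-type error, uniform in $m$ by Assumptions \ref{ass:bounds-spectral-densities} and \ref{as:norm-r-omega}. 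I expect the exponent $2\min(\gamma,1)$ — rather than $\min(\gamma,1)$ — to be exactly the one produced by the strong Szegő-type weighting $\sum_{|n|>L}|n|\,|r_m(n)|^2=O(L^{-2\min(\gamma,1)})$ attached to the trace of the relevant Toeplitz quantities, with $\gamma=1$ excluded because a logarithmic correction appears there. A stability argument for the coupled master equations — they define a contraction on $\mathcal{S}_{ML}(\mathbb{R}^+)\times\mathcal{S}_N(\mathbb{R}^+)$ along a contour $\Gamma$ enclosing the supports of $\mu_N$ and $\mu_{mp,c_N}$ with $\mathrm{Im}\,z$ bounded below — would then turn this residual into $\sup_{z\in\Gamma}\big|\frac{1}{ML}\mathrm{Tr}\,\mathbf{T}_N(z)-t_{mp}(z)\big|\le\kappa L^{-2\min(\gamma,1)}$, and feeding this into a Helffer--Sjöstrand representation of $\int\phi\,d\mu_N-\int\phi\,d\mu_{mp,c_N}$ yields (\ref{eq:convergencemu-MP}).

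\emph{Parts (i) and (ii): the probabilistic steps.} For (ii) I would write $\overline{\mathcal{R}}_{\mathrm{corr},L}=\frac1N\sum_{n=1}^N\mathbf{z}_n\mathbf{z}_n^H$ with $\mathbf{z}_n=\mathcal{B}_L^{-1/2}\mathbf{y}^{L}_n$: under $\mathrm{H}_0$ the per-time covariance of $\mathbf{z}_n$ is $\mathbf{I}_{ML}$, but the sequence $(\mathbf{z}_n)$ is temporally correlated through the memory of the $y_m$'s, so this is a correlated-sample covariance matrix whose deterministic equivalent is precisely the one encoded by (\ref{eq:canonical-T})--(\ref{eq:canonical-tildeT}). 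Following \cite{loubaton-mestre-2017} (cf. Sections \ref{sec:evalbarphi} and \ref{sec:approximation-MP}) I would derive a deterministic equivalent, with rate, for the resolvent $\frac{1}{ML}\mathrm{Tr}(\overline{\mathcal{R}}_{\mathrm{corr},L}-z\mathbf{I}_{ML})^{-1}$ via Gaussian integration by parts for the approximate equations, the Nash--Poincaré inequality for the variance, and a second round of integration by parts for the bias $\mathbb{E}\,\overline{\phi}_N-\int\phi\,d\mu_N$; the rate $\max(1/(M\sqrt L),1/M^2)$ and the constraint $\beta<4/5$ should come from requiring the neglected higher-cumulant and $L/N$ remainders to stay below the target. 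The exponential bound $\exp(-N^\gamma)$ would then follow from Gaussian concentration applied to the Lipschitz functional $\mathbf{y}\mapsto\overline{\phi}_N$ after localizing the spectrum, combined with Helffer--Sjöstrand to pass from resolvents to $\phi$. For (i), $\widehat{\phi}_N-\overline{\phi}_N$ is driven by the substitution $\widehat{\mathcal{B}}_L\leftrightarrow\mathcal{B}_L$; each block $\mathcal{R}_{m,L}^{-1/2}\widehat{\mathcal{R}}_{m,L}\mathcal{R}_{m,L}^{-1/2}-\mathbf{I}_L$ has operator norm $O(\sqrt{L/N})=O(1/\sqrt M)$ with exponentially small deviation probability, uniformly over the $M$ blocks (the union bound costing only $\log M$, absorbed into $N^\epsilon$). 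A naive perturbation bound would then give only $O(1/\sqrt M)$; the content of Section \ref{sec:influenceblockmat} is a refined expansion of $\widehat{\phi}_N-\overline{\phi}_N$ in powers of $\widehat{\mathcal{B}}_L-\mathcal{B}_L$ showing that the leading, linear term is in fact of size $N^\epsilon\max(1/M,1/L^{\gamma_0})$ — the $1/L^{\gamma_0}$ being a finite-$L$ Toeplitz bias governed by the tail bound (\ref{eq:uniform-control-reminder-rm}) and the $1/M$ an averaging gain over the $M$ nearly independent blocks — with the higher-order remainder absorbed by the same concentration tools.

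\emph{Main obstacle.} The genuinely hard step is Section \ref{sec:influenceblockmat} behind (i): exhibiting the cancellations that pull the effect of the block-normalization error down from the naive $O(1/\sqrt M)$ to $O(N^\epsilon\max(1/M,1/L^{\gamma_0}))$, a point the authors themselves flag as harder than the analogous computation in \cite[Section 4.1]{loubaton-rosuel}. A close second is the quantitative, uniform-in-$m$ Szegő analysis needed in (iii) to reach the precise exponent $2\min(\gamma,1)$ (and to see why $\gamma=1$ must be excluded), together with establishing the contraction property of the coupled master equations along a contour that does not escape to infinity.
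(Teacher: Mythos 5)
Your overall architecture coincides with the paper's (split into (i)+(ii)+(iii), Helffer--Sj\"ostrand, deterministic equivalents \`a la \cite{loubaton-mestre-2017}, Gaussian concentration, Szeg\H{o} analysis for the Toeplitz error), and for (ii) your sketch is essentially the proof given in Section \ref{sec:evalbarphi}. However, part (iii) as you propose it has a genuine gap. You correctly locate where the error enters the master equations (since $\Psi(\I_N)=\mathcal{B}_L$, the scalar ansatz fails only through $\overline{\Psi}(\mathcal{B}_L^{-1})=\I_N+\mathbf{E}_N$, with $\mathbf{E}_N$ the Toeplitz matrix of symbol $\frac{1}{M}\sum_m\epsilon_{m,L}(\nu)$, $\epsilon_{m,L}(\nu)=\mathcal{S}_m(\nu)\mathbf{a}_L^H(\nu)\mathcal{R}_{m,L}^{-1}\mathbf{a}_L(\nu)-1$). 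But the uniform Szeg\H{o}/Baxter estimate one can actually prove (Lemma \ref{lem:orthogonal-polynomials}, via orthogonal polynomials on the unit circle) is only $\sup_{m,\nu}|\epsilon_{m,L}(\nu)|=\mathcal{O}(L^{-\min(\gamma,1)})$, with a $\log L/L$ correction at $\gamma=1$. Consequently the stability/contraction argument you invoke transfers the residual into $\|\mathbf{T}_N(z)-t_N(z)\I_{ML}\|\leq C(z)L^{-\min(\gamma,1)}$ and nothing better: applied to the normalized trace it gives one power, not two. The paper's exponent $2\min(\gamma,1)$ does not come from a weighted tail sum $\sum_{|n|>L}|n|\,|r_m(n)|^2$ as you conjecture; it comes from the exact algebraic cancellation $\mathrm{Tr}(\mathbf{E}_N)=0$, i.e. $\int_0^1\epsilon_{m,L}(\nu)\,d\nu=\frac{1}{L}\mathrm{Tr}(\mathcal{R}_{m,L}^{-1}\mathcal{R}_{m,L})-1=0$, which kills the term linear in $\mathbf{E}_N$ in the expansion of $\frac{1}{ML}\mathrm{Tr}(\mathbf{T}_N(z)-t_N(z)\I_{ML})$ (see (\ref{eq:expre-trace-Deltamp})--(\ref{eq:trace-E})), leaving only contributions quadratic in $\epsilon_{m,L}$, hence the square of the first-order rate. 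Without identifying this cancellation (or an equivalent second-order expansion of the fixed point at the trace level) your route stalls at $L^{-\min(\gamma,1)}$. A further inconsistency: you propose working on a contour ``enclosing the supports'' with $\Imm z$ bounded below, but boundedness of the support of $\mu_N$ is precisely what is \emph{not} established (cf. Remarks \ref{re:compactly-supported-phi-1}--\ref{re:compactly-supported-phi-2}), and Helffer--Sj\"ostrand requires estimates with explicit polynomial blow-up as $\Imm z\downarrow0$, which is how the paper proceeds.

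For (i) you correctly diagnose that the naive bound $\|\widehat{\mathcal{B}}_L^{-1/2}-\mathcal{B}_L^{-1/2}\|\prec\max(M^{-1/2},L^{-\gamma_0})$ only yields $1/\sqrt{M}$ and that the improvement to $1/M$ must come from averaging the linear term over the $M$ blocks, but you do not supply the mechanism: in the paper this requires linearizing $\widehat{\mathcal{R}}_{m,L}^{-1/2}-\mathcal{R}_{m,L}^{-1/2}$ through the operators $\mathcal{D}_{m,L}$, replacing $\widehat{\mathcal{R}}_{m,L}$ by its Toeplitzified version, writing the resulting functional $\zeta$ in terms of the centered covariance estimates $\hat r_m^{\circ}(u)$, bounding $\mathbb{E}(\zeta)$ by Cauchy--Schwarz together with Nash--Poincar\'e variance bounds, and controlling $\zeta-\mathbb{E}(\zeta)$ by Gaussian concentration applied to a truncated Lipschitz surrogate $\tilde\zeta_\epsilon$ (since $\hat r_m(u)$ itself is not Lipschitz in the underlying Gaussian vector). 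You honestly flag this as the main obstacle, but as written it is an assertion of the target rather than an argument, so (i) remains unproved in your proposal as well.
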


The above theorem basically establishes three levels of approximation of the original linear spectral statistic $\widehat{\phi}_N$ and provides the speed of convergence to zero of the corresponding error terms. In particular, it is interesting to observe that the error term in (\ref{eq:convergencemu-MP}) becomes the dominant one as soon as $\beta < 1/3$ if $\gamma_0 > 1$. Note that the situation where $\beta$ is small (or, equivalently, $L \ll M$) is the most relevant asymptotic scenario. Otherwise, the ratio $M/N$ converges quickly towards $0$, 
which, in practice represents situations in which $M \ll N$. Therefore, it  
may be possible to choose a reasonably large value of $L$ such that $\frac{ML}{N} \ll 1$. In this context, the simpler asymptotic regime where $M,N,L$ converge towards $+\infty$ in such a way that $\frac{ML}{N} \rightarrow 0$ may be relevant.

As a consequence of all the above, we observe that when $\beta < 1/3$ and $\gamma_0 > 1$, the dominant error incurred by approximating the linear spectra statistic $\widehat{\phi}_N$ as an integral with respect to the Marchenko-Pastur law is in fact an unknown deterministic term as established in (\ref{eq:convergencemu-MP}). 

\subsection{Outline of the proof of Theorem \ref{thm:main_result}}

In this section, we provide some detail on the strategy that is followed in the proof of Theorem \ref{thm:main_result}. In order to present the main steps, we first review the concept of stochastic domination introduced in \cite{erdos13}  and slightly adapted in \cite{loubaton-rosuel}. We summarize next the formulation in \cite{loubaton-rosuel}. This definition will allow to denote the convergence of (\ref{eq:convergenceth(i)}) and (\ref{eq:convergenceth(ii)}) in a more compact and convenient way. More details can be found in \cite{loubaton-rosuel}.

\begin{defn}[Stochastic Domination]
Consider two families of non-negative random variables, namely $X= \{X^{(N)}(u)$, $N \in \mathbb{N}$, $u \in U^{(N)}\}$ and $Y = \{Y^{(N)}(u)$, $N \in \mathbb{N}$, $u \in U^{(N)}\}$, where $U^{(N)}$ is a set that may depend on $N$. We say that $X$ is stochastically dominated by $Y$ and write $X \prec Y$ if, for all small $\epsilon > 0$, there exists some $\gamma > 0$ depending on $\epsilon$ such that 
\begin{equation*}
    \sup_{u \in U^{(N)}} \mathbb{P}\left[  X^{(N)}(u) > N^{\epsilon} Y^{(N)}(u) \right] \leq \exp{-N^{\gamma}}
\end{equation*}
for each large enough $N > N_0(\epsilon)$.

On the other hand, we will say that a family of events $\Omega = \Omega^{(N)}(u)$ holds with exponentially high (resp. small) probability if there exist $N_0$ and $\gamma >0$ such that, for any $N \geq N_0$, $\mathbb{P}(\Omega^{(N)}(u)) > 1- \exp(-N^\gamma)$ (resp. $\mathbb{P}(\Omega^{(N)}(u)) < \exp(-N^\gamma)$) for each $u \in U^{(N)}$. 
\end{defn}

It can be seen that $\prec$ satisfies the usual arithmetic properties of order relations. In particular, given four families of non-negative random variables $X_1, X_2, Y_1, Y_2$ such that $X_1 \prec Y_1$ and $X_2 \prec Y_2$, then $X_1 + X_2 \prec Y_1 + Y_2$ and $X_1 X_2 \prec Y_1 Y_2$ (see Lemma 2.1 in \cite{loubaton-rosuel}). 

The proof of Theorem \ref{thm:main_result} is developed in Sections \ref{sec:simplification} to \ref{sec:approximation-MP}. The main steps are outlined in what follows. 
\begin{enumerate}[label=(\roman*),leftmargin=3pt]
\item Before we begin with the proper technical content of the paper, we close the present section with some useful properties and technical results that will become useful in the rest of the paper. 
\item Section \ref{sec:simplification} provides some preliminary results on the asymptotic behavior of the sample estimate of the spatio-temporal covariance matrix $\widehat{\mathcal{R}}_L$ and its $L \times L$ diagonal blocks. 
The objective is to show that the eigenvalue behavior of $\widehat{\mathcal{R}}_{\mathrm{corr},L}$ can be studied by examining the eigenvalue behavior of the matrix  $\overline{\mathcal{R}}_{\mathrm{corr}, L}$. More specifically, we will first prove that $\|\widehat{\mathcal{R}}_L \| $ is bounded with an exponentially large probability (recall that $\| \cdot \|$ denotes spectral norm) and that $\|\widehat{\mathcal{R}}_{m,L} - \mathcal{R}_{m,L}\| \prec \max (M^{-1/2},L^{-\gamma_0})$ for each $m=1,\ldots,M$, where we recall that $\widehat{\mathcal{R}}_{m,L}$ and $\mathcal{R}_{m,L}$ denote the $m$th diagonal block of $\widehat{\mathcal{R}}_{L}$ and $\mathcal{R}_{L}$ respectively (see (\ref{eq:defRmL}) and (\ref{eq:defRhatmL})). 
This will immediately imply that 
 \begin{equation}
     \label{eq:dominationRcorr}
     \|\widehat{\mathcal{R}}_{\mathrm{corr},L} - \overline{\mathcal{R}}_{\mathrm{corr},L}\| \prec \max \left(\frac{1}{\sqrt{M}},\frac{1}{L^{\gamma_0}}\right).
 \end{equation}

\item Section \ref{sec:influenceblockmat} is devoted to the proof of (\ref{eq:convergenceth(i)}), which basically quantifies the influence of replacing $\widehat{\mathcal{R}}_{\mathrm{corr}, L}$ with $\overline{\mathcal{R}}_{\mathrm{corr}, L}$ in the corresponding linear spectral statistics. More specifically, by exploiting the Helffer-Sj\"ostrand formula in combination with the preliminary results in Section \ref{sec:simplification}, Theorem \ref{th:hatphi-overlinephi} establishes that
\begin{equation}
\label{eq:improved-speed}
\left| \widehat{\phi}_N - \overline{\phi}_N \right| \prec \max{\left( \frac{1}{M}, \frac{1}{L^{\gamma_0}} \right)}
\end{equation}
Notice that (\ref{eq:convergenceth(i)}) implies that 
\[
\left| \widehat{\phi}_N - \overline{\phi}_N \right| \prec \max{\left( \frac{1}{\sqrt{M}}, \frac{1}{L^{\gamma_0}} \right)}
\]
Therefore, (\ref{eq:improved-speed}) appears as a stronger result. As shown in Section 
 \ref{sec:influenceblockmat}, its proof is demanding.

\item Section \ref{sec:evalbarphi} studies the error term $\overline{\phi}_N - \int \phi(\lambda) d \mu_N(\lambda)$. First, this section shows that, for any smooth function $\phi$ with domain containing $[0,+\infty)$ the study of the modified statistic $\overline{\phi}_N$ can be reduced to the study of the corresponding expectation, $\mathbb{E}\overline{\phi}_N$. More specifically, we establish that
\begin{equation*}
   \left|\overline{\phi}_N-\mathbb{E}\overline{\phi}_N \right| \prec \frac{1}{M\sqrt{L}}.
\end{equation*}
The remaining error term $\mathbb{E}\overline{\phi}_N - \int \phi(\lambda) d \mu_N(\lambda)$ will be characterized by adapting the tools developed in \cite{loubaton-mestre-2017}, which was devoted to the study of the empirical eigenvalue distribution of matrix $\widehat{\mathcal{R}}_L$. The main difference between the matrix model considered here and the one in \cite{loubaton-mestre-2017} is the fact that here the matrix $\widehat{\mathcal{R}}_L$ is multiplied on both sides by the block diagonal deterministic matrix $\mathcal{B}^{-1/2}_{L}$, see further (\ref{eq:def-Rcorrbar}). This multiplication on both sides introduces some modifications in the master equation that defines $\mathbf{T}_{N}(z)$, which is obviously different from the one in \cite{loubaton-mestre-2017}. Other than that, the strategy of the proof will follow \cite{loubaton-mestre-2017} almost verbatim, and will mostly be omitted. 
First, we will establish the almost sure weak convergence of $\bar{\mu}_N - \mu_N$  towards zero (cf. Proposition \ref{prop:convergence-Tr(Q-T)}). Then, by additionally imposing $\beta < 4/5$ in Assumption \ref{as:asymptotic-regime} and assuming that $\phi$ is compactly supported, we will be able to conclude that 
\begin{equation}
    \left|\mathbb{E}\overline{\phi}_N - \int \phi(\lambda) d \mu_N(\lambda)\right| 
    \leq \kappa \frac{1}{M^2}
\end{equation}
for some universal constant $\kappa > 0$. This will directly imply (\ref{eq:convergenceth(ii)}). 

\item Section \ref{sec:approximation-MP} finally shows that the deterministic sequence of probability measures $(\mu_N)_{N \geq 1}$ can be approximated by $\mu_{mp,c_N}$ in the sense of (\ref{eq:convergencemu-MP}) for compactly supported smooth functions $\phi$. A central step in the proof will be to establish that 
\begin{eqnarray*}
\sup_{m} \sup_{\nu \in [0,1]} \left| \mathcal{S}_m(\nu) {\bf a}_L(\nu)^{H} \mathcal{R}_{m,L}^{-1} {\bf a}_L(\nu) - 1 \right| & =  & 
\mathcal{O}\left( \frac{1}{L^{\min(1,\gamma)}}\right), \, \gamma < \gamma_0, \gamma \neq 1 \\
     & =  &  \frac{\log L}{L}, \; \gamma = 1 < \gamma_0
\end{eqnarray*}
(see further Lemma \ref{lem:orthogonal-polynomials}). This result, proved in Appendix \ref{sec:orthogonal-polynomials}, is obtained by noting that 
${\bf a}_L(\nu)^{H} \mathcal{R}_{m,L}^{-1} {\bf a}_L(\nu)$ can be expressed in terms of the orthogonal Szegö 
polynomials associated to the measure $\mathcal{S}_m(\nu) d \nu$, and by adapting to our context certain asymptotic related results presented in \cite[Chapter 5]{barry-simon-book}.
\item Section \ref{sec:sims} concludes the paper with a numerical validation that confirms the converge rates as established in Theorem \ref{thm:main_result}.

\end{enumerate}


The main tool in order to study the linear spectral statistics of the estimated block correlation matrix $\widehat{\mathcal{R}}_{\mathrm{corr},L}$ will be the Stieltjes transform of its empirical eigenvalue distribution defined by (\ref{eq:def-edf-Rcorr}). More specifically, we will denote by $\hat{q}_N(z)$ the Stieltjes transform of $d\hat{\mu}_{N}(\lambda)$, that is 
\begin{equation*}
\hat{q}_{N}(z)=\int_{\mathbb{R}^{+}}\frac{1}{\lambda-z}\,d\hat{\mu}_{N}(\lambda)=
\frac{1}{ML}\sum_{k=1}^{ML}\frac{1}{\hat{\lambda}_{k,N}-z}
\end{equation*}
which is well defined for $z\in\mathbb{C}^{+}$.  This function can also be
written as $ \hat{q}_{N}(z)=\frac{1}{ML}\mathrm{Tr} \widehat{\mathbf{Q}}_{N}(z)$ where 
$\widehat{\mathbf{Q}}_{N}(z)$ is the resolvent of matrix $\widehat{\mathcal{R}}_{\mathrm{corr},L}$, namely
\begin{equation}
\label{eq:def-resolvent-tildeRcorr}
\widehat{\mathbf{Q}}_{N}(z)=\left(\widehat{\mathcal{R}}_{\mathrm{corr},L}-z\mathbf{I}_{ML}\right)^{-1}.
\end{equation}
Likewise, for $z \in \mathbb{C}^+$, we will respectively denote by $\mathbf{Q}_N(z)$ the resolvent of $\overline{\mathcal{R}}_{\mathrm{corr}, L}$ and by $q_N(z)$ the Stieltjes transform associated to its empirical eigenvalue distribution $ d\overline{\mu}_N(\lambda)$, namely
\begin{equation}
\label{eq:def_resolvent_barRcorr}
    \mathbf{Q}_N(z) = \left( \overline{\mathcal{R}}_{\mathrm{corr}, L} - z\I_{ML}\right)^{-1}
\end{equation}
and
\[
q_N(z) = \int_{\mathbb{R}^+} \frac{1}{\lambda-z} d\overline{\mu}_N(\lambda) = \frac{1}{ML}\mathrm{Tr}\mathbf{Q}_N(z).
\]

\subsection{Notations}


The set $\mathbb{C}^{+}$ is composed of the complex numbers with strictly
positive imaginary parts. The conjugate of a complex number $z$ is denoted $
z^{\ast}$. 
The conjugate transpose of a matrix $\mathbf{A}$ is denoted $\mathbf{A}^{H}$
while the conjugate of $\mathbf{A}$ (i.e. the matrix whose entries are the
conjugates of the entries of $\mathbf{A}$) is denoted $\mathbf{A}^{\ast}$. $
\Vert\mathbf{A}\Vert$ and $
\Vert\mathbf{A}\Vert_F$ represent the spectral norm and the Frobenius norm of matrix $\mathbf{A}$,
respectively. For a square matrix $\mathbf{A}$, we write $\mathbf{A}>0$ (resp. $\mathbf{A} \geq 0$) to state that $\mathbf{A}$ is positive definite (resp. positive semi-definite). If $\mathbf{A}$ and $\mathbf{B}$ are two square matrices, $\mathbf{A}<\mathbf{B}$ (resp. $\mathbf{A} \leq \mathbf{B}$) should be read as $\mathbf{B}-\mathbf{A}>0$ (resp. $\mathbf{B}-\mathbf{A} \geq 0$). Also, for two general matrices  $\mathbf{A}$ and $\mathbf{B}$,  $\mathbf{A}\otimes\mathbf{B}$ represents the Kronecker product of $\mathbf{A}$ and $\mathbf{B}$, i.e.
the block matrix whose block $(i,j)$ is $\mathbf{A}_{i,j}\,\mathbf{B}$. If $
\mathbf{A}$ is a square matrix, $\mathrm{Im}(\mathbf{A})$ and $\mathrm{Re}(
\mathbf{A})$ represent the Hermitian matrices
\begin{equation*}
\mathrm{Im}(\mathbf{A})=\frac{\mathbf{A}-\mathbf{A}^{H}}{2i},\;\mathrm{Re}(
\mathbf{A})=\frac{\mathbf{A}+\mathbf{A}^{H}}{2}.
\end{equation*}
If $(\mathbf{A}_{N})_{N\geq1}$ (resp. $(\mathbf{b}_{N})_{N\geq1}$) is a
sequence of matrices (resp. vectors) whose dimensions increase with $N$, $(
\mathbf{A}_{N})_{N\geq1}$ (resp. $(\mathbf{b}_{N})_{N\geq1}$) is said to be
uniformly bounded if $\sup_{N\geq1}\Vert\mathbf{A}_{N}\Vert<+\infty$ (resp. $
\sup_{N\geq1}\Vert\mathbf{b}_{N}\Vert<+\infty$).

 We will let $\mathbf{J}_{K}$ denote the $K\times K$ shift matrix with ones in the first upper
diagonal and zeros elsewhere, namely $\{ \mathbf{J}_{K}\}
_{i,j}=\delta_{j-i=1}$. We will denote by $\mathbf{J}_{K}^{-1}$ its transpose in order to simplify the notation. Likewise, $\mathbf{J}_K^{0} = \I_K$ will denote the $K \times K$ identity matrix. 


If $x$ is a complex-valued random variable, its expectation is denoted by $
\mathbb{E}\left( x\right) $ and its variance as
\begin{equation*}
\mathrm{Var}(x)=\mathbb{E}(|x|^{2})-\left\vert \mathbb{E}(x)\right\vert ^{2}.
\end{equation*}
The zero-mean random variable $x-\mathbb{E}(x)$ is denoted $x^{\circ}$.

In some parts of the paper, we will need to bound quantities by constants that do not depend on the system dimensions nor on the complex variable $z$. These will be referred to as ``nice constants".
\begin{defn}[\textbf{Nice constants and nice polynomials}]
\label{def:nice}
A nice constant is a positive
constant independent of the dimensions $L,M,N$ and the complex variable $z$. A
nice polynomial is a polynomial whose degree is independent from $L,M,N$,
and whose coefficients are nice constants. Throughout the paper, $\kappa$ and $P_{1},
P_{2}$ will represent a generic nice constant and two generic nice polynomials respectively, whose values may change from one line to another. 
Finally, $C(z)$ will denote a general term of the form $C(z)=P_{1}(|z|)P_{2}(1/{\Imm z})$. 
\end{defn}

\subsection{\textbf{Background on Stieltjes transforms of positive matrix
valued measures}}
We recall that $\mathcal{S}_{K}(\mathbb{R}^{+})$ denotes the set of
all Stieltjes transforms of $K\times K$ positive matrix-valued measures ${
\boldsymbol\mu}$ carried by $\mathbb{R}^{+}$ verifying ${\boldsymbol\mu}(
\mathbb{R}^{+})=\mathbf{I}_{K}$. The elements of the class $\mathcal{S}_{K}(
\mathbb{R}^{+})$ satisfy the following properties:

\begin{proposition}
\label{prop:class-S} Consider an element $\mathbf{S}(z) = \int_{\mathbb{R}
^{+}} \frac{d \, {\boldsymbol \mu}(\lambda)}{\lambda- z}$ of $\mathcal{S}
_{K}(\mathbb{R}^{+})$. Then, the following properties hold true:

\begin{enumerate}[label=(\roman*)]
\item $\mathbf{S}$ is analytic on $\mathbb{C}^{+}$.

\item $\mathrm{Im}(\mathbf{S}(z)) \geq0$ and $\mathrm{Im}(z \, \mathbf{S
}(z)) \geq0$ if $z \in\mathbb{C}^{+}$.

\item $\lim_{y\rightarrow+\infty}-\mathrm{i}y\mathbf{S}(\mathrm{i} y)=
\mathbf{I}_{K}$.

\item $\mathbf{S}(z)\mathbf{S}^{H}(z)\leq\frac{\mathbf{I}_{K}}{(\Imm
{z})^{2}}$ for each $z\in\mathbb{C}^{+}$.


\item $\int_{\mathbf{R}^{+}}\lambda\,d{\boldsymbol\mu}(\lambda
)=\lim_{y\rightarrow+\infty}\mathrm{Re}\left( -\mathrm{i}y(\mathbf{I} _{K}+
\mathrm{i}y\mathbf{S}(iy)\right) $.
\end{enumerate}
Conversely, if a function $\mathbf{S}(z)$ satisfy properties (i), (ii),
(iii), then $\mathbf{S}(z)\in\mathcal{S}_{K}(\mathbb{R}^{+})$.

\end{proposition}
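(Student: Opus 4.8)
The plan is to treat the forward implication (properties (i)--(v)) by direct manipulation of the integral representation $\mathbf{S}(z)=\int_{\mathbb{R}^{+}}(\lambda-z)^{-1}\,d{\boldsymbol\mu}(\lambda)$, and the converse by the matrix-valued Herglotz--Nevanlinna representation theorem. For (i), since ${\boldsymbol\mu}(\mathbb{R}^{+})=\I_{K}$ is finite and, for $z\in\mathbb{C}^{+}$, the scalar kernel $(\lambda-z)^{-1}$ is holomorphic in $z$ and bounded by $(\Imm z)^{-1}$ uniformly in $\lambda\geq 0$, analyticity follows entrywise by differentiation under the integral sign, each entry being the integral of $(\lambda-z)^{-1}$ against a complex measure of finite total variation. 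All the matrix-valued limits below are justified by testing against a fixed vector $\mathbf{x}$, which reduces them to the scalar finite positive measures $\mathbf{x}^{H}\,d{\boldsymbol\mu}(\lambda)\,\mathbf{x}$.

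Properties (ii), (iii) and (v) rest on elementary algebraic identities for the kernel. From $\frac{1}{\lambda-z}-\frac{1}{\lambda-z^{\ast}}=\frac{2\mathrm{i}\,\Imm z}{|\lambda-z|^{2}}$ one obtains the key identity
\[
\Imm \mathbf{S}(z)=(\Imm z)\int_{\mathbb{R}^{+}}\frac{d{\boldsymbol\mu}(\lambda)}{|\lambda-z|^{2}}\ \geq 0,
\]
and, multiplying the kernel by $\lambda\geq 0$ first, $\Imm(z\mathbf{S}(z))=(\Imm z)\int_{\mathbb{R}^{+}}\lambda|\lambda-z|^{-2}\,d{\boldsymbol\mu}(\lambda)\geq 0$, which is (ii). For (iii), expanding $-\mathrm{i}y\mathbf{S}(\mathrm{i}y)=\int_{\mathbb{R}^{+}}\frac{y^{2}-\mathrm{i}y\lambda}{\lambda^{2}+y^{2}}\,d{\boldsymbol\mu}(\lambda)$ and applying dominated convergence (the integrand is bounded by $1$ in norm and tends entrywise to $1$) gives $-\mathrm{i}y\mathbf{S}(\mathrm{i}y)\to{\boldsymbol\mu}(\mathbb{R}^{+})=\I_{K}$. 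For (v), the same expansion yields $\mathrm{Re}\big(-\mathrm{i}y(\I_{K}+\mathrm{i}y\mathbf{S}(\mathrm{i}y))\big)=\int_{\mathbb{R}^{+}}\lambda\,\frac{y^{2}}{\lambda^{2}+y^{2}}\,d{\boldsymbol\mu}(\lambda)$, and monotone convergence ($\frac{y^{2}}{\lambda^{2}+y^{2}}\uparrow 1$) gives the limit $\int_{\mathbb{R}^{+}}\lambda\,d{\boldsymbol\mu}(\lambda)$.

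Property (iv) requires slightly more than bookkeeping. Writing ${\boldsymbol\mu}=\mathbf{F}(\lambda)\,d\tau(\lambda)$ with $\tau=\mathrm{Tr}\,{\boldsymbol\mu}$ and $\mathbf{F}\geq 0$, the pointwise bound $|\mathbf{y}^{H}\mathbf{F}\mathbf{x}|\leq(\mathbf{y}^{H}\mathbf{F}\mathbf{y})^{1/2}(\mathbf{x}^{H}\mathbf{F}\mathbf{x})^{1/2}$ followed by Cauchy--Schwarz in $L^{2}(\tau)$ applied to $\int\frac{1}{|\lambda-z|}(\mathbf{y}^{H}\mathbf{F}\mathbf{y})^{1/2}(\mathbf{x}^{H}\mathbf{F}\mathbf{x})^{1/2}\,d\tau$ gives, for all unit vectors $\mathbf{x},\mathbf{y}$,
\[
|\mathbf{y}^{H}\mathbf{S}^{H}(z)\mathbf{x}|^{2}\ \leq\ \mathbf{y}^{H}\Big(\textstyle\int_{\mathbb{R}^{+}}\frac{d{\boldsymbol\mu}(\lambda)}{|\lambda-z|^{2}}\Big)\mathbf{y}\ \cdot\ \mathbf{x}^{H}{\boldsymbol\mu}(\mathbb{R}^{+})\mathbf{x}.
\]
Combining this with the identity above and the elementary bound $\int_{\mathbb{R}^{+}}|\lambda-z|^{-2}\,d{\boldsymbol\mu}(\lambda)\leq(\Imm z)^{-2}\,\I_{K}$ (from $|\lambda-z|^{2}\geq(\Imm z)^{2}$ and positivity of ${\boldsymbol\mu}$) yields $\|\mathbf{S}^{H}(z)\mathbf{x}\|^{2}\leq(\Imm z)^{-2}\|\mathbf{x}\|^{2}$, i.e. $\mathbf{S}(z)\mathbf{S}^{H}(z)\leq(\Imm z)^{-2}\,\I_{K}$. (Alternatively, a Naimark dilation ${\boldsymbol\mu}(\cdot)=\mathbf{V}^{H}E(\cdot)\mathbf{V}$ with $E$ projection-valued and $\mathbf{V}$ an isometry reduces (iv) to the scalar resolvent bound $\|(\mathbf{A}-z)^{-1}\|\leq(\Imm z)^{-1}$.)

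For the converse, assume (i)--(iii). By (i)--(ii), both $\mathbf{S}$ and $z\mapsto z\mathbf{S}(z)$ are matrix-valued Herglotz (Nevanlinna) functions on $\mathbb{C}^{+}$; the bound $\sup_{y\geq 1}\|y\mathbf{S}(\mathrm{i}y)\|<\infty$ implied by (iii) upgrades the general Nevanlinna representation to the integrable form $\mathbf{S}(z)=\int_{\mathbb{R}}\frac{d{\boldsymbol\rho}(\lambda)}{\lambda-z}$ for a finite positive matrix-valued measure ${\boldsymbol\rho}$ on $\mathbb{R}$, obtained by reducing to the scalar Herglotz theorem applied to $\mathbf{x}^{H}\mathbf{S}(z)\mathbf{x}$ and recovering ${\boldsymbol\rho}$ by polarization (see e.g.\ \cite{rozanov}). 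Then (iii) gives ${\boldsymbol\rho}(\mathbb{R})=\lim_{y\to+\infty}(-\mathrm{i}y\mathbf{S}(\mathrm{i}y))=\I_{K}$, and writing $z\mathbf{S}(z)=\int\frac{\lambda\,d{\boldsymbol\rho}(\lambda)}{\lambda-z}-\I_{K}$, the second half of (ii) forces $(\Imm z)\int\lambda|\lambda-z|^{-2}\,d{\boldsymbol\rho}(\lambda)\geq 0$ for all $z\in\mathbb{C}^{+}$, which by Stieltjes inversion on $(-\infty,0)$ is impossible unless ${\boldsymbol\rho}((-\infty,0))=0$; hence ${\boldsymbol\rho}$ is carried by $\mathbb{R}^{+}$ with total mass $\I_{K}$, i.e. $\mathbf{S}\in\mathcal{S}_{K}(\mathbb{R}^{+})$. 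The main obstacle is this converse step --- invoking the matrix Herglotz representation with the correct normalization and localizing the support of ${\boldsymbol\rho}$ through the second inequality in (ii); the forward statements are routine once the identity for $\Imm\mathbf{S}(z)$ is established.
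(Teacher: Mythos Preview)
Your proof is correct and follows essentially the same approach as the paper: direct kernel manipulations for (i)--(iii) and (v), Cauchy--Schwarz for (iv), and the matrix Herglotz representation for the converse. The one packaging difference worth noting is that for (iv) the paper formulates and proves the matrix Cauchy--Schwarz inequality $\langle\mathbf{U},\mathbf{V}\rangle\,\langle\mathbf{V},\mathbf{V}\rangle^{-1}\,\langle\mathbf{U},\mathbf{V}\rangle^{H}\leq\langle\mathbf{U},\mathbf{U}\rangle$ in the Hilbert space $\mathbb{L}^{2}({\boldsymbol\mu})$ and then specializes to $\mathbf{U}(\lambda)=(\lambda-z)^{-1}\mathbf{I}$, $\mathbf{V}=\mathbf{I}$; this yields (iv) in one line and, more importantly, the matrix Schwarz inequality in this form is reused later in the paper (e.g.\ in Section~\ref{subsec:boundedness-hatR}). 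Your Radon--Nikodym decomposition ${\boldsymbol\mu}=\mathbf{F}\,d\tau$ followed by two scalar Cauchy--Schwarz applications is exactly an unpacking of that same inequality, so the arguments are equivalent.
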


While we have not been able to find a paper in which this result is proved,
it has been well known for a long time (see however \cite
{hachem-loubaton-najim-aap-2007} for more details on (i), (ii), (iii), (v)),
as well as Theorem 3 of \cite{alpay-tsekanovskii} from which (iv) follows immediately). 
We however provide an elementary proof of (iv) because it is based on a
version of the matrix Schwarz inequality that will be used later. Given a certain $K \times K$ positive matrix measure ${\boldsymbol\mu}$ carried by $\mathbb{R}^{+}$, we denote by $\mathbb{L}^{2}({
\boldsymbol\mu})$ the Hilbert space of all $K$-dimensional row vector-valued functions $\mathbf{u}(\lambda)$ defined on $\mathbb{R}^{+}$ satisfying $\int_{\mathbb{R}
^{+}}\mathbf{u}(\lambda)\,d\,{\boldsymbol\mu}(\lambda)\,\mathbf{u}^{H}
(\lambda)\,<+\infty$ endowed with the scalar product
\begin{equation*}
\langle \mathbf{u},\mathbf{v} \rangle =\int_{\mathbb{R}^{+}}\mathbf{u}(\lambda )\,d\,{
\boldsymbol\mu}(\lambda)\,\mathbf{v}^{H}(\lambda).
\end{equation*}
Then, if $\mathbf{U}(\lambda)=(\mathbf{u}_{1}(\lambda)^{T},\ldots ,\mathbf{u}
_{K_{u}}(\lambda))^{T})^{T}$ and $\mathbf{V}(\lambda)=(\mathbf{v}
_{1}(\lambda)^{T},\ldots,\mathbf{v}_{K_{v}}(\lambda))^{T})^{T}$ are matrices with $K_u$ and $K_v$ rows respectively, all of which are elements of $\mathbb{L}^{2}({\boldsymbol\mu})$, it holds that
\begin{equation}
\langle\mathbf{U},\mathbf{V}\rangle\, \langle\mathbf{V},\mathbf{V}\rangle^{-1}\,\langle
\mathbf{U},\mathbf{V}\rangle^{H}\leq\langle\mathbf{U},\mathbf{U}\rangle
\label{eq:schwartz-matriciel-v2}
\end{equation}
where, with some abuse of notation, $\langle\mathbf{U},\mathbf{V}\rangle$ denotes the matrix defined by $\left( \langle\mathbf{U},
\mathbf{V}\rangle\right) _{i,j}=\langle \mathbf{u}_{i},\mathbf{v}_{j} \rangle$. This inequality can be directly proven by considering the $(K_u + K_v) \times K$ matrix $\mathbf{W} = [\mathbf{U}^T,\mathbf{V}^T]^T$ and noting that $\langle \mathbf{W}, \mathbf{W} \rangle$ is positive semi-definite. This implies that its Schur complement is also positive semi-definite, which directly implies (\ref{eq:schwartz-matriciel-v2}). 
Now, using (\ref
{eq:schwartz-matriciel-v2}) for $\mathbf{U}(\lambda)=\frac{\mathbf{I}}{
\lambda-z}$ and $\mathbf{V}=\mathbf{I}$, and remarking that $|\lambda
-z|^{2}\geq (\Imm z)^{2}$ for each $\lambda\in\mathbb{R}^{+}$, we immediately obtain (iv).

\subsection{Further properties of stochastic domination and concentration inequalities}

The following result is a direct consequence of the union bound.

\begin{lemma}
\label{lem:stochastic_dom_maximum}
Let $X_1,\ldots,X_P$ denote a collection of $P \in \mathbb{N}$ families of non-negative random variables, each one defined as $X_p=\{X_p^{(N)}(u)$, $N \in \mathbb{N}$, $u \in U^{(N)}\}$. Let $Y$ denote an equivalently defined family of non-negative random variables such that $X_p \prec Y$ for each $p = 1,\ldots,P$. Assuming that $P \leq N^C$ for some universal constant $C$, we have
\begin{equation*}
    \max_{p=1,\ldots,P}X_p \prec Y.
\end{equation*}
\end{lemma}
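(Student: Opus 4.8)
The plan is to unwind the definition of stochastic domination for each of the $P$ families and then glue the resulting exponential bounds together with a union bound, using the polynomial bound $P \leq N^C$ to absorb the extra factor. First I would fix a small $\epsilon > 0$. Since $X_p \prec Y$ for each $p = 1, \ldots, P$, there is a $\gamma_p = \gamma_p(\epsilon) > 0$ and an integer $N_p(\epsilon)$ such that $\sup_{u \in U^{(N)}} \mathbb{P}[X_p^{(N)}(u) > N^{\epsilon} Y^{(N)}(u)] \leq \exp(-N^{\gamma_p})$ for all $N \geq N_p(\epsilon)$. A subtle point is that a priori both $\gamma_p$ and $N_p(\epsilon)$ could depend on $p$, and $p$ ranges over a set whose size grows with $N$; I would address this by setting $\gamma' = \inf_{p} \gamma_p$ and $N'(\epsilon) = \sup_p N_p(\epsilon)$, which are finite and positive provided these quantities are uniform in $p$ — in the intended applications the families $X_p$ arise from a single construction (e.g. the $M$ diagonal blocks $\widehat{\mathcal{R}}_{m,L}$), so the domination is uniform and $\gamma' > 0$, $N'(\epsilon) < \infty$. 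With $\gamma'$ and $N'(\epsilon)$ in hand, each of the $P$ bounds holds with the common rate $\exp(-N^{\gamma'})$ for $N \geq N'(\epsilon)$.

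Next I would apply the union bound pointwise in $u$: for each $N$ and each $u \in U^{(N)}$,
\begin{equation*}
\mathbb{P}\left[ \max_{p=1,\ldots,P} X_p^{(N)}(u) > N^{\epsilon} Y^{(N)}(u) \right]
\leq \sum_{p=1}^{P} \mathbb{P}\left[ X_p^{(N)}(u) > N^{\epsilon} Y^{(N)}(u) \right]
\leq P \exp(-N^{\gamma'}).
\end{equation*}
Taking the supremum over $u \in U^{(N)}$ preserves this bound. Now I would use the hypothesis $P \leq N^C$ to write $P \exp(-N^{\gamma'}) \leq \exp(C \log N - N^{\gamma'})$, and observe that for any $0 < \gamma'' < \gamma'$ we have $C \log N - N^{\gamma'} \leq -N^{\gamma''}$ for all $N$ sufficiently large, say $N \geq N''(\epsilon)$. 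Combining, $\sup_{u \in U^{(N)}} \mathbb{P}[\max_p X_p^{(N)}(u) > N^{\epsilon} Y^{(N)}(u)] \leq \exp(-N^{\gamma''})$ for $N \geq \max(N'(\epsilon), N''(\epsilon))$, which is exactly the statement $\max_{p} X_p \prec Y$ with rate $\gamma''$.

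The only real obstacle is the uniformity issue flagged above: the definition of $\prec$ quantifies $\gamma$ (and the threshold $N_0$) per family, and here the index $p$ enumerating the families is itself $N$-dependent, so one must be slightly careful that $\inf_p \gamma_p > 0$ and $\sup_p N_p(\epsilon) < \infty$. This is automatic in every use we make of the lemma because the bound $X_p \prec Y$ is always established by an argument that is uniform in the index $p$ (the constants produced in the concentration estimates of Section \ref{sec:simplification} do not depend on which block is considered), so the common rate $\gamma'$ is legitimately positive. Everything else is the routine union bound and the elementary fact that a polynomial factor $N^C$ is swallowed by a stretched-exponential $\exp(-N^{\gamma'})$.
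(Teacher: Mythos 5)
Your proof is correct and is exactly the argument the paper has in mind: the paper gives no details and simply notes the lemma is ``a direct consequence of the union bound,'' which is what you carry out (union bound over the $P$ events, then absorption of the polynomial factor $N^{C}$ into the stretched exponential by passing to a slightly smaller exponent $\gamma''<\gamma'$). Your remark about the need for the constants $\gamma_p$, $N_p(\epsilon)$ to be uniform in $p$ when $P$ grows with $N$ is a legitimate reading of the implicit hypothesis, and it is indeed satisfied in every application in the paper, so there is no gap.
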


On the other hand, we will be using a number of inequalities based on concentration of functions of random Gaussian vectors. More specifically, consider the real-valued function $f(\mathbf{x},\mathbf{x}^\ast)$ where $\mathbf{x}$ is a complex $N$-dimensional variable and where $(\cdot)^\ast$ denotes complex conjugate. If $\mathbf{x} \sim \mathcal{N}_\mathbb{C}(0,\mathbf{I}_N)$, $f(\mathbf{x},\mathbf{x}^\ast)$ can be interpreted as a function of the $2N$--dimensional $\mathcal{N}(0,\mathbf{I}_{2N})$ vector $\left(\sqrt{2} \mathrm{Re}({\bf x}^{T}), \sqrt{2} \mathrm{Im}({\bf x}^{T})\right)^{T}$. If $f$ is  1-Lipschitz, there exists a universal constant $C>0$ such that 
\begin{equation}
\label{eq:concentrationGaussian}
\mathbb{P}\left[|f(\mathbf{x},\mathbf{x}^\ast) - \mathbb{E}f(\mathbf{x},\mathbf{x}^\ast) | > t\right] \leq C \exp{-C t^2}.
\end{equation}
This concentration inequality is well known if $f$ is a function of a $\mathcal{N}(0,\mathbf{I}_N)$ real-valued vector ${\bf x}$  (\cite[Theorem 2.1.12]{tao-book}). We notice that 
(\ref{eq:concentrationGaussian}) implies that $|f(\mathbf{x},\mathbf{x}^\ast) - \mathbb{E}f(\mathbf{x},\mathbf{x}^\ast) | \prec 1$ (see \cite{loubaton-rosuel} for more details).

Finally, we will also make extensive use of the Hanson-Wright inequality, proven in \cite{rudelson2013hanson} for the subgaussian real-valued case, but easily extended to the complex Gaussian context. If $\mathbf{A}$ denotes an $N \times N$ matrix of complex entries and if $\mathbf{x} \sim \mathcal{N}_\mathbb{C}(0,\mathbf{I}_N)$, then 
\begin{equation}
\label{eq:hanson-wright}
\mathbb{P}\left[|\mathbf{x}^H \mathbf{A} \mathbf{x} - \mathbb{E}\mathbf{x}^H \mathbf{A} \mathbf{x} | > t\right] \leq 2 \exp{\left[-C \min\left(\frac{t^2}{\|\mathbf{A}\|_F^2},\frac{t}{\|\mathbf{A}\|}\right)\right]}
\end{equation}
where here again $C >0$ is a universal constant and where $\|\cdot\|_F$ and $\|\cdot\|$ respectively denote Frobenius and spectral norms.

\subsection{Additional properties of the Toeplitzification operators}

We introduce here some additional properties of the Toeplitzification operators introduced in Definitions \ref{def:Toeplizationm}-\ref{def:ToeplizationDerived}, which will prove useful in the course of the derivations. 
For a given squared matrix $\mathbf{M}$ with dimensions $R\times R$, the operator $\Psi_{K}^{(m)}( \mathbf{M}) $ in Definition \ref{def:Toeplizationm} can alternatively be represented as an $K\times K$ Toeplitz
matrix with $(i,j)$th entry equal to
\begin{equation}
\left\{ \Psi_{K}^{(m)}\left( \mathbf{M}\right) \right\}
_{i,j}=\sum_{l=-R+1}^{R-1}r_{m}\left( i-j-l\right) \tau\left( \mathbf{M}
\right) \left( l\right)   \label{eq:definition_operator_Psi}
\end{equation}
or, alternatively, as the matrix
\begin{equation}
\Psi_{K}^{(m)}\left( \mathbf{M}\right) =\sum_{n=-K+1}^{K-1}\left(
\sum_{l=-R+1}^{R-1}r_{m}\left( n-l\right) \tau\left( \mathbf{M}\right)
\left( l\right) \right) \mathbf{J}_{K}^{-n}
\label{eq:definition_operator_Psi2}
\end{equation}
where the sequence $\tau\left( \mathbf{M}\right) \left( l\right) $, $-R<l<R$, is defined as
\begin{equation}
\tau\left( \mathbf{M}\right) \left( l\right) =\frac{1}{R}\mathrm{Tr}\left[
\mathbf{MJ}_{R}^{l}\right] .   \label{eq:definition_tau}
\end{equation}
We observe that, with this definition,
\begin{equation}
    \label{eq:parsevalintro}
    \sum_{r=-(R-1)}^{R-1} \left| \tau({\bf M})(r) \right|^{2} \leq \frac{1}{R} \mathrm{Tr}({\bf M} {\bf M}^{H}).
\end{equation}
This inequality can be proven by noting that $ \tau({\bf M})(r)$, $r=-R+1,\ldots,R-1$ are the Fourier coefficients of the function $\nu \mapsto \mathbf{a}^H_R(\nu) \mathbf{M} \mathbf{a}_R(\nu)$ so that, by Parseval's identity,
\begin{align*}
     \sum_{r=-(R-1)}^{R-1} \left| \tau({\bf M})(r) \right|^{2} & =  \int_0^1 \left| \mathbf{a}^H_R(\nu) \mathbf{M} \mathbf{a}_R(\nu) \right|^{2} d\nu \leq \\ 
     & \leq  \int_0^1 \mathbf{a}^H_R(\nu) \mathbf{MM}^H \mathbf{a}_R(\nu) d\nu  = \frac{1}{R} \mathrm{Tr}({\bf M} {\bf M}^{H})
\end{align*}
where we have used the Cauchy-Schwarz inequality. 

We also mention the following property: If ${\bf A}$ is a $R \times R$ Toeplitz 
matrix with entries ${\bf A}_{i,j} = a(i-j)$ for some sequence $(a(l))_{l=-(R-1), \ldots, R-1}$, and if ${\bf B}$ is another $R \times R$ matrix, we have
\begin{equation}
    \label{eq:trace-toeplitz}
    \frac{1}{R} \mathrm{Tr} ( {\bf A} {\bf B} ) = \sum_{l=-(R-1)}^{R-1} a(l) \tau({\bf B})(-l)
\end{equation}

The following properties are easily checked (see \cite{loubaton-mestre-2017}). 
\begin{itemize}[leftmargin=3pt]
\item Given a square matrix $\mathbf{A}$ of dimension $K\times K$ and a
square matrix$\ \mathbf{B}$ of dimension $R\times R$, we can write
\begin{equation}
\frac{1}{K}\mathrm{Tr}\left[ \mathbf{A}\Psi_{K}^{(m)}\left( \mathbf{B}
\right) \right] =\frac{1}{R}\mathrm{Tr}\left[ \Psi_{R}^{(m)}\left( \mathbf{A}
\right) \mathbf{B}\right]   \label{eq:property_commutative}
\end{equation}

\item Given a square matrix $\mathbf{M}$ and a positive integer $K$, we have
\begin{equation*}
\left\Vert \Psi_{K}^{(m)}\left( \mathbf{M}\right) \right\Vert \leq\sup
_{\nu\in\lbrack0,1]}\left\vert \mathcal{S}_{m}\left( \nu\right) \right\vert
\left\Vert \mathbf{M}\right\Vert .
\end{equation*}

\item Given a square positive definite matrix $\mathbf{M}$ and a positive
integer $K$, the hypothesis $\inf_{\nu}\mathcal{S}_{m}\left( \nu\right) >0$
implies that
\begin{equation}
\Psi_{K}^{(m)}\left( \mathbf{M}\right) >0.   \label{eq:property_positive}
\end{equation}
\end{itemize}

Consider now the two other linear operators in Definition \ref{def:ToeplizationDerived},
which respectively operate on $N\times N$ and $ML\times ML$ matrices. 
If $\mathbf{A}$ and $\mathbf{B}$ are $
ML\times ML$ and $N\times N$ matrices, we see directly from (\ref
{eq:property_commutative}) that
\begin{equation}
\frac{1}{N}\mathrm{Tr}\left[ \overline{\Psi}\left( \mathbf{A}\right) \mathbf{
B}\right] =\frac{1}{ML}\mathrm{Tr}\left[ \mathbf{A}\Psi\left( \mathbf{B}
\right) \right] .   \label{eq:transpose_ops_gen}
\end{equation}

We finally conclude this section by two useful propositions that follow directly 
from  \cite{loubaton-mestre-2017}.
\begin{proposition}
\label{prop:Upsilon-tildeUpsilon}Let $\mathbf{\Gamma}^{m}(z),$ $m=1,\ldots,M$, be a
collection of $L\times L$ matrix-valued complex functions belonging to
$\mathcal{S}_{L}\left(  \mathbb{R}^{\mathbb{+}}\right)  $ and define 
$\mathbf{\Gamma}(z)$ as the $ML \times ML$ block diagonal matrix given by 
$\mathbf{\Gamma}(z)=\mathrm{diag}\left(  \mathbf{\Gamma}^{1}(z),\ldots
,\mathbf{\Gamma}^{M}(z)\right)  $. Then, for each $z \in \mathbb{C}^{+}$, 
the matrix $ \mathbf{I}_{N}
+c_{N}\overline{\Psi}^{T}\left(  \mathcal{B}_{L}^{-1/2}\mathbf{\Gamma
}(z)\mathcal{B}_{L}^{-1/2}\right)$ is invertible, so that we can define 
\begin{equation}
    \label{eq:def-tildeUpsilon}
    \widetilde{\mathbf{\Upsilon}}(z)   =-\frac{1}{z}\left(  \mathbf{I}_{N}
+c_{N}\overline{\Psi}^{T}\left(  \mathcal{B}_{L}^{-1/2}\mathbf{\Gamma
}(z)\mathcal{B}_{L}^{-1/2}\right)  \right)  ^{-1}.
\end{equation}
On the other hand, the matrix $ \mathbf{I}_{ML}+\mathcal{B}
_{L}^{-1/2}\Psi\left(  \widetilde{\mathbf{\Upsilon}}^{T}(z)\right)
\mathcal{B}_{L}^{-1/2}$ is also invertible, and we define 
\begin{equation}
\label{eq:def-Upsilon}
\mathbf{\Upsilon}(z)  =-\frac{1}{z}\left(  \mathbf{I}_{ML}+\mathcal{B}
_{L}^{-1/2}\Psi\left(  \widetilde{\mathbf{\Upsilon}}^{T}(z)\right)
\mathcal{B}_{L}^{-1/2}\right)  ^{-1}.
\end{equation}
Furthermore, $\widetilde{\mathbf{\Upsilon}}(z)$ and $\mathbf{\Upsilon}(z)$ are elements of
$\mathcal{S}_{N}(\mathbb{R}^{+})$ and $\mathcal{S}_{ML}(\mathbb{R}^{+})$
respectively. In particular, they are holomorphic on $\mathbb{C}^{+}$ and satisfy
\begin{equation}
\mathbf{\Upsilon}(z)\mathbf{\Upsilon}^{H}(z)\leq\frac{\mathbf{I}_{ML}}{(\Imm {z})^{2}
},\;\widetilde{\mathbf{\Upsilon}}(z)\widetilde{\mathbf{\Upsilon}}^{H}(z)\leq\frac
{\mathbf{I}_{N}}{(\Imm {z})^{2}}\label{eq:upperbound-R-tildeR}.
\end{equation}
Moreover, there exist two nice constants $\eta$ and $\widetilde{\eta}$ such that
\begin{align}
\mathbf{\Upsilon}(z)\mathbf{\Upsilon}^{H}(z) &  \geq\frac{(\Imm {z})^{2}}{16(\eta
^{2}+|z|^{2})^{2}}\mathbf{I}_{ML}\label{eq:lower-bound-UU*}\\
\widetilde{\mathbf{\Upsilon}}(z)\widetilde{\mathbf{\Upsilon}}^{H}(z) &  \geq\frac{(\Imm
{z})^{2}}{16(\widetilde{\eta}^{2}+|z|^{2})^{2}}\mathbf{I}_{N}
.\label{eq:lower-bound-tildeUtildeU*}
\end{align}
\end{proposition}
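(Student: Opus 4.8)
\emph{Strategy.} The plan is to verify that $\widetilde{\mathbf{\Upsilon}}(z)$ and $\mathbf{\Upsilon}(z)$ are well defined on $\mathbb{C}^{+}$ and belong to $\mathcal{S}_{N}(\mathbb{R}^{+})$ and $\mathcal{S}_{ML}(\mathbb{R}^{+})$ by checking the three characterizing properties of the converse part of Proposition \ref{prop:class-S} (analyticity; $\Imm(\cdot)\geq 0$ and $\Imm(z\,\cdot)\geq 0$; normalization $-\mathrm{i}y\,\mathbf{S}(\mathrm{i}y)\to\I$), following \cite{loubaton-mestre-2017} almost verbatim. The backbone is a single propagation-of-positivity observation: if $\mathbf{M}$ is a square matrix with $\Imm\mathbf{M}\geq 0$ (resp.\ $\Imm(z\mathbf{M})\geq 0$), then the same holds for $\Psi_{K}^{(m)}(\mathbf{M})$, for $\Psi(\mathbf{M})$, for $\overline{\Psi}(\mathbf{M})$, for $\mathcal{B}_{L}^{-1/2}\mathbf{M}\mathcal{B}_{L}^{-1/2}$, and for $\mathbf{M}^{T}$. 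For the Toeplitzification operators this is read off the integral representation in Definition \ref{def:Toeplizationm} together with the linearity identity $z\,\Psi_{K}^{(m)}(\mathbf{M})=\Psi_{K}^{(m)}(z\mathbf{M})$: the integrand $\mathcal{S}_{m}(\nu)\,(\mathbf{a}_{R}^{H}(\nu)\mathbf{M}\mathbf{a}_{R}(\nu))\,\mathbf{d}_{K}(\nu)\mathbf{d}_{K}^{H}(\nu)$ has imaginary part $\mathcal{S}_{m}(\nu)\,(\mathbf{a}_{R}^{H}(\nu)(\Imm\mathbf{M})\mathbf{a}_{R}(\nu))\,\mathbf{d}_{K}(\nu)\mathbf{d}_{K}^{H}(\nu)\geq 0$ since $\mathcal{S}_{m}\geq 0$ and $\mathbf{d}_{K}\mathbf{d}_{K}^{H}\geq 0$, and integration preserves positivity; for the conjugation one uses $\Imm(\mathcal{B}_{L}^{-1/2}\mathbf{M}\mathcal{B}_{L}^{-1/2})=\mathcal{B}_{L}^{-1/2}(\Imm\mathbf{M})\mathcal{B}_{L}^{-1/2}$ ($\mathcal{B}_{L}^{-1/2}$ Hermitian); for the transpose one uses $\Imm(\mathbf{M}^{T})=\overline{\Imm\mathbf{M}}$, the entrywise conjugate of a positive semidefinite matrix being positive semidefinite.

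\emph{Well-definedness and membership.} Write $\mathbf{A}(z)=\I_{N}+c_{N}\overline{\Psi}^{T}(\mathcal{B}_{L}^{-1/2}\mathbf{\Gamma}(z)\mathcal{B}_{L}^{-1/2})$. Using $z\,\overline{\Psi}^{T}(\cdot)=\overline{\Psi}^{T}(z\,\cdot)$, the property $\Imm(z\mathbf{\Gamma}^{m}(z))\geq 0$ from Proposition \ref{prop:class-S}(ii), and the propagation observation, one gets $\Imm(z\mathbf{A}(z))=(\Imm z)\I_{N}+c_{N}\Imm(\overline{\Psi}^{T}(z\,\mathcal{B}_{L}^{-1/2}\mathbf{\Gamma}(z)\mathcal{B}_{L}^{-1/2}))\geq(\Imm z)\I_{N}>0$, so $\mathbf{B}:=-z\mathbf{A}(z)$ has negative definite imaginary part and is therefore invertible (if $\mathbf{B}x=0$ then $x^{H}(\Imm\mathbf{B})x=\Imm(x^{H}\mathbf{B}x)=0$, hence $x=0$); since $z\neq 0$, $\mathbf{A}(z)$ is invertible and $\widetilde{\mathbf{\Upsilon}}(z)$ in (\ref{eq:def-tildeUpsilon}) is well defined, with $\widetilde{\mathbf{\Upsilon}}(z)=\mathbf{B}^{-1}$. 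From $\Imm(\mathbf{B}^{-1})=-\mathbf{B}^{-1}(\Imm\mathbf{B})\mathbf{B}^{-H}$ and $\Imm\mathbf{B}\leq-(\Imm z)\I_{N}$ one obtains $\Imm\widetilde{\mathbf{\Upsilon}}(z)\geq(\Imm z)\widetilde{\mathbf{\Upsilon}}(z)\widetilde{\mathbf{\Upsilon}}^{H}(z)\geq 0$; likewise $\Imm(z\widetilde{\mathbf{\Upsilon}}(z))=-\Imm(\mathbf{A}(z)^{-1})=\mathbf{A}(z)^{-1}(\Imm\mathbf{A}(z))\mathbf{A}(z)^{-H}\geq 0$ because $\Imm\mathbf{A}(z)=c_{N}\overline{\Psi}^{T}(\mathcal{B}_{L}^{-1/2}(\Imm\mathbf{\Gamma}(z))\mathcal{B}_{L}^{-1/2})\geq 0$. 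Analyticity on $\mathbb{C}^{+}$ follows from analyticity of the $\mathbf{\Gamma}^{m}$ (Proposition \ref{prop:class-S}(i)), continuity of the linear operators involved, and holomorphy of matrix inversion; and $-\mathrm{i}y\,\widetilde{\mathbf{\Upsilon}}(\mathrm{i}y)=\mathbf{A}(\mathrm{i}y)^{-1}\to\I_{N}$ because $\mathbf{\Gamma}^{m}(\mathrm{i}y)\to 0$ as $y\to+\infty$ (Proposition \ref{prop:class-S}(iii)). By the converse part of Proposition \ref{prop:class-S}, $\widetilde{\mathbf{\Upsilon}}(z)\in\mathcal{S}_{N}(\mathbb{R}^{+})$. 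Since $\widetilde{\mathbf{\Upsilon}}^{T}(z)$ is then itself the Stieltjes transform of a positive $N\times N$ matrix-valued measure normalized to $\I_{N}$, it enjoys the same three properties, and running the identical argument with $\Psi$ and $\widetilde{\mathbf{\Upsilon}}^{T}(z)$ in the roles of $\overline{\Psi}$ and $\mathbf{\Gamma}(z)$ shows that $\I_{ML}+\mathcal{B}_{L}^{-1/2}\Psi(\widetilde{\mathbf{\Upsilon}}^{T}(z))\mathcal{B}_{L}^{-1/2}$ is invertible and that $\mathbf{\Upsilon}(z)$ in (\ref{eq:def-Upsilon}) belongs to $\mathcal{S}_{ML}(\mathbb{R}^{+})$.

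\emph{Bounds.} The upper bounds (\ref{eq:upperbound-R-tildeR}) are exactly Proposition \ref{prop:class-S}(iv) applied to $\mathbf{\Upsilon}(z)$ and $\widetilde{\mathbf{\Upsilon}}(z)$; in particular $\|\mathbf{\Upsilon}(z)\|,\|\widetilde{\mathbf{\Upsilon}}(z)\|\leq 1/\Imm z$. For (\ref{eq:lower-bound-UU*})--(\ref{eq:lower-bound-tildeUtildeU*}) it suffices to upper bound $\|\mathbf{\Upsilon}^{-1}(z)\|$ and $\|\widetilde{\mathbf{\Upsilon}}^{-1}(z)\|$, since $\mathbf{\Upsilon}(z)\mathbf{\Upsilon}^{H}(z)\geq\|\mathbf{\Upsilon}^{-1}(z)\|^{-2}\I_{ML}$. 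From $\mathbf{\Upsilon}^{-1}(z)=-z(\I_{ML}+\mathcal{B}_{L}^{-1/2}\Psi(\widetilde{\mathbf{\Upsilon}}^{T}(z))\mathcal{B}_{L}^{-1/2})$, together with $\|\mathcal{B}_{L}^{-1/2}\|^{2}=\|\mathcal{B}_{L}^{-1}\|\leq s_{min}^{-1}$ (a consequence of $\mathcal{R}_{m,L}\geq s_{min}\I_{L}$, see Assumption \ref{ass:bounds-spectral-densities}), the operator-norm bound $\|\Psi_{L}^{(m)}(\mathbf{M})\|\leq s_{max}\|\mathbf{M}\|$, and $\|\widetilde{\mathbf{\Upsilon}}(z)\|\leq 1/\Imm z$, one gets $\|\mathbf{\Upsilon}^{-1}(z)\|\leq|z|+\frac{s_{max}}{s_{min}}\frac{|z|}{\Imm z}$. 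Using $\Imm z\leq|z|$ (so $|z|\Imm z\leq|z|^{2}$) and the AM--GM inequality $3|z|^{2}+4\eta^{2}\geq 4\sqrt{3}\,\eta|z|$, one checks that this is $\leq\frac{4(\eta^{2}+|z|^{2})}{\Imm z}$ for every $\eta\geq\frac{s_{max}}{4\sqrt{3}\,s_{min}}$, which is a nice constant; hence (\ref{eq:lower-bound-UU*}). The estimate for $\widetilde{\mathbf{\Upsilon}}$ is the same, using in addition $\|\overline{\Psi}(\mathbf{M})\|\leq s_{max}\|\mathbf{M}\|$ and the uniform bound $c_{N}\leq\sup_{N}c_{N}<\infty$ from Assumption \ref{as:asymptotic-regime}, and yields (\ref{eq:lower-bound-tildeUtildeU*}) with $\widetilde{\eta}\geq\frac{(\sup_{N}c_{N})\,s_{max}}{4\sqrt{3}\,s_{min}}$.

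\emph{Main difficulty.} Once the propagation-of-positivity step is in place the argument is routine; the only points requiring care are keeping track of the transposes in the positivity manipulations (handled by $\Imm(\mathbf{M}^{T})=\overline{\Imm\mathbf{M}}$) and extracting explicit nice constants $\eta,\widetilde{\eta}$ in the lower bounds via the elementary optimization above. There is no genuine obstacle beyond this: the only structural difference from the corresponding statement in \cite{loubaton-mestre-2017} is the extra conjugation by the block-diagonal factor $\mathcal{B}_{L}^{-1/2}$, which is harmless because $\mathcal{B}_{L}^{-1/2}$ is Hermitian positive definite with norm uniformly controlled by $s_{min}$.
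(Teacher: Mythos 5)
Your argument is correct and follows essentially the same route the paper takes by deferring to Lemma 4.1 of \cite{loubaton-mestre-2017}: propagation of positivity of imaginary parts through $\Psi$, $\overline{\Psi}$, the conjugation by $\mathcal{B}_{L}^{-1/2}$ and the transpose, verification of properties (i)--(iii) of Proposition \ref{prop:class-S} to get membership in $\mathcal{S}_{N}(\mathbb{R}^{+})$ and $\mathcal{S}_{ML}(\mathbb{R}^{+})$, and norm bounds on $\mathbf{\Upsilon}^{-1}(z)$, $\widetilde{\mathbf{\Upsilon}}^{-1}(z)$ for the lower bounds (\ref{eq:lower-bound-UU*})--(\ref{eq:lower-bound-tildeUtildeU*}). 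You simply write out in full, with explicit admissible constants $\eta,\widetilde{\eta}$, what the paper leaves as an ``easy adaptation'' of the cited lemma.
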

\begin{proof}
The proof is an easy adaptation of the proof of Lemma 4.1 in \cite{loubaton-mestre-2017}. More precisely, if we replace in this Lemma matrix 
$\mathrm{Bdiag}(\mathbb{E}\mathbf{Q}(z))$ by $\mathbf{\Gamma}(z)$ and matrices $(\mathbf{R}(z),\widetilde{\mathbf{R}}(z))$ by $(\mathbf{\Upsilon}(z),\widetilde{\mathbf{\Upsilon}}(z))$, it is easy to check that 
the arguments of the proof of Lemma 4.1 in \cite{loubaton-mestre-2017} can be extended
to the particular context considered in the present paper. 
\end{proof}
In order to state the next result, we consider two $ML \times ML$ block diagonal 
matrices  ${\bf S}, {\bf T}$ and two $N \times N$ matrices 
$\widetilde{{\bf S}}, \widetilde{{\bf T}}$. We also assume
that ${\bf S}, {\bf T},\widetilde{{\bf S}}, \widetilde{{\bf T}}$ are full rank matrices. 
For each fixed $z$, we define the linear operator $\Phi$ on the set of all $ML \times ML$
matrices by
\begin{equation}
    \label{eq:def-Phi-general}
\Phi \left(  \mathbf{X}\right)  =z^{2}c_{N}\mathbf{S}
\Psi\left(  \widetilde{\mathbf{S}}^{T} \overline{\Psi}\left(
\mathbf{X}\right)  \widetilde{\mathbf{T}}^{T}\right)  \mathbf{T}.
\end{equation}
Note that the operator $\Phi$ of course depends on ${\bf S}, {\bf T},\widetilde{{\bf S}}, \widetilde{{\bf T}}$, $M,L,N$ and $z$. We also define the following linear operators 
on the set of all  $ML \times ML$  Hermitian matrices:
\begin{align}
\Phi_{\mathbf{T}^{H}}\left(  \mathbf{X}\right)    & =\left\vert
z\right\vert ^{2}c_{N}\mathbf{T}^{H}\Psi\left(
\widetilde{\mathbf{T}}^{\ast}\overline{\Psi}\left(  \mathbf{X}\right)
\widetilde{\mathbf{T}}^{T}\right)  \mathbf{T}
\label{eq:defPhi-general-TH}\\
\Phi_{\mathbf{S}} \left(  \mathbf{X}\right)    & =\left\vert
z\right\vert ^{2}c_{N}\mathbf{S}\Psi\left(  \widetilde
{\mathbf{S}}^{T}\overline{\Psi}\left(  \mathbf{X}\right)  \widetilde
{\mathbf{S}}^{\ast}\right)  \mathbf{S}^{H}.
\label{eq:defPhi-general-S}
\end{align}
We remark that both operators are positive in the sense that if ${\bf X} \geq 0$, then 
$\Phi_{\mathbf{S}} \left(  \mathbf{X}\right) \geq 0$ and $\Phi_{\mathbf{T}^{H}} \left(  \mathbf{X}\right) \geq 0$. Let $\Phi^{(1)}\left(  \mathbf{X}\right)  =\Phi \left(  \mathbf{X}
\right)  $ and recursively define $\Phi^{(n+1)}\left(  \mathbf{X}\right)
=\Phi \left(  \Phi^{(n)}\left(  \mathbf{X}\right)  \right)  $ for
$n\geq1$. Then, the following result holds. 
\begin{proposition}
\label{prop:Phi-Phi_S-Phi_TH}
For any two $L$-dimensional column vectors $\mathbf{a}$,
$\mathbf{b}$ and for each $m=1, \ldots, M$, the inequality
\begin{equation}
\label{eq:inequality-general-phi-phiS-phiTH}
\left\vert 
\mathbf{a}^{H} \left(\Phi^{(n)}\left(\mathbf{X}\right)\right)_{m,m}
\mathbf{b} 
\right\vert
 \leq
 \left[  \mathbf{a}^{H} \left( \Phi_{\mathbf{S}}
^{(n)} \left(  \mathbf{X} \mathbf{X}^{H} \right) \right)_{m,m} \mathbf{a} \right]^{1/2}
  \left[ \mathbf{b}^{H} \left(\Phi_{\mathbf{T}^{H}}^{(n)}\left(
\I_{ML} \right) \right)_{m,m} \mathbf{b} \right]^{1/2}
\end{equation}
holds, where $(\mathbf{A})_{m,m}$ denotes the $m$th $L \times L$ diagonal block of $\mathbf{A}$. Moreover, if there exist two $ML \times ML$ positive definite matrices $\mathbf{Y}_1$ 
and $\mathbf{Y}_2$ such that 
\begin{eqnarray}
\label{eq:hyp-phiSn-0}
\lim_{n \rightarrow +\infty}\Phi_{\mathbf{S}}^{(n)}\left( \mathbf{Y}_1 \right) & \rightarrow & 0 \\
\label{eq:hyp-phiTHn-0}
\lim_{n \rightarrow +\infty} \Phi_{\mathbf{T}^{H}}^{(n)}\left( \mathbf{Y}_2 \right) & \rightarrow & 0
\end{eqnarray}
then, for each $ML \times ML$ matrix $\mathbf{X}$, 
\begin{equation}
\label{eq:convergence-Phin-general}
\lim_{n \rightarrow +\infty} \Phi^{(n)}\left(  \mathbf{X}\right) \rightarrow 0   
\end{equation}
If, moreover, $\sum_{n=0}^{+\infty} \Phi_{\mathbf{S}}
^{(n)}\left( \mathbf{Y}_1 \right) < +\infty$ and $\sum_{n=0}^{+\infty} \Phi_{\mathbf{T}^{H}}
^{(n)}\left( \mathbf{Y}_2 \right) < +\infty$, then, for each $ML \times ML$ hermitian matrix $\mathbf{Y}$, the two series $\sum_{n=0}^{+\infty} \Phi_{\mathbf{S}}
^{(n)}\left( \mathbf{Y} \right)$ and $\sum_{n=0}^{+\infty} \Phi_{\mathbf{T}^{H}}
^{(n)}\left( \mathbf{Y} \right)$ are convergent. Finally, for each $ML \times ML$ matrix $\mathbf{X}$, 
 $\sum_{n=0}^{+\infty} \Phi^{(n)}\left( \mathbf{X} \right)$ is also convergent, and we have
 \begin{equation}
     \label{eq:inequality-general-norm-sum-Phin}
     \left\| \sum_{n=0}^{+\infty} \Phi^{(n)}\left( \mathbf{X} \right) 
     \right\|  \leq  
  \left\| \sum_{n=0}^{+\infty} \Phi_{\mathbf{S}}^{(n)}\left( \mathbf{X} \mathbf{X}^{H} \right) \right\|^{1/2}  \, \left\| \sum_{n=0}^{+\infty} \Phi_{\mathbf{T}^{H}}^{(n)}\left(\I_{ML} \right) \right\|^{1/2}
\end{equation}
as well as 
\begin{equation}
\label{eqinequality-general-norm-sum-Phin-bis}
   \left\| \sum_{n=0}^{+\infty} \Phi^{(n)}\left( \mathbf{X} \right) \right\|  \leq  \| {\bf X} \| \; \left\| \sum_{n=0}^{+\infty} \Phi_{\mathbf{S}}^{(n)}\left( \I_{ML} \right) \right\|^{1/2}  \, \left\| \sum_{n=0}^{+\infty} \Phi_{\mathbf{T}^{H}}^{(n)}\left(\I_{ML} \right) \right\|^{1/2}.
\end{equation}
\end{proposition}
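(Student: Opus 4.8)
The plan is to first establish the pointwise inequality~\eqref{eq:inequality-general-phi-phiS-phiTH} by induction on $n$, using the matrix Schwarz inequality~\eqref{eq:schwartz-matriciel-v2} in a suitable $\mathbb{L}^2$ space, and then to deduce the convergence and summability statements as essentially formal consequences of that inequality. For the base case $n=1$, I would write $\mathbf{a}^{H}(\Phi(\mathbf{X}))_{m,m}\mathbf{b}$ explicitly using the definition~\eqref{eq:def-Phi-general} of $\Phi$ together with the integral representation~\eqref{eq:definition_operator_Psi2} of $\Psi_L^{(m)}$, i.e.\ expand $\Psi_L^{(m)}(\cdot)=\int_0^1\mathcal{S}_m(\nu)\,\mathbf{a}_R^{H}(\nu)(\cdot)\mathbf{a}_R(\nu)\,\mathbf{d}_L(\nu)\mathbf{d}_L^{H}(\nu)\,d\nu$. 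This exhibits $\mathbf{a}^{H}(\Phi(\mathbf{X}))_{m,m}\mathbf{b}$ as an integral over $\nu$ (and a sum over the averaging index in $\overline{\Psi}$) of a product of two factors: one built from $\mathbf{S}$, $\widetilde{\mathbf{S}}$, $\mathbf{X}$ and $\mathbf{a}$, the other built from $\mathbf{T}$, $\widetilde{\mathbf{T}}$ and $\mathbf{b}$. Applying Cauchy--Schwarz in $\nu$ (and in the averaging index) then produces exactly the product of square roots on the right-hand side of~\eqref{eq:inequality-general-phi-phiS-phiTH}, where the appearance of $\mathbf{X}\mathbf{X}^{H}$ in the $\Phi_{\mathbf{S}}$ factor and of $\I_{ML}$ in the $\Phi_{\mathbf{T}^{H}}$ factor comes from bounding $\mathbf{X}\mathbf{a}_R(\nu)\mathbf{a}_R^{H}(\nu)\mathbf{X}^{H}\le \mathbf{X}\mathbf{X}^{H}\cdot\|\mathbf{a}_R(\nu)\|^2$-type steps inside $\overline{\Psi}$, matching the placement of $\widetilde{\mathbf{S}}^{T}\cdots\widetilde{\mathbf{S}}^{\ast}$ versus $\widetilde{\mathbf{T}}^{\ast}\cdots\widetilde{\mathbf{T}}^{T}$ in~\eqref{eq:defPhi-general-S}--\eqref{eq:defPhi-general-TH}. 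For the inductive step, I would apply the $n=1$ argument with $\mathbf{X}$ replaced by $\Phi^{(n)}(\mathbf{X})$, bound the resulting $\Phi_{\mathbf{S}}^{(1)}\big((\Phi^{(n)}(\mathbf{X}))(\Phi^{(n)}(\mathbf{X}))^{H}\big)$ using positivity of $\Phi_{\mathbf{S}}$ together with the induction hypothesis applied blockwise, and similarly iterate the $\Phi_{\mathbf{T}^{H}}$ factor; monotonicity of the positive operators $\Phi_{\mathbf{S}}$, $\Phi_{\mathbf{T}^{H}}$ under the order $\le$ is what lets the $n$-fold composition propagate cleanly.

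Once~\eqref{eq:inequality-general-phi-phiS-phiTH} is in hand, the statement~\eqref{eq:convergence-Phin-general} follows quickly: to show $\Phi^{(n)}(\mathbf{X})\to0$ it suffices to control each block $(\Phi^{(n)}(\mathbf{X}))_{m,m}$, and by~\eqref{eq:inequality-general-phi-phiS-phiTH} its operator norm is bounded (up to taking $\mathbf{a},\mathbf{b}$ to be unit eigenvectors) by $\|(\Phi_{\mathbf{S}}^{(n)}(\mathbf{X}\mathbf{X}^{H}))_{m,m}\|^{1/2}\|(\Phi_{\mathbf{T}^{H}}^{(n)}(\I_{ML}))_{m,m}\|^{1/2}$. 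Since $\Phi_{\mathbf{S}}$ and $\Phi_{\mathbf{T}^{H}}$ are positive (order-preserving) operators, $0\le \mathbf{X}\mathbf{X}^{H}\le\|\mathbf{X}\|^2\I_{ML}\le\kappa\,\mathbf{Y}_1$ for a suitable scalar $\kappa$ (using that $\mathbf{Y}_1>0$), hence $0\le\Phi_{\mathbf{S}}^{(n)}(\mathbf{X}\mathbf{X}^{H})\le\kappa\,\Phi_{\mathbf{S}}^{(n)}(\mathbf{Y}_1)\to0$ by~\eqref{eq:hyp-phiSn-0}, and likewise $0\le\Phi_{\mathbf{T}^{H}}^{(n)}(\I_{ML})\le\kappa'\,\Phi_{\mathbf{T}^{H}}^{(n)}(\mathbf{Y}_2)\to0$ by~\eqref{eq:hyp-phiTHn-0}. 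The product of the two square roots therefore tends to $0$, uniformly in $m$, giving~\eqref{eq:convergence-Phin-general}.

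For the summability claims, the same domination argument shows that if $\sum_n\Phi_{\mathbf{S}}^{(n)}(\mathbf{Y}_1)<+\infty$ then for any Hermitian $\mathbf{Y}$, writing $-\kappa\,\mathbf{Y}_1\le\mathbf{Y}\le\kappa\,\mathbf{Y}_1$ and using positivity of $\Phi_{\mathbf{S}}$, the partial sums of $\sum_n\Phi_{\mathbf{S}}^{(n)}(\mathbf{Y})$ are sandwiched and monotone enough to converge; the argument for $\Phi_{\mathbf{T}^{H}}$ is identical. Finally, to get~\eqref{eq:inequality-general-norm-sum-Phin}, I would apply~\eqref{eq:inequality-general-phi-phiS-phiTH} to the partial sum $\sum_{n=0}^{P}\Phi^{(n)}(\mathbf{X})$ — note the inequality extends to $\Phi^{(0)}=\mathrm{id}$ by the plain Schwarz inequality, and to finite sums by linearity combined with one more application of the matrix Schwarz inequality~\eqref{eq:schwartz-matriciel-v2} across the sum index — obtaining $\|\sum_{n=0}^{P}(\Phi^{(n)}(\mathbf{X}))_{m,m}\|\le\|\sum_{n=0}^{P}(\Phi_{\mathbf{S}}^{(n)}(\mathbf{X}\mathbf{X}^{H}))_{m,m}\|^{1/2}\|\sum_{n=0}^{P}(\Phi_{\mathbf{T}^{H}}^{(n)}(\I_{ML}))_{m,m}\|^{1/2}$, then letting $P\to+\infty$ and taking the sup over $m$ (the left side is block diagonal, so its norm is the max over blocks). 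The variant~\eqref{eqinequality-general-norm-sum-Phin-bis} then follows by the crude bound $\Phi_{\mathbf{S}}^{(n)}(\mathbf{X}\mathbf{X}^{H})\le\|\mathbf{X}\|^2\,\Phi_{\mathbf{S}}^{(n)}(\I_{ML})$, again by positivity. I expect the main obstacle to be bookkeeping in the inductive step: one must be careful that the Schwarz splitting at each level of the composition attaches $\widetilde{\mathbf{S}}$-factors to the $\Phi_{\mathbf{S}}$ side and $\widetilde{\mathbf{T}}$-factors to the $\Phi_{\mathbf{T}^{H}}$ side consistently (so that the two auxiliary operators that appear are exactly~\eqref{eq:defPhi-general-S} and~\eqref{eq:defPhi-general-TH} and not some mixed variant), and that the block-diagonal structure of $\mathbf{S},\mathbf{T}$ is used so that the operation "take the $m$th diagonal block" commutes appropriately with the iteration — this is precisely where the adaptation of the corresponding lemma in \cite{loubaton-mestre-2017} requires genuine care rather than a verbatim copy.
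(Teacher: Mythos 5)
Your treatment of everything downstream of the key inequality matches the paper's own proof: the paper likewise deduces (\ref{eq:convergence-Phin-general}) by choosing $\alpha_i>0$ with $\mathbf{Y}_i\geq\alpha_i\I_{ML}$, using positivity of $\Phi_{\mathbf{S}},\Phi_{\mathbf{T}^H}$ together with $\mathbf{X}\mathbf{X}^H\leq\|\mathbf{X}\|^2\I_{ML}$; it obtains the summability of $\sum_n\Phi_{\mathbf{S}}^{(n)}(\mathbf{Y})$ for Hermitian $\mathbf{Y}$ by domination, writing an indefinite $\mathbf{Y}$ as a difference of two positive matrices (which is the clean way to argue here: your ``sandwiched and monotone enough'' step is not literally correct, since the partial sums are not monotone for indefinite $\mathbf{Y}$, though the fix is immediate); and it gets (\ref{eq:inequality-general-norm-sum-Phin}) exactly as you propose, by applying Cauchy--Schwarz across the summation index in (\ref{eq:inequality-general-phi-phiS-phiTH}) for partial sums and passing to the limit, with (\ref{eqinequality-general-norm-sum-Phin-bis}) following from $\Phi^{(n)}_{\mathbf{S}}(\mathbf{X}\mathbf{X}^H)\leq\|\mathbf{X}\|^2\Phi^{(n)}_{\mathbf{S}}(\I_{ML})$. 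Note, however, that the paper does not prove (\ref{eq:inequality-general-phi-phiS-phiTH}) itself: it is quoted from Section 5 of \cite{loubaton-mestre-2017}.

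The genuine gap is in your proposed induction for (\ref{eq:inequality-general-phi-phiS-phiTH}). The base case $n=1$ via Cauchy--Schwarz inside the integral representations of $\Psi$ and $\overline{\Psi}$ is fine, but the inductive step as you describe it does not close. Applying the $n=1$ inequality with $\mathbf{X}$ replaced by $\Phi^{(n)}(\mathbf{X})$ leaves the factor $\mathbf{a}^H\bigl(\Phi_{\mathbf{S}}\bigl(\Phi^{(n)}(\mathbf{X})(\Phi^{(n)}(\mathbf{X}))^H\bigr)\bigr)_{m,m}\mathbf{a}$, and the only control the bilinear-form induction hypothesis gives on the inner matrix is, blockwise, $\bigl(\Phi^{(n)}(\mathbf{X})\bigr)_{m,m}\bigl(\Phi^{(n)}(\mathbf{X})\bigr)_{m,m}^H\leq\bigl\|\bigl(\Phi_{\mathbf{T}^H}^{(n)}(\I_{ML})\bigr)_{m,m}\bigr\|\,\bigl(\Phi_{\mathbf{S}}^{(n)}(\mathbf{X}\mathbf{X}^H)\bigr)_{m,m}$ (take $\mathbf{b}$ proportional to the image of $\mathbf{a}$). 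Feeding this into $\Phi_{\mathbf{S}}$ produces a bound of the form ``spectral-norm constant times $\Phi_{\mathbf{S}}^{(n+1)}(\mathbf{X}\mathbf{X}^H)$'' on the left factor, while the right factor remains a single application $\Phi_{\mathbf{T}^H}^{(1)}(\I_{ML})$ --- a mixed inequality, not the stated one in which the $\mathbf{T}$-side also accumulates into $\Phi_{\mathbf{T}^H}^{(n+1)}(\I_{ML})$. To get (\ref{eq:inequality-general-phi-phiS-phiTH}) one has to perform the Schwarz splitting simultaneously at all $n$ nested levels of the composition (equivalently, induct on a stronger statement in which both the $\mathbf{S}$-chain and the $\mathbf{T}$-chain are propagated as nested quadratic forms rather than collapsed to a norm at an intermediate stage); this is precisely how the argument in Section 5 of \cite{loubaton-mestre-2017} proceeds, it is the bookkeeping you yourself flag as the main obstacle, and your sketch does not resolve it.
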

\begin{proof}
Inequality (\ref{eq:inequality-general-phi-phiS-phiTH}) is established in Section 5 of  \cite{loubaton-mestre-2017}. We now prove 
(\ref{eq:convergence-Phin-general}). For this, we first remark that since matrices $(\mathbf{Y}_i)_{i=1,2}$ are positive definite, 
there exist $\alpha_1 > 0$ and $\alpha_2 > 0$ such that $\mathbf{Y}_i \geq \alpha_i \I_{ML}$ for $i=1,2$. As the operators $\Phi_{\mathbf{S}}$ 
and $ \Phi_{\mathbf{T}^{H}}$ are positive, it holds that $\Phi^{(n)}_{\mathbf{S}}(\mathbf{Y}_1) \geq \alpha_1 \Phi^{(n)}_{\mathbf{S}}(\I_{ML})$
and $ \Phi_{\mathbf{T}^{H}}^{(n)}(\mathbf{Y}_2) > \alpha_2  \Phi_{\mathbf{T}^{H}}^{(n)}(\I_{ML})$ for each $n$. Therefore, conditions
(\ref{eq:hyp-phiSn-0}) and (\ref{eq:hyp-phiTHn-0}) imply that $\Phi^{(n)}_{\mathbf{S}}(\I_{ML}) \rightarrow 0$ and 
$\Phi^{(n)}_{\mathbf{T}^{H}}(\I_{ML}) \rightarrow 0$. If $\mathbf{X}$ is a generic $ML \times ML$ matrix, the inequality $\mathbf{X} \mathbf{X}^{H} \leq \| {\bf X} \|^{2} \I_{ML}$
implies that $\Phi^{(n)}_{\mathbf{S}}(\mathbf{X} \mathbf{X}^{H}) \leq \| {\bf X} \|^{2} \Phi^{(n)}_{\mathbf{S}}(\I_{ML})$. Therefore, we deduce that 
for each matrix ${\bf X}$, $\Phi^{(n)}_{\mathbf{S}}(\mathbf{X} \mathbf{X}^{H}) \rightarrow 0$ when $n \rightarrow +\infty$. The inequality in
(\ref{eq:inequality-general-phi-phiS-phiTH}) thus leads to (\ref{eq:convergence-Phin-general}). Using similar arguments, 
we check that the convergence $\sum_{n=0}^{+\infty} \Phi_{\mathbf{S}}
^{(n)}\left( \mathbf{Y}_1 \right)$ and $\sum_{n=0}^{+\infty} \Phi_{\mathbf{T}^{H}}
^{(n)}\left( \mathbf{Y}_2 \right)$ implies the convergence of $\sum_{n=0}^{+\infty} \Phi_{\mathbf{S}}
^{(n)}\left( \mathbf{Y} \right)$ and $\sum_{n=0}^{+\infty} \Phi_{\mathbf{T}^{H}}
^{(n)}\left( \mathbf{Y} \right)$ for each positive matrix $\mathbf{Y}$. If $\mathbf{Y}$ is not positive, it is sufficient to 
remark that $\mathbf{Y}$ can be written as the difference of 2 positive matrices to conclude to the convergence of the 
above two series. We finally consider a general matrix ${\bf X}$, and establish that $\sum_{n=0}^{+\infty} \Phi^{(n)}\left( \mathbf{X} \right)$ is convergent. For this, we remark that (\ref{eq:inequality-general-phi-phiS-phiTH}) implies that for each $m$ and each $k$, the inequality
\begin{multline}
\label{eq:inequality-partial-sum-quadratic-phi}
 \sum_{n=0}^{k} \left\vert \mathbf{a}^{H} \left( \Phi^{(n)}\left(  \mathbf{X}\right) \right)
_{m,m} \mathbf{b} \right\vert \leq \\ \leq \left[   \mathbf{a}^{H} \left(  \sum_{n=0}^{k}  \left( \Phi_{\mathbf{S}}
^{(n)}\left(  \mathbf{X} \mathbf{X}^{H} \right) \right)_{m,m} \right) \mathbf{a}\right]
^{1/2}  \left[ \mathbf{b}^{H}\left( \sum_{n=0}^{k} \left( \Phi_{\mathbf{T}^{H}}^{(n)}\left(
\I_{ML} \right) \right)_{m,m} \right)\mathbf{b}\right]  ^{1/2}
\end{multline}
holds. This implies that 
$$
 \sum_{n=0}^{+\infty} \left\vert \mathbf{a}^{H} \left( \Phi^{(n)}\left(  \mathbf{X}\right) \right)
_{m,m} \mathbf{b} \right\vert < +\infty
$$
and that the series  $\sum_{n=0}^{+\infty} \Phi^{(n)}\left( \mathbf{X} \right)$ is convergent. The result in (\ref{eq:inequality-general-norm-sum-Phin})
is obtained by taking the limit in the inequality (\ref{eq:inequality-partial-sum-quadratic-phi}),
while (\ref{eqinequality-general-norm-sum-Phin-bis}) is an immediate consequence of 
(\ref{eq:inequality-general-norm-sum-Phin}). 
\end{proof}

\section{Preliminary results on the empirical estimates $\widehat{\mathcal{R}}_L$ and $\widehat{\mathcal{R}}_{m,L}$}

\label{sec:simplification}
Consider again the  sample block correlation matrix, namely $\widehat{\mathcal{R}}_{\mathrm{corr},L} = \widehat{\mathcal{B}}^{-1/2}_L \widehat{\mathcal{R}}_L \widehat{\mathcal{B}}^{-1/2}_L$, where we recall that $\widehat{\mathcal{B}}_L = \mathrm{Bdiag}(\widehat{\mathcal{R}}_L)$. In this section, we will show that we can replace the block diagonal sample covariance matrix $\widehat{\mathcal{B}}_L$ by the true matrix $\mathcal{B}_L = \mathcal{R}_L$ without altering the asymptotic behavior of the empirical eigenvalue distribution of $\overline{\mathcal{R}}_{\mathrm{corr},L} = \mathcal{B}^{-1/2}_L \widehat{\mathcal{R}}_L \mathcal{B}^{-1/2}_L$. 
 
For this, we proceed in three steps. First, in Section \ref{subsec:boundedness-hatR}, we prove that the spectral norm of $\widehat{\mathcal{R}}_L$ is bounded with exponentially high probability. 
Then, using similar arguments, we show in Section \ref{subsec:convergence-estimators-RmL} that $\| \widehat{\mathcal{B}}_L - \mathcal{B}_L \| \prec \max(M^{-1/2},L^{-\gamma_0})$. Finally, in Section \ref{sec:evalTheta} we establish that $\| \widehat{\mathcal{B}}^{-1/2}_L - \mathcal{B}^{-1/2}_L \| \prec \max(M^{-1/2},L^{-\gamma_0})$ using Hermitian matrix perturbation results. 
The fact that $\|\widehat{\mathcal{R}}_L\|$ is bounded with exponentially high probability will immediately imply that 
\begin{equation}
    \label{eq:simplification}
\left \| \widehat{\mathcal{R}}_{\mathrm{corr},L} - \overline{\mathcal{R}}_{\mathrm{corr},L} \right \| \prec \max\left(\frac{1}{\sqrt{M}},\frac{1}{L^{\gamma_0}}\right).
\end{equation}

We will write the normalized observations as $\mathbf{w}_{n,N}=\frac{1}{\sqrt{N
}}\mathbf{y}_n^L $, where $n=1,\ldots ,N$ and
\begin{equation}
\mathbf{W}_{N}=\left[ \mathbf{w}_{1,N},\ldots ,\mathbf{w}_{N,N}\right] .
\label{eq:def-W_N}
\end{equation}
Therefore $\widehat{\mathcal{R}}_{L}$ coincides with  $\widehat{\mathcal{R}}_{L}=\mathbf{W}_{N}\mathbf{W}_{N}^{H}$. 
In the following, we will often drop the index $N$, and will denote $\mathbf{W}
_{N}, {\bf w}_{j,N}, \mathbf{Q}_{N},\ldots $ by $\mathbf{W}, {\bf w}_j, \mathbf{Q},\ldots $ in order to
simplify the notations. 

  \subsection{Control of the largest eigenvalue of $\widehat{\mathcal{R}}_L$}
  \label{subsec:boundedness-hatR}

  The approach we follow is based on the observation that it is possible to
  add a bounded matrix to $\mathbf{W}_N \mathbf{W}_N^{H}$ to produce a block Toeplitz matrix.
Controlling the largest eigenvalue of $\mathbf{W}_N \mathbf{W}_N^{H}$
  becomes therefore equivalent to controlling the largest eigenvalue of the block Toeplitz matrix, a problem that can be solved by studying
  the supremum over the frequency interval of the spectral norm of the corresponding symbol. 
  
  \subsubsection{Modifying $\mathbf{W}_N \mathbf{W}_N^{H}$ into a block Toeplitz matrix}
  
  In order to present this result, it is more convenient
  to reorganize the rows of matrix $\mathbf{W}_N$. For this, we define for each $n$ the $M$ dimensional random vector
  $\y_n$ defined by
  \begin{equation}
    \label{eq:def-y}
    \y_n = \left( \begin{array}{c} y_{1,n} \\ \vdots \\ y_{M,n} \end{array} \right).
    \end{equation}
  $(\y_n)_{n \in \mathbb{Z}}$ is thus an $M$--dimensional stationary random sequence whose spectral density matrix $\S(\nu)$
coincides with the diagonal matrix $\S(\nu) = \mathrm{Diag}(\mathcal{S}_1(\nu), \ldots, \mathcal{S}_M(\nu))$. 
  We next consider the $ML \times N$ matrix $\W_N$, which is defined as
  \begin{equation}
    \label{eq:new-definition-W}
   \W_N = \frac{1}{\sqrt{N}} \, \left( \begin{array}{ccccc} \y_{1} & \y_{2} & \ldots & \y_{N-1} & \y_{N} \\
                                            \y_{2} & \y_{3} & \ldots & \y_{N} & \y_{N+1} \\
                                             \vdots & \vdots & \vdots & \vdots & \vdots \\
                                             \vdots & \vdots & \vdots & \vdots & \vdots \\
                                             \y_{L} & \y_{L+1} & \ldots & \y_{N+L-2} & \y_{N+L-1} \\
\end{array} \right).
\end{equation} 
Observe that $\W_{N}$ can be obtained by simple permutation of the rows of $\mathbf{W}_N$ and consequently $\W_{N}\W^H_{N}$ and $\mathbf{W}_N\mathbf{W}^H_N$ have the same eigenvalues. In particular, they have the same spectral norm. For this reason, we may focus on the behavior of $\W_{N}$ from now on. 

  We define matrices $\W_{N,1}$ and $\W_{N,2}$ as the $ML \times (N-L+1)$ and $ML \times (L-1)$ matrices such that
  $\W_N = (\W_{N,1}, \W_{N,2})$. In particular, matrix $\W_{N,2}$ is given by
  \begin{equation}
    \label{eq:def-W2}
    \W_{N,2} = \frac{1}{\sqrt{N}} \, \left( \begin{array}{ccccc} \y_{N-L+2} & \y_{N-L+3} & \ldots & \y_{N-1} & \y_{N} \\
                                            \y_{N-L+3} & \y_{N-L+4} & \ldots & \y_{N} & \y_{N+1} \\
                                             \vdots & \vdots & \vdots & \vdots & \vdots \\
                                             \vdots & \vdots & \vdots & \vdots & \vdots \\
                                             \y_{N+1} & \y_{N+2} & \ldots & \y_{N+L-2} & \y_{N+L-1} \\
\end{array} \right).
  \end{equation}
  We now express $\W_{N,2}$ as $\W_{N,2} = \W_{N,2,1} + \W_{N,2,2}$ where $\W_{N,2,1}$ is the upper block triangular matrix given by
 \begin{equation}
    \label{eq:def-W21}
    \W_{N,2,1} = \frac{1}{\sqrt{N}} \, \left( \begin{array}{ccccc} \y_{N-L+2} & \y_{N-L+3} & \ldots & \y_{N-1} & \y_{N} \\
                                                                 \y_{N-L+3} & \y_{N-L+4} & \ldots & \y_{N} & 0 \\
                                                                 \y_{N-L+4} & \ldots  & \y_N & 0 & 0 \\
                                             \vdots & \vdots & \vdots & \vdots & \vdots \\
                                            \y_N & 0 & \vdots & \vdots & 0 \\
                                            0  & 0 & \ldots & 0 & 0 \\
\end{array} \right)
  \end{equation}
 and where $\W_{N,2,2}$ is the lower block triangular matrix defined by
 \begin{equation}
    \label{eq:def-W22}
    \W_{N,2,2} = \frac{1}{\sqrt{N}} \, \left( \begin{array}{ccccc} 0 & 0 & \ldots & 0 & 0 \\
                                            0 & 0 & \ldots & 0 & \y_{N+1} \\
                                            0 & \vdots & 0 & \y_{N+1} & \y_{N+2} \\
                                            \vdots & \vdots & \vdots & \vdots & \vdots \\
                                            0 & \y_{N+1} & \ldots & \ldots & \y_{N+L-2} \\
                                             \y_{N+1} & \y_{N+2} & \ldots & \y_{N+L-2} & \y_{N+L-1} \\
\end{array} \right).
  \end{equation} 
 In other words, matrix $\W_{N,2,1}$ is obtained by replacing in $\W_{N,2}$ vectors $\y_{N+1}, \ldots, \y_{N+L-1}$ by
 $\bf{0}, \ldots, \bf{0}$ while  $\W_{N,2,2}$ is obtained by replacing in $\W_{N,2}$ vectors $\y_{N-L+2}, \ldots, \y_{N}$ by
 $\bf{0}, \ldots, \bf{0}$. We also define $\W_{N,0}$ as the $ML \times (L-1)$ lower block triangular matrix given by
 \begin{equation}
   \label{eq:def-W0}
     \W_{N,0} = \frac{1}{\sqrt{N}} \, \left( \begin{array}{ccccc} 0 & 0 & \ldots & 0 & 0 \\
                                            0 & 0 & \ldots & 0 & \y_{1} \\
                                            0 & \vdots & 0 & \y_{1} & \y_{2} \\
                                            \vdots & \vdots & \vdots & \vdots & \vdots \\
                                            0 & \y_{1} & \ldots & \ldots & \y_{L-2} \\
                                             \y_{1} & \y_{2} & \ldots & \y_{L-2} & \y_{L-1} \\
\end{array} \right).
  \end{equation} 
 We finally introduce the $ML \times (N+L-1)$ block Hankel matrix $\widetilde{\W}_N$ defined by
 \begin{equation}
   \label{eq:def-tildeW}
   \widetilde{\W}_N = (\W_{N,0}, \W_{N,1}, \W_{N,2,1}).
   \end{equation}
 It is easy to check that $\widetilde{\W}_N \widetilde{\W}_N^{H}$ is the block Toeplitz matrix whose $M \times M$  blocks
 $\left((\widetilde{\W}_N \widetilde{\W}_N^{H})_{k,l}\right)_{k,l=1, \ldots, L}$ are given by
 $$
 (\widetilde{\W}_N \widetilde{\W}_N^{H})_{k,l} = \widehat{{\bf R}}_{k-l}
 $$
 where the $M \times M$ matrices $(\widehat{{\bf R}}_l)_{l=-(L-1), \ldots, L-1}$ are defined by
 $$
 \widehat{{\bf R}}_l = \frac{1}{N} \sum_{n=1}^{N-l} \y_{n+l} \y_n^H
 $$
 for $l \geq 0$ and $\widehat{{\bf R}}_l = \widehat{{\bf R}}^{H}_{-l}$ for $l \leq 0$. In other words, for each
 $l$, $\widehat{{\bf R}}_l$ is the standard empirical biased estimate of the autocovariance matrix at lag $l$ of the multivariate time
 series $(\y_n)_{n \in \mathbb{Z}}$. 
 
 Matrix $\widetilde{\W}_N \widetilde{\W}_N^{H}$ also coincides with the
 block Toeplitz matrix associated to the symbol $\widehat{\S}(\nu)$ defined by
 \begin{equation}
   \label{eq:def-hatS}
   \widehat{\S}(\nu) = \sum_{l=-(L-1)}^{L-1} \widehat{{\bf R}}_l \mathrm{e}^{-2 i \pi l \nu}
 \end{equation}
 so that we can write
 \begin{equation}
   \label{eq:expre-integrale-modif-gram}
   \widetilde{\W}_N \widetilde{\W}_N^H = \int_{0}^{1} \dd_L(\nu) \dd_{L}^{H}(\nu) \otimes \widehat{\S}(\nu)  \, d\nu.
 \end{equation}
The $M \times M$ matrix $\widehat{\S}(\nu)$ coincides with a lag window estimator of the spectral density of $(\y_n)_{n \in \mathbb{Z}}$. Evaluating the spectral norm
 of $\widetilde{\W}_N \widetilde{\W}_N^{H}$ is easier than that of $\W_N \W_N^{H}$, because the spectral norm of $\widetilde{\W}_N \widetilde{\W}_N^{H}$ is upper bounded by $\sup_{\nu \in [0,1]} \|  \widehat{\S}(\nu) \|$,
 a term that can be controlled using a discretization in the frequency domain and the epsilon net argument in $\mathbb{C}^{M}$ (see e.g. \cite{tao-book} for an introduction to the concept of epsilon net). In the reminder of this section, we first prove that $\| \W_N \W_N^{H} - \widetilde{\W}_N \widetilde{\W}_N^{H} \|$
 is bounded with exponentially high probability and then establish that  $\sup_{\nu \in [0,1]} \|  \widehat{\S}(\nu) \|$, and thus $\|\widetilde{\W}_N \widetilde{\W}_N^{H}\|$ is
 also bounded with exponentially high probability. 

 We first state the following lemma, which will allow to reduce various suprema on the interval $[0,1]$ to the corresponding suprema on a finite grid of the same interval.
 This result is adapted from Zygmund \cite{zygmund}, and was used in \cite{xiao-wu-2012}. 
 \begin{lemma}
   \label{le:zygmund}
   Let $h(\nu) = \sum_{l=-(L-1)}^{L-1} h_l\mathrm{e}^{-2 i \pi l \nu}$ an order  $L-1$ real valued trigonometric polynomial. Then, for each $\nu_0 \in [0,1]$,
   $\delta > 0$, $K \geq 2(1+\delta)(L-1)$, we define $\nu_k = \nu_0 + k/K$ for $k=0, \ldots, K$. Then, it holds that
   \begin{equation}
     \label{eq:sygmund}
     \max_{\nu \in [0,1]} | h(\nu)| \leq \left( 1 + \frac{1}{\delta} \right) \max_{k=0, \ldots, K} |h(\nu_k)|.
   \end{equation}
 \end{lemma}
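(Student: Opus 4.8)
The plan is to reduce the statement to the classical Bernstein inequality for trigonometric polynomials, following the idea in Zygmund's book. First I would set $\nu^\star \in [0,1]$ to be a point where $|h|$ attains its maximum $\|h\|_\infty := \max_{\nu}|h(\nu)|$; since $h$ is a trigonometric polynomial, it is smooth and such a point exists. Let $\nu_{k_0}$ be a grid point $\nu_0 + k_0/K$ closest to $\nu^\star$, so that $|\nu^\star - \nu_{k_0}| \le \tfrac{1}{2K}$. By the mean value theorem there is a point $\xi$ between $\nu^\star$ and $\nu_{k_0}$ with
\begin{equation*}
|h(\nu^\star)| - |h(\nu_{k_0})| \le |h(\nu^\star) - h(\nu_{k_0})| = |h'(\xi)|\,|\nu^\star - \nu_{k_0}| \le \frac{1}{2K}\,\|h'\|_\infty .
\end{equation*}

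Next I would invoke Bernstein's inequality: for a trigonometric polynomial of degree at most $L-1$ one has $\|h'\|_\infty \le 2\pi (L-1)\|h\|_\infty$. (One must be slightly careful about the normalization of the frequency: here the exponentials are $\mathrm{e}^{-2i\pi l \nu}$ with $\nu \in [0,1]$, so the relevant "angular" degree is $2\pi(L-1)$; this is the standard Bernstein bound after the change of variable $\theta = 2\pi\nu$.) Combining with the previous display gives
\begin{equation*}
\|h\|_\infty \le \max_{k=0,\ldots,K}|h(\nu_k)| + \frac{\pi (L-1)}{K}\,\|h\|_\infty .
\end{equation*}
Using the hypothesis $K \ge 2(1+\delta)(L-1)$, one has $\tfrac{\pi(L-1)}{K} \le \tfrac{\pi}{2(1+\delta)}$, which is strictly less than $1$, so the term $\tfrac{\pi(L-1)}{K}\|h\|_\infty$ can be absorbed into the left-hand side, yielding $\|h\|_\infty \le \bigl(1 - \tfrac{\pi(L-1)}{K}\bigr)^{-1}\max_k |h(\nu_k)|$.

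The remaining point is purely arithmetic: one must check that $\bigl(1 - \tfrac{\pi(L-1)}{K}\bigr)^{-1} \le 1 + \tfrac{1}{\delta}$ under $K \ge 2(1+\delta)(L-1)$. I expect this to be the only delicate step, and in fact it may require sharpening the crude bound $\|h'\|_\infty \le 2\pi(L-1)\|h\|_\infty$: Zygmund's argument actually uses a refined interpolation estimate (controlling $h(\nu^\star)$ not by a single nearby node but by a weighted combination of all nodes via the conjugate Fej\'er-type kernel), which replaces the factor $\pi$ by $1$, giving $\|h\|_\infty \le (1-\tfrac{L-1}{K})^{-1}\max_k|h(\nu_k)|$ and hence, since $1 - \tfrac{L-1}{K} \ge 1 - \tfrac{1}{2(1+\delta)} = \tfrac{1+2\delta}{2(1+\delta)} \ge \tfrac{1}{1+1/\delta}$ for $\delta>0$, exactly the claimed bound $(1+1/\delta)\max_k|h(\nu_k)|$. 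So the cleanest route is to follow Zygmund's interpolation formula rather than the crude mean-value-plus-Bernstein argument; the main obstacle is simply reproducing that interpolation identity and bounding the kernel, after which the constant $1+1/\delta$ drops out of the elementary inequality above. Since the result is quoted from \cite{zygmund} and \cite{xiao-wu-2012}, I would present the short mean-value version for intuition and cite the reference for the sharp constant.
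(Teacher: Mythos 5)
The paper itself gives no proof of this lemma: it is quoted as "adapted from Zygmund \cite{zygmund}, and was used in \cite{xiao-wu-2012}". So your final fallback (cite Zygmund for the sharp constant) is in line with what the authors do, and your elementary mean-value-plus-Bernstein computation is correct as far as it goes: with $\|h'\|_\infty\le 2\pi(L-1)\|h\|_\infty$ and a nearest grid point at distance at most $1/(2K)$ one gets $\|h\|_\infty\le\bigl(1-\pi(L-1)/K\bigr)^{-1}\max_k|h(\nu_k)|$ whenever $\pi(L-1)/K<1$. Note, however, that under the hypothesis $K\ge 2(1+\delta)(L-1)$ this requires $\delta>\pi/2-1$, so your claim that $\pi/(2(1+\delta))$ "is strictly less than $1$" is not true for small $\delta$; and even when it applies, the resulting constant is not $\le 1+1/\delta$ (that would force $\pi\le 2$). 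This weaker variant would actually be enough for the way the lemma is used in the paper (there $\delta$ is a fixed constant and only some $C_\delta$ with $K=\mathcal{O}(L)$ matters), but it does not prove the lemma as stated.

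The genuine gap is in your proposed sharpening. The asserted intermediate estimate $\|h\|_\infty\le\bigl(1-(L-1)/K\bigr)^{-1}\max_k|h(\nu_k)|$ is false: take $K=2(L-1)+1$ equidistant nodes, where the claimed constant is $K/(K-(L-1))<2$, while the optimal constant in this inequality is the Lebesgue constant of trigonometric interpolation of degree $L-1$ at $2(L-1)+1$ equidistant points, which grows like $\tfrac{2}{\pi}\log L$ and exceeds $2$ for $L$ large (one can take the extremal interpolant, which is a real trigonometric polynomial of degree $L-1$). So "replacing $\pi$ by $1$" in a Bernstein-type bound is not what Zygmund's argument does, and no derivative estimate of that form can hold. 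The actual proof of Theorem 7.28 in Chapter X of \cite{zygmund} writes $h(\nu)$ as a discrete average of the grid values against a delayed-mean (de la Vall\'ee Poussin type) kernel which reproduces polynomials of degree $L-1$ and has degree less than $K-(L-1)$, and the constant $1+1/\delta$ comes from bounding the discrete $\ell^1$ average of that kernel uniformly in $\nu$; the blow-up as $\delta\to 0$ is intrinsic (consistent with the $\log L$ Lebesgue constant at $\delta\approx 1/(2(L-1))$). If you want a self-contained proof you must reproduce that interpolation identity and kernel bound; otherwise simply cite the theorem, as the paper does, and drop the incorrect intermediate inequality.
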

 We now compare the spectral norms of  $\W_N \W_N^{H}$ and $\widetilde{\W}_N \widetilde{\W}_N^{H}$.
 \begin{proposition}
   \label{prop:spectral-norm-w-tildew}
   Let $\alpha$ denote a large enough constant. Under Assumptions \ref{assum:statistics}-\ref{ass:bounds-spectral-densities} and \ref{as:norm-r-omega}, it holds that
   \begin{equation}
       \label{eq:spectral-norm-w-tildew}
       \mathbb{P} \left(\|  \W_N \W_N^{H} - \widetilde{\W}_N \widetilde{\W}_N^{H} \| > \alpha \right) \leq \kappa_1 L \exp{( -\kappa_2 M \alpha)}.
   \end{equation}
   for two nice constants $\kappa_1$ and $\kappa_2$. 
 \end{proposition}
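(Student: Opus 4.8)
The plan is to reduce everything to a bound on the three ``edge'' matrices $\W_{N,0}$, $\W_{N,2,1}$, $\W_{N,2,2}$. Using $\W_N=(\W_{N,1},\,\W_{N,2,1}+\W_{N,2,2})$ and $\widetilde{\W}_N=(\W_{N,0},\W_{N,1},\W_{N,2,1})$, the common contribution $\W_{N,1}\W_{N,1}^{H}$ cancels, and one is left with
\begin{equation*}
\W_N\W_N^{H}-\widetilde{\W}_N\widetilde{\W}_N^{H}=\W_{N,2,1}\W_{N,2,2}^{H}+\W_{N,2,2}\W_{N,2,1}^{H}+\W_{N,2,2}\W_{N,2,2}^{H}-\W_{N,0}\W_{N,0}^{H}.
\end{equation*}
By the triangle inequality and submultiplicativity of the spectral norm, the left-hand side is at most $\|\W_{N,0}\|^{2}+2\|\W_{N,2,1}\|\,\|\W_{N,2,2}\|+\|\W_{N,2,2}\|^{2}$, so it suffices to prove that $\mathbb{P}(\|\W_{N,0}\|>\sqrt{\alpha/4})\leq\kappa_1 L\exp(-\kappa_2 M\alpha)$ and likewise for $\W_{N,2,1}$ and $\W_{N,2,2}$, and then take a union bound over the three events. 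The crucial observation is that each of these matrices has only $L-1$ columns and its nonzero entries involve only $L-1$ \emph{consecutive} observation vectors ($\y_1,\ldots,\y_{L-1}$ for $\W_{N,0}$, $\y_{N-L+2},\ldots,\y_N$ for $\W_{N,2,1}$, $\y_{N+1},\ldots,\y_{N+L-1}$ for $\W_{N,2,2}$), and that the three matrices share the same algebraic ``triangular window'' structure up to reversing the orientation; hence by stationarity it is enough to treat $\W_{N,0}$.

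For $\W_{N,0}$, the plan is to adjoin the reverse-oriented windows and zero-pad in order to embed it as the first $L-1$ columns of an $ML\times(2L-2)$ block-Hankel matrix $\mathbf{H}_{0}$ whose $(i,k)$ block is $\frac1{\sqrt N}\,\y_{i+k-L}$ (with $\y_j:=0$ outside $\{1,\ldots,L-1\}$). Since $\mathbf{H}_{0}=[\W_{N,0}\,|\,\mathbf{C}]$ for some extra columns $\mathbf{C}$, we get $\W_{N,0}\W_{N,0}^{H}\leq\mathbf{H}_{0}\mathbf{H}_{0}^{H}$ in the positive semidefinite order, and a direct computation of the blocks shows that $\mathbf{H}_{0}\mathbf{H}_{0}^{H}$ is in fact the \emph{block-Toeplitz} matrix $\int_0^1\dd_L(\nu)\dd_L^{H}(\nu)\otimes\widehat{\S}_0(\nu)\,d\nu$ (as in \eqref{eq:expre-integrale-modif-gram}), whose symbol is the \emph{rank-one} positive matrix $\widehat{\S}_0(\nu)=\tfrac1N\,\mathbf{f}(\nu)\mathbf{f}(\nu)^{H}$ with $\mathbf{f}(\nu)=\sum_{a=1}^{L-1}\y_a\,\mathrm e^{-2i\pi a\nu}$. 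Consequently $\|\W_{N,0}\|^{2}\leq\sup_{\nu\in[0,1]}\|\widehat{\S}_0(\nu)\|=\tfrac1N\sup_{\nu\in[0,1]}\|\mathbf{f}(\nu)\|^{2}$; note that this collapses the supremum over the unit sphere of $\mathbb{C}^{M}$ to nothing, so no $\epsilon$-net argument is needed here.

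It then remains to control $\sup_{\nu}\|\mathbf{f}(\nu)\|^{2}=\sup_{\nu}\sum_{m=1}^{M}|F_m(\nu)|^{2}$, where $F_m(\nu)=\sum_{a=1}^{L-1}y_{m,a}\,\mathrm e^{-2i\pi a\nu}$. Since $\nu\mapsto\|\mathbf{f}(\nu)\|^{2}$ is a nonnegative trigonometric polynomial of degree at most $L-2$, Zygmund's Lemma \ref{le:zygmund} reduces the supremum to a maximum over a grid of $K+1=\mathcal{O}(L)$ points (with $\delta$ fixed), which after a union bound over the grid produces exactly the factor $L$ in \eqref{eq:spectral-norm-w-tildew}. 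For a fixed $\nu$, $\|\mathbf{f}(\nu)\|^{2}$ is a sum of $M$ \emph{independent} terms $|F_m(\nu)|^{2}$, each a scaled squared complex Gaussian with $\mathbb{E}|F_m(\nu)|^{2}=\mathbb{E}\,|\dd_{L-1}(\nu)^{T}\mathbf{y}^{L-1}_{m}|^{2}\leq(L-1)\|\mathcal{R}_{m,L-1}\|\leq(L-1)s_{max}$ by Assumption \ref{ass:bounds-spectral-densities}; a Bernstein/Chernoff estimate for sums of independent sub-exponential variables (equivalently, the Hanson--Wright inequality \eqref{eq:hanson-wright} after writing $\|\mathbf{f}(\nu)\|^{2}$ as a quadratic form in a standard complex Gaussian vector) yields $\mathbb{P}(\tfrac1N\|\mathbf{f}(\nu)\|^{2}>\alpha)\leq\exp(-\kappa_2 M\alpha)$ for $\alpha$ large enough, because the relevant large-deviation scale $t/\|{\bf A}\|=N\alpha/((L-1)s_{max})$ is of order $M\alpha/(c_\star s_{max})$ and the Frobenius-norm scale $t^2/\|{\bf A}\|_F^2$ is of order $M\alpha^2/(c_\star^2 s_{max}^2)$, both $\gtrsim M\alpha$ in that range. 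Combining the grid union bound with this tail bounds $\mathbb{P}(\|\W_{N,0}\|>\sqrt{\alpha/4})$ by $\kappa_1 L\exp(-\kappa_2 M\alpha)$; the same estimate holds for $\W_{N,2,1}$ and $\W_{N,2,2}$ by stationarity (their edge Grams are dominated by the analogous block-Toeplitz completions built from a block of $L-1$ consecutive $\y$'s, with symbols $\tfrac1N\mathbf{g}(\nu)\mathbf{g}(\nu)^{H}$ distributed as $\tfrac1N\mathbf{f}(\nu)\mathbf{f}(\nu)^{H}$). A final union bound over the three matrices gives \eqref{eq:spectral-norm-w-tildew}.

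I expect the main obstacle to be the combinatorial verification that adjoining the reverse-oriented windows and zero-padding genuinely turns each edge Gram matrix into a block-Toeplitz matrix \emph{with a rank-one symbol} (so that its operator norm collapses to $\tfrac1N\|\mathbf{f}(\nu)\|^{2}$); a mild subtlety is that $\W_{N,2,1}$ is ``upper-triangular'' while $\W_{N,0}$ and $\W_{N,2,2}$ are ``lower-triangular'', but both orientations are simultaneously captured by the same $(2L-2)$-column completion, so the argument is uniform. The second delicate point is getting the exponent right: the bound must be exponential in $M\alpha$ rather than merely in $\alpha$, and this is exactly what the sub-exponential concentration of $\sum_{m=1}^{M}|F_m(\nu)|^{2}$ (sum of $M$ independent terms, with $N/(L-1)\asymp M/c_\star$) delivers, the polynomial factor $L$ coming from the Zygmund discretization being harmless. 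The remaining work is routine bookkeeping of the nice constants and of the fact that the bound is only claimed for $\alpha$ large enough.
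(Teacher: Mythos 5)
Your proposal is correct and follows essentially the same route as the paper: decompose the difference into the edge matrices $\W_{N,0},\W_{N,2,1},\W_{N,2,2}$, bound each Gram norm by $\sup_{\nu}\frac{1}{N}\|\mathbf{f}(\nu)\|^{2}$ for the corresponding rank-one periodogram-type symbol, discretize via Lemma \ref{le:zygmund}, apply Hanson--Wright at each grid point (sum of $M$ independent scaled $|F_m(\nu)|^2$ with $\|\mathbf{A}\|\asymp 1/M$), and take union bounds. Your minor variants — domination by a $(2L-2)$-column block-Toeplitz completion in place of the matrix Cauchy--Schwarz inequality (\ref{eq:schwartz-matriciel-v2}), handling the cross terms by submultiplicativity rather than repeating the argument, and using the crude bound $\mathbb{E}|F_m(\nu)|^2\leq (L-1)s_{max}$ instead of Lemma \ref{le:expectation-periodogram} — are all valid and do not change the substance of the argument.
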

   \begin{proof}
   We drop all the subindexes $N$ from all the matrices for clarity of exposition. Matrix $\W \W^{H}$ is equal to $\W \W^{H} = \W_1 \W_1^{H} + (\W_{2,1} + \W_{2,2}) (\W_{2,1} + \W_{2,2})^{H}$ while
     $\widetilde{\W} \widetilde{\W}^{H} = \W_0 \W_0^{H} + \W_1 \W_1^{H} + \W_{2,1} \W_{2,1}^{H}$. Therefore,
     $$
     \W \W^{H} - \widetilde{\W} \widetilde{\W}^{H} =  \W_{2,2} \W_{2,2}^{H} +  \W_{2,2} \W_{2,1}^{H} +  \W_{2,1} \W_{2,2}^{H} - \W_0 \W_0^{H}.
     $$
     In order to establish (\ref{eq:spectral-norm-w-tildew}), we have to show that $\mathbb{P}(\| \W_{2,i} \W_{2,j}^{H} \| > \alpha)$, $i,j=1,2$,  
     and $\mathbb{P}(\| \W_{0} \W_{0}^{H} \| > \alpha)$ decrease at the same rate as the right hand side of (\ref{eq:spectral-norm-w-tildew}).
     We just establish this property for matrix $\W_0 \W_0^{H}$, or equivalently for matrix $\widetilde{\W}_0 \widetilde{\W}^H_0$, where $\widetilde{\W}_0$ is defined as
     $$
     \widetilde{\W}_0 = \frac{1}{\sqrt{N}} \, \left( \begin{array}{ccccc} \y_1 & 0 & \ldots & 0 & 0 \\
                                            \y_2 & \y_1 & \ldots & 0 & 0 \\
                                            \vdots & \ddots & \ddots & 0 & 0 \\
                                            \y_{L-2} & \y_{L-3} & \ddots & \y_1 & 0 \\
                                             \y_{L-1} & \y_{L-2} & \ldots & \y_{2} & \y_{1} \\
     \end{array} \right).
     $$
     It is easily seen that $\widetilde{\W}_0$ can be expressed as
     \begin{equation}
       \label{eq:expre-integrale-W0}
       \widetilde{\W}_0 = \sqrt{\frac{L}{N}} \int_{0}^{1}  \dd_{L-1}(\nu) \dd_{L-1}^{H}(\nu)  \otimes \bs{\xi}_{L,y}(\nu)  \, d\nu
     \end{equation}
     where $\bs{\xi}_{L,y}(\nu)$ is an $M$-dimensional column vector defined as $\bs{\xi}_{L,y}(\nu) = \frac{1}{\sqrt{L}} \sum_{l=0}^{L-2} \y_{l+1}\mathrm{e}^{-2 i \pi l \nu} $. The matrix version of the Cauchy-Schwarz inequality in (\ref{eq:schwartz-matriciel-v2}) with $\mathbf{U}(\nu)=\sqrt{\frac{L}{N}}\dd_{L-1}(\nu)  \otimes \bs{\xi}_{L,y}(\nu)$ and $\mathbf{V}(\nu) =  \dd_{L-1}(\nu)$ leads immediately to
     $$
     \widetilde{\W}_0 \widetilde{\W}_0^{H} \leq \frac{L}{N} \, \int_{0}^{1}  \dd_{L-1}(\nu) \dd_{L-1}^{H} (\nu)\otimes  \bs{\xi}_{L,y}(\nu)  \bs{\xi}_{L,y}^{H}(\nu) \, d\nu.
     $$
     From this, we obtain immediately that
     $$
     \| \widetilde{\W}_0 \widetilde{\W}_0^{H} \| \leq \sup_{\nu \in [0,1]} \frac{L}{N} \, \|  \bs{\xi}_{L,y}(\nu) \|^{2}.
     $$
     Next, observe that $\nu \rightarrow  \frac{L}{N} \|  \bs{\xi}_{L,y}(\nu) \|^{2}$ is a real valued trigonometric polynomial of order $L-2$. Therefore, if $K, \delta$ and the points
     $(\nu_k)_{k=0, \ldots, K}$ are given as in Lemma \ref{le:zygmund}, it holds that
    $$
         \left\| \widetilde{\W}_0 \widetilde{\W}_0^{H} \right\| \leq \left( 1+ \frac{1}{\delta} \right) \sup_{k=0,\ldots,K} \frac{L}{N} \| \boldsymbol{\xi}_{L,y} (\nu_k)\|^2.
     $$
     Noting that $K = \mathcal{O}(L)$, it is sufficient to evaluate $\mathbb{P}\left(  \frac{L}{N} \| \boldsymbol{\xi}_{L,y} (\nu)\|^2 > \eta \right)$ for some fixed $\nu$ and some well chosen constant $\eta$, and then use the union bound.
     Observe first that we can express 
     $$
      \frac{L}{N} \|  \bs{\xi}_{L,y}(\nu) \|^{2} = \frac{ML}{N} \frac{1}{M} \sum_{m=1}^{M} \left| \xi_{L,y_m}(\nu) \right|^{2}
     $$
     where  $\xi_{L,y_m}(\nu)$, $m=1,\ldots,M$ are components of $\bs{\xi}_{L,y}(\nu)$. These are mutually independent complex Gaussian random variables, so that we can use the Hanson-Wright inequality 
    in order to establish an exponential concentration inequality on   $\frac{L}{N} \|  \xi_{L,y}(\nu) \|^{2}$. 

     In order to use (\ref{eq:hanson-wright}), we remark that for each $m$, $\xi_{L,y_m}(\nu)$
     can be written as $\xi_{L,y_m}(\nu)= \left(\mathbb{E} | \xi_{L,y_m}(\nu) |^{2}\right)^{1/2} x_m$ where $x_1, \ldots, x_M$ are $\mathcal{N}_\mathbb{C}(0,1)$ 
     i.i.d. random variables. If ${\bf x}=(x_1, \ldots, x_M)$, $\frac{1}{M} \sum_{m=1}^{M} \left| \xi_{L,y_m}(\nu) \right|^{2}$ can be written as
     $$
     \frac{1}{M} \sum_{m=1}^{M} \left| \xi_{L,y_m}(\nu) \right|^{2} = 
     {\bf x}^{H} {\boldsymbol \Xi}(\nu)  {\bf x}
     $$
     where ${\boldsymbol \Xi}(\nu)$ is the $M \times M$ diagonal matrix with $m$th diagonal entry equal to 
     $$
     \left[\boldsymbol{\Xi}(\nu)\right]_{m,m} = \frac{1}{M} \mathbb{E} | \xi_{L,y_m}(\nu) |^{2}.
     $$
     In order to evaluate 
     $\|{\boldsymbol \Xi}(\nu)\|$ and $\|{\boldsymbol \Xi}(\nu)\|_F^{2}$, we have to study the behaviour of  $\mathbb{E} | \xi_{L,y_m}(\nu) |^{2}$, i.e. the expectation of the periodogram of the sequence $y_{m,1}, \ldots, y_{m,L-1}$. The following result establishes that the diagonal entries of this matrix are equal to scaled versions of the spectral densities $\frac{1}{M} \mathcal{S}_m(\nu), m=1,\ldots,M,$ up to an error that decays as $\mathcal{O}\left(\frac{1}{ML^{\min(1,\gamma_0)}}\right)$. 
     \begin{lemma}
     \label{le:expectation-periodogram} Under Assumptions \ref{assum:statistics} and \ref{as:norm-r-omega},
     $ \mathbb{E}| \xi_{L,y_m}(\nu) |^{2}$ can be written as
     $\mathbb{E}| \xi_{L,y_m}(\nu) |^{2} =  \mathcal{S}_m(\nu) + \epsilon_{m,L}(\nu)$ where $\epsilon_{m,L}(\nu)$ verifies
     \begin{equation}
         \label{eq:reminder-expectation-periodogram}
     | \epsilon_{m,L}(\nu) | \leq \frac{\kappa}{(L-1)^{\min(1,\gamma_0)}}    
     \end{equation}
     for each $m$ and for some nice constant $\kappa$.
     \end{lemma}
     Lemma \ref{le:expectation-periodogram} is proved in Appendix \ref{sec:app_proof_lemma_period}.
     
     This lemma implies that there exists a nice constant $\kappa$ for 
     which $ \mathbb{E}| \xi_{L,y_m}(\nu) |^{2} \leq \kappa$ for each 
     $\nu$ and each $m$ and $L > 1$. Therefore, if ${\boldsymbol \Xi}(\nu)$ is the above mentioned diagonal matrix, ${\boldsymbol \Xi}(\nu)$ verifies $\|{\boldsymbol \Xi}(\nu)\| \leq \frac{\kappa}{M}$
     and $\| {\boldsymbol \Xi}(\nu) \|_{F}^{2} \leq  \frac{\kappa^{2}}{M}$. 
     Consider a nice constant $\eta > 2 \kappa$. Then, 
     \begin{eqnarray*}
     \mathbb{P} \left(\frac{1}{M} \sum_{m=1}^{M} \left| \xi_{L,y_m}(\nu) \right|^{2} > \eta\right) & \leq & 
     \mathbb{P} \left(\frac{1}{M} \sum_{m=1}^{M} \left| \xi_{L,y_m}(\nu) \right|^{2} - \mathbb{E}| \xi_{L,y_m}(\nu)|^{2}  > \eta - \kappa \right) \\
         & \leq &  \mathbb{P} \left(\frac{1}{M} \sum_{m=1}^{M} \left| \xi_{L,y_m}(\nu) \right|^{2} - \mathbb{E}| \xi_{L,y_m}(\nu)|^{2}  > \eta/2 \right).
    \end{eqnarray*}
    As $\min\left(\frac{M(\eta/2)}{\kappa},  \frac{M(\eta/2)^{2}}{\kappa^{2}} \right) = 
     \frac{M(\eta/2)}{\kappa}$  and $ML/N \rightarrow c_\star$, the Hanson-Wright inequality leads to 
    $$
     \mathbb{P} \left(\frac{ML}{N} \, \frac{1}{M} \sum_{m=1}^{M} \left| \xi_{L,y_m}(\nu) \right|^{2} > \eta\right) \leq \kappa_1 \exp (- M \kappa_2 \, \eta)
     $$
     for some nice constants $\kappa_1$ and $\kappa_2$.
     Recalling that $K = \mathcal{O}(L)$, and using the union bound to evaluate 
     $\mathbb{P}\left( \sup_{k=0,\ldots,K} \frac{L}{N} \| \boldsymbol{\xi}_{K,y} (\nu_k)\|^2 > \eta \right)$, we have shown that, if $\alpha$ is large enough, there exist two nice constants $\kappa_1$ and 
     $\kappa_2$ such that 
     $$
      \mathbb{P}( \| \widetilde{\W}_0 \widetilde{\W}_0^{H} \| > \alpha) \leq L \, \kappa_1 \exp (- M \kappa_2 \alpha).
     $$
     Following the same approach to evaluate the other terms $\|\W_{N,i} \W_{N,j}^{H}\|$, we can conclude that
     (\ref{eq:spectral-norm-w-tildew}) is established.
     \end{proof}

     As a consequence of Proposition \ref{prop:spectral-norm-w-tildew}, the evaluation of $\mathbb{P}(\|\W_N \W_N^{H}\| > \alpha)$ can be alternatively formulated in terms of the evaluation of $\mathbb{P}(\|\widetilde{\W}_N \widetilde{\W}_N^{H}\| > \alpha)$.
     
 \subsubsection{Controlling the spectral norm of $\widetilde{\W}_N \widetilde{\W}_N^{H}$}
 
In order to establish the fact that $\|\widetilde{\W}_N \widetilde{\W}_N^{H}\|$ is bounded with exponentially large probability, we use the expression in (\ref{eq:expre-integrale-modif-gram}) and remark that 
$$
\left\|\widetilde{\W}_N \widetilde{\W}_N^{H}\right\| \leq \sup_{\nu \in [0,1]} \|\widehat{\S}(\nu)\|.
$$
In the following, we thus control the spectral norm of $\widehat{\S}(\nu)$. In particular, we have the following result. 
\begin{proposition}
\label{prop:sup-hatS}
If $\alpha$ is a large enough constant, under Assumptions \ref{assum:statistics}-\ref{ass:bounds-spectral-densities} and \ref{as:norm-r-omega}, it holds that
\begin{equation}
\label{eq:concentration-sup-hatS}
\mathbb{P}\left( \sup_{\nu \in [0,1]} \left\| \widehat{\S}(\nu) \right\| > \alpha \right) < \kappa_1 L \, \exp{ (-\kappa_2 M \alpha)}
\end{equation}
for some nice constants $\kappa_1$ and $\kappa_2$. 
\end{proposition}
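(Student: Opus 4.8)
The plan is to control $\sup_{\nu\in[0,1]}\|\widehat{\S}(\nu)\|$ by combining an $\epsilon$--net over the unit sphere of $\mathbb{C}^{M}$ with a discretization of the frequency interval, and then to estimate the resulting finitely many random variables via the Hanson--Wright inequality (\ref{eq:hanson-wright}). The starting point is the identity, obtained directly from (\ref{eq:def-hatS}) and the definition of $\widehat{\mathbf{R}}_{l}$,
\[
\widehat{\S}(\nu)=\frac{1}{N}\sum_{1\le p,q\le N,\,|p-q|\le L-1}e^{-2i\pi(p-q)\nu}\,\y_{p}\y_{q}^{H},
\]
which exhibits $\widehat{\S}(\nu)$ as an $M\times M$ Hermitian matrix. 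Fixing a $1/4$--net $\mathcal{V}$ of the unit sphere of $\mathbb{C}^{M}$, whose cardinality is at most $9^{2M}$, we have $\|\widehat{\S}(\nu)\|\le 2\max_{\mathbf{u}\in\mathcal{V}}|\mathbf{u}^{H}\widehat{\S}(\nu)\mathbf{u}|$. For a fixed unit vector $\mathbf{u}$, I would introduce the scalar sequence $z_{p}=\mathbf{u}^{H}\y_{p}$, which by Assumption \ref{assum:statistics} is stationary, circularly symmetric complex Gaussian, with spectral density $f_{\mathbf{u}}(\nu)=\sum_{m=1}^{M}|u_{m}|^{2}\mathcal{S}_{m}(\nu)$ that is bounded above by $s_{max}$ and below by $s_{min}$; one then has $\mathbf{u}^{H}\widehat{\S}(\nu)\mathbf{u}=\frac{1}{N}\mathbf{z}^{H}\mathbf{C}(\nu)\mathbf{z}$, where $\mathbf{z}=(z_{1},\dots,z_{N})^{T}$ and $\mathbf{C}(\nu)$ is the Hermitian $N\times N$ band Toeplitz matrix whose $(q,p)$ entry is $e^{-2i\pi(p-q)\nu}$ when $|p-q|\le L-1$ and $0$ otherwise. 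Since $\mathbf{u}^{H}\widehat{\S}(\nu)\mathbf{u}=\sum_{|l|\le L-1}(\mathbf{u}^{H}\widehat{\mathbf{R}}_{l}\mathbf{u})e^{-2i\pi l\nu}$ has conjugate--symmetric coefficients, the function $\nu\mapsto\mathbf{u}^{H}\widehat{\S}(\nu)\mathbf{u}$ is a real trigonometric polynomial of degree $L-1$, so Lemma \ref{le:zygmund} reduces $\sup_{\nu\in[0,1]}|\mathbf{u}^{H}\widehat{\S}(\nu)\mathbf{u}|$, up to a fixed multiplicative constant, to the maximum over an equispaced grid $\nu_{0},\dots,\nu_{K}$ with $K+1=\mathcal{O}(L)$ points, the same grid for every $\mathbf{u}$.

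It then remains to estimate $\mathbf{u}^{H}\widehat{\S}(\nu_{k})\mathbf{u}$ for a fixed pair $(\mathbf{u},\nu_{k})$. Writing $\mathbf{z}=\boldsymbol{\Gamma}^{1/2}\mathbf{g}$ in distribution with $\mathbf{g}\sim\mathcal{N}_{\mathbb{C}}(0,\I_{N})$ and $\boldsymbol{\Gamma}$ the Hermitian Toeplitz covariance matrix of $\mathbf{z}$, one gets $\mathbf{u}^{H}\widehat{\S}(\nu_{k})\mathbf{u}=\mathbf{g}^{H}\mathbf{D}(\nu_{k})\mathbf{g}$ with $\mathbf{D}(\nu_{k})=\frac{1}{N}\boldsymbol{\Gamma}^{1/2}\mathbf{C}(\nu_{k})\boldsymbol{\Gamma}^{1/2}$. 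The symbol of $\boldsymbol{\Gamma}$ is $f_{\mathbf{u}}\le s_{max}$, hence $\|\boldsymbol{\Gamma}\|\le s_{max}$; the matrix $\mathbf{C}(\nu)$ has at most $2L-1$ nonzero entries per row, each of modulus one, so $\|\mathbf{C}(\nu)\|\le 2L-1$ and $\|\mathbf{C}(\nu)\|_{F}^{2}\le(2L-1)N$; and the mean $\mathbb{E}[\mathbf{u}^{H}\widehat{\S}(\nu)\mathbf{u}]=\mathrm{Tr}\,\mathbf{D}(\nu)=\sum_{m}|u_{m}|^{2}\sum_{|l|\le L-1}(1-|l|/N)r_{m}(l)e^{-2i\pi l\nu}$ is bounded in modulus by $\sup_{m}\|r_{m}\|_{1}$, a nice constant by Assumption \ref{as:norm-r-omega}. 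Consequently $\|\mathbf{D}(\nu)\|\le s_{max}(2L-1)/N$ and $\|\mathbf{D}(\nu)\|_{F}^{2}\le\|\boldsymbol{\Gamma}\|^{2}\|\mathbf{C}(\nu)\|_{F}^{2}/N^{2}\le s_{max}^{2}(2L-1)/N$, and since $L/N=c_{N}/M$ with $c_{N}$ bounded, both quantities are $\mathcal{O}(1/M)$. Inserting the two norm bounds and the bound on the mean into (\ref{eq:hanson-wright}) yields a nice constant $\kappa>0$ such that $\mathbb{P}(|\mathbf{u}^{H}\widehat{\S}(\nu_{k})\mathbf{u}|>\alpha)\le 2\exp(-\kappa M\alpha)$ for every sufficiently large constant $\alpha$, uniformly in $\mathbf{u}$ on the unit sphere and in $k$. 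Combining this with the two reductions above, a union bound over the at most $9^{2M}(K+1)$ pairs $(\mathbf{u},\nu_{k})$, and absorbing the factor $9^{2M}=e^{2M\log 9}$ into the exponent by taking $\alpha$ large enough, we obtain $\mathbb{P}(\sup_{\nu\in[0,1]}\|\widehat{\S}(\nu)\|>\alpha)\le\kappa_{1}L\exp(-\kappa_{2}M\alpha)$, which is (\ref{eq:concentration-sup-hatS}).

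The delicate point, and essentially the only place requiring care, is the choice of the Frobenius-norm bound fed to Hanson--Wright: one must use $\|\mathbf{C}(\nu)\|_{F}^{2}=\mathcal{O}(NL)$, obtained by counting the $\mathcal{O}(NL)$ nonzero band entries, rather than the crude bound $\|\mathbf{C}(\nu)\|_{F}^{2}\le N\|\mathbf{C}(\nu)\|^{2}=\mathcal{O}(NL^{2})$. It is this refinement that gives $\|\mathbf{D}(\nu)\|_{F}^{2}=\mathcal{O}(1/M)$ and thus a Hanson--Wright exponent of order $M\alpha$; the weaker exponent of order $M\alpha^{2}/L$ produced by the crude bound would be overwhelmed by the $\mathcal{O}(L)$ grid points and, more seriously, by the exponentially large cardinality $e^{\Theta(M)}$ of the $\epsilon$--net, unless $\alpha$ were allowed to grow like $\sqrt{L}$, so the argument would not close. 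All other ingredients --- the quadratic-form representation of $\mathbf{u}^{H}\widehat{\S}(\nu)\mathbf{u}$, the bound $\|\boldsymbol{\Gamma}\|\le s_{max}$, and the band estimate on $\mathbf{C}(\nu)$ --- are routine.
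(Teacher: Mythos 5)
Your proof is correct and follows essentially the same route as the paper: reduce the frequency supremum to a grid of $\mathcal{O}(L)$ points via Lemma \ref{le:zygmund}, reduce the spectral norm to quadratic forms over an $\epsilon$-net of the sphere in $\mathbb{C}^M$, and apply the Hanson--Wright inequality to $\mathbf{g}^H\mathbf{D}(\nu)\mathbf{g}$ with both $\|\mathbf{D}(\nu)\|$ and $\|\mathbf{D}(\nu)\|_F^2$ of order $L/N = \mathcal{O}(1/M)$, which is exactly the paper's key quantitative input (its matrix $\boldsymbol{\Omega}$ is the same object as your $\mathbf{D}(\nu)$). The only differences — performing the net reduction before the grid reduction, using the standard Hermitian $\epsilon$-net bound instead of the paper's explicit $(1-2\epsilon-\epsilon^2)$ perturbation estimate, and absorbing the (uniformly bounded) mean into the tail bound rather than splitting off $\mathbb{E}\widehat{\S}(\nu)$ first — are cosmetic.
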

\begin{proof}
We denote by $\widehat{\S}^{\circ}(\nu)$ the centered matrix  $\widehat{\S}^{\circ}(\nu) =  \widehat{\S}(\nu) - \mathbb{E}\widehat{\S}(\nu)$. We first notice that 
$$
\sup_{\nu} \| \widehat{\S}(\nu) \| \leq \sup_{\nu} \| \mathbb{E}\widehat{\S}(\nu) \| + \sup_{\nu} \| \widehat{\S}^\circ(\nu) \|
$$ 
and work on the two terms separately. First, we prove that $\sup_{\nu} \| \mathbb{E}\widehat{\S}(\nu) \|$ is bounded. Indeed, it is clear that
$\mathbb{E}(\widehat{\S}(\nu)) = \sum_{l=-(L-1)}^{L-1} (1 - \frac{|l|}{L}) {\bf R}(l) \mathrm{e}^{-2 i \pi l \nu}$
where ${\bf R}(l) = \mathbb{E}(\y_{n+l}\y_n^H)$ is the autocovariance matrix of $\y_n$ at lag $l$. 
Since the components of $\y_n$ are independent time series, matrix ${\bf R}(l)$ coincides with 
${\bf R}(l) = \mathrm{Diag}\left( (r_m(l))_{m=1, \ldots, M} \right)$. Therefore, 
$$
\| \mathbb{E}\widehat{\S}(\nu) \| \leq \sup_{m=1, \ldots, M} \sum_{l=-(L-1)}^{L-1} |r_m(l)| \leq \sup_{m \geq 1} \sum_{l \in \mathbb{Z}}  |r_m(l)|.
$$
Condition (\ref{eq:uniform-norm-omega0-rm}) thus implies that  $\sup_{\nu} \| \mathbb{E}\widehat{\S}(\nu) \| < +\infty$. 
Therefore, in order to establish (\ref{eq:concentration-sup-hatS}), we need to study 
$\mathbb{P} ( \sup_{\nu \in [0,1]} \| \widehat{\S}^\circ(\nu)  \| > \alpha )$ for $\alpha$ sufficiently large. 

We first show that the study of the supremum of $\| \widehat{\S}^{\circ}(\nu) \|$ over 
$[0,1]$ can be reduced to the supremum over a discrete grid with $\mathcal{O}(L)$ elements. The idea is to make use Lemma \ref{le:zygmund} by conveniently expressing $\| \widehat{\S}^{\circ}(\nu) \|$ in terms of trigonometric polynomials.
\begin{lemma}
\label{le:discrete-grid}
We consider $\delta$, $K$, and $(\nu_k)_{k=0, \ldots, K}$ as in Lemma \ref{le:zygmund}. Then, the following 
result holds:
\begin{equation}
\label{eq:discrete-grid-hatScirc}
\sup_{\nu \in [0,1]} \| \widehat{\S}^{\circ}(\nu) \| \leq \left( 1 + \frac{1}{\delta} \right) \, \sup_{k=0, \ldots, K} \| \widehat{\S}^{\circ}(\nu_k) \|.
\end{equation}
\end{lemma}
\begin{proof}
We will first verify that 
\begin{equation}
\label{eq:max-max}
\sup_{\nu \in [0,1]} \| \widehat{\S}^{\circ}(\nu) \| = \sup_{\nu \in [0,1], \mathbf{h} \in \mathbb{S}^{M-1}} \left| 
\mathbf{h}^H \widehat{\S}^{\circ}(\nu) \mathbf{h} \right|
\end{equation}
where $\mathbb{S}^{M-1}$ is the unit sphere in $\mathbb{C}^{M}$. We remark that, because of the continuity of the spectral norm as well as the continuity of both true and estimated spectral densities, there exists a certain $\widehat{\nu}$ that achieves the supremum on the left hand side of (\ref{eq:max-max}), that is $\sup_{\nu \in [0,1]} \| \widehat{\S}^{\circ}(\nu) \| = 
\| \widehat{\S}^{\circ}(\widehat{\nu}) \|$. Moreover, for such given $\widehat{\nu}$, there exists a $\mathbf{h}_{\widehat{\nu}} \in \mathbb{S}^{M-1}$ for which 
$\| \widehat{\S}^{\circ}(\widehat{\nu})  \| = | \mathbf{h}^H_{\widehat{\nu}} \widehat{\S}^{\circ}(\widehat{\nu}) \mathbf{h}_{\widehat{\nu}} |$. In other words, 
$\sup_{\nu \in [0,1]} \| \widehat{\S}^{\circ}(\nu) \|$ coincides with $ | 
\mathbf{h}_{\widehat{\nu}}^H \widehat{\S}^{\circ}(\widehat{\nu}) \mathbf{h}_{\widehat{\nu}} |$. Hence, we obtain that the left hand side of (\ref{eq:max-max})
is less than the right hand side of (\ref{eq:max-max}). The converse inequality is obvious. 

Using a similar continuity argument, we can readily see that 
$$
\sup_{\nu \in [0,1], \mathbf{h} \in \mathbb{S}^{M-1}} | 
\mathbf{h}^H \widehat{\S}^{\circ}(\nu) \mathbf{h} | = | \mathbf{h}^H_{\widehat{\nu}} \widehat{\S}^{\circ}(\widehat{\nu}) \mathbf{h}_{\widehat{\nu}} |
$$ 
also coincides with $\sup_{\nu \in [0,1]} | \mathbf{h}^H_{\widehat{\nu}} \widehat{\S}^{\circ}(\nu) \mathbf{h}_{\widehat{\nu}} |$. 
The function $\nu \rightarrow \mathbf{h}^H_{\widehat{\nu}} \widehat{\S}^{\circ}(\nu) \mathbf{h}_{\widehat{\nu}}$ is a real valued trigonometric 
polynomial of order $L-1$. Therefore, Lemma \ref{le:zygmund} implies that 
$$
\sup_{\nu \in [0,1]} \left| \mathbf{h}^H_{\widehat{\nu}} \widehat{\S}^{\circ}(\nu) \mathbf{h}_{\widehat{\nu}} \right| \leq \left( 1 + \frac{1}{\delta} \right) \, \sup_{k=0, \ldots, K} \left| \mathbf{h}^H_{\widehat{\nu}} \widehat{\S}^{\circ}(\nu_k) \mathbf{h}_{\widehat{\nu}} \right|.
$$
Since $| \mathbf{h}^H_{\widehat{\nu}} \widehat{\S}^{\circ}(\nu_k) \mathbf{h}_{\widehat{\nu}} | \leq \|  \widehat{\S}^{\circ}(\nu_k) \|$, 
we have shown that 
$$
\sup_{\nu \in [0,1]} \| \widehat{\S}^{\circ}(\nu) \| = \sup_{\nu \in [0,1]} \left| \mathbf{h}^H_{\widehat{\nu}} \widehat{\S}^{\circ}(\nu) \mathbf{h}_{\widehat{\nu}} \right| \leq \left( 1 + \frac{1}{\delta} \right) \sup_{k=0, \ldots, K}  \|  \widehat{\S}^{\circ}(\nu_k) \|.
$$
This establishes (\ref{eq:discrete-grid-hatScirc}).
\end{proof}

We now complete the proof of (\ref{eq:concentration-sup-hatS}) in Proposition \ref{prop:sup-hatS}. The union bound 
leads to 
\begin{equation}
\label{eq:union-bound-frequency-domain}
\mathbb{P}\left( \sup_{\nu \in [0,1]} \| \widehat{\S}^{\circ}(\nu) \| > \alpha_N \right) 
\leq \sum_{k=0}^{K} \mathbb{P}\left( \| \widehat{\S}^{\circ}(\nu_k) \| > \frac{\delta}{1+\delta} \; \alpha_N \right).
\end{equation}
Thus, we only need to evaluate $ \mathbb{P}( \| \widehat{\S}^{\circ}(\nu) \| > \eta_N )$, where $\nu$ is a fixed 
frequency and where $\eta_N =  \frac{\delta}{1+\delta} \; \alpha_N$. For this, we use the epsilon net argument 
in $\mathbb{C}^{M}$. We recall that an epsilon net $\mathcal{N}_{\epsilon}$ of $\mathbb{C}^{M}$ is a finite set of unit norm vectors of $\mathbb{C}^{M}$ having the property 
that for each $\mathbf{g} \in \mathbb{S}^{M-1}$, there exists an $\mathbf{h} \in \mathcal{N}_{\epsilon}$ such that 
$\| \mathbf{g} - \mathbf{h} \| \leq \epsilon$. It is well known that the cardinal $|\mathcal{N}_{\epsilon}|$ is upper bounded by $\left(\frac{\kappa}{\epsilon}\right)^{2M}$ for some nice constant $\kappa$. We consider such an epsilon net $\mathcal{N}_{\epsilon}$ and 
denote by $\widehat{\mathbf{h}}$ a vector of $\mathbb{S}^{M-1}$ for which 
$\| \widehat{\S}^{\circ}(\nu) \| = | \widehat{\mathbf{h}}^H  \widehat{\S}^{\circ}(\nu) \widehat{\mathbf{h}} | $, and consider 
a vector $\widetilde{\mathbf{h}} \in \mathcal{N}_{\epsilon}$ such that $\| \widehat{\mathbf{h}} - \widetilde{\mathbf{h}} \| \leq \epsilon$. 
We express $\widetilde{\mathbf{h}}^H \widehat{\S}^{\circ}(\nu)  \widetilde{\mathbf{h}}$ as 
$$
\widetilde{\mathbf{h}}^H \widehat{\S}^{\circ}(\nu)  \widetilde{\mathbf{h}} = \left(\widehat{\mathbf{h}} + \widetilde{\mathbf{h}} - \widehat{\mathbf{h}}\right)^H  \widehat{\S}^{\circ}(\nu) 
\left(\widehat{\mathbf{h}} + \widetilde{\mathbf{h}} - \widehat{\mathbf{h}}\right).
$$
Using the triangular inequality, we obtain that 
$$
\left| \widetilde{\mathbf{h}}^H \widehat{\S}^{\circ}(\nu)  \widetilde{\mathbf{h}} \right| \geq \left| \widehat{\mathbf{h}}^H \widehat{\S}^{\circ}(\nu)  \widehat{\mathbf{h}} \right|
- 2 \left|  (\widetilde{\mathbf{h}} - \widehat{\mathbf{h}})^H  \widehat{\S}^{\circ}(\nu) \widehat{\mathbf{h}} \right| - 
\left|  (\widetilde{\mathbf{h}} - \widehat{\mathbf{h}})^H  \widehat{\S}^{\circ}(\nu) (\widetilde{\mathbf{h}} - \widehat{\mathbf{h}})^H \right|.
$$
Since $\widetilde{\mathbf{h}} \in \mathbb{S}^{M-1}$ and $\| \widehat{\mathbf{h}} - \widetilde{\mathbf{h}} \| \leq \epsilon$, we can write
$$
\left|  (\widetilde{\mathbf{h}} - \widehat{\mathbf{h}})^H  \widehat{\S}^{\circ}(\nu) \widehat{\mathbf{h}} \right| \leq \|  \widehat{\S}^{\circ}(\nu) (\widetilde{\mathbf{h}} - \widehat{\mathbf{h}}) \| \leq \epsilon \, \| \widehat{\S}^{\circ}(\nu) \|
$$ 
together with $\left|  (\widetilde{\mathbf{h}} - \widehat{\mathbf{h}})^H  \widehat{\S}^{\circ}(\nu) (\widetilde{\mathbf{h}} - \widehat{\mathbf{h}})^H \right| \leq \epsilon^{2} 
\| \widehat{\S}^{\circ}(\nu) \|$. This implies that
$$
\left| \widetilde{\mathbf{h}}^H \widehat{\S}^{\circ}(\nu)  \widetilde{\mathbf{h}} \right| \geq (1 - 2 \epsilon - \epsilon^{2}) \| \widehat{\S}^{\circ}(\nu) \|.
$$
In the following, we assume that $\epsilon$ satisfies $1 - 2 \epsilon - \epsilon^{2} > 0$. Therefore, using again the union bound, we obtain that 
\begin{equation}
\label{eq:union-bound-in-CM}
 \mathbb{P}\left( \| \widehat{\S}^{\circ}(\nu) \| > \eta_N \right) \leq \sum_{\mathbf{h} \in \mathcal{N}_{\epsilon}} 
\mathbb{P}\left( \left| \mathbf{h}^H \widehat{\S}^{\circ}(\nu)  \mathbf{h} \right| \geq (1 - 2 \epsilon - \epsilon^{2}) \eta_N \right).
\end{equation}
In order to evaluate $\mathbb{P}( | \mathbf{h}^H \widehat{\S}^{\circ}(\nu)  \mathbf{h} | \geq (1 - 2 \epsilon - \epsilon^{2}) \eta_N )$ for each unit norm vector $\mathbf{h}$, we denote by $z_n$ the scalar time series defined by 
$z_n = \mathbf{h}^H \y_n$. Then, the quadratic form $\mathbf{h}^H \widehat{\S}^{\circ}(\nu)  \mathbf{h}$ coincides 
with $\widehat{s}_z(\nu) - \mathbb{E}\widehat{s}_z(\nu)$ where $\widehat{s}_z(\nu)$ represents the 
lag-window estimator of the spectral density of $z$ defined by 
$\widehat{s}_z(\nu) = \sum_{l=-(L-1)}^{L-1} \widehat{r}_z(l) \mathrm{e}^{-2 i \pi l \nu}$. Here, 
$\widehat{r}_z(l)$ is the standard empirical estimate of the autocovariance coefficient of $z$ at lag $l$. 
We denote by ${\bf z}$ the $N$--dimensional vector ${\bf z} = (z_1, \ldots, z_N)^{T}$. As is well known, 
$\widehat{s}_z(\nu)$ can be expressed as 
\begin{equation}
\label{eq:expre-quadratic-hatsz}
\widehat{s}_z(\nu) = \int_{0}^{1} w(\nu - \mu) \frac{1}{N} \left| \sum_{n=0}^{N-1} z_{n+1} \mathrm{e}^{-2 i \pi n \mu} \right|^{2} \, d\mu
\end{equation}
where $w(\mu)$ is the Fourier transform of the rectangular window $\mathbb{I}_{l \in \{ -(L-1), \ldots, L-1 \}}$. The expression in
(\ref{eq:expre-quadratic-hatsz}) can also be written as a quadratic form of vector ${\bf z}$: 
\begin{equation}
\label{eq:expre-quadratic-hatsz-bis}
\widehat{s}_z(\nu) = {\bf z}^{H} \, \left( \frac{1}{N} \int_{0}^{1} w(\nu - \mu) \, \dd_N(\mu) \dd_N^{H}(\mu) \, d\mu \right) \, {\bf z}.
\end{equation}
where we recall that $\dd_N(\mu)$ is defined by (\ref{eq:def-d_R}).
If ${\bf R}_z$ represents the covariance matrix of vector ${\bf z}$, ${\bf z}$ can be written as ${\bf z} = {\bf R}_z^{1/2} {\bf x}$ 
for some $\mathcal{N}_\mathbb{C}(0,\I_N)$ distributed random vector ${\bf x}$. Therefore, if we denote by $\boldsymbol{\Omega}$ the 
$N \times N$ matrix defined by 
$$
\boldsymbol{\Omega} = {\bf R}_z^{1/2}  \, \frac{1}{N} \int_{0}^{1} w(\nu - \mu) \, \dd_N(\mu) \dd_N^{H}(\mu) \, d\mu \, {\bf R}_z^{1/2},
$$
the quantity $\widehat{s}_z(\nu) - \mathbb{E}\widehat{s}_z(\nu)$ can be written as $\widehat{s}_z(\nu) - \mathbb{E}\widehat{s}_z(\nu) = 
{\bf x}^H \boldsymbol{\Omega} {\bf x} - \mathbb{E} {\bf x}^H \boldsymbol{\Omega} {\bf x}$. Therefore,  
$$
P\left( |\widehat{s}_z(\nu) - \mathbb{E}\widehat{s}_z(\nu)| > (1 - 2 \epsilon - \epsilon^{2}) \eta \right)
$$ 
can be evaluated using the Hanson-Wright inequality (\ref{eq:hanson-wright}). This requires the evaluation of the spectral and the Frobenius norm of $\boldsymbol{\Omega}$. Observe that we can express $\boldsymbol{\Omega}= {\bf R}_z^{1/2} \boldsymbol{\Omega}_w {\bf R}_z^{1/2}$ where $\boldsymbol{\Omega}_w$ is a Toeplitz matrix defined as
\[
\boldsymbol{\Omega}_w = \frac{1}{N} \int_{0}^{1} w(\nu - \mu) \, \dd_N(\mu) \dd_N^{H}(\mu) \, d\mu.
\]
It is easy to check that the spectral norm of ${\bf R}_z$ is uniformly bounded. Moreover, the spectral norm of  $\boldsymbol{\Omega}_w$ is bounded by $\frac{1}{N} \sup_{\nu} |w(\nu)| = L/N$. Therefore, $\| \boldsymbol{\Omega} \| \leq \kappa \frac{L}{N}$ for some nice constant $\kappa$. 
In order to evaluate the Frobenius norm of $\boldsymbol{\Omega}$, observe that $\boldsymbol{\Omega}_w$ is band Toeplitz matrix with entries given by 
$(\boldsymbol{\Omega}_w)_{k,l} = \frac{1}{N} \mathrm{e}^{2 i \pi (k-l) \nu} \mathbb{I}_{|k-l| \leq L-1}$. Therefore, $\| \Omega_w \|_F^{2} \leq 
\kappa \frac{L}{N}$, which implies that  $\| \boldsymbol{\Omega} \|_F^{2} \leq \kappa \frac{L}{N}$. Consequently, the Hanson-Wright inequality in (\ref{eq:hanson-wright}) implies that, if $\eta$ is large enough,
$$
\mathbb{P}\left( \left| \mathbf{h}^H \widehat{\S}^{\circ}(\nu)  \mathbf{h} \right| \geq (1 - 2 \epsilon - \epsilon^{2}) \eta \right) \leq 
\kappa_1 \exp{ (-\kappa_2 M \eta)}
$$
where we have introduced two nice constants $\kappa_1$ and $\kappa_2$.
Recalling that $| \mathcal{N}_{\epsilon} | \leq \left(\frac{\kappa}{\epsilon}\right)^{2M}$, the union bound 
(\ref{eq:union-bound-in-CM})  implies that  
$$
\mathbb{P}\left( \| \widehat{\S}^{\circ}(\nu) \| > \eta \right) \leq  \left(\frac{\kappa}{\epsilon}\right)^{2M}  \kappa_1  \exp{(-\kappa_2 M \alpha})
$$
The right hand side of the above inequality can clearly be bounded by 
$\kappa_3  \exp{(-\kappa_4 M \alpha})$ for $\alpha$ large enough, where $\kappa_3$ and $\kappa_4$ are two new nice constants.
Finally, (\ref {eq:union-bound-frequency-domain}) leads to 
$$
\mathbb{P}\left( \sup_{\nu \in [0,1]} \| \widehat{\S}^{\circ}(\nu) \| > \alpha/2 \right) \leq \kappa_1 L \exp{ (-\kappa_2 M \alpha)}
$$
for $N$ sufficiently large and two nice constants $\kappa_1$, $\kappa_2$. This completes the proof of Proposition \ref{prop:sup-hatS}. 
\hfill $\square$
\end{proof}
\vspace{1em}

As a direct sequence of Propositions \ref{prop:spectral-norm-w-tildew} and \ref{prop:sup-hatS}, we have the following corollary. 

\begin{corollary}
\label{cor:control-norm-calR}
For each $\alpha$  larger than a certain positive constant, then, it holds that
\begin{eqnarray}
    \label{eq:control-norm-calR}
    \mathbb{P}( \| \W_N \W_N^{H} \| & > & \alpha) \leq \kappa_1 \, L  \exp( - \kappa_2 M \alpha) \\
     \mathbb{P}( \| \hat{\mathcal{R}}_L  \| & > & \alpha) \leq \kappa_1 \, L  \exp( - \kappa_2 M \alpha)
     \label{eq:control-norm-calRbis}
\end{eqnarray}
for some nice constants $\kappa_1$ and $\kappa_2$.  Moreover, $ \| \widehat{\mathcal{R}}_L  \|$
satisfies 
\begin{equation}
    \label{eq:hatcalR-domination}
     \| \widehat{\mathcal{R}}_L  \| \prec 1.
\end{equation}
\end{corollary}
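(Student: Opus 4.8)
The plan is to combine Propositions \ref{prop:spectral-norm-w-tildew} and \ref{prop:sup-hatS} through a triangle inequality and a union bound. First I would write
\begin{equation*}
\| \W_N \W_N^{H} \| \leq \| \W_N \W_N^{H} - \widetilde{\W}_N \widetilde{\W}_N^{H} \| + \| \widetilde{\W}_N \widetilde{\W}_N^{H} \|,
\end{equation*}
and recall from (\ref{eq:expre-integrale-modif-gram}) that $\| \widetilde{\W}_N \widetilde{\W}_N^{H} \| \leq \sup_{\nu \in [0,1]} \| \widehat{\S}(\nu) \|$. Consequently, for any $\alpha$,
\begin{equation*}
\mathbb{P}\left( \| \W_N \W_N^{H} \| > \alpha \right) \leq \mathbb{P}\left( \| \W_N \W_N^{H} - \widetilde{\W}_N \widetilde{\W}_N^{H} \| > \frac{\alpha}{2} \right) + \mathbb{P}\left( \sup_{\nu \in [0,1]} \| \widehat{\S}(\nu) \| > \frac{\alpha}{2} \right),
\end{equation*}
and, provided $\alpha/2$ exceeds the threshold constants of Propositions \ref{prop:spectral-norm-w-tildew} and \ref{prop:sup-hatS}, both terms on the right-hand side are bounded by $\kappa_1 L \exp(-\kappa_2 M \alpha)$ for suitable nice constants (after relabelling $\kappa_2$ to absorb the factor $1/2$). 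Taking the maximum of the two pairs of constants yields (\ref{eq:control-norm-calR}). Since $\widehat{\mathcal{R}}_L = \mathbf{W}_N \mathbf{W}_N^{H}$ and $\W_N$ is obtained by a row permutation of $\mathbf{W}_N$, the matrices $\widehat{\mathcal{R}}_L$ and $\W_N \W_N^{H}$ share the same spectrum, so $\| \widehat{\mathcal{R}}_L \| = \| \W_N \W_N^{H} \|$ and (\ref{eq:control-norm-calRbis}) follows immediately from (\ref{eq:control-norm-calR}).

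It then remains to derive the stochastic domination (\ref{eq:hatcalR-domination}). Fix $\epsilon > 0$ and apply (\ref{eq:control-norm-calRbis}) with $\alpha = N^{\epsilon}$, which exceeds the threshold constant for $N$ large enough; this gives $\mathbb{P}(\| \widehat{\mathcal{R}}_L \| > N^{\epsilon}) \leq \kappa_1 L \exp(-\kappa_2 M N^{\epsilon})$. By Assumption \ref{as:asymptotic-regime}, $M = c_N N / L \geq \kappa N^{1-\beta}$ for some $\kappa > 0$ and all large $N$, while $L = \mathcal{O}(N^{\beta})$, so $\log(\kappa_1 L) = \mathcal{O}(\log N)$ is negligible in front of $\kappa_2 M N^{\epsilon} \geq \kappa' N^{1-\beta+\epsilon}$. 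Hence for any $\gamma < 1 - \beta$ (which can be chosen independently of $\epsilon$) we have $\kappa_1 L \exp(-\kappa_2 M N^{\epsilon}) \leq \exp(-N^{\gamma})$ for $N$ sufficiently large, which is precisely the statement $\| \widehat{\mathcal{R}}_L \| \prec 1$.

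There is essentially no hard step here: the corollary is a direct consequence of the two preceding propositions, whose proofs contain the genuine work (the block-Toeplitz approximation, the Zygmund discretization, and the epsilon-net together with Hanson-Wright). The only point that requires a little care is checking that the polynomial prefactor $L$ in (\ref{eq:control-norm-calR}) does not spoil the exponential decay when deducing $\prec 1$; this is handled by noting that $M$ diverges at the polynomial rate $N^{1-\beta}$, which dominates $\log L$.
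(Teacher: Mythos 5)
Your argument is correct and is exactly the route the paper intends: the corollary is stated there as a direct consequence of Propositions \ref{prop:spectral-norm-w-tildew} and \ref{prop:sup-hatS}, combined via the triangle inequality, the bound $\|\widetilde{\W}_N\widetilde{\W}_N^{H}\|\leq \sup_{\nu}\|\widehat{\S}(\nu)\|$ from (\ref{eq:expre-integrale-modif-gram}), and the row-permutation identity between $\widehat{\mathcal{R}}_L$ and $\W_N\W_N^{H}$. Your deduction of $\|\widehat{\mathcal{R}}_L\|\prec 1$ by taking $\alpha=N^{\epsilon}$ and noting that $M\geq\kappa N^{1-\beta}$ dominates the prefactor $L$ is also the intended (and valid) reasoning.
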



\subsection{Evaluation of the behaviour of $\| \mathrm{Bdiag}(\widehat{\mathcal{R}}_L) - \mathrm{Bdiag}(\mathcal{R}_L) \|$}
\label{subsec:convergence-estimators-RmL}
Recall that $\mathcal{R}_{m,L}$, $m=1,\ldots,M$, denote the $L \times L$ diagonal blocks of the matrix $\mathrm{Bdiag}(\mathcal{R}_L)$. We will denote by $\widehat{\mathcal{R}}_{m,L}$ the $m$th $L \times L$ diagonal block of $\widehat{\mathcal{R}}_L$. In this section, we establish that
\begin{equation}
     \label{eq:convergence-blocks}
     \| \widehat{\mathcal{R}}_{m,L} - \mathcal{R}_{m,L} \| \prec \max\left(\frac{1}{\sqrt{M}},\frac{1}{L^{\gamma_0}}\right).
\end{equation}
Note first that we can express $\widehat{\mathcal{R}}_{m,L}$ as the empirical estimate 
of $\mathcal{R}_{m,L}$, that is 
$$
 \widehat{\mathcal{R}}_{m,L} = \frac{1}{N} \sum_{n=1}^{N} \y_{m,n}^{L} \left(\y_{m,n}^{L}\right)^{H}
 $$
 or equivalently by $\widehat{\mathcal{R}}_{m,L} = \mathbf{W}^{m}_N \left(\mathbf{W}^{m}_N\right)^H$ where $ \mathbf{W}^{m}_N$ is the 
 $L \times N$ matrix defined by 
 $$
 \mathbf{W}^{m}_N = \frac{1}{\sqrt{N}} \left(\y_{m,1}^{L}, \ldots, \y_{m,N}^{L} \right).
 $$
The arguments used in this section are based on the techniques used in Section \ref{subsec:boundedness-hatR}. Therefore, we just provide a sketch of proof of (\ref{eq:convergence-blocks}) based on the same two steps as above: first, we approximate $\mathbf{W}^{m}_N  \left(\mathbf{W}^{m}_N\right)^H$ with a Toeplitz matrix and then study the equivalent Toeplitz version of (\ref{eq:convergence-blocks}). 

\subsubsection{Modifying $\mathbf{W}^{m}_N  \left(\mathbf{W}^{m}_N\right)^H$ into a Toeplitz matrix}

We prove here that $\mathbf{W}^{m}_N \left(\mathbf{W}^{m}_N\right)^H$ 
can be approximated as the Toeplitz matrix $\widetilde{\mathbf{W}}^{m}_N (\widetilde{\mathbf{W}}^{m}_N)^H$
where $\widetilde{\mathbf{W}}^{m}_N$ is obtained by replacing vectors $(\y_n)_{n=1, \ldots, N}$ 
by the scalars $(y_{m,n})_{n=1, \ldots, N}$ in the definition of matrix $\W_N$ in (\ref{eq:def-tildeW}) above. In particular,  it holds that
\begin{equation}
\label{eq:def_RLmToep}
 \widetilde{\mathbf{W}}_N^{m} \left(\widetilde{\mathbf{W}}_N^{m}\right)^{H} = \int_{0}^{1} \widehat{\mathcal{S}}_{m}(\nu)  \dd_L(\nu) \dd_{L}^{H}(\nu) \, d\nu
 \end{equation}
 where  $ \widehat{\mathcal{S}}_{m}(\nu)$ represents the $m$th diagonal entry of the 
 lag window estimator (\ref{eq:def-hatS}). More specifically, following the proof 
 of Proposition \ref{prop:spectral-norm-w-tildew}, we justify that
\begin{equation}
    \label{eq:toeplitz-approximation-hatRm}
    \left\|  \mathbf{W}_N^{m} \left(\mathbf{W}_N^{m}\right)^{H} -  \widetilde{\mathbf{W}}_N^{m} \left(\widetilde{\mathbf{W}}_N^{m}\right)^{H}\right\| \prec \frac{1}{M}.
\end{equation}
   To verify (\ref{eq:toeplitz-approximation-hatRm}), we drop the dependence on $N$ of all matrices to simplify the notation and remark that 
   $$
     \mathbf{W}^{m} (\mathbf{W}^{m})^{H} - \widetilde{\mathbf{W}}^{m} (\widetilde{\mathbf{W}}^{m})^{H} =  \mathbf{W}_{2,2}^{m} (\mathbf{W}_{2,2}^{m})^{H} +  \mathbf{W}_{2,2}^{m} (\mathbf{W}_{2,1}^{m})^{H} +  \mathbf{W}_{2,1}^{m} (\mathbf{W}_{2,2}^{m})^{H} - \mathbf{W}_0^{m} (\mathbf{W}_0^{m})^{H}
  $$
  where the various matrices of the right hand side are obtained by replacing vectors 
  $(\y_n)_{n=1, \ldots, N+L-1}$ in the definition of matrices $\W_{2,2}, \W_{2,1}, \W_{0}$ used in Section \ref{subsec:boundedness-hatR} by the scalars  $(y_{m,n})_{n=1, \ldots, N+L-1}$. In order to verify (\ref{eq:toeplitz-approximation-hatRm}), we just briefly check that
$$
\left\|  \mathbf{W}_{0}^{m} \left(\mathbf{W}_{0}^{m}\right)^{H} \right\| \prec \frac{1}{M}
$$
   or equivalently (after proper column permutation of $(\mathbf{W}_{0}^{m}$) that 
$$
\left\|  \widetilde{\mathbf{W}}_{0}^{m} \left(\widetilde{\mathbf{W}}_{0}^{m}\right)^{H} \right\| \prec \frac{1}{M}
$$
   where $ \widetilde{\mathbf{W}}_{0}^{m}$ is defined by 
   $$
    \widetilde{\mathbf{W}}^{m}_0 = \sqrt{\frac{L}{N}} \int_{0}^{1}  \dd_{L-1}(\nu) \dd_{L-1}^{H}(\nu)   \xi_{L,y_m}(\nu) \, d\nu.
   $$
   As in Section \ref{subsec:boundedness-hatR}, we notice that the matrix-valued Cauchy-Schwarz inequality in (\ref{eq:schwartz-matriciel-v2}) with $\mathbf{U}(\nu)=\sqrt{\frac{L}{N}}\dd_{L-1}(\nu)  \xi_{L,y_m}(\nu)$ and $\mathbf{V}(\nu) =  \dd_{L-1}(\nu)$ implies that 
    $$
     \widetilde{\mathbf{W}}_0^{m} (\widetilde{\mathbf{W}}_0^{m})^{H} \leq \frac{L}{N} \, \int_{0}^{1}  \dd_{L-1}(\nu) \dd_{L-1}^{H} (\nu)  |\xi_{L,y_m}(\nu)|^{2}  \, d\nu.
     $$
     This allow us to establish that 
     $$
     \left\|  \widetilde{\mathbf{W}}_0^{m} (\widetilde{\mathbf{W}}_0^{m})^{H} \right\| \leq  \sup_{\nu \in [0,1]} \frac{L}{N} |\xi_{L,y_m}(\nu)|^{2}.
     $$
     By Lemma \ref{le:zygmund} we know that the supremum can be replaced by a maximum over $\mathcal{O}(L)$ points, so that by Lemma \ref{lem:stochastic_dom_maximum} it is sufficient to establish that
     $$
     \frac{L}{N} |\xi_{L,y_m}(\nu)|^{2} \prec \frac{1}{M}
     $$
     for some fixed $\nu$. Following the same reasoning as in Section \ref{subsec:boundedness-hatR}, a direct application of the Hanson-Wright inequality shows that $ |\xi_{L,y_m}(\nu)|^{2} \prec 1$ for any fixed $\nu$, from where the result follows.

     \subsubsection{Studying the Toeplitz equivalent of (\ref{eq:convergence-blocks})}
     
     In order to prove (\ref{eq:convergence-blocks}), it thus remains to establish that 
     $$
      \left\|  \widetilde{\mathbf{W}}_N^{m} \left(\widetilde{\mathbf{W}}_N^{m}\right)^{H} - \mathcal{R}_{m,L} \right\| \prec \max \left(\frac{1}{\sqrt{M}},\frac{1}{L^{\gamma_0}}\right)
     $$
    Noting that $\widetilde{\mathbf{W}}_N^{m} (\widetilde{\mathbf{W}}_N^{m})^{H} - \mathcal{R}_{m,L}$ is the $L \times L$ Toeplitz matrix associated to the symbol $\widehat{\mathcal{S}}_m(\nu) - \mathcal{S}_m(\nu)$, and using Lemma \ref{le:zygmund}, it is sufficient to prove that 
    $$
    \left| \widehat{\mathcal{S}}_m(\nu) - \mathcal{S}_m(\nu) \right| \prec \max \left(\frac{1}{\sqrt{M}},\frac{1}{L^{\gamma_0}}\right)
     $$
    for each $\nu$. In order to see this, we write $\widehat{\mathcal{S}}_m(\nu) - \mathcal{S}_m(\nu)$
    as 
    $$
    \widehat{\mathcal{S}}_m(\nu) - \mathcal{S}_m(\nu) =  \widehat{\mathcal{S}}_m(\nu) - 
    \mathbb{E}\left(\widehat{\mathcal{S}}_m(\nu)\right) + \mathbb{E}\left(\widehat{\mathcal{S}}_m(\nu)\right) - \mathcal{S}_m(\nu).
    $$
    The bias $\mathbb{E}(\widehat{\mathcal{S}}_m(\nu)) - \mathcal{S}_m(\nu)$ is equal to 
    $$
    \mathbb{E}\left(\widehat{\mathcal{S}}_m(\nu)\right) - \mathcal{S}_m(\nu) = 
    - \sum_{|l| \geq L} r_m(l)\mathrm{e}^{-2 i \pi l \nu} - \frac{1}{N} \sum_{l=-(L-1)}^{L-1} |l|  r_m(l)\mathrm{e}^{-2 i \pi l \nu}.
    $$
    An easy adaptation of the proof of Lemma \ref{le:expectation-periodogram} in Appendix \ref{sec:app_proof_lemma_period} establishes that 
    \begin{equation}
    \label{eq:biais-hatsm}
    \left| \mathbb{E}\left(\widehat{\mathcal{S}}_m(\nu)\right) - \mathcal{S}_m(\nu) \right| \leq \kappa \left( \frac{1}{L^{\gamma_0}}+ \frac{L^{(1-\gamma_0)_{+}}}{N} \right)
    \end{equation}
    for some nice constant $\kappa$, where $(\cdot)_+=\max{(\cdot,0)}$. This implies that 
      \begin{equation}
        \label{eq:sup-m-nu-convergence-biais}
        \sup_{m=1, \ldots, M} \sup_{\nu} \left|  \mathbb{E}\left(\widehat{\mathcal{S}}_m(\nu)\right) - \mathcal{S}_m(\nu) \right| \leq \kappa \max\left(\frac{1}{L^{\gamma_0}},\frac{1}{M}\right)
    \end{equation}
    for some nice constant $\kappa$. 
    In order to study the term $\widehat{\mathcal{S}}_m(\nu) -\mathbb{E}(\widehat{\mathcal{S}}_m(\nu))$, 
    we remark that it can be written as 
    $$
   \widehat{\mathcal{S}}_m(\nu) - \mathbb{E}\left(\widehat{\mathcal{S}}_m(\nu)\right)  = 
    \mathbf{e}_m^{T} \widehat{\S}^{\circ}(\nu) \mathbf{e}_m
    $$
    where $\mathbf{e}_m$ is the $m$th vector of the canonical basis of $\mathbb{C}^{M}$. 
     Using the Hanson-Wright inequality as in Section 
    \ref{subsec:boundedness-hatR}, we obtain immediately that for each $\nu$ and for each $m$, 
    there exist two nice constants $\kappa_1$ and $\kappa_2$ such that
    $$
    \mathbb{P}\left( |\mathbf{e}_m^{T} \widehat{\S}^{\circ}(\nu) \mathbf{e}_m| > \alpha_N \right) \leq \kappa_1 
    \exp ( - \kappa_2 M \alpha_N^{2})
    $$
    where $(\alpha_N)_{N \geq 1}$ satisfies $\alpha_N \rightarrow 0$ and 
    $M \alpha_N^{2} \rightarrow +\infty$. 
    In particular, the choice $\alpha_N = {N^\epsilon}/{\sqrt{M}}$ satisfies this property for all small enough $\epsilon>0$, which allows to conclude that $ |\mathbf{e}_m^{T} \widehat{\S}^{\circ}(\nu) \mathbf{e}_m| \prec M^{-1/2}$ for any fixed $m$ and $\nu$. However, noting again that $\mathbf{e}_m^{T} \widehat{\S}^{\circ}(\nu) \mathbf{e}_m$ is a real valued trigonometric polynomial, we see by Lemma \ref{le:discrete-grid} and Lemma \ref{lem:stochastic_dom_maximum} that $\sup_{m,\nu} |\mathbf{e}_m^{T} \widehat{\S}^{\circ}(\nu) \mathbf{e}_m| \prec M^{-1/2}$.
    
    As a consequence of all the above, we have established that $\|\widehat{\mathcal{R}}_{m,L}-\mathcal{R}_{m,L}\| \prec \max(M^{-1/2},L^{-\gamma_0})$, which directly implies that
    \begin{equation}
        \label{eq:dominationblockR}
        \| \mathrm{Bdiag}(\widehat{\mathcal{R}}_L) - \mathrm{Bdiag}(\mathcal{R}_L) \| \prec \max\left( \frac{1}{\sqrt{M}}, \frac{1}{L^{\gamma_0}} \right).  
    \end{equation}
    All these results are all the ingredients that we need in order to evaluate the spectral norm of the matrix
    \begin{equation}
    \label{eq:def-Theta}
    \bs{\Theta}_N =  \widehat{\mathcal{R}}_{\mathrm{corr},L} - \overline{\mathcal{R}}_{\mathrm{corr},L}
    \end{equation}
    which is carried out in the following section. 
    
\subsection{Evaluation of $\| {\boldsymbol \Theta}_N\|= \|\widehat{\mathcal{R}}_{\mathrm{corr},L} - \overline{\mathcal{R}}_{\mathrm{corr},L}\|$ }
\label{sec:evalTheta}

We first precise that almost surely, all the matrices $(\hat{\mathcal{R}}_{m,L})_{m=1, \ldots, M(N), N \geq 1}$ are invertible. To verify this, we remark that the random variable $\mathrm{det}(\hat{\mathcal{R}}_{m,L})$ is a differentiable function of the $2(N+L-1)$ 
entries of the Gaussian vector $(\mathrm{Re}(y_{m,1}, \ldots, y_{m,N+L-1}), \mathrm{Im}(y_{m,1}, \ldots, y_{m,N+L-1}))$. Therefore, the probability distribution of $\mathrm{det}(\hat{\mathcal{R}}_{m,L})$ 
is absolutely continuous, and the event $\{ \mathrm{det}(\hat{\mathcal{R}}_{m,L}) = 0 \}$ 
has probability $0$. Therefore, the union of the above events is also negligible, thus showing the 
almost sure invertibility of the matrices $(\hat{\mathcal{R}}_{m,L})_{m=1, \ldots, M(N), N \geq 1}$.

Using the above definition of $\bs{\Theta}_N$, we are able to write 
\begin{eqnarray}
{\bs \Theta}_N & = & \widehat{\mathcal{B}}_L^{-1/2} \widehat{\mathcal{R}}_L \widehat{\mathcal{B}}_L^{-1/2} - \mathcal{B}_L^{-1/2} \widehat{\mathcal{R}}_L \mathcal{B}_L^{-1/2} \nonumber \\
& = & (\widehat{\mathcal{B}}_L^{-1/2} -  \mathcal{B}_L^{-1/2}) \widehat{\mathcal{R}}_L \widehat{\mathcal{B}}_L^{-1/2} + \mathcal{B}_L^{-1/2} \widehat{\mathcal{R}}_L (\widehat{\mathcal{B}}_L^{-1/2} -  \mathcal{B}_L^{-1/2}) \label{eq:expre-Theta}
\end{eqnarray}
We have shown above that $\| \widehat{\mathcal{R}}_{m,L} - \mathcal{R}_{m,L} \| \prec \max{(M^{-1/2},L^{-\gamma_0})}$. Our first objective here is to show that $\| \widehat{\mathcal{R}}_{m,L}^{-1/2} - \mathcal{R}_{m,L}^{-1/2} \| \prec \max{(M^{-1/2},L^{-\gamma_0})}$. For this, we use perturbation theory 
of Hermitian matrices arguments (see e.g. \cite[Sec. 2, Ch. 1 and Sec. 1, Ch. 2]{kato}) that will also be needed in Section \ref{sec:influenceblockmat}.

We first recall that Assumption \ref{ass:bounds-spectral-densities} implies that for each $N$, matrices 
$(\mathcal{R}_{m,L})_{m=1, \ldots, M}$ verify $s_{min} {\bf I}_L \leq \mathcal{R}_{m,L} \leq s_{max} {\bf I}_L$. 
Therefore, if we denote by $\mathcal{C}$ a simple closed contour included in 
the half plane $\{ \mathrm{Re}(\lambda) > 0 \}$ and enclosing the interval $[s_{min}, s_{max}]$, then, 
$\mathcal{C}$ also encloses the spectrum of the matrices $(\mathcal{R}_{m,L})_{m=1, \ldots, M}$.
This in particular implies that matrix $\mathcal{R}_{m,L}^{-1/2}$ can be written as 
\begin{equation}
    \label{eq:expre-sqrtmathcalRmL}
 \mathcal{R}_{m,L}^{-1/2} = \frac{1}{2 i \pi} \int_{\mathcal{C}_-} \frac{1}{\sqrt{\lambda}} \, \left(  \mathcal{R}_{m,L} - \lambda {\bf I}_L \right)^{-1} \, d \lambda
\end{equation}
where $\mathcal{C}_-$ means that the contour is negatively oriented. 
In the following, we denote by $(\lambda_{k,m})_{k=1, \ldots, K_m}$ the distinct eigenvalues of $\mathcal{R}_{m,L}$, and by $(\Pi_{k,m})_{k=1, \ldots, K_m}$
the orthogonal projection matrices over the corresponding eigenspaces. Therefore, 
$\mathcal{R}_{m,L}$ can be written as 
\begin{equation}
    \label{eq:spectral-decomposition-mathcalRmL}
 \mathcal{R}_{m,L} = \sum_{k=1}^{K_m} \lambda_{k,m} \, \Pi_{k,m}.
\end{equation}
We denote by $\bs{\Delta}_{m,L}$ the matrix defined by 
\begin{equation}
\label{eq:def-deltamL}
{\bs \Delta}_{m,L} = \widehat{\mathcal{R}}_{m,L} - \mathcal{R}_{m,L}.
\end{equation}
In order to investigate ${\bs \Delta}_{m,L}$, it will be convenient to introduce a collection of operators $\mathcal{D}_{m,L} \left( \bf{X} \right)$, $m = 1,\ldots,M$, which transform $L \times L$ matrices into $L \times L$ matrices and are defined as
\begin{equation}
    \label{eq:def-differential-1}
    \mathcal{D}_{m,L} \left( {\bf X} \right)  =  \frac{1}{2 \pi i} \int_{\mathcal{C}_-} \frac{1}{\sqrt{\lambda}} \left(  \mathcal{R}_{m,L} - \lambda {\bf I}_L \right)^{-1}   {\bf X}  \left(  \mathcal{R}_{m,L} - \lambda {\bf I}_L \right)^{-1} \, d \lambda
\end{equation}
where, as before, $\mathcal{C}_-$ is a negatively oriented simple closed contour on the half plane ${\mathrm{Re}\lambda>0}$ enclosing $[s_{min},s_{max}]$. As seen below,  $\mathcal{D}_{m,L}$ can be interpreted as the differential operator of the matrix valued-function 
${\bf A} \rightarrow {\bf A}^{-1/2}$ evaluated at $\mathcal{R}_{m,L}$.
Note that, using the definitions in (\ref{eq:spectral-decomposition-mathcalRmL}), we can express 
$$
\left(  \mathcal{R}_{m,L} - \lambda {\bf I}_L \right)^{-1} = \sum_{k=1}^{K_m} \frac{\Pi_{k,m}}{\lambda_{k,m} - \lambda}.
$$
Plugging this expression into  (\ref{eq:def-differential-1}) and 
using the residue theorem, we can trivially check that this operator can also be expressed as 
\begin{equation}
    \mathcal{D}_{m,L} \left( {\bf X} \right)  = \sum_{k=1}^{K_m} \sum_{l=1}^{K_m} \frac{1}{\sqrt{\lambda_{k,m}} \sqrt{\lambda_{l,m}}(\sqrt{\lambda_{k,m}} + \sqrt{\lambda_{l,m}})} \, \Pi_{k,m} {\bf X} \Pi_{l,m}.
    \label{eq:def-differential-2}
\end{equation}
We summarize next a number of properties that will be useful about these operators throughout the paper.
\begin{lemma}
    \label{lem:operator-Dm-props}
    Consider the operator $   \mathcal{D}_{m,L}$ as defined in (\ref{eq:def-differential-1})-(\ref{eq:def-differential-2}). Then, for every $L \times L$ matrix ${\bf A}$:
    \begin{enumerate}[label=(\roman*),leftmargin=20pt]
        \item If ${\bf B}$ denotes another $L \times L$ matrix,
        \begin{equation}
            \label{eq:swapDmL}
            \mathrm{Tr}\left(\mathcal{D}_{m,L}({\bf A}) {\bf B} \right) = \mathrm{Tr}\left({\bf A} \mathcal{D}_{m,L}({\bf B})  \right)
        \end{equation}
        \item There exists a nice constant $\kappa > 0$ such that 
        \begin{equation}
        \label{eq:continuity-mathcalD}
        \| \mathcal{D}_{m,L}({\bf A}) \| \leq \kappa \| {\bf A} \|.
        \end{equation}
         \item There exists a nice constant $\kappa >0 $ such that
         \begin{equation}
        \label{eq:upperbound-mathcalDmathcalDH}
        \frac{1}{L} \mathrm{Tr} \left[\mathcal{D}_{m,L}({\bf A}) \mathcal{D}^{H}_{m,L}({\bf A}) \right] \leq \kappa \, \frac{1}{L} \mathrm{Tr}({\bf A} {\bf A}^{H}).
        \end{equation}
    \end{enumerate}
\end{lemma}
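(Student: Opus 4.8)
The plan is to prove the three items separately, exploiting the two equivalent descriptions of $\mathcal{D}_{m,L}$: the contour integral (\ref{eq:def-differential-1}) and the spectral sum (\ref{eq:def-differential-2}). Throughout, the crucial uniform input is that Assumption \ref{ass:bounds-spectral-densities} forces $s_{min}\mathbf{I}_L \leq \mathcal{R}_{m,L} \leq s_{max}\mathbf{I}_L$, so that the distinct eigenvalues $\lambda_{k,m}$ all lie in the fixed compact interval $[s_{min},s_{max}]\subset(0,+\infty)$, uniformly in $m$, $L$ and $N$; in particular (\ref{eq:def-differential-2}) is legitimate (all $\sqrt{\lambda_{k,m}}$ are real and positive), and all constants produced below will depend only on $s_{min}$, $s_{max}$ and the fixed contour $\mathcal{C}$, hence will be nice.

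For part (i), I would work from (\ref{eq:def-differential-2}). Set $c_{k,l}=\big(\sqrt{\lambda_{k,m}}\,\sqrt{\lambda_{l,m}}\,(\sqrt{\lambda_{k,m}}+\sqrt{\lambda_{l,m}})\big)^{-1}$, which is manifestly symmetric in $(k,l)$. Then $\mathrm{Tr}\big(\mathcal{D}_{m,L}(\mathbf{A})\mathbf{B}\big)=\sum_{k,l}c_{k,l}\,\mathrm{Tr}\big(\Pi_{k,m}\mathbf{A}\Pi_{l,m}\mathbf{B}\big)$, and by cyclicity of the trace $\mathrm{Tr}\big(\Pi_{k,m}\mathbf{A}\Pi_{l,m}\mathbf{B}\big)=\mathrm{Tr}\big(\mathbf{A}\Pi_{l,m}\mathbf{B}\Pi_{k,m}\big)$. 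Relabelling $k\leftrightarrow l$ and using $c_{l,k}=c_{k,l}$ turns the sum into $\sum_{k,l}c_{k,l}\,\mathrm{Tr}\big(\mathbf{A}\Pi_{k,m}\mathbf{B}\Pi_{l,m}\big)=\mathrm{Tr}\big(\mathbf{A}\,\mathcal{D}_{m,L}(\mathbf{B})\big)$, which is (\ref{eq:swapDmL}). For part (ii), I would use (\ref{eq:def-differential-1}): fix once and for all the contour $\mathcal{C}$, which is contained in $\{\mathrm{Re}\,\lambda>0\}$, stays at a strictly positive distance $\delta_0$ from $[s_{min},s_{max}]$ and hence from the spectrum of every $\mathcal{R}_{m,L}$, and has finite length $|\mathcal{C}|$ and $\inf_{\lambda\in\mathcal{C}}|\lambda|>0$. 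Then $\|(\mathcal{R}_{m,L}-\lambda\mathbf{I}_L)^{-1}\|\leq\delta_0^{-1}$ and $|\lambda|^{-1/2}\leq\kappa_0$ for all $\lambda\in\mathcal{C}$, uniformly in $m,L,N$; pulling the spectral norm inside the integral and using submultiplicativity of $\|\cdot\|$ gives $\|\mathcal{D}_{m,L}(\mathbf{A})\|\leq\frac{\kappa_0|\mathcal{C}|}{2\pi\delta_0^{2}}\,\|\mathbf{A}\|$, i.e. (\ref{eq:continuity-mathcalD}).

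For part (iii), I would return to (\ref{eq:def-differential-2}). Since the $c_{k,l}$ are real, $\mathcal{D}_{m,L}^{H}(\mathbf{A})=\sum_{k,l}c_{k,l}\,\Pi_{l,m}\mathbf{A}^{H}\Pi_{k,m}$, and using $\Pi_{k,m}\Pi_{k',m}=\delta_{k,k'}\Pi_{k,m}$ twice one obtains
\[
\mathrm{Tr}\big(\mathcal{D}_{m,L}(\mathbf{A})\mathcal{D}_{m,L}^{H}(\mathbf{A})\big)=\sum_{k,l}c_{k,l}^{2}\,\mathrm{Tr}\big(\Pi_{k,m}\mathbf{A}\Pi_{l,m}\mathbf{A}^{H}\big)=\sum_{k,l}c_{k,l}^{2}\,\big\|\Pi_{k,m}\mathbf{A}\Pi_{l,m}\big\|_{F}^{2}.
\]
Bounding $c_{k,l}^{2}\leq(2s_{min}^{3/2})^{-2}$ and using that $\{\Pi_{k,m}(\cdot)\Pi_{l,m}\}_{k,l}$ is an orthogonal decomposition (so $\sum_{k,l}\|\Pi_{k,m}\mathbf{A}\Pi_{l,m}\|_{F}^{2}=\|\mathbf{A}\|_{F}^{2}=\mathrm{Tr}(\mathbf{A}\mathbf{A}^{H})$), then dividing by $L$, yields (\ref{eq:upperbound-mathcalDmathcalDH}) with $\kappa=(2s_{min}^{3/2})^{-2}$.

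I do not expect any genuine obstacle here: the lemma is essentially bookkeeping with the two representations of $\mathcal{D}_{m,L}$, and the only point needing attention is to verify that each bound is nice, i.e. independent of $m,L,N$ — which holds because every constant depends solely on $s_{min}$, $s_{max}$ and the fixed contour $\mathcal{C}$. As an alternative unifying device for (ii) and (iii), one may note the representation $\mathcal{D}_{m,L}(\mathbf{A})=\int_{0}^{\infty}e^{-t\mathcal{R}_{m,L}^{1/2}}\,\mathcal{R}_{m,L}^{-1/2}\mathbf{A}\,\mathcal{R}_{m,L}^{-1/2}\,e^{-t\mathcal{R}_{m,L}^{1/2}}\,dt$, valid by $\int_{0}^{\infty}e^{-t(a+b)}\,dt=(a+b)^{-1}$ on the eigenbasis, and then pull the spectral norm (for (ii)) or the Frobenius norm together with $\|\mathbf{U}\mathbf{M}\mathbf{V}\|_{F}\leq\|\mathbf{U}\|\,\|\mathbf{M}\|_{F}\,\|\mathbf{V}\|$ (for (iii)) inside the $t$-integral, giving both bounds with $\kappa=(2s_{min}^{3/2})^{-2}$.
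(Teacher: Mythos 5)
Your proposal is correct and follows essentially the same route as the paper: (i) directly from the spectral representation and cyclicity of the trace, (ii) by bounding the resolvent and $1/\sqrt{\lambda}$ uniformly on the fixed contour $\mathcal{C}$, and (iii) from the spectral sum with the uniform lower bound $\lambda_{k,m}\geq s_{min}$; your Pythagorean decomposition $\sum_{k,l}\|\Pi_{k,m}\mathbf{A}\Pi_{l,m}\|_F^2=\mathrm{Tr}(\mathbf{A}\mathbf{A}^H)$ is just a slightly more explicit bookkeeping of the paper's step using $\sum_k\Pi_{k,m}=\mathbf{I}_L$. All constants depend only on $s_{min},s_{max}$ and the fixed contour, so they are nice, as required.
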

\begin{proof}
    The identity in (\ref{eq:swapDmL}) follows directly from the definition of $\mathcal{D}_{m,L}$. To see (\ref{eq:continuity-mathcalD}), simply consider the definition of $\mathcal{D}_{m,L}$ in (\ref{eq:def-differential-1}) and note that $\sup_{\lambda \in \mathcal{C}} \| \left( \mathcal{R}_{m,L} - \lambda {\bf I}_L \right)^{-1} \| \leq \kappa$ for some nice constant $\kappa$. 
    In order to justify (\ref{eq:upperbound-mathcalDmathcalDH}), we express $\mathcal{D}_{m,L}({\bf A})$ using (\ref{eq:def-differential-2}) so that, noting that $\Pi_{l,m} \Pi_{l',m} = \Pi_{l,m} \delta_{l-l'}$, we can write
    $$
    \mathcal{D}_{m,L}({\bf A})  \mathcal{D}_{m,L}({\bf A})^{H}  = 
    \sum_{k,k',l} \frac{\Pi_{k,m}}{\lambda_{k,m}^{1/2} (\lambda_{k,m}^{1/2} + \lambda_{l,m}^{1/2})} {\bf A} \, \frac{\Pi_{l,m}}{\lambda_{l,m}}  {\bf A}^{H} \frac{\Pi_{k',m}}{\lambda_{k',m}^{1/2} \lambda_{l,m}^{1/2}(\lambda_{k',m}^{1/2} + \lambda_{l,m}^{1/2})} 
    $$
    Taking the normalized trace, changing the order of the matrices, and using again the fact that $\Pi_{k,m} \Pi_{k',m} = \Pi_{k,m} \delta_{k-k'}$, we obtain 
    $$
    \frac{1}{L} \mathrm{Tr} \left[\mathcal{D}_{m,L}({\bf A}) \mathcal{D}_{m,L}({\bf A})^{H} \right] =
    \sum_{k,l} \frac{1}{L} \mathrm{Tr} \left[ \frac{\Pi_{k,m} {\bf A} \,\Pi_{l,m} {\bf A}^{H} \Pi_{k,m}}{\lambda_{k,m}\lambda_{l,m}(\lambda_{k,m}^{1/2} + \lambda_{l,m}^{1/2})^{2}} \right].
    $$
    Using that $\lambda_{k,m} \geq s_{min}$ for each $k$ and $m$, we obtain immediately that 
    $$
    \frac{\Pi_{k,m} {\bf A} \,\Pi_{l,m} {\bf A}^{H} \Pi_{k,m}}{\lambda_{k,m}\lambda_{l,m}(\lambda_{k,m}^{1/2} + \lambda_{l,m}^{1/2})^{2}}  \leq \kappa \, \Pi_{k,m} {\bf A} \,\Pi_{l,m} {\bf A}^{H} \Pi_{k,m}
    $$
    from where the inequality 
    $$
    \frac{1}{L} \mathrm{Tr} \left[\mathcal{D}_{m,L}({\bf A}) \mathcal{D}_{m,L}({\bf A})^{H} \right]  \leq \kappa \, \frac{1}{L} \mathrm{Tr} \sum_{k,l} \Pi_{k,m} {\bf A} \,\Pi_{l,m} {\bf A}^{H}
    $$
    follows directly. Noting that $\sum_{k} \Pi_{k,m} = {\bf I}_L$, we obtain (\ref{eq:upperbound-mathcalDmathcalDH}).
\end{proof}
Having introduced these operators, we now formulate a result that will be useful here and in the following sections. 
\begin{lemma}
\label{le:perturbation}
Under Assumptions \ref{assum:statistics}-\ref{ass:bounds-spectral-densities} and \ref{as:norm-r-omega}, it holds that 
\begin{equation}
\label{eq:formula-perturbation}
\widehat{\mathcal{R}}_{m,L}^{-1/2} - \mathcal{R}_{m,L}^{-1/2} = - \mathcal{D}_{m,L} \left( \bs{\Delta}_{m,L} \right) +
\bs{\Upsilon}_{m,L}
\end{equation}
where the matrix $\bs{\Upsilon}_{m,L}$, implicitely defined by (\ref{eq:formula-perturbation}), verifies 
\begin{equation}
    \label{eq:domination-Upsilon}
    \| \bs{\Upsilon}_{m,L} \| \prec \max\left(\frac{1}{M},\frac{1}{L^{2 \gamma_0}}\right).
\end{equation}
\end{lemma}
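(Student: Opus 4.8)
The plan is to derive (\ref{eq:formula-perturbation}) from a second-order resolvent expansion based on the Cauchy integral representation of the inverse square root, and then to bound the remainder $\bs{\Upsilon}_{m,L}$ by $\|\bs{\Delta}_{m,L}\|^{2}$. First I would fix the negatively oriented contour $\mathcal{C}_-$ of (\ref{eq:expre-sqrtmathcalRmL}), chosen so that $\mathrm{dist}(\mathcal{C},[s_{min},s_{max}]) \geq \delta_0$ for some fixed $\delta_0>0$; since $\mathcal{R}_{m,L}$ has spectrum in $[s_{min},s_{max}]$ by Assumption \ref{ass:bounds-spectral-densities}, the quantity $\sup_{\lambda\in\mathcal{C}}\|(\mathcal{R}_{m,L}-\lambda\mathbf{I}_L)^{-1}\|$ is bounded by a nice constant. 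I would then argue that on an event $\Omega_N$ of exponentially high probability the spectrum of $\widehat{\mathcal{R}}_{m,L}$ is also enclosed by $\mathcal{C}$ and stays at distance at least $\delta_0/2$ from it: indeed $\widehat{\mathcal{R}}_{m,L}\geq 0$, $\|\widehat{\mathcal{R}}_{m,L}\|\leq\|\widehat{\mathcal{R}}_L\|\prec 1$ by Corollary \ref{cor:control-norm-calR}, and $\widehat{\mathcal{R}}_{m,L}=\mathcal{R}_{m,L}+\bs{\Delta}_{m,L}\geq (s_{min}-\|\bs{\Delta}_{m,L}\|)\mathbf{I}_L$ where, by (\ref{eq:convergence-blocks}) and its proof in Section \ref{subsec:convergence-estimators-RmL}, $\|\bs{\Delta}_{m,L}\|$ is with exponentially high probability at most a deterministic sequence tending to zero. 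On $\Omega_N$ we therefore have the representation $\widehat{\mathcal{R}}_{m,L}^{-1/2}=\frac{1}{2i\pi}\int_{\mathcal{C}_-}\lambda^{-1/2}(\widehat{\mathcal{R}}_{m,L}-\lambda\mathbf{I}_L)^{-1}\,d\lambda$ together with $\sup_{\lambda\in\mathcal{C}}\|(\widehat{\mathcal{R}}_{m,L}-\lambda\mathbf{I}_L)^{-1}\|\leq\kappa$.

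Writing $G(\lambda)=(\mathcal{R}_{m,L}-\lambda\mathbf{I}_L)^{-1}$ and $\widehat{G}(\lambda)=(\widehat{\mathcal{R}}_{m,L}-\lambda\mathbf{I}_L)^{-1}$, I would use the resolvent identity $\widehat{G}=G-G\bs{\Delta}_{m,L}\widehat{G}$ and substitute the resulting expression for $\widehat{G}$ on its right-hand side once more, obtaining $\widehat{G}=G-G\bs{\Delta}_{m,L}G+G\bs{\Delta}_{m,L}G\bs{\Delta}_{m,L}\widehat{G}$. Plugging this into the contour integral for $\widehat{\mathcal{R}}_{m,L}^{-1/2}$, the first term reproduces $\mathcal{R}_{m,L}^{-1/2}$ by (\ref{eq:expre-sqrtmathcalRmL}) and the second reproduces $-\mathcal{D}_{m,L}(\bs{\Delta}_{m,L})$ by the definition (\ref{eq:def-differential-1}); hence on $\Omega_N$ the matrix $\bs{\Upsilon}_{m,L}$ implicitly defined by (\ref{eq:formula-perturbation}) equals $\frac{1}{2i\pi}\int_{\mathcal{C}_-}\lambda^{-1/2}G(\lambda)\bs{\Delta}_{m,L}G(\lambda)\bs{\Delta}_{m,L}\widehat{G}(\lambda)\,d\lambda$. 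Bounding the integrand by the product of $\sup_{\lambda\in\mathcal{C}}|\lambda^{-1/2}|$, the two uniform resolvent bounds and $\|\bs{\Delta}_{m,L}\|^{2}$, and using that $\mathcal{C}$ has finite length, gives $\|\bs{\Upsilon}_{m,L}\|\leq\kappa\|\bs{\Delta}_{m,L}\|^{2}$ on $\Omega_N$.

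To conclude, I would combine $\|\bs{\Delta}_{m,L}\|\prec\max(M^{-1/2},L^{-\gamma_0})$ (which is (\ref{eq:convergence-blocks})) with the stability of $\prec$ under multiplication to get $\|\bs{\Delta}_{m,L}\|^{2}\prec\max(M^{-1},L^{-2\gamma_0})$, and then transfer this to $\bs{\Upsilon}_{m,L}$ by restricting to $\Omega_N$: since $\mathbb{P}(\Omega_N^c)$ is exponentially small and $\|\bs{\Upsilon}_{m,L}\|\leq\kappa\|\bs{\Delta}_{m,L}\|^{2}$ on $\Omega_N$, the standard splitting of the probability gives $\|\bs{\Upsilon}_{m,L}\|\prec\max(M^{-1},L^{-2\gamma_0})$, i.e. (\ref{eq:domination-Upsilon}); uniformity in $m$ is harmless since $M\leq N$, via Lemma \ref{lem:stochastic_dom_maximum}.

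I expect the main obstacle to be the second part of the first step — establishing with exponentially high probability that $\widehat{\mathcal{R}}_{m,L}$ is bounded below away from $0$, not merely bounded above — so that its inverse square root admits the contour representation on $\mathcal{C}$ and the resolvent $(\widehat{\mathcal{R}}_{m,L}-\lambda\mathbf{I}_L)^{-1}$ is uniformly bounded there; this requires using the fact, available from the proof of (\ref{eq:convergence-blocks}), that the fluctuation part of $\bs{\Delta}_{m,L}$ is $\prec M^{-1/2}$ while the bias part is a deterministic $o(1)$ quantity, so that $\|\bs{\Delta}_{m,L}\|$ is genuinely smaller than $s_{min}/2$ on $\Omega_N$ for $N$ large. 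Everything else is routine resolvent algebra and bookkeeping with $\prec$.
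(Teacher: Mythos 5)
Your proposal is correct and follows essentially the same route as the paper's proof: it introduces a high-probability event on which $\|\bs{\Delta}_{m,L}\|$ is bounded by a deterministic sequence tending to zero (so that the spectrum of $\widehat{\mathcal{R}}_{m,L}$ stays enclosed by the contour $\mathcal{C}$ and its resolvent is uniformly bounded there), expands the resolvent to second order inside the Cauchy integral so that the first two terms give $\mathcal{R}_{m,L}^{-1/2}-\mathcal{D}_{m,L}(\bs{\Delta}_{m,L})$, bounds the remainder by $\kappa\|\bs{\Delta}_{m,L}\|^{2}\prec\max(M^{-1},L^{-2\gamma_0})$, and concludes by the standard probability splitting. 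The only cosmetic difference is the ordering of the factors in the second-order remainder (your $G\bs{\Delta}G\bs{\Delta}\widehat{G}$ versus the paper's $\widehat{G}\bs{\Delta}G\bs{\Delta}G$), which is immaterial for the norm estimate.
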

\begin{proof}
    See Appendix \ref{sec:app_proof_sqrt}.
\end{proof}
Since $\|\bs{\Delta}_{m,L} \| = \|\widehat{\mathcal{R}}_{m,L} - \mathcal{R}_{m,L} \| \prec \max{(M^{-1/2},L^{-\gamma_0})}$, we directly observe from the above proposition and (\ref{eq:continuity-mathcalD}) that $\| \widehat{\mathcal{B}}_L^{-1/2} -  \mathcal{B}_L^{-1/2} \| \prec \max{(M^{-1/2},L^{-\gamma_0})}$. This of course implies that  $\|\widehat{\mathcal{B}}_L^{-1/2}\| \prec 1$. Moreover, using the fact that $\|\widehat{\mathcal{R}}_L\| \prec 1$ (see (\ref{eq:hatcalR-domination})), (\ref{eq:expre-Theta}) leads to  
\begin{equation}
    \label{eq:thetaprec}
    \| {\bs \Theta}_N \| \prec \max{\left(\frac{1}{\sqrt{M}},\frac{1}{L^{\gamma_0}}\right)}.     
\end{equation}


To close this section, we remark that the identity $\widehat{\mathcal{R}}_{m,L}^{-1} - \mathcal{R}_{m,L}^{-1} = - \widehat{\mathcal{R}}_{m,L}^{-1} \boldsymbol{\Delta}_{m,L} \mathcal{R}_{m,L}^{-1}$ leads immediately to $\| \widehat{\mathcal{R}}_{m,L}^{-1} - \mathcal{R}_{m,L}^{-1} \| \prec \max{\left(\frac{1}{\sqrt{M}},\frac{1}{L^{\gamma_0}}\right)}$. Using this and 
(\ref{eq:control-norm-calRbis}), we obtain the following Proposition.
\begin{proposition}
\label{prop:concentration-hatcorr-overlinecorr}
Under Assumptions \ref{assum:statistics}-\ref{ass:bounds-spectral-densities} and \ref{as:norm-r-omega}, there exists $\alpha_0 > 0$ such that for each $\alpha \geq \alpha_0 $, one may find $\epsilon > 0$ and $N_0$ (both depending on $\alpha$) such that 
\begin{eqnarray}
\label{eq:concentration-hatcorr}
\mathbb{P}\left( \| \overline{\mathcal{R}}_{\mathrm{corr},L} \| > \alpha \right) & \leq &\exp{-N^{\epsilon}} \\
\label{eq:concentration-hatcorr}
\mathbb{P}\left( \| \widehat{\mathcal{R}}_{\mathrm{corr},L} \| > \alpha \right) & \leq & \exp{-N^{\epsilon}} 
\end{eqnarray}
for each $N \geq N_0$. 
\end{proposition}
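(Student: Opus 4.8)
The plan is to establish Proposition~\ref{prop:concentration-hatcorr-overlinecorr} by controlling the spectral norms of $\overline{\mathcal{R}}_{\mathrm{corr},L}$ and $\widehat{\mathcal{R}}_{\mathrm{corr},L}$ directly from the factorized expressions $\overline{\mathcal{R}}_{\mathrm{corr},L} = \mathcal{B}_L^{-1/2}\widehat{\mathcal{R}}_L\mathcal{B}_L^{-1/2}$ and $\widehat{\mathcal{R}}_{\mathrm{corr},L} = \widehat{\mathcal{B}}_L^{-1/2}\widehat{\mathcal{R}}_L\widehat{\mathcal{B}}_L^{-1/2}$, combining the high-probability bound on $\|\widehat{\mathcal{R}}_L\|$ from Corollary~\ref{cor:control-norm-calR} with the deterministic bound $\mathcal{B}_L^{-1} \leq s_{min}^{-1}\I_{ML}$ (a consequence of Assumption~\ref{ass:bounds-spectral-densities}, which gives $s_{min}\I_L \leq \mathcal{R}_{m,L}$ for every block) and the stochastic domination $\|\widehat{\mathcal{B}}_L^{-1/2}\| \prec 1$ established just above the statement.

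First I would treat $\overline{\mathcal{R}}_{\mathrm{corr},L}$. Submultiplicativity gives $\|\overline{\mathcal{R}}_{\mathrm{corr},L}\| \leq \|\mathcal{B}_L^{-1/2}\|^2\,\|\widehat{\mathcal{R}}_L\| \leq s_{min}^{-1}\|\widehat{\mathcal{R}}_L\|$, so that for any $\alpha$ we have $\mathbb{P}(\|\overline{\mathcal{R}}_{\mathrm{corr},L}\| > \alpha) \leq \mathbb{P}(\|\widehat{\mathcal{R}}_L\| > s_{min}\alpha)$. Applying the bound (\ref{eq:control-norm-calRbis}) of Corollary~\ref{cor:control-norm-calR}, valid once $s_{min}\alpha$ exceeds the threshold there, the right-hand side is at most $\kappa_1 L\exp(-\kappa_2 M s_{min}\alpha)$. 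Since $L = \mathcal{O}(N^\beta)$ and $M \to +\infty$ (with $M = \Omega(N^{1-\beta})$ because $ML/N \to c_\star$), the factor $L$ is absorbed and this quantity is bounded by $\exp(-N^\epsilon)$ for a suitable $\epsilon>0$ depending on $\alpha$, for all $N$ large enough. This yields the first inequality.

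For $\widehat{\mathcal{R}}_{\mathrm{corr},L}$ the argument is the same in spirit but uses the random matrix $\widehat{\mathcal{B}}_L^{-1/2}$ in place of the deterministic $\mathcal{B}_L^{-1/2}$. We have $\|\widehat{\mathcal{R}}_{\mathrm{corr},L}\| \leq \|\widehat{\mathcal{B}}_L^{-1/2}\|^2\,\|\widehat{\mathcal{R}}_L\|$. The event $\{\|\widehat{\mathcal{R}}_{\mathrm{corr},L}\| > \alpha\}$ is therefore contained in the union of $\{\|\widehat{\mathcal{B}}_L^{-1/2}\| > K\}$ and $\{\|\widehat{\mathcal{R}}_L\| > \alpha/K^2\}$ for any fixed constant $K$. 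Since $\|\widehat{\mathcal{B}}_L^{-1/2}\| \prec 1$, one can fix $K$ large enough that the first event has exponentially small probability, i.e. probability below $\exp(-N^{\gamma})$ for some $\gamma>0$; more precisely, because $\|\widehat{\mathcal{B}}_L^{-1/2} - \mathcal{B}_L^{-1/2}\| \prec \max(M^{-1/2},L^{-\gamma_0}) \to 0$ and $\|\mathcal{B}_L^{-1/2}\| \leq s_{min}^{-1/2}$, choosing $K = 2 s_{min}^{-1/2}$ makes $\mathbb{P}(\|\widehat{\mathcal{B}}_L^{-1/2}\| > K)$ exponentially small. The second event is handled by (\ref{eq:control-norm-calRbis}) exactly as before, provided $\alpha/K^2$ exceeds the threshold in Corollary~\ref{cor:control-norm-calR}. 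Taking $\alpha_0 = K^2$ times that threshold and combining the two exponential bounds via the union bound gives the claimed $\exp(-N^\epsilon)$ estimate.

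The only mild subtlety — and the closest thing to an obstacle — is the bookkeeping of the polynomial-in-$L$ prefactors against the exponential-in-$M$ decay, together with making the dependence of $\epsilon$ on $\alpha$ explicit; but since $M/L \to +\infty$ under Assumption~\ref{as:asymptotic-regime} (indeed $M$ grows at least like $N^{1-\beta}$ while $L$ grows like $N^\beta$ with $\beta<1$), the factor $L$ is trivially dominated and there is no genuine difficulty. One should also make sure to invoke the almost-sure invertibility of the blocks $\widehat{\mathcal{R}}_{m,L}$ established at the start of Section~\ref{sec:evalTheta}, so that $\widehat{\mathcal{B}}_L^{-1/2}$ and hence $\widehat{\mathcal{R}}_{\mathrm{corr},L}$ are almost surely well defined.
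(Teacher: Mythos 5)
Your proposal is correct and follows essentially the same route as the paper, which obtains the proposition directly by combining the exponential bound (\ref{eq:control-norm-calRbis}) on $\|\widehat{\mathcal{R}}_L\|$ with the deterministic bound $\mathcal{B}_L^{-1}\leq s_{min}^{-1}\I_{ML}$ and the stochastic domination of $\|\widehat{\mathcal{B}}_L^{-1}-\mathcal{B}_L^{-1}\|$ (you use the square-root version $\|\widehat{\mathcal{B}}_L^{-1/2}-\mathcal{B}_L^{-1/2}\|$, an immaterial difference). Your additional bookkeeping (absorbing the prefactor $L$ via $M\geq\kappa N^{1-\beta}$, and the union-bound split with $K=2s_{min}^{-1/2}$) is exactly what the paper leaves implicit.
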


From all the above, we can therefore conclude that the spectral behavior of the sample block correlation matrix $\widehat{\mathcal{R}}_{\mathrm{corr},L}$ is equivalent to the spectral behavior of the matrix $\overline{\mathcal{R}}_{\mathrm{corr},L} = \mathcal{B}^{-1/2}_L \widehat{\mathcal{R}}_L \mathcal{B}^{-1/2}_L$. 
    


\section{Study of the influence of the estimation of matrices $(\mathcal{R}_{m,L})_{m=1, \ldots, M}$.}
\label{sec:influenceblockmat}
In this section, we study the impact of the estimation of matrices $(\mathcal{R}_{m,L})_{m=1, \ldots, M}$ 
on the asymptotic behaviour of the linear statistics $\widehat{\phi}_N$, defined as 
$$
\widehat{\phi}_N = \frac{1}{ML} \sum_{k=1}^{ML} \phi(\hat{\lambda}_{k,N}) = \int_{\mathbb{R}^{+}} \phi(\lambda) d\hat{\mu}_N(\lambda)
$$
More specifically, we evaluate the behaviour of $\widehat{\phi}_N - \overline{\phi}_N$ 
where $\overline{\phi}_N$ is defined in (\ref{eq:defbarphi}) by establishing the following result. 
\begin{theorem}
\label{th:hatphi-overlinephi}
Let Assumptions \ref{assum:statistics}-\ref{ass:bounds-spectral-densities} and \ref{as:norm-r-omega} hold true. Assume that the function $\phi$ is defined on $(-\delta,+\infty)$ for some $\delta > 0$ and smooth in a neighbourhood of the interval $[0,\alpha_0]$ where $\alpha_0$ is defined in Proposition \ref{prop:concentration-hatcorr-overlinecorr}. Then, it holds that 
\begin{equation}
\label{eq:magnitude-hatphi-overlinephi}
|\widehat{\phi}_N - \overline{\phi}_N| \prec \max\left(\frac{1}{M}, \frac{1}{L^{\gamma_0}}\right).
\end{equation}
\end{theorem}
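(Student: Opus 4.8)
The plan is to represent the difference $\widehat{\phi}_N - \overline{\phi}_N$ via the Helffer--Sj\"ostrand formula and then expand it in powers of the perturbation ${\bs \Theta}_N = \widehat{\mathcal{R}}_{\mathrm{corr},L} - \overline{\mathcal{R}}_{\mathrm{corr},L}$. Concretely, let $\widetilde{\phi}$ be an almost-analytic extension of $\phi$ compactly supported in a neighbourhood of $[0,\alpha_0]$ (which is legitimate because, by Proposition \ref{prop:concentration-hatcorr-overlinecorr}, all eigenvalues of both $\widehat{\mathcal{R}}_{\mathrm{corr},L}$ and $\overline{\mathcal{R}}_{\mathrm{corr},L}$ lie in $[0,\alpha_0]$ with exponentially high probability). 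Then
\[
\widehat{\phi}_N - \overline{\phi}_N = \frac{1}{\pi}\int_{\mathbb{C}} \bar{\partial}\widetilde{\phi}(z)\, \frac{1}{ML}\mathrm{Tr}\left(\widehat{\mathbf{Q}}_N(z) - \mathbf{Q}_N(z)\right)\, d^2 z,
\]
and the resolvent identity gives $\widehat{\mathbf{Q}}_N(z) - \mathbf{Q}_N(z) = -\widehat{\mathbf{Q}}_N(z){\bs \Theta}_N \mathbf{Q}_N(z)$. Since $\|{\bs \Theta}_N\| \prec \max(M^{-1/2}, L^{-\gamma_0})$ (see (\ref{eq:thetaprec})), this already yields $|\widehat{\phi}_N - \overline{\phi}_N| \prec \max(M^{-1/2},L^{-\gamma_0})$, which is only the weaker bound (\ref{eq:convergenceth(i)}). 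To reach the stronger bound (\ref{eq:magnitude-hatphi-overlinephi}), I would iterate the resolvent identity once more to separate a ``linear-in-${\bs \Theta}_N$'' term from a quadratic remainder that is automatically $\prec \max(M^{-1},L^{-2\gamma_0})$:
\[
\widehat{\mathbf{Q}}_N - \mathbf{Q}_N = -\mathbf{Q}_N {\bs \Theta}_N \mathbf{Q}_N + \widehat{\mathbf{Q}}_N {\bs \Theta}_N \mathbf{Q}_N {\bs \Theta}_N \mathbf{Q}_N.
\]

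The heart of the argument is then to show that the \emph{linear term} $-\frac{1}{ML}\mathrm{Tr}(\mathbf{Q}_N(z){\bs \Theta}_N \mathbf{Q}_N(z))$, after integration against $\bar\partial\widetilde\phi$, is itself $\prec \max(M^{-1}, L^{-\gamma_0})$ — i.e.\ there is a genuine cancellation beyond the crude $\|{\bs \Theta}_N\|$ bound. To exploit this, I would plug in the explicit structure of ${\bs \Theta}_N$ from (\ref{eq:expre-Theta}), namely
\[
{\bs \Theta}_N = (\widehat{\mathcal{B}}_L^{-1/2} - \mathcal{B}_L^{-1/2})\widehat{\mathcal{R}}_L \widehat{\mathcal{B}}_L^{-1/2} + \mathcal{B}_L^{-1/2}\widehat{\mathcal{R}}_L(\widehat{\mathcal{B}}_L^{-1/2} - \mathcal{B}_L^{-1/2}),
\]
and then use the first-order perturbation expansion of Lemma \ref{le:perturbation}, $\widehat{\mathcal{R}}_{m,L}^{-1/2} - \mathcal{R}_{m,L}^{-1/2} = -\mathcal{D}_{m,L}({\bs \Delta}_{m,L}) + {\bs \Upsilon}_{m,L}$ with $\|{\bs \Upsilon}_{m,L}\| \prec \max(M^{-1}, L^{-2\gamma_0})$. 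The ${\bs \Upsilon}_{m,L}$ contributions are harmless. The main quantity to control is therefore a sum over $m$ of traces of the form $\frac{1}{ML}\mathrm{Tr}\big( \mathbf{Q}_N \, \Xi_m\big)$ where $\Xi_m$ is built from $\mathcal{D}_{m,L}({\bs \Delta}_{m,L})$, the deterministic matrix $\mathcal{B}_L^{-1/2}$, and $\widehat{\mathcal{R}}_L$ (and copies of $\mathbf{Q}_N$). Since ${\bs \Delta}_{m,L} = \widehat{\mathcal{R}}_{m,L} - \mathcal{R}_{m,L}$ is a centered quadratic form in the Gaussian entries, I would compute its expectation (it is essentially a bias term of order $L^{-\gamma_0}$ plus an $O(1/N)$ end-effect) and then use a variance/concentration bound — the Gaussian concentration inequality (\ref{eq:concentrationGaussian}) applied to the appropriate Lipschitz functionals, together with integration by parts (Stein/Poincar\'e-type identities) for the Gaussian vector — to show that the fluctuation of each such trace around its mean is $\prec \frac{1}{M\sqrt{L}}$ or better, while its mean is $\prec \max(M^{-1}, L^{-\gamma_0})$. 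The properties (\ref{eq:swapDmL})--(\ref{eq:upperbound-mathcalDmathcalDH}) of $\mathcal{D}_{m,L}$ are exactly what is needed to bound these traces: (\ref{eq:swapDmL}) lets one move $\mathcal{D}_{m,L}$ onto the resolvent factor, and (\ref{eq:upperbound-mathcalDmathcalDH}) controls the resulting Frobenius-norm-type quantities uniformly in $m$. One also needs to carry along the $C(z)$-type bounds in $z$ so that the Helffer--Sj\"ostrand integral converges; here the polynomial control of $\|\mathbf{Q}_N(z)\|$ by $1/\Imm z$ and the compact support of $\bar\partial\widetilde\phi$ suffice, together with Lemma \ref{lem:stochastic_dom_maximum} to take the union over the $M$ values of $m$ and over a fine grid in $z$.

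The main obstacle, as the paper itself flags (``its proof is demanding''), is precisely this extraction of the extra $M^{-1/2}$ gain in the linear term: a naive bound loses it, and recovering it requires identifying the deterministic (bias) part of ${\bs \Delta}_{m,L}$ explicitly, handling the cross-terms between the two summands of (\ref{eq:expre-Theta}) carefully, and — crucially — showing that the randomness of $\mathbf{Q}_N$ does not destroy the averaging over $m$. In particular one cannot simply replace $\mathbf{Q}_N$ by a deterministic equivalent at this stage; instead I would bound the centered part of each $\frac{1}{ML}\mathrm{Tr}(\mathbf{Q}_N \Xi_m)$ using Gaussian concentration in the \emph{joint} randomness of $\widehat{\mathcal{R}}_L$ (note $\mathbf{Q}_N$, ${\bs \Delta}_{m,L}$ and $\widehat{\mathcal{R}}_L$ are all functions of the same Gaussian sample), and then reduce the study of the expectation $\mathbb{E}\,\frac{1}{ML}\mathrm{Tr}(\mathbf{Q}_N\Xi_m)$ to that of $\mathbb{E}\mathbf{Q}_N$ — which is itself close to the deterministic matrix $\mathbf{T}_N(z)$ up to the errors quantified later in Section \ref{sec:evalbarphi}. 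Keeping all these error terms of the correct order $\max(M^{-1},L^{-\gamma_0})$ simultaneously, uniformly in $z$ on the support of $\bar\partial\widetilde\phi$ and uniformly in $m$, is the delicate bookkeeping that makes the proof long.
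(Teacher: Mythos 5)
Your skeleton agrees with the paper's proof: reduction to compactly supported $\phi$ via Proposition \ref{prop:concentration-hatcorr-overlinecorr}, the Helffer--Sj\"ostrand representation, the second-order resolvent expansion so that the quadratic term is killed by $\|\bs{\Theta}_N\|^2 \prec \max(M^{-1},L^{-2\gamma_0})$, the decomposition (\ref{eq:expre-Theta}) of $\bs{\Theta}_N$, and Lemma \ref{le:perturbation} to linearize $\widehat{\mathcal{B}}_L^{-1/2}-\mathcal{B}_L^{-1/2}$ into $-\mathcal{D}_{m,L}(\bs{\Delta}_{m,L})$ plus a negligible remainder. Up to that point your plan is the paper's Steps 1--3.

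The gap is in how you close Step 3. First, the functional you propose to concentrate is \emph{not} Lipschitz in the Gaussian vector ${\bf x}$: $\bs{\Delta}_{m,L}$ is quadratic in ${\bf x}_m$, so the gradient of $\sum_m\frac{1}{ML}\mathrm{Tr}(\mathbf{Q}_N\Xi_m)$ carries uncontrolled factors of $\|{\bf x}_m\|$, and (\ref{eq:concentrationGaussian}) cannot be invoked with constant $N^{\epsilon}/M$ as you assert. The paper's resolution is precisely the missing device: replace $\widehat{\mathcal{R}}_{m,L}$ by its Toeplitz approximation $\widehat{\mathcal{R}}^t_{m,L}$ (error $\prec 1/M$ by (\ref{eq:toeplitz-approximation-hatRm})), rewrite the trace via (\ref{eq:swapDmL}) and (\ref{eq:trace-toeplitz}) as a sum over the $2L-1$ centered autocovariances $\hat r^\circ_m(u)$ as in (\ref{eq:expre-bis-zeta}), and then build the Lipschitz surrogate $\tilde\zeta_\epsilon$ of (\ref{eq:def-zetatilde}) by truncating each $\hat r^\circ_m(u)$ with $g_{N,\epsilon}$ and inserting the cutoff $\tilde g(\|{\bf x}_m\|^2/(N+L-1))$, so that $\tilde\zeta_\epsilon=\zeta$ with exponentially high probability and $\|\nabla\tilde\zeta_\epsilon\|\leq\kappa N^{\epsilon}/M$ (Lemma \ref{le:tildezeta-lipschitz}, which itself needs the Parseval-type bound (\ref{eq:parseval}) and Lemma \ref{le:sumsquare-derivatives-trace}). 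Without this regularization your concentration step simply does not go through, and this is exactly the ``demanding'' part. Second, your treatment of the expectation is off: $\mathbb{E}\,\bs{\Delta}_{m,L}=0$ exactly (the sample estimate $\widehat{\mathcal{R}}_{m,L}$ is unbiased); the $L^{-\gamma_0}$ bias you mention only arises for the Toeplitzified estimate through (\ref{eq:biais-hatsm}), i.e.\ after the step you skipped. Moreover the $O(1/M)$ bound on $\mathbb{E}(\zeta)$ is not obtained by substituting $\mathbb{E}\mathbf{Q}_N$ (or $\mathbf{T}_N$) for $\mathbf{Q}_N$; it comes from the \emph{covariance} between $\hat r^\circ_m(u)$ and $\frac{1}{L}\mathrm{Tr}\bigl(\mathbf{Q}^\circ_{m,m}\mathcal{D}_{m,L}(\mathbf{J}_L^{-u})\mathcal{R}_{m,L}^{1/2}\bigr)$, each of variance $O(1/N)$ by Poincar\'e--Nash, summed over the $2L-1$ lags to give $O(L/N)=O(1/M)$ -- again only available after the Toeplitz reduction. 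Finally, your termwise bookkeeping (each trace with mean $\prec\max(M^{-1},L^{-\gamma_0})$ and fluctuation $\prec (M\sqrt L)^{-1}$) does not add up: summed over the $M$ values of $m$ it would give bounds of order $1$ and $1/\sqrt L$ respectively; the mean and the concentration must be handled for the whole $m$-sum at once, as the paper does with $\zeta$.
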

In order to establish Theorem \ref{th:hatphi-overlinephi}, we first mention that Proposition 
\ref{prop:concentration-hatcorr-overlinecorr} implies that it is possible to assume without restriction that 
$\phi$ is compactly supported by the interval $[-\delta, \alpha]$ for some $\alpha > \alpha_0$. 
To justify this claim, we consider $\xi \in (\alpha_0, \alpha)$ and introduce the event $\mathcal{A}_N$ defined by 
$$
\mathcal{A}_N = \left\{   \| \overline{\mathcal{R}}_{\mathrm{corr},L} \| \leq \xi \right\} 
\cap \left\{   \| \widehat{\mathcal{R}}_{\mathrm{corr},L} \| \leq \xi \right\}. 
$$
Proposition \ref{prop:concentration-hatcorr-overlinecorr} implies that there exists a $\eta > 0$
for which $P(\mathcal{A}_N^{c}) \leq \exp-N^{\eta}$ for each $N$ 
large enough. We denote by $\phi_c$ a smooth function, supported by $[-\delta, \alpha]$, 
and which coincides with $\phi$ on the interval $[-\delta/2, \xi]$. Then, it is clear 
that $\widehat{\phi}_N$ and $\overline{\phi}_N$ coincide with $\widehat{\phi}_{c,N}$ and $\overline{\phi}_{c,N}$ respectively on $\mathcal{A}_N$. 
For each $\epsilon > 0$, by conditioning on the event $\mathcal{A}_N$ and its complementary $\mathcal{A}_N^c$ we can express
\begin{multline*}
    \mathbb{P}\left(|\widehat{\phi}_N - \overline{\phi}_N| > N^{\epsilon} \max\left(\frac{1}{M}, \frac{1}{L^{\gamma_0}}\right)\right)  =  \\
= \mathbb{P}\left(|\widehat{\phi}_{c,N} - \overline{\phi}_{c,N}| > N^{\epsilon} \max\left(\frac{1}{M}, \frac{1}{L^{\gamma_0}}\right), \, \mathcal{A}_N \right) + \\
+ \mathbb{P}\left(|\widehat{\phi}_N - \overline{\phi}_N| > N^{\epsilon} \max\left(\frac{1}{M}, \frac{1}{L^{\gamma_0}}\right), \, \mathcal{A}_N^{c} \right)
\end{multline*}
where we have used the fact that $\widehat{\phi}_N$ and $\overline{\phi}_N$ respectively coincide with $\widehat{\phi}_{c,N}$ and $\overline{\phi}_{c,N}$ on $\mathcal{A}_N$. Now, for $N$ large enough we can bound the first term of the above equation by ${\mathbb{P}(|\widehat{\phi}_{c,N} - \overline{\phi}_{c,N}| > N^{\epsilon} \max(M^{-1}, {L^{-\gamma_0}}))}$  and the second term by $\mathbb{P}( \mathcal{A}_N^{c}) \leq \exp(-N^{\eta})$. Therefore, it is sufficient to establish that $|\widehat{\phi}_{c,N} - \overline{\phi}_{c,N}| \prec \max(M^{-1}, L^{-\gamma_0})$ to prove (\ref{eq:magnitude-hatphi-overlinephi}). For this reason, from now on we assume 
without loss of generality that $\phi$ is supported by $[-\delta, \alpha]$.

The main tool that we will use in order to analyze the asymptotic behavior of the linear spectral statistics is the Helffer-Sjöstrand formula for sufficiently regular, compactly supported functions.This formula was already used in the large random matrices literature, see e.g. \cite{and-gui-zei-2010},  \cite{anderson-2013}, \cite{najim-yao-2016}. In order to introduce this tool, assume that $\phi(\lambda)$ is compactly supported and of class $\mathcal{C}^{k+1}$ for a certain integer $k$, and denote by $\Phi_k(\phi): \mathbb{C} \rightarrow \mathbb{C}$ the function of complex variable
\begin{equation}
\label{eq:cleverfunctHS}
    \Phi_k(\phi)(x+iy) = \sum_{l=0}^{k} \frac{(iy)^l}{l!} \phi^{(l)}(x) \rho (y)
\end{equation}
where $\rho: \mathbb{R} \rightarrow \mathbb{R}^+$  is a smooth, compactly supported function (to fix the ideas, we assume that the support of $\rho$ is $[-2,2]$) that takes the value $1$ in a neighbourhood of zero. Now, taking $z=x + iy$, we see that the function $\Phi_k(\phi)(z)$ is compactly supported on the complex plane, and therefore by \cite[Lemma 20.3]{rudin-book} we have 
\begin{equation*}
    \int \phi(\lambda) d\mu(\lambda) = \frac{1}{\pi} \mathrm{Re} \int_{\mathbb{C}^+} dx\,dy\,\overline{\partial} \Phi_k(\phi) (z) s_\mu (z) 
\end{equation*}
where $\mu$ is a probability measure, $s_\mu(z)$ its Stieltjes transform and where we define 
$$
\overline{\partial}  \Phi_k(\phi)(z) = \frac{\partial \Phi_k(\phi)(x + iy)}{\partial x} + i\frac{\partial \Phi_k(\phi)(x + iy)}{\partial y}.
$$
In particular, according to the definition of $\Phi_k(\phi)(z)$ in (\ref{eq:cleverfunctHS}), we can see that
$$
\overline{\partial}  \Phi_k(\phi)(z) = \frac{(iy)^k}{k!}\phi^{(k+1)}(x)
$$
when $y$ belongs to a neighbourhood of zero where $\rho(y)=1$. The regularity of $\phi$ will allow us to bound quantities of the form $|\overline{\partial}\Phi_k(\phi)(z) y^{-k}|$ when $y$ is in a neighbourhood of zero. 

Consider now the two resolvents $\widehat{\mathbf{Q}}_N(z)$ and $\mathbf{Q}_N(z)$ defined in (\ref{eq:def-resolvent-tildeRcorr}) and (\ref{eq:def_resolvent_barRcorr}) respectively. Recall that their normalized trace is equal to the Stieltjes transform of the empirical eigenvalue distribution of $\widehat{\mathcal{R}}_{\mathrm{corr},L}$ and $\overline{\mathcal{R}}_{\mathrm{corr},L}$ respectively. Hence, a direct application of the Helffer-Sjöstrand formula to our problem leads to the identity
\begin{equation}
\label{eq:expre-hatphi-overlinephi-hs}
\widehat{\phi}_N - \overline{\phi}_N = \frac{1}{\pi} \mathrm{Re} \int_{\mathcal{D}} dx \, dy \,\bar{\partial}\Phi_k(\phi)(z)\left(\frac{1}{ML}\mathrm{Tr} (\widehat{\mathbf{Q}}_N(z)) -  \frac{1}{ML}\mathrm{Tr} (\mathbf{Q}_N(z)) \right)
\end{equation}
where $\mathcal{D}$ is defined by $\mathcal{D} = [-\delta, \alpha] \times [0,2]$ and where $k$ is large enough. Before going into 
the details of the proof of Theorem \ref{th:hatphi-overlinephi}, we first present the main steps of the proof. In what follows, we will omit the dependence on $N$ and $z$ in all the matrices in order to simplify the notation. 

We recall that $\bs{\Theta}$ is the matrix defined in (\ref{eq:def-Theta}) and remark that, by the definition of resolvents, we can write 
$$
\widehat{{\bf Q}} - {\bf Q} = - {\bf Q} \bs{\Theta} \widehat{{\bf Q}} = - {\bf Q} \bs{\Theta} {\bf Q} + 
{\bf Q}  \bs{\Theta} {\bf Q} \bs{\Theta} \widehat{{\bf Q}}
$$
Therefore, (\ref{eq:expre-hatphi-overlinephi-hs}) can also be written as 
\begin{eqnarray}
    \widehat{\phi}_N - \overline{\phi}_N = &-& \frac{1}{\pi}\mathrm{Re}\int_{\mathcal{D}} dx \, dy \,\bar{\partial}\Phi_k(\phi)(z) \frac{1}{ML}(\mathrm{Tr}  {\bf Q}^{2} \bs{\Theta}) \nonumber \\
    &+&  \frac{1}{\pi}\mathrm{Re}\int_{\mathcal{D}} dx \, dy \,\bar{\partial}\Phi_k(\phi)(z) \frac{1}{ML}\mathrm{Tr}( {\bf Q}  \bs{\Theta} {\bf Q} \bs{\Theta} \widehat{{\bf Q}}). \label{eq:expre-hatphi-overlinephi-hs-improved}
\end{eqnarray}
Having established these basic facts, the proof of Theorem \ref{th:hatphi-overlinephi} proceeds as follows:
\begin{enumerate}[leftmargin=3pt]
\item The first step of the proof consists in showing that, by virtue of (\ref{eq:thetaprec}), the second term of (\ref{eq:expre-hatphi-overlinephi-hs-improved}) can be disregarded from the evaluation, in the sense that
\begin{equation}
    \label{eq:term-Theta2-negligible}
\left| \frac{1}{\pi} \mathrm{Re} \int_{\mathcal{D}} dx \, dy \,\bar{\partial}\Phi_k(\phi)(z) 
\frac{1}{ML} \mathrm{Tr} ({\bf Q}  \bs{\Theta} {\bf Q} \bs{\Theta} \widehat{{\bf Q}}) \right| \prec \max\left(\frac{1}{M},\frac{1}{L^{2\gamma_0}}\right)
\end{equation}
We therefore just need to evaluate the first term of the right hand side of (\ref{eq:expre-hatphi-overlinephi-hs-improved}).
\item In the second step, it is proved that ${\bs \Theta}$ can be written as 
\begin{equation}
\label{eq:simplification-Theta}
{\bs \Theta} = \left( \widehat{\mathcal{B}}^{-1/2}  - \mathcal{B}^{-1/2} \right) \, \mathcal{B}^{1/2} \overline{\mathcal{R}}_{\mathrm{corr}} + 
\overline{\mathcal{R}}_{\mathrm{corr}} \, \mathcal{B}^{1/2} \left( \widehat{\mathcal{B}}^{-1/2}  - \mathcal{B}^{-1/2} \right) + {\bs \Theta}_2
\end{equation}
where $\|{\bs \Theta}_2\| \prec \max(M^{-1},L^{-2\gamma_0})$. This will imply that the contribution of $\bs{\Theta}_2$ to 
the first term of the right hand side of (\ref{eq:expre-hatphi-overlinephi-hs-improved}) can be omitted. 
\item If we take ${\bs \Theta}_1 = {\bs \Theta} - {\bs \Theta}_2$, the purpose of the third step is to establish that 
\begin{equation}
\label{eq:difficult-to-prove-1}
\left|  \int_{\mathcal{D}} dx \, dy \,\bar{\partial}\Phi_k(\phi)(z) 
\frac{1}{ML}\mathrm{Tr} ({\bf Q}^{2} {\bs \Theta}_1) \right| \prec \max\left(\frac{1}{M}, \frac{1}{L^{\gamma_0}}\right).
\end{equation}
For this, we will just verify that 
\begin{equation}
\label{eq:difficult-to-prove-2}
\left| \int_{\mathcal{D}} dx \, dy \,\bar{\partial}\Phi_k(\phi)(z) 
\frac{1}{ML} \mathrm{Tr} \left[ {\bf Q}^{2} (\widehat{\mathcal{B}}^{-1/2} -\mathcal{B}^{-1/2}) \, \mathcal{B}^{1/2} \overline{\mathcal{R}}_{\mathrm{corr}} \right] \right| \prec \max\left(\frac{1}{M}, \frac{1}{L^{\gamma_0}}\right)
\end{equation}
(note that the second term in (\ref{eq:simplification-Theta}) can be handled similarly). The proof of (\ref{eq:difficult-to-prove-2}) is demanding. Using Lemma \ref{le:perturbation}, 
we only need to show that 
\begin{multline}
 \label{eq:difficult-to-prove-3}
\left| \int_{\mathcal{D}} dx \, dy \,\bar{\partial}\Phi_k(\phi)(z) 
\frac{1}{M} \sum_{m=1}^{M}  \frac{1}{L} \mathrm{Tr} \left[ \mathcal{D}_{m,L}({\bs \Delta}_{m,L}) \mathcal{R}_{m,L}^{1/2} ({\bf Q} + z {\bf Q}^{2})_{m,m}  \right] \right| \\ \prec \max\left(\frac{1}{M}, \frac{1}{L^{\gamma_0}}\right)
\end{multline}
where ${\bf Q}_{m,m}$ denotes the $m$th $L \times L$ diagonal block of ${\bf Q}$, where the operator $\mathcal{D}_{m,L}$ is defined in (\ref{eq:def-differential-1})-(\ref{eq:def-differential-2}) and where ${\bs \Delta}_{m,L}$ is defined in (\ref{eq:def-deltamL}).  
We will only establish that 
\begin{equation}
\label{eq:difficult-to-prove-4}
\left|  \int_{\mathcal{D}} dx \, dy \,\bar{\partial}\Phi_k(\phi)(z) 
\frac{1}{M} \sum_{m=1}^{M} \frac{1}{L}  \mathrm{Tr} \left[   \mathcal{D}_{m,L}({\bs \Delta}_{m,L}) \mathcal{R}_{m,L}^{1/2} {\bf Q}_{m,m} \right] \right|  \prec \max\left(\frac{1}{M}, \frac{1}{L^{\gamma_0}}\right)
\end{equation}
because the term due to $z ({\bf Q}^{2})_{m,m}$ can be handled similarly. In order to show this, we rely on the fact that, up to a term 
stochastically dominated by $\frac{1}{M}$, it is possible to replace matrices $(\widehat{\mathcal{R}}_{m,L})_{m=1, \ldots, M}$ in (\ref{eq:difficult-to-prove-4}) by their Toeplitz approximations $\widehat{\mathcal{R}}_{m,L}^{t} = \widetilde{\mathbf{W}}^m_N(\widetilde{\mathbf{W}}^m_N)^H $, $m=1,\ldots,M$ introduced in (\ref{eq:def_RLmToep}) of Section \ref{subsec:convergence-estimators-RmL}. The upper bound in (\ref{eq:biais-hatsm}) will
imply that the contribution of the bias of the Toeplitz estimates of $(\mathcal{R}_{m,L})_{m=1, \ldots, M}$ to (\ref{eq:difficult-to-prove-4}) is a term of order $\mathcal{O}(\max{(M^{-1},L^{-\gamma_0}}))$. At this point, it will remain to 
study the term $\zeta$ defined by 
\begin{equation}
    \label{eq:def-zeta}
    \zeta = \int_{\mathcal{D}} dx \, dy \,\bar{\partial}\Phi_k(\phi)(z) 
\frac{1}{M} \sum_{m=1}^{M} \frac{1}{L}  \mathrm{Tr} \left[  \mathcal{D}_{m,L} \left(\widehat{\mathcal{R}}^{t}_{m,L} - \mathbb{E}(\widehat{\mathcal{R}}^{t}_{m,L})\right) \mathcal{R}_{m,L}^{1/2} {\bf Q}_{m,m} \right].
\end{equation}
Recall that $\hat{r}_m(l) = \frac{1}{N} \sum_{n=1}^{N-l} y_m(n+l) y_m^{*}(n)$ 
and $\hat{r}_m(-l) = \hat{r}_m^{*}(l)$ for $l \geq 0$ represent the empirical estimate of the autocovariance sequence of $y_m$ at lag $l$, and consider $\hat{r}_m^{\circ}(l) = \hat{r}_m - \mathbb{E}\hat{r}_m(l)$. With these definitions and using (\ref{eq:swapDmL}) and (\ref{eq:trace-toeplitz}, the term $\zeta$ can be re-written as 
\begin{equation}
\label{eq:expre-bis-zeta}
\zeta =   \int_{\mathcal{D}} dx \, dy \,\bar{\partial}\Phi_k(\phi)(z) 
\frac{1}{M} \sum_{m=1}^{M}  \sum_{u=-(L-1)}^{L-1} \hat{r}_m^{\circ}(u)\;
\tau\left( \mathcal{D}_{m,L}\left(\mathcal{R}_{m,L}^{1/2} {\bf Q}_{m,m}\right) \right)(-u)
\end{equation}
where we recall that if ${\bf A}$ is a $L \times L$ matrix, $\tau({\bf A})(u)$ is defined by $\tau({\bf A})(u) = 
\frac{1}{L} \mathrm{Tr}( {\bf A} {\bf J}_L^{u})$, see (\ref{eq:definition_tau}). This way of expressing $\zeta$ will be the key to showing that 
\begin{equation}
    \label{eq:zeta-prec-1/M}
    |\zeta| \prec \frac{1}{M}
\end{equation}
which will complete the proof of Theorem \ref{th:hatphi-overlinephi}. This will be shown in two final steps. \begin{itemize}[leftmargin=12pt]
\item We check that $\mathbb{E}(\zeta) =  \mathcal{O}(M^{-1})$. To verify this, we use that the Nash-Poincaré inequality and obtain that 
$\mathrm{Var}\left[ \tau\left( \mathcal{D}_{m,L}(\mathcal{R}_{m,L}^{1/2} {\bf Q}_{m,m}) \right)(-u) \right] = \mathcal{O}(N^{-1})$. Since $\mathrm{Var}(\hat{r}_m(u))$ is also a term of order $\mathcal{O}(N^{-1})$, we obtain immediately 
from the Schwartz inequality that $\mathbb{E}(\zeta) = \mathcal{O}(M^{-1})$.
\item The most difficult part of the proof consists in establishing that
\begin{equation}
\label{eq:zeta-Ezeta-prec-1/M}
\left| \zeta - \mathbb{E}(\zeta) \right| \prec \frac{1}{M}.
\end{equation}
For this, for each $m$, we introduce the $(N+L-1)$--dimensional row vector 
${\bf y}_m = (y_{m,1}, \ldots, y_{m,N+L-1})$, which can be re-written as 
\begin{equation}
\label{eq:def-xm}
{\bf y}_m = {\bf x}_m \mathcal{R}_{m,N+L-1}^{1/2}
\end{equation}
for some $\mathcal{N}_\mathbb{C}(0, \I_{N+L-1})$-distributed row vector ${\bf x}_m$. By using the above definition, we can re-interpret $\zeta$ as a function of  of the $M(N+L-1)$ i.i.d. $\mathcal{N}_\mathbb{C}(0,1)$ entries of vector 
\begin{equation}
    \label{eq:def-x}
    {\bf x} = ({\bf x}_1, \ldots, {\bf x}_M).
\end{equation}
If $\zeta$, considered as a function of $({\bf x},{\bf x}^{*})$
were a Lipschitz function with constant of order $\mathcal{O}(M^{-1})$, the result in (\ref{eq:zeta-Ezeta-prec-1/M}) would follow from conventional concentration inequalities of Lipschitz functions of Gaussian random vectors (see (\ref{eq:concentrationGaussian}) above). 
Unfortunately, the terms $\hat{r}_m(u)$ are not Lipschitz functions 
of ${\bf x}$ due to the quadratic dependence on this vector. In any case, it is still true that for each $\epsilon > 0$, the inequality $| \hat{r}_m(u) - \mathbb{E}(\hat{r}_m(u)) | \leq \frac{N^{\epsilon}}{N}$ holds for fixed $u$ and $m$, except for an event that has exponentially small probability. Therefore, we show that it is possible to replace (for each $u$ and $m$)  $ \hat{r}_m(u) - \mathbb{E}(\hat{r}_m(u))$ by a well chosen function, and that the corresponding modification $\tilde{\zeta}$ of $\zeta$
is Lipschitz with constant $\frac{N^{\epsilon}}{M}$. We deduce from this that $|\zeta - \mathbb{E}(\zeta) | \prec 
\frac{N^{\epsilon}}{M}$ for each $\epsilon > 0$, a property which will directly imply (\ref{eq:zeta-Ezeta-prec-1/M}). 
\end{itemize}
\end{enumerate}
We now proceed with the three steps of the proof. 

{\bf Step 1}. 
In order to establish (\ref{eq:term-Theta2-negligible}), we simply notice that 
$$
\left|  \int_{\mathcal{D}} dx \, dy \,\bar{\partial}\Phi_k(\phi)(z) 
\frac{1}{ML} \mathrm{Tr} ({\bf Q}  \bs{\Theta} {\bf Q} \bs{\Theta} \hat{{\bf Q}}) \right| \leq 
 \int_{\mathcal{D}} dx \, dy \,|\bar{\partial}\Phi_k(\phi)(z)| \left|\frac{1}{ML} \mathrm{Tr} ( {\bf Q}  \bs{\Theta} {\bf Q} \bs{\Theta} \hat{{\bf Q}})\right|.
 $$
 It is clear that if $z \in \mathbb{C}^{+}$, we can use the item (iv) in Proposition \ref{prop:class-S} to establish that
 $$
 \left|\frac{1}{ML}\mathrm{Tr} ({\bf Q}  \bs{\Theta} {\bf Q} \bs{\Theta} \hat{{\bf Q}}) \right| \leq \|{\bf Q}\|^{2}  \| \hat{{\bf Q}} \| \|\bs{\Theta}\|^{2} \leq \frac{1}{(\Imm z)^{3}} \|\bs{\Theta}\|^{2}.
 $$
Since $\phi$ is smooth by assumption, we can choose $k \geq 3$ to guarantee that the integral $ \int_{\mathcal{D}} dx \, dy \,|\bar{\partial}\Phi_k(\phi)(z)| \frac{1}{{(\Imm z)}^{3}}$ is finite. This, together with  (\ref{eq:thetaprec}), shows that 
$$
\left|  \int_{\mathcal{D}} dx \, dy \,\bar{\partial}\Phi_k(\phi)(z) 
\frac{1}{ML}(\mathrm{Tr} {\bf Q}  \bs{\Theta} {\bf Q} \bs{\Theta} \hat{{\bf Q}}) \right| \leq \kappa  \|\bs{\Theta}\|^{2} \prec \max\left(\frac{1}{M},\frac{1}{L^{2\gamma_0}}\right)
$$
which completes the proof of (\ref{eq:term-Theta2-negligible}). \\

{\bf Step 2}. In order to establish (\ref{eq:simplification-Theta}), we take (\ref{eq:expre-Theta}) as a 
starting point and express $\widehat{\mathcal{R}}$ as $\widehat{\mathcal{R}} = \mathcal{B}^{1/2} \overline{\mathcal{R}}_{\mathrm{corr}} \mathcal{B}^{1/2}$, that is
\begin{align*}
\bs{\Theta} = \left( \widehat{\mathcal{B}}^{-1/2}  - \mathcal{B}^{-1/2} \right) \, \mathcal{B}^{1/2} \overline{\mathcal{R}}_{\mathrm{corr}} \mathcal{B}^{1/2}  \widehat{\mathcal{B}}^{-1/2} + 
\overline{\mathcal{R}}_{\mathrm{corr}} \, \mathcal{B}^{1/2} \left( \widehat{\mathcal{B}}^{-1/2}  - \mathcal{B}^{-1/2} \right) \\ = \left( \widehat{\mathcal{B}}^{-1/2}  - \mathcal{B}^{-1/2} \right) \, \mathcal{B}^{1/2} \overline{\mathcal{R}}_{\mathrm{corr}}  + 
\overline{\mathcal{R}}_{\mathrm{corr}} \, \mathcal{B}^{1/2} \left( \widehat{\mathcal{B}}^{-1/2}  - \mathcal{B}^{-1/2} \right)
+ \bs{\Theta}_2
\end{align*}
where $\bs{\Theta}_2$ is given by 
\begin{equation*}
\bs{\Theta}_2 
= \left( \widehat{\mathcal{B}}^{-1/2}  - \mathcal{B}^{-1/2} \right)  \mathcal{B}^{1/2} \overline{\mathcal{R}}_{\mathrm{corr}}
 \mathcal{B}^{1/2} \left( \widehat{\mathcal{B}}^{-1/2}  - \mathcal{B}^{-1/2} \right).
\end{equation*}
As we showed that $\|\widehat{\mathcal{B}}^{-1/2}  - \mathcal{B}^{-1/2}\| \prec \max (M^{-1/2},L^{-\gamma_0})$, 
$\| \bs{\Theta}_2 \|$ clearly verifies $\| \bs{\Theta}_2 \| \prec \max (M^{-1},L^{-2\gamma_0})$ as expected. Hence, using 
the inequality $|\frac{1}{ML} \mathrm{Tr} ({\bf Q}^{2} \bs{\Theta}_2) | \leq \frac{\| \bs{\Theta}_2 \|}{(\Imm z)^{2}}$ together with
$$
 \int_{\mathcal{D}} dx \, dy \,\left|\bar{\partial}\Phi_k(\phi)(z)\right| \frac{1}{(\Imm z)^{2}} < +\infty
 $$
we obtain that 
$$
\left|  \int_{\mathcal{D}} dx \, dy \,\bar{\partial}\Phi_k(\phi)(z) 
\frac{1}{ML} \mathrm{Tr} ({\bf Q}^{2}  \bs{\Theta}_2) \right| \leq \kappa \, \| \bs{\Theta}_2 \| \prec \max \left(\frac{1}{M},\frac{1}{L^{2\gamma_0}}\right).
$$

{\bf Step 3}. We finally establish (\ref{eq:difficult-to-prove-1}), and just verify (\ref{eq:difficult-to-prove-2}) because the 
contribution of the second term of $\bs{\Theta}_1$ 
can be handled similarly. Using (\ref{eq:formula-perturbation}) and the resolvent identity 
$\overline{\mathcal{R}}_{\mathrm{corr}} {\bf Q} = \I+ z {\bf Q}$, we obtain immediately that
the term on the left hand side of (\ref{eq:difficult-to-prove-2}) can be written as the sum of the term on the left hand side of 
(\ref{eq:difficult-to-prove-3}) plus a term depending on the matrices $(\bs{\Upsilon}_m)_{m=1, \ldots, M}$. As this last term is easily seen to be stochastically dominated by $\max({L^{- 2 \gamma_0}}, {M}^{-1})$, (\ref{eq:difficult-to-prove-2}) becomes  
equivalent to (\ref{eq:difficult-to-prove-3}). 

We now prove (\ref{eq:difficult-to-prove-4}). We first reason that we can replace the matrices $\widehat{\mathcal{R}}_{m,L}$ with their Toeplitz approximations $\widehat{\mathcal{R}}^t_{m,L} $. Indeed, it was shown in (\ref{eq:toeplitz-approximation-hatRm}) of Section \ref{subsec:convergence-estimators-RmL}
that $\| \widehat{\mathcal{R}}_{m,L} -  \widehat{\mathcal{R}}_{m,L}^{t} \| \prec \frac{1}{M}$. We claim that this 
implies that 
$$
\left| \int_{\mathcal{D}} dx \, dy \,\bar{\partial}\Phi_k(\phi)(z) 
\frac{1}{M} \sum_{m=1}^{M} \frac{1}{L}  \mathrm{Tr} \left[ \mathcal{D}_{m,L} \left(\widehat{\mathcal{R}}_{m,L} - \widehat{\mathcal{R}}_{m,L}^{t}\right) \mathcal{R}_{m,L}^{1/2} {\bf Q}_{m,m} \right] \right| \prec \frac{1}{M} 
$$
Indeed, a direct use of (\ref{eq:continuity-mathcalD}) together with the fact that $\| {\bf Q}_{m,m} \| \leq (\Imm z)^{-1}$ for $z \in \mathbb{C}^+$ shows that
$$
\left| \frac{1}{M} \sum_{m=1}^{M} \frac{1}{L}  \mathrm{Tr} \left[  \mathcal{D}_{m,L} \left(\widehat{\mathcal{R}}_{m,L} - \widehat{\mathcal{R}}_{m,L}^{t}\right) \mathcal{R}_{m,L}^{1/2} {\bf Q}_{m,m} \right] \right| \leq 
\frac{\kappa}{\Imm z} \, \sup_{m=1, \ldots, M} \| \widehat{\mathcal{R}}_{m,L} -  \widehat{\mathcal{R}}_{m,L}^{t} \|
$$
for some nice constant $\kappa >0$. However, since the integral $\int_{\mathcal{D}} dx \, dy \,|\bar{\partial}\Phi_k(\phi)(z)| \frac{1}{\Imm z} $ is finite as long as $k \geq 1$, 
we readily see that 
\begin{multline*}
\left| \int_{\mathcal{D}} dx \, dy \,\bar{\partial}\Phi_k(\phi)(z) 
\frac{1}{M} \sum_{m=1}^{M} \frac{1}{L}  \mathrm{Tr} \left[  \mathcal{D}_{m,L} \left(\widehat{\mathcal{R}}_{m,L} - \widehat{\mathcal{R}}_{m,L}^{t}\right) \mathcal{R}_{m,L}^{1/2} {\bf Q}_{m,m} \right] \right| \leq \\ \leq \kappa \, \sup_{m=1, \ldots, M} \| \widehat{\mathcal{R}}_{m,L} -  \widehat{\mathcal{R}}_{m,L}^{t} \| \prec \frac{1}{M}.
\end{multline*}
Consequently, in order to establish  (\ref{eq:difficult-to-prove-4}), it only remains to prove that 
\begin{multline}
\label{eq:difficult-to-prove-5}
\left|  \int_{\mathcal{D}} dx \, dy \,\bar{\partial}\Phi_k(\phi)(z) 
\frac{1}{M} \sum_{m=1}^{M} \frac{1}{L}  \mathrm{Tr} \left[  \mathcal{D}_{m,L} \left(\widehat{\mathcal{R}}_{m,L}^{t} - \mathcal{R}_{m,L}\right) \mathcal{R}_{m,L}^{1/2} {\bf Q}_{m,m} \right] \right| \prec \\ \prec \max\left(\frac{1}{M}, \frac{1}{L^{\gamma_0}}\right).
\end{multline}
Given the Toeplitz structure of $\widehat{\mathcal{R}}_{m,L}^{t} - \mathcal{R}_{m,L}$, the bound that was established in (\ref{eq:biais-hatsm}) directly implies that $\sup_{m=1, \ldots, M} \| \mathbb{E}\widehat{\mathcal{R}}_{m,L}^{t} - 
\mathcal{R}_{m,L} \|  \leq \kappa (L^{-\gamma_0} + N^{-1}L^{(1-\gamma_0)_{+}})$. 
This leads immediately to 
\begin{multline*}
\left| \int_{\mathcal{D}} dx \, dy \,\bar{\partial}\Phi_k(\phi)(z) 
\frac{1}{M} \sum_{m=1}^{M} \frac{1}{L}  \mathrm{Tr} \left[  \mathcal{D}_{m,L} \left(\mathbb{E}\left(\widehat{\mathcal{R}}_{m,L}^{t}\right) - 
\mathcal{R}_{m,L} \right) \mathcal{R}_{m,L}^{1/2} {\bf Q}_{m,m} \right] \right|  \leq \\  \leq  \kappa 
 \left(\frac{1}{L^{\gamma_0}} + \frac{L^{(1-\gamma_0)_{+}}}{N}\right) 
      \leq   \kappa \left(\frac{1}{L^{\gamma_0}} + \frac{1}{M}\right).
\end{multline*}
It thus remains to study $\zeta$ defined in (\ref{eq:def-zeta}). Noting that $\widehat{\mathcal{R}}_{m,L}^{t} - \mathbb{E}(\widehat{\mathcal{R}}_{m,L}^{t})$ is the $L \times L$ Toeplitz matrix with entries $\hat{r}^\circ_m(i-j) = \hat{r}_m(i-j) - \mathbb{E}\hat{r}_m(i-j)$, 
$1 \leq i,j \leq L$, we can establish, by virtue of (\ref{eq:swapDmL}) and (\ref{eq:trace-toeplitz}), 
\begin{multline*}
\frac{1}{L}  \mathrm{Tr} \left[  \mathcal{D}_{m,L} (\widehat{\mathcal{R}}^{t}_{m,L} - \mathbb{E}(\widehat{\mathcal{R}}^{t}_{m,L})) \mathcal{R}_{m,L}^{1/2} {\bf Q}_{m,m} \right] = \frac{1}{L}  \mathrm{Tr} \left[ (\widehat{\mathcal{R}}^{t}_{m,L} - \mathbb{E}(\widehat{\mathcal{R}}^{t}_{m,L}))  \mathcal{D}_{m,L}( \mathcal{R}_{m,L}^{1/2} {\bf Q}_{m,m}) \right] \\  = \sum_{u=-(L-1)}^{L-1} \hat{r}^\circ_m(u) \; \tau( \mathcal{D}_{m,L}(\mathcal{R}_{m,L}^{1/2} {\bf Q}_{m,m}) )(-u).
\end{multline*}
We first study the expectation of $\zeta$ and prove that
\begin{equation}
    \label{eq:eval-Ezeta}
    \mathbb{E}(\zeta) = \mathcal{O}\left(\frac{1}{M}\right).
\end{equation}
For this, 
we first note that, using again (\ref{eq:swapDmL}), we can write
$$
\mathbb{E}(\zeta)  = \int_{\mathcal{D}} dx \, dy \,\bar{\partial}\Phi_k(\phi)(z) \frac{1}{M} \sum_{m=1}^{M}  \sum_{u=-(L-1)}^{L-1} \mathbb{E} \left( \hat{r}^\circ_m(u) \; \frac{1}{L} \mathrm{Tr} \left( {\bf Q}^{\circ}_{m,m} 
\mathcal{D}_{m,L}( {\bf J}_L^{-u}) \mathcal{R}_{m,L}^{1/2} \right)  \right)
$$
where we recall that ${\bf Q}_{m,m}^{\circ} = {\bf Q}_{m,m} - \mathbb{E}({\bf Q}_{m,m})$. By the Cauchy-Schwarz inequality, we have 
\begin{multline}
    \left|  \mathbb{E} \left( \hat{r}^\circ_m(u) \; \frac{1}{L} \mathrm{Tr} \left( {\bf Q}^{\circ}_{m,m} \mathcal{D}_{m,L}({\bf J}_L^{-u}) \mathcal{R}_{m,L}^{1/2} \right)  \right) \right| \leq  \\ \leq \mathrm{Var}^{1/2}(\hat{r}_m(u)) \,  \mathrm{Var}^{1/2}\left(  \frac{1}{L} \mathrm{Tr} \left( {\bf Q}_{m,m} \mathcal{D}_{m,L}( {\bf J}_L^{-u}) \mathcal{R}_{m,L}^{1/2} \right)  \right)
    \label{eq:bound_multivar}
\end{multline}
and it is therefore enough to bound these two variances. Regarding $\mathrm{Var}(\hat{r}_m(u))$, we observe that we can write 
\begin{equation}
    \label{eq:expre-rmu}
    \hat{r}_m(u) = \frac{1}{N} {\bf x}_m \mathcal{R}_{m,N+L-1}^{1/2} \left( \begin{array}{c} \I_N \\ 0 \end{array} \right) {\bf J}_N^{-u} \left(\I_N, 0\right) \mathcal{R}_{m,N+L-1}^{1/2} {\bf x}_m^{H}
\end{equation}
from which we deduce immediately that $\mathrm{Var}(\hat{r}_m(u)) \leq \frac{\kappa}{N}$. Regarding the term corresponding to the second variance in (\ref{eq:bound_multivar}), we first introduce the following lemma, proven in Appendix \ref{app:prove-lemma-trace}.
\begin{lemma}
\label{le:sumsquare-derivatives-trace}
Let ${\bf x}_{m,i}$ the $i$th entry of vector ${\bf x}_m$ defined in (\ref{eq:def-xm}) and consider an $ML \times ML$ deterministic matrix ${\bf A}$.  For $z \in \mathbb{C}^+$, we have
\begin{equation}
    \label{eq:sumsquare-derivatives-trace}
    \sum_{m=1}^{M} \sum_{i=1}^{N+L-1}
    \left| \frac{1}{ML} \mathrm{Tr}  \left( \frac{\partial {\bf Q}}{\partial {\bf x}_{m,i}} {\bf A} \right) \right|^{2} \leq \frac{\kappa}{MN} \, \frac{1+|z|}{(\Imm z)^{3}} \left( 1 + \frac{1}{\Imm z} \right) \, \frac{1}{ML} \mathrm{Tr}({\bf A} {\bf A}^{H})
\end{equation}
for some nice constant $\kappa$. 
\end{lemma}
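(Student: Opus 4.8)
The plan is to differentiate the resolvent explicitly and then exploit the Hankel structure of the derivative. Write $\widehat{\mathcal{R}}_L = \mathbf{W}_N\mathbf{W}_N^H$ and $\overline{\mathcal{R}}_{\mathrm{corr},L} = \mathcal{B}_L^{-1/2}\mathbf{W}_N\mathbf{W}_N^H\mathcal{B}_L^{-1/2}$. Because of the representation (\ref{eq:def-xm}), the entries of $\mathbf{W}_N$ depend holomorphically on the variables ${\bf x}_{m,i}$ (so the Wirtinger derivative $\partial/\partial {\bf x}_{m,i}$ is the ordinary complex derivative), while $\mathbf{W}_N^H$ depends only on their conjugates; hence the resolvent identity gives
$\frac{\partial {\bf Q}}{\partial {\bf x}_{m,i}} = -{\bf Q}\,\mathcal{B}_L^{-1/2}\bigl(\frac{\partial \mathbf{W}_N}{\partial {\bf x}_{m,i}}\bigr)\mathbf{W}_N^H\mathcal{B}_L^{-1/2}{\bf Q}$. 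Inspecting the definition of $\mathbf{W}_N$ one checks that $\frac{\partial \mathbf{W}_N}{\partial {\bf x}_{m,i}} = {\bf B}_m^T{\bf G}_{m,i}$, where ${\bf B}_m^T$ is the $ML\times L$ matrix embedding the $m$th block of $L$ coordinates and ${\bf G}_{m,i}$ is the $L\times N$ matrix with $(l{+}1,n)$-entry $\frac{1}{\sqrt N}[\mathcal{R}_{m,N+L-1}^{1/2}]_{i,n+l}$, i.e. a sliding window over the $i$th row of $\mathcal{R}_{m,N+L-1}^{1/2}$. Using the cyclicity of the trace this yields $\frac{1}{ML}\mathrm{Tr}\bigl(\frac{\partial {\bf Q}}{\partial {\bf x}_{m,i}}{\bf A}\bigr) = -\frac{1}{ML}\mathrm{Tr}({\bf G}_{m,i}{\bf P}_m)$ with ${\bf M} := \mathcal{B}_L^{-1/2}{\bf Q}{\bf A}{\bf Q}\mathcal{B}_L^{-1/2}$ and ${\bf P}_m := \mathbf{W}_N^H{\bf M}{\bf B}_m^T$.

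Next I would ``de-window'' this quantity. Expanding $\mathrm{Tr}({\bf G}_{m,i}{\bf P}_m)$ and grouping the terms by the common value $p=n+l$ shows it equals $\frac{1}{\sqrt N}\sum_p[\mathcal{R}_{m,N+L-1}^{1/2}]_{i,p}({\bf v}_m)_p = \frac{1}{\sqrt N}\bigl[\mathcal{R}_{m,N+L-1}^{1/2}{\bf v}_m\bigr]_i$, where ${\bf v}_m\in\mathbb{C}^{N+L-1}$ is formed by the anti-diagonal sums of ${\bf P}_m$ and, crucially, does not depend on $i$. Summing over $i$ and using $\mathcal{R}_{m,N+L-1}\le s_{max}{\bf I}$ (a consequence of Assumption \ref{ass:bounds-spectral-densities}) gives $\sum_i|\mathrm{Tr}(\frac{\partial {\bf Q}}{\partial {\bf x}_{m,i}}{\bf A})|^2\le \frac{s_{max}}{N}\|{\bf v}_m\|^2$. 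Since each anti-diagonal of the $N\times L$ matrix ${\bf P}_m$ has at most $L$ entries, Cauchy--Schwarz yields $\|{\bf v}_m\|^2\le L\,\|{\bf P}_m\|_F^2$; and because the column ranges of the matrices ${\bf B}_m^T$ are mutually orthogonal and exhaust $\mathbb{C}^{ML}$, summing over $m$ gives $\sum_m\|{\bf P}_m\|_F^2 = \|\mathbf{W}_N^H{\bf M}\|_F^2 = \mathrm{Tr}({\bf M}^H\widehat{\mathcal{R}}_L{\bf M})$.

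It then remains to bound $\mathrm{Tr}({\bf M}^H\widehat{\mathcal{R}}_L{\bf M})$. Writing $\widehat{\mathcal{R}}_L=\mathcal{B}_L^{1/2}\overline{\mathcal{R}}_{\mathrm{corr},L}\mathcal{B}_L^{1/2}$, cancelling the $\mathcal{B}_L^{\pm1/2}$ factors and using the resolvent identity $\overline{\mathcal{R}}_{\mathrm{corr},L}{\bf Q}={\bf I}+z{\bf Q}$ reduces this to $\mathrm{Tr}\bigl({\bf A}^H\,{\bf Q}^H({\bf I}+z{\bf Q})\,{\bf A}\,{\bf Q}\mathcal{B}_L^{-1}{\bf Q}^H\bigr)$. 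The elementary trace-norm inequality $|\mathrm{Tr}({\bf A}^H{\bf Y}{\bf A}{\bf W})|\le \|{\bf Y}\|\,\|{\bf W}\|\,\mathrm{Tr}({\bf A}{\bf A}^H)$ (obtained by bounding $\|{\bf A}^H{\bf Y}{\bf A}\|$ in trace norm by $\|{\bf Y}\|\,\mathrm{Tr}({\bf A}^H{\bf A})$), together with $\|{\bf Q}\|\le(\Imm z)^{-1}$, $\|{\bf I}+z{\bf Q}\| = \sup_j|\lambda_j/(\lambda_j-z)|\le 1+|z|/\Imm z$, and $\|\mathcal{B}_L^{-1}\|\le s_{min}^{-1}$, gives $\mathrm{Tr}({\bf M}^H\widehat{\mathcal{R}}_L{\bf M})\le \kappa\,\frac{1+|z|}{(\Imm z)^3}\bigl(1+\frac1{\Imm z}\bigr)\,\mathrm{Tr}({\bf A}{\bf A}^H)$. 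Collecting the three bounds and dividing by $(ML)^2$ produces exactly the right-hand side of (\ref{eq:sumsquare-derivatives-trace}), with $\kappa$ a nice constant depending only on $s_{max},s_{min}$. The main obstacle is the bookkeeping in the middle step: correctly identifying the de-windowing map ${\bf P}_m\mapsto{\bf v}_m$, checking that ${\bf v}_m$ is independent of $i$ (this is what turns $\sum_i$ into a single factor $\|\mathcal{R}_{m,N+L-1}^{1/2}{\bf v}_m\|^2$), and controlling $\|{\bf v}_m\|$ by $\sqrt{L}\,\|{\bf P}_m\|_F$; the resolvent calculus and the final norm estimates are routine, the only mild care being to arrange the $\mathcal{B}_L^{\pm1/2}$ factors and the factor $\|{\bf I}+z{\bf Q}\|$ so that exactly the stated combination of $|z|$ and $\Imm z$ appears.
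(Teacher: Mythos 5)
Your proof is correct and follows essentially the same route as the paper's: differentiate the resolvent in the Wirtinger sense, perform the sum over $i$ exactly through the rows of $\mathcal{R}_{m,N+L-1}^{1/2}$, pick up the factor $L$ from the shift structure (your anti-diagonal Cauchy--Schwarz step plays the role of the paper's Jensen averaging over the $L$ shifted rows combined with ${\bf J}_{N+L-1}^{(l-1)}\mathcal{R}_{m,N+L-1}{\bf J}_{N+L-1}^{-(l-1)}\leq \kappa \,\I_{N+L-1}$), and reduce to the same quantity $\mathrm{Tr}({\bf M}^{H}\widehat{\mathcal{R}}_L{\bf M})$, which is then bounded exactly as in the paper via the resolvent identity $\overline{\mathcal{R}}_{\mathrm{corr},L}{\bf Q}=\I_{ML}+z{\bf Q}$ and the spectral bounds on ${\bf Q}$ and $\mathcal{B}_L^{-1}$. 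I see no gaps, and the bookkeeping yields precisely the stated constant structure $\frac{\kappa}{MN}\,\frac{1+|z|}{(\Imm z)^{3}}\left(1+\frac{1}{\Imm z}\right)\frac{1}{ML}\mathrm{Tr}({\bf A}{\bf A}^{H})$.
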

As a consequence of the above lemma, we have the following result, which follows from a direct application of the Poincar\'e-Nash inequality (see further \cite[Lemma 3.1]{loubaton-mestre-2017}).
\begin{corollary}
    Let $({\bf A}_N)_{N \geq 1}$ denote a sequence of deterministic $ML \times ML$ matrices. Then,
    \begin{equation}
        \label{eq:var_norm_trace}
        \mathrm{Var} \frac{1}{ML} \mathrm{Tr}  \left( \mathbf{A}_N \mathbf{Q} \right) \leq \frac{\kappa}{MN} \, \frac{1+|z|}{(\Imm z)^{3}} \left( 1 + \frac{1}{\Imm z} \right) \, \frac{1}{ML} \mathrm{Tr}({\bf A}_N {\bf A}_N^{H})
    \end{equation}
    for some nice constant $\kappa$.
\end{corollary}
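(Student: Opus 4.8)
The plan is to invoke the Poincar\'e--Nash variance inequality for functions of the complex Gaussian vector $\mathbf{x} = (\mathbf{x}_1,\ldots,\mathbf{x}_M)$, whose $M(N+L-1)$ components are i.i.d.\ $\mathcal{N}_\mathbb{C}(0,1)$ and which generates the observations via $\mathbf{y}_m = \mathbf{x}_m \mathcal{R}_{m,N+L-1}^{1/2}$ as in (\ref{eq:def-xm}). Recall that for a smooth function $f(\mathbf{x},\mathbf{x}^{\ast})$ with finite variance this inequality reads
\begin{equation*}
\mathrm{Var}(f) \leq \sum_{m=1}^{M}\sum_{i=1}^{N+L-1}\left( \mathbb{E}\left| \frac{\partial f}{\partial \mathbf{x}_{m,i}}\right|^{2} + \mathbb{E}\left| \frac{\partial f}{\partial \mathbf{x}_{m,i}^{\ast}}\right|^{2} \right).
\end{equation*}
I would apply this with $f = \frac{1}{ML}\mathrm{Tr}(\mathbf{A}_N\mathbf{Q})$; by cyclicity of the trace one has $\frac{\partial f}{\partial \mathbf{x}_{m,i}} = \frac{1}{ML}\mathrm{Tr}\big(\frac{\partial \mathbf{Q}}{\partial \mathbf{x}_{m,i}}\mathbf{A}_N\big)$, so the first double sum in the bound is exactly $\mathbb{E}$ of the left-hand side of (\ref{eq:sumsquare-derivatives-trace}) taken with $\mathbf{A} = \mathbf{A}_N$. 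Since the estimate in Lemma \ref{le:sumsquare-derivatives-trace} holds pathwise (its right-hand side is deterministic), the expectation passes through it at no cost, and the first sum is bounded by $\frac{\kappa}{MN}\frac{1+|z|}{(\Imm z)^{3}}\big(1+\frac{1}{\Imm z}\big)\frac{1}{ML}\mathrm{Tr}(\mathbf{A}_N\mathbf{A}_N^{H})$.

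For the second double sum, involving the anti-holomorphic derivatives $\partial/\partial\mathbf{x}_{m,i}^{\ast}$, the same bound holds by an essentially identical argument: $\overline{\mathcal{R}}_{\mathrm{corr},L} = \mathcal{B}_L^{-1/2}\widehat{\mathcal{R}}_L\mathcal{B}_L^{-1/2}$ depends on $\mathbf{x}$ only through the quadratic forms $\frac{1}{N}\mathbf{x}_m^{\ast}(\cdots)\mathbf{x}_m$, so that $\frac{\partial\mathbf{Q}}{\partial\mathbf{x}_{m,i}^{\ast}} = -\mathbf{Q}\frac{\partial\overline{\mathcal{R}}_{\mathrm{corr},L}}{\partial\mathbf{x}_{m,i}^{\ast}}\mathbf{Q}$ has exactly the structure treated in the proof of Lemma \ref{le:sumsquare-derivatives-trace}, with the roles of $\mathbf{x}_m$ and $\mathbf{x}_m^{\ast}$ interchanged. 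Alternatively, one may use the conjugation symmetry $\mathbf{Q}(z)^{H}=\mathbf{Q}(z^{\ast})$ to reduce the conjugate-derivative sum to the holomorphic case with $(\mathbf{A}_N,z)$ replaced by $(\mathbf{A}_N^{H},z^{\ast})$, noting that $\mathrm{Tr}(\mathbf{A}_N^{H}\mathbf{A}_N)=\mathrm{Tr}(\mathbf{A}_N\mathbf{A}_N^{H})$ and that the factor $\frac{1+|z|}{(\Imm z)^{3}}\big(1+\frac{1}{\Imm z}\big)$ is invariant under $z\mapsto z^{\ast}$. Adding the two contributions and absorbing the factor $2$ into the nice constant $\kappa$ yields (\ref{eq:var_norm_trace}).

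I do not anticipate a genuine obstacle here, since all the analytic work is concentrated in Lemma \ref{le:sumsquare-derivatives-trace}, which is established separately in Appendix \ref{app:prove-lemma-trace}. The only points needing a little care are to confirm that the anti-holomorphic derivatives obey the same estimate (handled by the symmetry argument above) and to exploit the pathwise character of (\ref{eq:sumsquare-derivatives-trace}) so that the expectation appearing in the Poincar\'e--Nash inequality can be inserted directly into the bound.
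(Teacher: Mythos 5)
Your proposal is correct and follows essentially the same route as the paper: the corollary is obtained by feeding the pathwise bound of Lemma \ref{le:sumsquare-derivatives-trace} (with ${\bf A}={\bf A}_N$) into the Poincar\'e--Nash inequality for the i.i.d.\ $\mathcal{N}_\mathbb{C}(0,1)$ vector ${\bf x}$, the anti-holomorphic derivative sum being handled by the symmetric argument (or the $\mathbf{Q}(z)^H=\mathbf{Q}(z^*)$ observation) and the factor $2$ absorbed into the nice constant. No gaps.
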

We can apply the above corollary to study the second variance term in (\ref{eq:bound_multivar}) by defining the $ML \times L$ matrix $\mathbf{E}_m$, which is composed of $M$ blocks of dimension $L \times L$, all of which are zero except for the $m$th one, which is equal to $\I_L$. This means that we can express $\mathbf{Q}_{m,m} = \mathbf{E}_m^H \mathbf{Q} \mathbf{E}_m$. 
Hence, the use of (\ref{eq:var_norm_trace}) with ${\bf A}_N = {\bf E}_m \mathcal{D}_{m,L}( {\bf J}_L^{-u}) \mathcal{R}_{m,L}^{1/2}  {\bf E}_m^{H}$ leads immediately to 
$$
\mathrm{Var}\left( \frac{1}{L} \mathrm{Tr} \left( {\bf Q}^{\circ}_{m,m} 
\mathcal{D}_{m,L}({\bf J}_L^{-u}) \mathcal{R}_{m,L}^{1/2}  \right) \right) 
\leq \frac{\kappa}{N}  \frac{1+|z|}{(\Imm z)^{3}} \left(1+\frac{1}{\Imm z}\right).
$$
Using these two bounds in (\ref{eq:bound_multivar}) we can conclude that
$$
|\mathbb{E}(\zeta)| \leq \frac{\kappa}{M}   \int_{\mathcal{D}} dx \, dy \,|\bar{\partial}\Phi_k(\phi)(z)| \; \frac{1}{(\Imm z)^{3/2}} \left(1+\frac{1}{\Imm z}\right)^{1/2}.
$$
Noting that $\phi$ is smooth, we see that the above integral is finite by choosing $k \geq 2$, and consequently  (\ref{eq:eval-Ezeta}) is proved.

We finally establish that $|\zeta - \mathbb{E}(\zeta)| \prec \frac{1}{M}$ following the approach in \cite{loubaton-rosuel}. More specifically, we interpret 
$\zeta$ as a function of the  $M(N+L-1)$--dimensional vector ${\bf x} = ({\bf x}_1, \ldots, {\bf x}_M)$ (where we recall that the vectors $({\bf x}_m)_{m=1, \ldots, M}$ are defined by (\ref{eq:def-xm})) and exchange $\zeta$ by a term that is Lipschitz with a relevant Lipschitz constant. For each $\epsilon > 0$, 
we denote by $\mathcal{A}_{N,\epsilon}$ the composite event 
\begin{eqnarray}
    \label{eq:def-event-A}
  \mathcal{A}_{N,\epsilon}  = & \bigcap\limits_{\substack{m=1, \ldots, M\\ u=-(L-1), \ldots, L-1}}  \left\{|\hat{r}_m(u) - \mathbb{E}(\hat{r}_m(u))| < \frac{N^{\epsilon}}{\sqrt{N}} \right\}  \\ \nonumber 
     & \bigcap \left( \bigcap_{m=1, \ldots, M} 
  \left\{ \frac{\|{\bf x}_m\|^{2}}{N+L-1} \leq 2 \right\} \right).
\end{eqnarray}
It is clear that $\left| \frac{\|{\bf x}_m\|^{2}}{N+L-1} - 1 \right| \prec \frac{1}{\sqrt{N}}$ and that 
$|\hat{r}_m(u) - \mathbb{E}(\hat{r}_m(u))| \prec \frac{1}{\sqrt{N}}$. Therefore, 
the family of events $\left(\mathcal{A}_{N,\epsilon}\right)_{N \geq 1}$ holds with exponentially
high probability, i.e. there exist $N_0$ and $\eta > 0$ such that 
$\mathbb{P}\left(\mathcal{A}_{N,\epsilon}^{c}\right) \leq \exp-N^{\eta}$ for each $N \geq N_0$. Our strategy
is to replace $\zeta$ with a certain random variable $\tilde{\zeta}_{\epsilon}$ such that
$\tilde{\zeta}_{\epsilon} = \zeta$ on $\mathcal{A}_{N,\epsilon}$ and which, considered 
as a function of ${\bf x}$, is Lipschitz with constant $\frac{N^{\epsilon}}{M}$. In order to build $\tilde{\zeta}_{\epsilon}$, we consider a smooth function $g(t)$ satisfying $g(t) = t$ if $t \in [-1,1]$, $g(t) = 0$
if $|t| \geq 2$, $g(t) \geq 0$ if $t \geq 0$,  $g(t) \leq 0$ if $t \leq 0$ and $|g(t)| \leq 2 |t|$ for each $t$. We then define $g_{N,\epsilon}(t)$ by 
\begin{equation}
    \label{eq:def-gNepsilon}
    g_{N,\epsilon}(t) = \frac{N^{\epsilon}}{\sqrt{N}} \, g\left( \frac{\sqrt{N}}{N^{\epsilon}} \, t \right)
\end{equation}
It is easy to check that $g_{N,\epsilon}$ verifies: 
\begin{equation}
\label{eq:propgepsilon}
g_{N,\epsilon}(t)  =  t \; \mbox{if} \; |t| \leq \frac{N^{\epsilon}}{\sqrt{N}}, \;  g_{N,\epsilon}(t)  =  0 \; \mbox{if} \; |t| \geq \frac{2 N^{\epsilon}}{\sqrt{N}}, \; |g_{N,\epsilon}(t)| \leq  \frac{2 N^{\epsilon}}{\sqrt{N}} \; \mbox{for each $t$} 
\end{equation}
and
\begin{equation}
\label{eq:propgepsilonprime}    
g_{N,\epsilon}'(t)  = 0 \; \mbox{if} \; |t| \geq \frac{2 N^{\epsilon}}{\sqrt{N}}, \;  |g_{N,\epsilon}'(t)|  \leq \kappa \; \mbox{if} \; |t| \leq \frac{2 N^{\epsilon}}{\sqrt{N}}.
\end{equation}
We also introduce a $\mathcal{C}_1$ function $\tilde{g}(t)$ verifying $\tilde{g}(t) = 1 $
if $t \in [0,2]$, and $\tilde{g}(t) = 0$ if $t$ does not belong to $[-1,3]$. 
We then define $\tilde{\zeta}_{\epsilon}$ by 
\begin{equation}
    \label{eq:def-zetatilde}
 \tilde{\zeta}_{\epsilon}  = \frac{1}{M} \, \int_{\mathcal{D}} dx \, dy \,\bar{\partial}\Phi_k(\phi)(z) f({\bf x},z)
 \end{equation}
 where $ f({\bf x},z)$ is defined by 
 \begin{equation}
 \label{eq:def-f}
f({\bf x},z) =  \sum_{m=1}^{M}  \tilde{g}\left(\frac{\|{\bf x}_m\|^{2}}{N+L-1}\right) \left( \sum_{u=-(L-1)}^{L-1} g_{N,\epsilon}(\hat{r}^\circ_m(u)) \;  \tau(\mathcal{D}_{m,L}(\mathcal{R}_{m,L}^{1/2} {\bf Q}_{m,m}(z))(-u)  \right).
\end{equation}
It is clear that $\tilde{\zeta}_{\epsilon} = \zeta$ on the set $\mathcal{A}_{N,\epsilon}$. In the following, we first establish that, considered as a function of ${\bf x}$,  $\tilde{\zeta}_{\epsilon}$ is a Lipschitz function with constant $\frac{N^{\epsilon}}{M}$.
This property will imply that $| \tilde{\zeta}_{\epsilon} - \mathbb{E}( \tilde{\zeta}_{\epsilon})| \prec \frac{N^{\epsilon}}{M}$. Next, we justify that if
$| \tilde{\zeta}_{\epsilon} - \mathbb{E}( \tilde{\zeta}_{\epsilon})| \prec \frac{N^{\epsilon}}{M}$, then $|\zeta - \mathbb{E}(\zeta)| \prec \frac{N^{\epsilon}}{M}$. 
As this property will be true for each $\epsilon > 0$, we will deduce from this 
that  $|\zeta - \mathbb{E}(\zeta)| \prec \frac{1}{M}$ as expected. \\

In order to show that $\tilde{\zeta}_{\epsilon}$ is a Lipschitz function with constant $\frac{N^{\epsilon}}{M}$, we establish that the norm square of the gradient of 
$\tilde{\zeta}_{\epsilon}$ is a $\mathcal{O}(\frac{N^{2\epsilon}}{M^{2}})$ term. 
\begin{lemma}
\label{le:tildezeta-lipschitz}
Under the assumptions of Theorem \ref{th:hatphi-overlinephi}, the inequality 
\begin{equation}
\| \nabla \tilde{\zeta}_{\epsilon} \|^{2} = \sum_{m_0=1}^{M} \sum_{i=1}^{N+L-1} 
\left( \left| \frac{\partial \tilde{\zeta}_{\epsilon}}{\partial {\bf x}_{m_0,i}} \right|^{2} +
\left| \frac{\partial \tilde{\zeta}_{\epsilon}}{\partial {\bf x}_{m_0,i}^{*}} \right|^{2} \right)
\leq \kappa \frac{N^{2\epsilon}}{M^{2}}
\end{equation}
holds true for some nice constant $\kappa >0$.
\end{lemma}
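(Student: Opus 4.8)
The plan is to compute the gradient of $\tilde{\zeta}_\epsilon$ entry by entry, split it according to the product rule into a contribution from the differentiation of the cutoff $\tilde g(\|{\bf x}_{m}\|^2/(N+L-1))$, a contribution from the differentiation of the truncated autocovariances $g_{N,\epsilon}(\hat r_m^\circ(u))$, and a contribution from the differentiation of the resolvent blocks ${\bf Q}_{m,m}(z)$ appearing inside $\tau(\mathcal{D}_{m,L}(\mathcal{R}_{m,L}^{1/2}{\bf Q}_{m,m}(z)))(-u)$. Since $\tilde{\zeta}_\epsilon$ is of the form $\frac1M\int_{\mathcal D}dx\,dy\,\bar\partial\Phi_k(\phi)(z)f({\bf x},z)$, it suffices by the triangle inequality (Minkowski for the $\ell^2$ norm of the gradient, which commutes with the $dx\,dy$ integral since $\int_{\mathcal D}|\bar\partial\Phi_k(\phi)(z)|C(z)\,dx\,dy<\infty$ for $k$ large enough) to bound, uniformly on $\mathcal D$, the norm square of the gradient of $f({\bf x},z)$ by a quantity of order $C(z)\,N^{2\epsilon}$, which then yields $\|\nabla\tilde{\zeta}_\epsilon\|^2\leq \kappa N^{2\epsilon}/M^2$ after the $\frac1M$ prefactor is squared and the $z$-integral absorbed into $\kappa$.

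First I would handle the resolvent term. On the support of $\tilde g$ we may assume $\|{\bf x}_m\|^2/(N+L-1)\le 3$ for every $m$, so all prefactors are under control; the quantities $\tau(\mathcal{D}_{m,L}(\cdot))(-u)$ over $u=-(L-1),\dots,L-1$ are, by Parseval (see (\ref{eq:parsevalintro})) and the continuity bound (\ref{eq:upperbound-mathcalDmathcalDH}), controlled in $\ell^2(u)$ by $\frac1L\mathrm{Tr}(\mathcal{R}_{m,L}{\bf Q}_{m,m}{\bf Q}_{m,m}^H\mathcal{R}_{m,L})\le C(z)$. Differentiating ${\bf Q}_{m,m}$ with respect to ${\bf x}_{m_0,i}$ produces terms of the form $-{\bf E}_m^H{\bf Q}\,(\partial\overline{\mathcal R}_{\mathrm{corr},L}/\partial {\bf x}_{m_0,i})\,{\bf Q}{\bf E}_m$, and the key tool here is exactly Lemma \ref{le:sumsquare-derivatives-trace} (and its Corollary on $\mathrm{Var}\frac{1}{ML}\mathrm{Tr}({\bf A}_N{\bf Q})$): summing $|\partial_{{\bf x}_{m_0,i}}\tau(\mathcal{D}_{m,L}(\mathcal{R}_{m,L}^{1/2}{\bf Q}_{m,m}))(-u)|^2$ over $(m_0,i)$ and then over $u$ and over $m$, one reorganizes each such derivative as a normalized trace $\frac{1}{ML}\mathrm{Tr}(\frac{\partial{\bf Q}}{\partial{\bf x}_{m_0,i}}{\bf A})$ against a suitable deterministic matrix ${\bf A}={\bf E}_m\mathcal{D}_{m,L}({\bf J}_L^{-u})\mathcal{R}_{m,L}^{1/2}{\bf E}_m^H$ with $\frac{1}{ML}\mathrm{Tr}({\bf A}{\bf A}^H)=O(1)$, and Lemma \ref{le:sumsquare-derivatives-trace} gives a gain of $\frac{1}{MN}$ per such sum. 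Summing the $O(L)$ values of $u$ costs an $L$, and summing over $m$ costs an $M$; since $ML/N\to c_\star$, the product $\frac{1}{MN}\cdot L\cdot M=\frac{L}{N}=O(1)$, so this piece contributes $O(C(z))$ and hence $O(C(z)N^{2\epsilon})$ trivially — in fact no $N^\epsilon$ is needed here.

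Next comes the $g_{N,\epsilon}(\hat r_m^\circ(u))$ term, which is the source of the $N^\epsilon$. Differentiating gives $g_{N,\epsilon}'(\hat r_m^\circ(u))\,\partial_{{\bf x}_{m_0,i}}\hat r_m(u)$, which vanishes unless $m_0=m$; by (\ref{eq:propgepsilonprime}) the factor $|g_{N,\epsilon}'|$ is bounded by a nice constant, and using the explicit quadratic-form expression (\ref{eq:expre-rmu}) for $\hat r_m(u)$ together with $\|\mathcal{R}_{m,N+L-1}^{1/2}\|\le s_{max}^{1/2}$ and (on the support of $\tilde g$) $\|{\bf x}_m\|\le\sqrt{3(N+L-1)}$, one gets $\sum_{i}|\partial_{{\bf x}_{m,i}}\hat r_m(u)|^2\le \kappa/N$. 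Multiplying by the squared $\ell^2$ norm of the coefficients $\tau(\mathcal{D}_{m,L}(\mathcal{R}_{m,L}^{1/2}{\bf Q}_{m,m}))(-u)$ over $u$ (which is $O(C(z))$ as above) and summing over $m$ gives $O(M\cdot\frac1N\cdot C(z))$; but this must be compared with the target $N^{2\epsilon}$: here one uses that a naive sum over $u$ of $|g_{N,\epsilon}|\le 2N^\epsilon/\sqrt N$ is not needed because the derivative picks only $g_{N,\epsilon}'$, which is bounded — so the $\ell^2$-in-$u$ bookkeeping of the coefficients is what controls this term, and one checks that $\sum_{u}|g_{N,\epsilon}'(\hat r_m^\circ(u))\,\partial\hat r_m(u)|^2\cdot(\text{coeff bound})$ already is $O(C(z)/N)$ per $m$, so the sum over $m$ gives $O(C(z)M/N)=O(C(z)/L)$. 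Finally, the cutoff-derivative term $\tilde g'(\|{\bf x}_m\|^2/(N+L-1))\cdot\frac{2{\bf x}_{m,i}^*}{N+L-1}$ is bounded similarly: $\sum_i|{\bf x}_{m,i}|^2/(N+L-1)^2\le 3/(N+L-1)$ on the relevant set, and it multiplies $\sum_u g_{N,\epsilon}(\hat r_m^\circ(u))\tau(\cdots)(-u)$, whose modulus is at most $(\sum_u|g_{N,\epsilon}|^2)^{1/2}(\sum_u|\tau(\cdots)|^2)^{1/2}\le \sqrt{2L}\cdot\frac{2N^\epsilon}{\sqrt N}\cdot C(z)$; squaring and summing over $m$ gives $O(M\cdot\frac1N\cdot L\cdot\frac{N^{2\epsilon}}{N}\cdot C(z))=O(C(z)N^{2\epsilon}L/N^2)$, which is $O(C(z)N^{2\epsilon}/N)$ since $L/N\to0$. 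Collecting the three contributions, $\|\nabla f({\bf x},z)\|^2\le \kappa C(z)N^{2\epsilon}$ uniformly on $\mathcal D$, hence $\|\nabla\tilde{\zeta}_\epsilon\|^2\le \kappa N^{2\epsilon}/M^2$ after squaring the $\frac1M$ and integrating in $z$.

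The main obstacle I anticipate is not any single estimate but the careful bookkeeping of the interplay between the sums over $u\in\{-(L-1),\dots,L-1\}$, the sum over $m$, and the sum over the $N+L-1$ coordinates $i$: one must consistently use the $\ell^2$-in-$u$ Parseval bound (\ref{eq:parsevalintro}) for the $\tau(\mathcal{D}_{m,L}(\cdot))$ coefficients rather than crude $\ell^\infty$ bounds (which would lose a factor $L$), and one must invoke Lemma \ref{le:sumsquare-derivatives-trace} in precisely the form that pairs the derivative of ${\bf Q}$ with a deterministic matrix of bounded normalized Frobenius norm, so that the gain $\frac{1}{MN}$ exactly compensates the $ML\sim c_\star N$ growth coming from summing over $(m,u)$. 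Keeping track of the powers of $\Imm z$ (so that, after multiplying by $\bar\partial\Phi_k(\phi)(z)$ which vanishes like $y^k$, the $dx\,dy$ integral over $\mathcal D$ converges for $k$ chosen large enough) is routine but must be done; this is why the statement is phrased with a nice constant $\kappa$ absorbing all such $z$-integrals.
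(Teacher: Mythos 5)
Your overall plan reproduces the paper's proof: the same product-rule split of $\partial f/\partial {\bf x}_{m_0,i}$ into a cutoff term, a $g_{N,\epsilon}'$ term and a resolvent-derivative term, the same $\ell^2$-in-$u$ control of the coefficients $\tau(\mathcal{D}_{m,L}(\cdot))(-u)$ via (\ref{eq:parsevalintro}) and (\ref{eq:upperbound-mathcalDmathcalDH}), the quadratic form (\ref{eq:expre-rmu}) for $\partial \hat{r}_{m_0}(u)/\partial {\bf x}_{m_0,i}$, and Lemma \ref{le:sumsquare-derivatives-trace} for the derivatives of ${\bf Q}$; your Minkowski step for moving the gradient through the $z$-integral is a harmless variant of the paper's Cauchy--Schwarz in $z$. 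However, there is a genuine gap in your treatment of the resolvent-derivative term, which is in fact the dominant one. That term is not the unweighted object $\sum_{m,u}\partial_{{\bf x}_{m_0,i}}\tau(\mathcal{D}_{m,L}(\mathcal{R}_{m,L}^{1/2}{\bf Q}_{m,m}))(-u)$: it is the weighted double sum $T^{3}_{m_0,i}=\sum_{m}\tilde{g}(\cdot)\sum_{u} g_{N,\epsilon}(\hat{r}^{\circ}_m(u))\,\partial_{{\bf x}_{m_0,i}}\tau(\cdots)(-u)$. What you bound is the coefficient mass $\sum_{m,u}\sum_{m_0,i}\bigl|\partial_{{\bf x}_{m_0,i}}\tau(\cdots)(-u)\bigr|^{2}=O(C(z))$, which is correct and is exactly what the paper extracts from Lemma \ref{le:sumsquare-derivatives-trace}; but the square of the weighted sum is not controlled by this quantity alone. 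Before summing over $(m_0,i)$ one must apply Cauchy--Schwarz (or Jensen, as in (\ref{eq:eval-intermediate-T3})) over the pair $(m,u)$, and since the only bound on the weights valid for \emph{every} ${\bf x}$ is $|g_{N,\epsilon}(t)|\leq 2N^{\epsilon}/\sqrt{N}$ (see (\ref{eq:propgepsilon})) while there are $M(2L-1)\sim 2c_\star N$ such weights, this step unavoidably produces the factor $ML\,N^{2\epsilon}/N\sim N^{2\epsilon}$. A Lipschitz constant must be deterministic, so the typical size of $\hat{r}^{\circ}_m(u)$ cannot be invoked here. Consequently the resolvent term contributes $O(N^{2\epsilon}C(z))$, not $O(C(z))$, and it --- not the $g_{N,\epsilon}'$ term, which as you correctly note involves only the bounded derivative $g_{N,\epsilon}'$ --- is the true source of the $N^{2\epsilon}$ in the lemma; your statement that ``no $N^{\epsilon}$ is needed here'' and your attribution of the $N^{\epsilon}$ to the $g_{N,\epsilon}$ term are both incorrect, and your final bound comes out right only because you append a gratuitous $N^{2\epsilon}$ ``trivially''.

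A secondary, non-fatal issue is the normalization bookkeeping in that same term: with ${\bf A}={\bf E}_m\mathcal{D}_{m,L}({\bf J}_L^{-u})\mathcal{R}_{m,L}^{1/2}{\bf E}_m^{H}$ one has $\frac{1}{ML}\mathrm{Tr}({\bf A}{\bf A}^{H})=O(1/M)$, not $O(1)$, and $\tau$ is a $\frac{1}{L}$-normalized trace, so converting to the $\frac{1}{ML}\mathrm{Tr}$ form of Lemma \ref{le:sumsquare-derivatives-trace} introduces a factor $M^{2}$. These two factors of $M$ cancel and give the paper's per-$(m,u)$ bound $\kappa C(z)/N$ for $\sum_{m_0,i}|\partial_{{\bf x}_{m_0,i}}\tau(\cdots)|^{2}$, but as written your accounting is off by powers of $M$ that only accidentally compensate. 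Repairing the proof amounts to inserting the missing Cauchy--Schwarz/Jensen step with the $g_{N,\epsilon}$ weights, i.e.\ reproducing (\ref{eq:eval-intermediate-T3}), after which your estimates for the other two terms (both $O(C(z))$ or better, uniformly in the relevant region) suffice to conclude.
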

We just evaluate the contribution of the derivatives w.r.t. the variables ${\bf x}_{m_0,i}$
because the derivatives w.r.t.  ${\bf x}_{m_0,i}^{*}$ can be addressed in a similar way. 
It is clear that 
$$
\left| \frac{\partial \tilde{\zeta}_{\epsilon}}{\partial {\bf x}_{m_0,i}} \right|^{2} \leq 
\frac{\kappa}{M^{2}} \int_{\mathcal{D}} dx \, dy \,\left|\bar{\partial}\Phi_k(\phi)(z)\right|^{2} \left| \frac{\partial f}{\partial {\bf x}_{m_0,i}} \right|^{2}
$$
so that Lemma \ref{le:tildezeta-lipschitz} will be established if we choose $k \geq 4$ and prove that 
\begin{equation}
    \label{eq:property-gradient-f}
 \sum_{m_0=1}^{M} \sum_{i=1}^{N}  \left| \frac{\partial f}{\partial {\bf x}_{m_0,i}} \right|^{2}
 \leq N^{2 \epsilon} \, P_1(|z|) P_2\left(\frac{1}{\Imm z}\right)
\end{equation}
where $P_1$ and $P_2$ are nice polynomials, with $\mathrm{deg}(P_2) \leq 4$. 

 We now observe that the derivative ${\partial f}/{\partial {\bf x}_{m_0,i}}$ is the sum of the followings three terms: 
 $$
 T^{1}_{m_0,i} = \tilde{g}'\left(\frac{\|{\bf x}_{m_0}\|^{2}}{N+L-1}\right) \frac{ {\bf x}_{m_0,i}^{\ast}}{N+L-1}  \left( \sum_{u=-(L-1)}^{L-1} g_{N,\epsilon}(\hat{r}^\circ_{m_0}(u)) \;  \tau(\mathcal{D}_{m_0,L}(  \mathcal{R}_{m_0,L}^{1/2} {\bf Q}_{m_0,m_0}))(-u)  \right)   
 $$
 $$
  T^{2}_{m_0,i} = \tilde{g}\left(\frac{\|{\bf x}_{m_0}\|^{2}}{N+L-1}\right)  \left( \sum_{u=-(L-1)}^{L-1} g_{N,\epsilon}'(\hat{r}^\circ_{m_0}(u)) \; \frac{\partial \hat{r}_{m_0}(u)}{\partial {\bf x}_{m_0,i}}  \;  \tau(\mathcal{D}_{m_0,L}( \mathcal{R}_{m_0,L}^{1/2} {\bf Q}_{m_0,m_0} ))(-u)  \right)   
  $$
 $$
  T^{3}_{m_0,i} =  \sum_{m=1}^{M}  \tilde{g}\left(\frac{\|{\bf x}_m\|^{2}}{N+L-1}\right) \left( \sum_{u=-(L-1)}^{L-1} g_{N,\epsilon}(\hat{r}^\circ_m(u)) \;  \tau(\mathcal{D}_{m,L}(  \mathcal{R}_{m_0,L}^{1/2} \frac{\partial {\bf Q}_{m,m}}{\partial {\bf x}_{m_0,i}} ))(-u)  \right)    
$$
We first address the behaviour of $\sum_{m_0,i}|T^{1}_{m_0,i}|^{2}$. For this, we first 
remark that
$$
\left|\tilde{g}'\left(\frac{\|{\bf x}_{m_0}\|^{2}}{N+L-1}\right)\right|^{2} \leq 
\kappa \, \mathbf{1}_{\frac{\|{\bf x}_{m_0}\|^{2}}{N+L-1} \leq 3}.
$$ 
Now, using the inequality $|g_{N,\epsilon}(t)|\leq 2 \frac{N^{\epsilon}}{\sqrt{N}}$ together with the Cauchy-Schwartz inequality, we obtain
\begin{align*}
\left| \sum_{u=-(L-1)}^{L-1} g_{N,\epsilon}(\hat{r}_{m_0}(u) - \mathbb{E}(\hat{r}_{m_0}(u)) \;  \tau( \mathcal{D}_{m_0,L}(\mathcal{R}_{m_0,L}^{1/2} {\bf Q}_{m_0,m_0}))(-u)  \right|^{2} 
\leq & \\ \leq
\kappa \, L \, \frac{N^{2\epsilon}}{N} \sum_{u=-(L-1)}^{L-1} \left| \tau(\mathcal{D}_{m_0,L}(\mathcal{R}_{m_0,L}^{1/2} {\bf Q}_{m_0,m_0}))(u)  \right|^{2}.
\end{align*}
Now, the inequality in (\ref{eq:parsevalintro}) together with (\ref{eq:upperbound-mathcalDmathcalDH}) imply that, for each $L \times L$ matrix 
${\bf A}$, we have
\begin{equation}
    \label{eq:parseval}
    \sum_{u=-(L-1)}^{L-1} \left| \tau(\mathcal{D}_{m_0,L}({\bf A}))(u) \right|^{2} \leq \kappa \, \frac{1}{L} \mathrm{Tr}({\bf A} {\bf A}^{H})
\end{equation}
for some nice constant $\kappa >0$.
Using this for ${\bf A} = \mathcal{R}_{m_0,L}^{1/2} {\bf Q}_{m_0,m_0}$ leads to the conclusion that 
\begin{equation}
    \label{eq:use-of-parseval-T1}
\sum_{u=-(L-1)}^{L-1} \left| \tau(\mathcal{D}_{m_0,L}(\mathcal{R}_{m_0,L}^{1/2} {\bf Q}_{m_0,m_0}))(u)  \right|^{2} \leq \kappa \frac{1}{L} \mathrm{Tr}\left({\bf Q}_{m_0,m_0} {\bf Q}_{m_0,m_0}^{H}\right).
\end{equation}
Therefore, we obtain that 
$$
|T^{1}_{m_0,i}|^{2} \leq \frac{\kappa}{(N+L-1)} \, \frac{L N^{2\epsilon}}{N} \;  \frac{|{\bf x}_{m_0,i}|^{2}}{N+L-1} \, \mathbf{1}_{\frac{\|{\bf x}_{m_0}\|^{2}}{N+L-1} \leq 3} \; \frac{1}{L} \mathrm{Tr}\left(  {\bf Q}_{m_0,m_0} {\bf Q}_{m_0,m_0}^{H} \right)
$$
from where we deduce that 
$$
\sum_{i=1}^{N+L-1} |T^{1}_{m_0,i}|^{2} \leq  \frac{\kappa}{(N+L-1)} \, \frac{L N^{2\epsilon}}{N} 
\frac{1}{L} \mathrm{Tr}\left(  {\bf Q}_{m_0,m_0} {\bf Q}_{m_0,m_0}^{H} \right)
$$
and 
$$
\sum_{m_0=1}^{M} \sum_{i=1}^{N+L-1} |T^{1}_{m_0,i}|^{2} \leq \kappa \,  \frac{N^{2\epsilon}}{N}
\frac{1}{ML} \mathrm{Tr}\left( {\bf Q}{\bf Q}^{H} \right) \leq \kappa \, \frac{N^{2\epsilon}}{N} \, 
\frac{1}{(\Imm z)^{2}}.
$$
We now consider $\sum_{m_0,i}|T^{2}_{m_0,i}|^{2}$. We first remark that 
$$
\left|\tilde{g}\left(\frac{\|{\bf x}_{m_0}\|^{2}}{N+L-1}\right)\right|^{2} \leq 
\kappa \, \mathbf{1}_{\frac{\|{\bf x}_{m_0}\|^{2}}{N+L-1} \leq 3}
$$
and that 
$ \left|g_{N,\epsilon}'(\hat{r}_{m_0}(u) - \mathbb{E}(\hat{r}_{m_0}(u))\right|^{2} \leq \kappa$. Therefore, a direct use of the Cauchy-Schwartz inequality and (\ref{eq:use-of-parseval-T1}) leads us to 
$$
\left|T^{2}_{m_0,i}\right|^{2} \leq \kappa \,  \mathbf{1}_{\frac{\|{\bf x}_{m_0}\|^{2}}{N+L-1} \leq 3} \, \left(\sum_{u} \left| \frac{\partial \hat{r}_{m_0}(u)}{\partial {\bf x}_{m_0,i}} \right|^{2} \right) \; 
\frac{1}{L} \mathrm{Tr}\left(  {\bf Q}_{m_0,m_0} {\bf Q}_{m_0,m_0}^{H} \right).
$$
Using (\ref{eq:expre-rmu}), we obtain that 
$$
\sum_{i=1}^{N+L-1} \left| \frac{\partial \hat{r}_{m_0}(u)}{\partial {\bf x}_{m_0,i}} \right|^{2}  \leq \frac{\kappa}{N+L-1} \frac{\|{\bf x}_{m_0}\|^{2}}{N+L-1}
$$
and eventually that 
$$
\sum_{m_0,i} \left|T^{2}_{m_0,i}\right|^{2} \leq \kappa \, \frac{1}{ML} \mathrm{Tr}\left( 
{\bf Q} {\bf Q}^{H} \right) \leq \frac{\kappa}{(\Imm z)^{2}}.
$$
We finally study  $\sum_{m_0,i}|T^{3}_{m_0,i}|^{2}$. Using once more the fact that
$\left| g_{N,\epsilon}(t) \right|^{2} \leq 4 \frac{N^{2\epsilon}}{N}$ and that
$\left|\tilde{g}\left(\frac{\|{\bf x}_{m_0}\|^{2}}{N+L-1}\right)\right|^{2} \leq 
\kappa$, Jensen inequality leads immediately that 
\begin{equation}
    \label{eq:eval-intermediate-T3}
\sum_{m_0,i} |T^{3}_{m_0,i}|^{2} \leq \frac{\kappa N^{2\epsilon}}{N} \, ML \, 
\sum_{m=1}^{M} \sum_{u=-(L-1)}^{L-1} \sum_{m_0,i} \left|\tau\left(\mathcal{D}_{m,L}\left(\mathcal{R}_{m,L}^{1/2} \frac{\partial {\bf Q}_{m,m}}{\partial {\bf x}_{m_0,i}}\right)\right)(u)  \right|^{2} 
\end{equation}
In order to evaluate  $\sum_{m_0,i}|T^{3}_{m_0,i}|^{2}$, we use (\ref{eq:swapDmL}) and observe that we can write
$$
\tau\left(\mathcal{D}_{m,L}\left( \mathcal{R}_{m,L}^{1/2} \frac{\partial {\bf Q}_{m,m}}{\partial {\bf x}_{m_0,i}}\right)\right)(u) = \frac{1}{L} \mathrm{Tr} \left( \frac{\partial {\bf Q}_{m,m}}{\partial {\bf x}_{m_0,i}} \mathcal{D}_{m,L}({\bf J}_L^{u}) \mathcal{R}_{m,L}^{1/2} \right) = \frac{1}{L}  \mathrm{Tr} \left( \frac{\partial {\bf Q}}{\partial {\bf x}_{m_0,i}} {\bf A} \right)
$$
where ${\bf A}$ is the $ML \times ML$ matrix defined by 
${\bf A} = {\bf E}_m \mathcal{D}_{m,L}({\bf J}_L^{u}) \mathcal{R}_{m,L}^{1/2} {\bf E}_m^{H}$. Lemma \ref{le:sumsquare-derivatives-trace} leads immediately to 
$$
\sum_{m_0,i} \left|\tau\left(\mathcal{D}_{m,L}\left( \mathcal{R}_{m,L}^{1/2} \frac{\partial {\bf Q}_{m,m}}{\partial {\bf x}_{m_0,i}}\right) \right)(u)\right|^{2} \leq \frac{\kappa}{N} \, \frac{(1+|z|)}{(\Imm z)^{3}} \left( 1 + \frac{1}{\Imm z} \right)
$$
Plugging this into the evaluation (\ref{eq:eval-intermediate-T3}) eventually 
leads to 
$$
\sum_{m_0,i} |T^{3}_{m_0,i}|^{2} \leq \kappa \, N^{2 \epsilon} \, \frac{(1+|z|)}{(\Imm z)^{3}} \left( 1 + \frac{1}{\Imm z} \right) 
$$
This establishes (\ref{eq:property-gradient-f}) and Lemma \ref{le:tildezeta-lipschitz}. 
Therefore, we have shown that, considered as a function of ${\bf x}$, $\tilde{\zeta}_{\epsilon}$ is Lipschitz with constant $\kappa \frac{N^{\epsilon}}{M}$. 
The Gaussian concentration inequality thus implies that $\left| \tilde{\zeta}_{\epsilon} - \mathbb{E}(\tilde{\zeta}_{\epsilon}) \right| \prec \frac{N^{\epsilon}}{M}$. 

It remains to justify that $\left| \tilde{\zeta}_{\epsilon} - \mathbb{E}(\tilde{\zeta}_{\epsilon}) \right| \prec \frac{N^{\epsilon}}{M}$ implies that 
$\left| \zeta - \mathbb{E}(\zeta) \right| \prec \frac{N^{\epsilon}}{M}$. For this, 
it is sufficient to follow the proof of \cite[Lemma 4.1, p.41]{loubaton-rosuel}.

\section{Evaluation of the modified statistic $\overline{\phi}_N$}
\label{sec:evalbarphi}
\subsection{Reduction to the study of the expectation of $\overline{\phi}_N$}
\label{sec:reduction2expect}
In this short section, we show that we can reduce the study of the statistic $\overline{\phi}_N$ to the study of the expectation $\mathbb{E}(\overline{\phi}_N)$ up to an error that is dominated by $\frac{1}{M\sqrt{L}}$. We express the result in terms of the following proposition.

\begin{proposition}
    \label{prop:barphi-Ebarphi}
    Let Assumptions \ref{assum:statistics}-\ref{ass:bounds-spectral-densities} and \ref{as:norm-r-omega} hold true and let $\phi$ have the same properties as in the statement of Theorem \ref{th:hatphi-overlinephi}. Then, 
    \begin{equation}
        \label{eq:barphi-Ebarphi}
        \left| \overline{\phi}_N - \mathbb{E}\left( \overline{\phi}_N \right) \right| \prec \frac{1}{M\sqrt{L}}.
    \end{equation}
\end{proposition}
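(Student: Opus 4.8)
The plan is to prove Proposition \ref{prop:barphi-Ebarphi} by combining the Helffer-Sj\"ostrand representation with a variance bound obtained from the Poincar\'e-Nash inequality, exactly in the spirit of the argument used for $\mathbb{E}(\zeta)$ in Section \ref{sec:influenceblockmat}. First I would write, for $\phi$ smooth and (after the usual truncation argument based on Proposition \ref{prop:concentration-hatcorr-overlinecorr}) compactly supported on $[-\delta,\alpha]$,
\begin{equation*}
\overline{\phi}_N - \mathbb{E}(\overline{\phi}_N) = \frac{1}{\pi}\mathrm{Re}\int_{\mathcal{D}} dx\,dy\,\bar{\partial}\Phi_k(\phi)(z)\,\left( \frac{1}{ML}\mathrm{Tr}\,\mathbf{Q}_N(z) - \mathbb{E}\,\frac{1}{ML}\mathrm{Tr}\,\mathbf{Q}_N(z) \right),
\end{equation*}
where $\mathbf{Q}_N(z)$ is the resolvent of $\overline{\mathcal{R}}_{\mathrm{corr},L}$ and $\mathcal{D}=[-\delta,\alpha]\times[0,2]$. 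It then suffices to control the centered quantity $\left(\frac{1}{ML}\mathrm{Tr}\,\mathbf{Q}_N(z)\right)^{\circ}$, uniformly in $z\in\mathcal{D}$, with a bound that is integrable against $|\bar{\partial}\Phi_k(\phi)(z)|$ once $k$ is chosen large enough.

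The core estimate is a variance bound of the form
\begin{equation*}
\mathrm{Var}\left( \frac{1}{ML}\mathrm{Tr}\,\mathbf{Q}_N(z) \right) \leq \frac{\kappa}{M^2 L}\, C(z)
\end{equation*}
for a nice constant $\kappa$ and some $C(z)=P_1(|z|)P_2(1/\Imm z)$. To obtain it I would express $\overline{\mathcal{R}}_{\mathrm{corr},L}=\mathcal{B}_L^{-1/2}\mathbf{W}_N\mathbf{W}_N^H\mathcal{B}_L^{-1/2}$ as a function of the $M(N+L-1)$ i.i.d.\ $\mathcal{N}_\mathbb{C}(0,1)$ entries of the vector $\mathbf{x}=(\mathbf{x}_1,\ldots,\mathbf{x}_M)$ with $\mathbf{y}_m=\mathbf{x}_m\mathcal{R}_{m,N+L-1}^{1/2}$, apply the Poincar\'e-Nash inequality, and bound $\sum_{m,i}\left(|\partial_{\mathbf{x}_{m,i}}\frac{1}{ML}\mathrm{Tr}\,\mathbf{Q}|^2+|\partial_{\mathbf{x}_{m,i}^*}\frac{1}{ML}\mathrm{Tr}\,\mathbf{Q}|^2\right)$. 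Here is where Lemma \ref{le:sumsquare-derivatives-trace} (with $\mathbf{A}=\I_{ML}$) does most of the work: it already gives $\sum_{m,i}|\partial_{\mathbf{x}_{m,i}}\frac{1}{ML}\mathrm{Tr}\,\mathbf{Q}|^2 \leq \frac{\kappa}{MN}\,\frac{1+|z|}{(\Imm z)^3}\left(1+\frac{1}{\Imm z}\right)$, and since $N = ML/c_N \asymp ML$, the factor $\frac{1}{MN}$ is $\asymp \frac{1}{M^2 L}$, which is precisely the rate claimed. One subtlety is that Lemma \ref{le:sumsquare-derivatives-trace} as stated applies to derivatives of the resolvent of $\overline{\mathcal{R}}_{\mathrm{corr},L}$ with respect to the $\mathbf{x}_{m,i}$; I would either invoke it directly (the matrix model there is the same one, with the $\mathcal{B}_L^{-1/2}$ factors folded in) or re-derive the analogous bound using $\partial_{\mathbf{x}_{m,i}}\mathbf{Q} = -\mathbf{Q}\,(\partial_{\mathbf{x}_{m,i}}\overline{\mathcal{R}}_{\mathrm{corr},L})\,\mathbf{Q}$ together with the rank-one structure of the derivative and the bounds $\|\mathbf{Q}\|\leq (\Imm z)^{-1}$, $\|\mathbf{W}_N\|\prec 1$ (Corollary \ref{cor:control-norm-calR}), $\|\mathcal{B}_L^{-1/2}\|\leq s_{min}^{-1/2}$.

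Having the variance bound, Chebyshev's inequality gives, for fixed $z\in\mathcal{D}$, $\mathbb{P}\big(|(\frac{1}{ML}\mathrm{Tr}\,\mathbf{Q})^{\circ}| > t\big) \leq \frac{\kappa\,C(z)}{M^2 L\, t^2}$; but to upgrade this to the stochastic-domination statement $\prec \frac{1}{M\sqrt L}$ I would instead use the Gaussian concentration inequality \eqref{eq:concentrationGaussian}, after the Lipschitz-truncation device already deployed for $\tilde\zeta_\epsilon$ in Section \ref{sec:influenceblockmat}: on the high-probability event $\{\|\mathbf{x}_m\|^2/(N+L-1)\leq 2\ \forall m\}\cap\{\|\overline{\mathcal{R}}_{\mathrm{corr},L}\|\leq\xi\}$ one replaces $\frac{1}{ML}\mathrm{Tr}\,\mathbf{Q}_N(z)$ by a globally Lipschitz modification whose Lipschitz constant, by the same gradient computation, is $O(N^\epsilon/(M\sqrt L))$ (the extra $N^\epsilon$ absorbing the truncation), whence $|(\frac{1}{ML}\mathrm{Tr}\,\mathbf{Q})^{\circ}|\prec \frac{1}{M\sqrt L}$ uniformly on a fine enough grid of $\mathcal{D}$, and then on all of $\mathcal{D}$ by continuity/Lipschitz-in-$z$ control and Lemma \ref{lem:stochastic_dom_maximum}. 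Finally, integrating against $\bar{\partial}\Phi_k(\phi)(z)$ with $k$ large enough that $\int_{\mathcal{D}}|\bar{\partial}\Phi_k(\phi)(z)|\,C(z)^{1/2}\,dx\,dy<\infty$ yields \eqref{eq:barphi-Ebarphi}. The main obstacle, and the step requiring the most care, is the uniformity in $z$ near the real axis together with the truncation-to-Lipschitz step: one must check that the cutoff functions do not destroy the $\frac{1}{M\sqrt L}$ rate and that the relevant integrals over $\mathcal{D}$ converge, which dictates the choice of $k$; the algebraic heart of the matter, the $\frac{1}{MN}$ gradient bound, is essentially already available from Lemma \ref{le:sumsquare-derivatives-trace}.
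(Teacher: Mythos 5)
Your proof uses the same two key ingredients as the paper (the Helffer--Sj\"ostrand representation after the truncation of $\phi$, and the gradient bound of Lemma \ref{le:sumsquare-derivatives-trace} with $\mathbf{A}_N=\I_{ML}$ feeding a Gaussian concentration argument, with $\frac{1}{MN}\asymp\frac{1}{M^2L}$ giving the rate), but you route them through a pointwise-in-$z$ concentration of $\frac{1}{ML}\mathrm{Tr}\,\mathbf{Q}_N(z)$, followed by a grid/uniformity argument over $\mathcal{D}$ and a Lipschitz-truncation device, whereas the paper applies the concentration inequality (\ref{eq:concentrationGaussian}) \emph{once}, directly to the integrated statistic $\overline{\phi}_N^\circ$ viewed as a function of $\mathbf{x}$. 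Concretely, since $\nabla\overline{\phi}_N$ is the integral over $\mathcal{D}$ of $\bar{\partial}\Phi_k(\phi)(z)$ times the gradient of the normalized trace, Cauchy--Schwarz plus Lemma \ref{le:sumsquare-derivatives-trace} gives $\|\nabla\overline{\phi}_N\|^2\leq\frac{\kappa}{MN}\int_{\mathcal{D}}|\bar{\partial}\Phi_k(\phi)(z)|^2\frac{1+|z|}{(\Imm z)^3}\left(1+\frac{1}{\Imm z}\right)dx\,dy$, which is finite for $k\geq4$ and, crucially, holds for \emph{every} realization of $\mathbf{x}$: the proof of Lemma \ref{le:sumsquare-derivatives-trace} only uses $\|\mathbf{Q}\|\leq(\Imm z)^{-1}$ and the resolvent identity $\mathcal{B}_L^{-1/2}\mathbf{W}_N\mathbf{W}_N^H\mathcal{B}_L^{-1/2}\mathbf{Q}=\I+z\mathbf{Q}$, never a bound on $\|\mathbf{W}_N\|$ or $\|\mathbf{x}\|$. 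So $\overline{\phi}_N$ is globally Lipschitz with constant $O(1/(M\sqrt{L}))$ and the proposition follows in one step. The two complications you single out as the main obstacles --- uniformity in $z$ as $\Imm z\to0$ (grid plus Lipschitz-in-$z$ control, exploiting the $y^k$ decay of $\bar{\partial}\Phi_k(\phi)$ near the real axis) and the $\tilde{\zeta}_\epsilon$-style truncation --- are therefore artifacts of the pointwise route: the truncation is not needed even for fixed $z$ (the trace of the resolvent is already globally Lipschitz in $\mathbf{x}$, unlike $\zeta$ in Section \ref{sec:influenceblockmat} where $\hat{r}_m(u)$ enters quadratically), and the uniformity issue disappears when you concentrate the integrated functional. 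Your variance bound via Poincar\'e--Nash is correct (it is the Corollary following Lemma \ref{le:sumsquare-derivatives-trace}) but, as you note, Chebyshev alone cannot deliver stochastic domination, so it plays no role in the final argument. In short: your plan can be made to work, but at the cost of extra technical machinery that the paper's more economical ordering of the same estimates avoids.
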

We devote the rest of the section to the proof of this result. We first reason that, without loss of generality, we can replace the function $\phi$ by a smooth function that is supported by $[-\delta,\alpha]$ for some $\alpha > \alpha_0$ (see the statement of Theorem \ref{th:hatphi-overlinephi} for a definition of $\alpha_0$, $\delta$). The justification is the same that we used at the initial steps of the proof of Theorem \ref{th:hatphi-overlinephi} and is therefore omitted. We therefore focus on this class functions for the rest of the proof. 

In order to show Proposition \ref{prop:barphi-Ebarphi}, consider again the Helffer-Sj\"ostrand representation of $\overline{\phi}_N$ in (\ref{eq:expre-hatphi-overlinephi-hs}), which allows us to write
\begin{equation}
    \overline{\phi}^\circ_N = \overline{\phi}_N - \mathbb{E}\overline{\phi}_N   = \frac{1}{\pi} \mathrm{Re} \int_{\mathcal{D}} dx \, dy \,\bar{\partial}\Phi_k(\phi)(z)\frac{1}{ML}\mathrm{Tr} \left(\mathbf{Q}_N(z) - \mathbb{E}\mathbf{Q}_N(z)\right).
\end{equation}
Here again, the idea is to consider $\overline{\phi}^\circ_N$ as a function of the $\mathcal{N}_\mathbb{C}(0,\I_{M(N+L-1)})$-distributed random vector ${\bf x}$ defined in (\ref{eq:def-x}). We will show that this function is Lipschitz with constant of order $\mathcal{O}((M\sqrt{L})^{-1})$, so that the result follows from conventional concentration results of Gaussian functionals in (\ref{eq:concentrationGaussian}) (see also \cite[Theorem 2.1.12]{tao-book}).

Indeed, let $\nabla \overline{\phi}^\circ_N$ denote the gradient of $\overline{\phi}^\circ_N$ with respect to ${\bf x}$. Then, we can obviously write $$
    \left\| \nabla \overline{\phi}^\circ_N \right\|^2 = \sum_{m_0=1}^{M} \sum_{i=1}^{N+L-1} \left| \frac{\partial \overline{\phi}_N}{\partial \mathbf{x}_{m_0,i}} \right|^2 + \left| \frac{\partial \overline{\phi}_N}{\partial \mathbf{x}^\ast_{m_0,i}} \right|^2
$$
where we recall that $\mathbf{x}_{m_0,i}$ denotes the $i$th entry of $\mathbf{x}_m$, the $m$th block of ${\bf x}$, with dimension $N+L-1$. A direct use of Lemma \ref{le:sumsquare-derivatives-trace} shows that 
$$
    \sum_{m_0=1}^{M} \sum_{i=1}^{N+L-1} \left| \frac{\partial \overline{\phi}_N}{\partial \mathbf{x}_{m_0,i}} \right|^2 \leq
    \frac{\kappa}{MN} \int_\mathcal{D} dx \, dy \, \left|\overline{\partial}\Phi_k(\phi)(z) \right|^2  
    \frac{1+|z|}{(\Imm z)^3} \left( 1+ \frac{1}{\Imm z} \right)
$$
for some nice constant $\kappa$, where the integral on the right hand side is finite if we select $k \geq 4$, which is always possible because $\phi$ is smooth. This concludes the proof of Proposition \ref{prop:barphi-Ebarphi}.

\subsection{Weak convergence of $\overline{\mu}_N(\lambda)$ and evaluation of $\mathbb{E}(\overline{\phi}_N)$}
\label{sec:deterministic-equivalent}

The aim of this section is twofold. On the one hand, we will show that $ \overline{\mu}_N(\lambda) - \mu_N(\lambda) $ converges weakly almost surely to zero. On the other hand, we will evaluate the convergence of the 
$\mathbb{E}(\overline{\phi}_N)$ by establishing that, when $\frac{L^{3/2}}{MN} \rightarrow 0$ (equivalently $\beta < 4/5$), we have
\begin{equation}
    \label{eq:convergEphi-intphidmu}
    \left| \mathbb{E}\left(\overline{\phi}_N\right) - \int \phi(\lambda) d\mu_N(\lambda) \right|  \leq \kappa \frac{L}{MN}
\end{equation}
for some nice constant $\kappa >0$.


 
We will address the problem by studying $\frac{1}{ML}\mathrm{Tr}\left( \mathbb{E}\mathbf{Q}_N(z) - \mathbf{T}_N(z) \right)$. 
We will study this term by conveniently adapting the tools in \cite{loubaton-mestre-2017} to the present context. First, we study the master equations that define the matrix
function $\mathbf{T}_{N}(z)$ in the statement of Theorem \ref{thm:main_result}
and establish existence and unicity of the solution using again the tools
developed in \cite[Section 5]{loubaton-mestre-2017}. We then establish that, considering a sequence of $ML \times ML$ deterministic matrices $\mathbf{A}_N$ of uniformly bounded spectral norm, we have
\[
\left|\frac{1}{ML}\mathrm{Tr}\left[  \left(  \mathbb{E}\mathbf{Q}_{N} (z)-\mathbf{T}_{N}(z)\right)  \mathbf{A}_{N}\right] \right| \leq  \kappa \frac{L}{MN}
\]
for $z$ in a certain subset of $\mathbb{C}^+$, assuming that $\beta < 4/5$. Even if the subset where the above inequality 
holds is not the whole semiplane $\mathbb{C}^+$, it will be sufficient to deduce (\ref{eq:convergEphi-intphidmu}) by conveniently adapting
the arguments in \cite[Lemma 5.5.5]{and-gui-zei-2010}.


First of all, we consider here the two asymptotic equivalents $\mathbf{T}(z)$,$\widetilde
{\mathbf{T}}(z)$, as the solutions to the equations (\ref{eq:canonical-T})-(\ref{eq:canonical-tildeT}).

\begin{proposition}
\label{proposition:existence_unicity} There exists a unique pair of functions
$(\mathbf{T}(z),\widetilde{\mathbf{T}}(z))\in\mathcal{S}_{ML}(\mathbb{R}
^{+})\times\mathcal{S}_{N}(\mathbb{R}^{+})$ that satisfy (\ref{eq:canonical-T})--(\ref{eq:canonical-tildeT}) for each $z\in\mathbb{C}^{+}$.
Moreover, one can find two nice constants $\eta$ and $\tilde{\eta}$ such that
\begin{align}
\mathbf{T}(z)\mathbf{T}^{H}(z)  &  \geq\frac{(\Imm z)^2}{16(\eta
^{2}+|z|^{2})^{2}}\I_{ML}\label{eq:lower-bound-TT*}\\
\widetilde{\mathbf{T}}(z)\widetilde{\mathbf{T}}^{H}(z)  &  \geq\frac
{(\Imm{z})^{2}}{16(\tilde{\eta}^{2}+|z|^{2})^{2}}\I_{N}.
\label{eq:lower-bound-tildeTtildeT*}
\end{align}

\end{proposition}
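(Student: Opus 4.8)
The plan is to follow the strategy of \cite[Section 5]{loubaton-mestre-2017} almost verbatim, adapting it to the presence of the deterministic block-diagonal factors $\mathcal{B}_L^{-1/2}$ in the master equations (\ref{eq:canonical-T})--(\ref{eq:canonical-tildeT}). First I would establish \emph{existence} by a fixed-point / Montel argument: define the map $\mathbf{\Gamma}(z) \mapsto (\widetilde{\mathbf{\Upsilon}}(z), \mathbf{\Upsilon}(z))$ furnished by Proposition \ref{prop:Upsilon-tildeUpsilon}, which sends the block-diagonal part of $\mathcal{S}_{ML}(\mathbb{R}^+)$ into itself (after composing with $\mathrm{Bdiag}$, since $\mathbf{T}(z)$ as defined by (\ref{eq:canonical-T}) is automatically block-diagonal because $\Psi(\cdot)$ produces a block-diagonal matrix and $\mathcal{B}_L^{-1/2}$ is block-diagonal). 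The class $\mathcal{S}_{ML}(\mathbb{R}^+)$ is a normal family on compact subsets of $\mathbb{C}^+$ by the uniform bound in Proposition \ref{prop:class-S}(iv), so one extracts a convergent subsequence of Picard iterates; by continuity of the operators $\Psi$, $\overline{\Psi}$ and of matrix inversion (the relevant inverses exist by Proposition \ref{prop:Upsilon-tildeUpsilon}) the limit is a fixed point, hence a solution $(\mathbf{T}(z), \widetilde{\mathbf{T}}(z)) \in \mathcal{S}_{ML}(\mathbb{R}^+) \times \mathcal{S}_N(\mathbb{R}^+)$. Alternatively, and more in the spirit of the cited reference, existence can be imported directly: the solution is known to be the deterministic equivalent of $\mathbb{E}\mathbf{Q}_N(z)$ for the matrix $\overline{\mathcal{R}}_{\mathrm{corr},L}$, which is well defined.

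The heart of the argument is \emph{uniqueness}. Suppose $(\mathbf{T}^{(1)}, \widetilde{\mathbf{T}}^{(1)})$ and $(\mathbf{T}^{(2)}, \widetilde{\mathbf{T}}^{(2)})$ are two solutions in the stated class. Subtracting the two instances of (\ref{eq:canonical-T}) and of (\ref{eq:canonical-tildeT}) and using the resolvent-type identity $\mathbf{A}^{-1} - \mathbf{B}^{-1} = \mathbf{A}^{-1}(\mathbf{B}-\mathbf{A})\mathbf{B}^{-1}$, one expresses $\mathbf{T}^{(1)} - \mathbf{T}^{(2)}$ as a linear image of itself: $\mathbf{T}^{(1)} - \mathbf{T}^{(2)} = \Phi(\mathbf{T}^{(1)} - \mathbf{T}^{(2)})$, where $\Phi$ is exactly the operator defined in (\ref{eq:def-Phi-general}) with the choices $\mathbf{S} = \mathcal{B}_L^{-1/2}\mathbf{T}^{(1)}$, $\mathbf{T} = \mathcal{B}_L^{-1/2}\mathbf{T}^{(2)}$ and the analogous substitutions for $\widetilde{\mathbf{S}}, \widetilde{\mathbf{T}}$ (up to the factor $z^2 c_N$ already built into (\ref{eq:def-Phi-general})). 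Iterating gives $\mathbf{T}^{(1)} - \mathbf{T}^{(2)} = \Phi^{(n)}(\mathbf{T}^{(1)} - \mathbf{T}^{(2)})$ for all $n$, so it suffices to show $\Phi^{(n)}(\mathbf{X}) \to 0$ for every $\mathbf{X}$. By Proposition \ref{prop:Phi-Phi_S-Phi_TH}, this reduces to exhibiting positive definite matrices $\mathbf{Y}_1, \mathbf{Y}_2$ with $\Phi_{\mathbf{S}}^{(n)}(\mathbf{Y}_1) \to 0$ and $\Phi_{\mathbf{T}^H}^{(n)}(\mathbf{Y}_2) \to 0$. The natural candidates are $\mathbf{Y}_1 = \mathbf{S}\mathbf{S}^H$ (resp. $\mathbf{Y}_2 = \mathbf{I}_{ML}$): using the definitions (\ref{eq:defPhi-general-S})--(\ref{eq:defPhi-general-TH}), the property (\ref{eq:property_commutative}) relating $\Psi$ and $\overline{\Psi}$, and the bounds on $\|\Psi_K^{(m)}\|$, one shows that each application of $\Phi_{\mathbf{S}}$ contracts in the sense that $\frac{1}{ML}\mathrm{Tr}\,\Phi_{\mathbf{S}}^{(n+1)}(\mathbf{I}) \le \rho \cdot \frac{1}{ML}\mathrm{Tr}\,\Phi_{\mathbf{S}}^{(n)}(\mathbf{I})$ for some $\rho < 1$ when $\Imm z$ is large, and then one propagates this to all of $\mathbb{C}^+$ by the identity theorem for analytic functions, exactly as in \cite[Section 5]{loubaton-mestre-2017}. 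This is the step I expect to be the main obstacle, since it requires checking that the block-diagonal conjugation by $\mathcal{B}_L^{-1/2}$ (whose norm and inverse norm are controlled by $s_{\min}, s_{\max}$ via Assumption \ref{ass:bounds-spectral-densities}) does not destroy the contraction estimate; the uniform two-sided bound $s_{\min}\mathbf{I}_L \le \mathcal{R}_{m,L} \le s_{\max}\mathbf{I}_L$ is precisely what makes this go through.

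Finally, the lower bounds (\ref{eq:lower-bound-TT*})--(\ref{eq:lower-bound-tildeTtildeT*}) follow immediately once uniqueness is established, because the unique solution coincides with the pair $(\mathbf{\Upsilon}(z), \widetilde{\mathbf{\Upsilon}}(z))$ of Proposition \ref{prop:Upsilon-tildeUpsilon} (with $\mathbf{\Gamma}(z) = \mathbf{T}(z)$), and that proposition already provides the bounds (\ref{eq:lower-bound-UU*})--(\ref{eq:lower-bound-tildeUtildeU*}) with the same structure; one simply transfers the nice constants $\eta, \widetilde{\eta}$. I would close the proof by noting that analyticity of $\mathbf{T}(z), \widetilde{\mathbf{T}}(z)$ on $\mathbb{C}^+$ is inherited from membership in $\mathcal{S}_{ML}(\mathbb{R}^+)$, $\mathcal{S}_N(\mathbb{R}^+)$ via Proposition \ref{prop:class-S}(i), so the fixed point is genuinely an element of the required classes.
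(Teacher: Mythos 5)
Your architecture is essentially the paper's: existence by iterating the system (\ref{eq:canonical-T})--(\ref{eq:canonical-tildeT}) through Proposition \ref{prop:Upsilon-tildeUpsilon} and extracting a limit among block-diagonal matrices with blocks in $\mathcal{S}_L(\mathbb{R}^+)$; uniqueness by writing the difference of two solutions as a fixed point of the specialization of the operator (\ref{eq:def-Phi-general}) and invoking Proposition \ref{prop:Phi-Phi_S-Phi_TH}; and the bounds (\ref{eq:lower-bound-TT*})--(\ref{eq:lower-bound-tildeTtildeT*}) by feeding the solution back into Proposition \ref{prop:Upsilon-tildeUpsilon} (note $\mathbf{T}$ is indeed block diagonal with diagonal blocks in $\mathcal{S}_L(\mathbb{R}^+)$, so this is licit). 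Two bookkeeping points: the quantity the paper iterates is $\mathbf{T}_{\mathcal{B}}-\mathbf{S}_{\mathcal{B}}$ with $\mathbf{T}_{\mathcal{B}}=\mathcal{B}_L^{-1/2}\mathbf{T}\mathcal{B}_L^{-1/2}$, i.e.\ the conjugation acts on both sides (see (\ref{eq:T-S})--(\ref{eq:def-Phi0})), which is harmless since $\mathcal{B}_L$ is invertible; and your ``alternative'' existence remark (importing the solution as the deterministic equivalent of $\mathbb{E}\mathbf{Q}_N$) is circular as stated, because $\mathbf{R}_N(z)$ in (\ref{eq:def_Rztilde})--(\ref{eq:def_Rz}) is not defined by the canonical system --- so the Picard/normal-family route has to carry the existence proof, as it does in the paper.

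The genuine divergence is in how the iterated operator is killed. The paper does not use a large-$\Imm z$ contraction: its Lemma \ref{le:convergence-series-PhinTB} (proof as in Lemma 5.4 of the cited reference) shows $\Phi_{\mathbf{T}_{\mathcal{B}}}^{(n)}(\mathbf{X})\rightarrow 0$ and $\Phi_{\mathbf{T}_{\mathcal{B}}^{H}}^{(n)}(\mathbf{X})\rightarrow 0$ for \emph{every} $z\in\mathbb{C}^{+}$, via the imaginary-part identity derived from the canonical equations (the analogue of the identity displayed for $\mathbf{T}_{\mathcal{B},mp}$ in Section \ref{sec:approximation-MP}): iterating $\Imm\mathbf{T}_{\mathcal{B}}/\Imm z=\mathcal{B}_L^{-1/2}\mathbf{T}\mathbf{T}^{H}\mathcal{B}_L^{-1/2}+\Phi_{\mathbf{T}_{\mathcal{B}}^{H}}\left(\Imm\mathbf{T}_{\mathcal{B}}/\Imm z\right)$ and using positivity of the operator together with the lower bound on $\mathbf{T}\mathbf{T}^{H}$ shows the partial sums are bounded, hence the terms vanish. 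Your route --- a contraction with $\rho<1$, valid in fact only where $|z|^{2}/(\Imm z)^{4}$ is small rather than on all of $\{\Imm z \text{ large}\}$, giving $\mathbf{T}=\mathbf{S}$ there --- can be made to work, but the continuation step must be phrased correctly: the identity theorem does not propagate the contraction or the convergence $\Phi^{(n)}(\mathbf{X})\rightarrow 0$; what it propagates is the \emph{equality} $\mathbf{T}(z)=\mathbf{S}(z)$, established on an open subset and then extended to all of $\mathbb{C}^{+}$ because both solutions are analytic there (Proposition \ref{prop:class-S}(i)). Stated that way your uniqueness argument is simpler and does not even need the Schwarz machinery of Proposition \ref{prop:Phi-Phi_S-Phi_TH}, but it only yields uniqueness within the analytic class; the paper's pointwise argument at a fixed $z$ is the one that is recycled later (Section \ref{sec:approximation-MP}), which is why the paper proves the stronger lemma.
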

The proof follows the steps as the proof of Proposition 5.1 in
\cite{loubaton-mestre-2017}. To prove existence, we consider
the composition of (\ref{eq:canonical-T})--(\ref{eq:canonical-tildeT}) as a
mapping in the set of $ML\times ML$ block diagonal matrices. Using Proposition
\ref{prop:Upsilon-tildeUpsilon} one can establish that iterating these two
equations one can create a sequence of $ML\times ML$ diagonal block matrices
with blocks belonging to the class $\mathcal{S}_{L}\left(  \mathbb{R}
^{\mathbb{+}}\right)  $ that has a limit in this set. Then, in a second step,
it can be shown that this limit is a solution to the canonical equation. For
more details, the reader may refer to the proof of Proposition 5.1 in
\cite{loubaton-mestre-2017}.

The proof of unicity follows the same path that was established in
\cite{loubaton-mestre-2017}. More specifically, assume that $\mathbf{T}
(z),\widetilde{\mathbf{T}}(z)$ and $\mathbf{S}(z),\widetilde{\mathbf{S}}(z)$
are matrices solutions of the system (\ref{eq:canonical-T},
\ref{eq:canonical-tildeT}) of equations at point $z$, and assume that
$\mathbf{T}(z)$ and $\mathbf{S}(z)$ have positive imaginary parts. Let
$\mathbf{T}_{\mathcal{B}}(z)=\mathcal{B}_{L}^{-1/2}\mathbf{T}(z)\mathcal{B}
_{L}^{-1/2}$ and $\mathbf{S}_{\mathcal{B}}(z)=\mathcal{B}_{L}^{-1/2}
\mathbf{S}(z)\mathcal{B}_{L}^{-1/2}$. It is easily seen that
\begin{equation}
\mathbf{T}_{\mathcal{B}}(z)-\mathbf{S}_{\mathcal{B}}(z)=\Phi_{\mathcal{B}
,0}\left(  \mathbf{T}_{\mathcal{B}}(z)-\mathbf{S}_{\mathcal{B}}(z)\right)
\label{eq:T-S}
\end{equation}
where we have defined the operator $\Phi_{\mathcal{B},0}\left(  \mathbf{X}
\right)  $ as
\begin{equation}
\Phi_{\mathcal{B},0}\left(  \mathbf{X}\right)  =z^{2}c_{N}\mathbf{S}
_{\mathcal{B}}(z)\Psi\left(  \widetilde{\mathbf{S}}^{T}(z)\overline{\Psi
}\left(  \mathbf{X}\right)  \widetilde{\mathbf{T}}^{T}(z)\right)
\mathbf{T}_{\mathcal{B}}(z) \label{eq:def-Phi0}
\end{equation}
where $\mathbf{X}$ is an $ML\times ML$ matrix. This operator is the analog of
$\Phi_{0}\left(  \mathbf{X}\right)  $ in \cite{loubaton-mestre-2017}
translated to our current matrix model. Operating like in
\cite{loubaton-mestre-2017} we write $\Phi_{\mathcal{B},0}^{(1)}\left(
\mathbf{X}\right)  =\Phi_{\mathcal{B},0}\left(  \mathbf{X}\right)  $ and
recursively define $\Phi_{\mathcal{B},0}^{(n+1)}\left(  \mathbf{X}\right)
=\Phi_{\mathcal{B},0}(\Phi_{\mathcal{B},0}^{(n)}(\mathbf{X}))$ for $n\geq1$.
By (\ref{eq:T-S}), unicity is proven if we are able to show that
$\lim_{n\rightarrow\infty}\Phi_{\mathcal{B},0}^{(n)}(\mathbf{X})=\mathbf{0}$
for every $ML\times ML$ matrix $\mathbf{X}$.\ Now, using Proposition
\ref{prop:Phi-Phi_S-Phi_TH} it is easily established that, for any two
$L$-dimensional column vectors $\mathbf{a}$, $\mathbf{b}$, we can write
\begin{equation}
\left\vert \mathbf{a}^{H}\left(  \Phi_{\mathcal{B},0}^{(n)}\left(
\mathbf{X}\right)  \right)  _{m,m}\mathbf{b}\right\vert \leq\left[
\mathbf{a}^{H}\left(  \Phi_{\mathbf{S}_{\mathcal{B}}}^{(n)}\left(
\mathbf{X}\mathbf{X}^{H}\right)  \right)  _{m,m}\mathbf{a}\right]
^{1/2}\left[  \mathbf{b}^{H}\left(  \Phi_{\mathbf{T}_{\mathcal{B}}^{H}}
^{(n)}\left(  \I_{ML}\right)  \right)  _{m,m}\mathbf{b}\right]  ^{1/2}
\label{eq:inequality-fundamental-phi0B}
\end{equation}
where $\Phi_{\mathbf{T}_{\mathcal{B}}^{H}}$ and $\Phi_{\mathbf{S}
_{\mathcal{B}}}$ are the positive operators defined by
\begin{align}
\Phi_{\mathbf{T}_{\mathcal{B}}^{H}}\left(  \mathbf{X}\right)   &  =\left\vert
z\right\vert ^{2}c_{N}\mathbf{T}_{\mathcal{B}}^{H}(z)\Psi\left(
\widetilde{\mathbf{T}}^{\ast}(z)\overline{\Psi}\left(  \mathbf{X}\right)
\widetilde{\mathbf{T}}^{T}(z)\right)  \mathbf{T}_{\mathcal{B}}
(z)\label{eq:defPhi_Tb}\\
\Phi_{\mathbf{S}_{\mathcal{B}}}\left(  \mathbf{X}\right)   &  =\left\vert
z\right\vert ^{2}c_{N}\mathbf{S}_{\mathcal{B}}(z)\Psi\left(  \widetilde
{\mathbf{S}}^{T}(z)\overline{\Psi}\left(  \mathbf{X}\right)  \widetilde
{\mathbf{S}}^{\ast}(z)\right)  \mathbf{S}_{\mathcal{B}}^{H}(z).
\label{eq:defPhi_Sb}
\end{align}
Thus, by Proposition \ref{prop:Phi-Phi_S-Phi_TH}, $\lim_{n\rightarrow\infty
}\Phi_{\mathcal{B},0}^{(n)}(\mathbf{X})=\mathbf{0}$ will follow directly if we
are able prove that there exist two positive definite matrices $\mathbf{Y}
_{1}$ and $\mathbf{Y}_{2}$ such that $\Phi_{\mathbf{T}_{\mathcal{B}}^{H}
}^{(n)}\left(  \mathbf{Y}_{1}\right)  $ and $\Phi_{\mathbf{S}_{\mathcal{B}}
}^{(n)}\left(  \mathbf{Y}_{2}\right)  $ converge towards $\mathbf{0}$.

\begin{lemma}
\label{le:convergence-series-PhinTB} Let $\mathbf{T}(z),\widetilde{\mathbf{T}
}(z)$ be a solution to the canonical equation (\ref{eq:canonical-T},
\ref{eq:canonical-tildeT}) at point $z \in\mathbb{C}^{+}$ satisfying
$\mathrm{Im}(\mathbf{T}(z)) \geq0$, and define $\mathbf{T}_{\mathcal{B}
}(z)=\mathcal{B}_{L}^{-1/2}\mathbf{T} (z)\mathcal{B}_{L}^{-1/2}$. Let
$\mathbf{X}$ be a positive semi definite matrix. Then, it holds that
\begin{equation}
\Phi_{\mathbf{T}_{\mathcal{B}}}^{(n)}\left(  \mathbf{X}\right)  \rightarrow
\mathbf{0} \label{eq:convergence-Phi_Tn-zero}
\end{equation}
and
\begin{equation}
\Phi_{\mathbf{T}_{\mathcal{B}}^{H}}^{(n)}\left(  \mathbf{X}\right)
\rightarrow\mathbf{0} \label{eq:convergence-Phi_THn-zero}
\end{equation}
as $n\rightarrow\infty$. Moreover, the series $\sum_{n=0}^{+\infty}
\Phi_{\mathbf{T}_{\mathcal{B}}}^{(n)}\left(  \mathbf{X}\right)  $ and
$\sum_{n=0}^{+\infty}\Phi_{\mathbf{T}_{\mathcal{B}}^{H}}^{(n)}\left(
\mathbf{X}\right)  $ converge. 
\end{lemma}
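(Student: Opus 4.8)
The statement concerns the operators $\Phi_{\mathbf{T}_{\mathcal{B}}}$ and $\Phi_{\mathbf{T}_{\mathcal{B}}^{H}}$ attached to a solution $(\mathbf{T}(z),\widetilde{\mathbf{T}}(z))$ of the canonical system, and the goal is to prove that their iterates vanish and that the associated series converge. The plan is to reduce everything to a single scalar Lyapunov-type functional and to use the defining fixed-point equation for $\mathbf{T}$ to show that this functional is strictly contracted under each iteration. This is exactly the mechanism used in \cite[Section 5]{loubaton-mestre-2017}; the only novelty here is the presence of the conjugating matrix $\mathcal{B}_{L}^{-1/2}$, which is deterministic, block diagonal, and uniformly invertible (by Assumption \ref{ass:bounds-spectral-densities} one has $s_{min}\I_L\le\mathcal{R}_{m,L}\le s_{max}\I_L$, hence $\mathcal{B}_L$ and $\mathcal{B}_L^{-1/2}$ have uniformly bounded spectral norms from above and below), so it does not affect any of the contraction estimates.

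First I would record the elementary structural facts: $\Phi_{\mathbf{T}_{\mathcal{B}}}$ and $\Phi_{\mathbf{T}_{\mathcal{B}}^{H}}$ are positive operators (they send positive semidefinite matrices to positive semidefinite matrices), they preserve the cone of block diagonal matrices since $\Psi(\cdot)$ is block diagonal valued, and by monotonicity it suffices to prove the claim for $\mathbf{X}=\I_{ML}$, because any positive semidefinite $\mathbf{X}$ satisfies $\mathbf 0\le\mathbf{X}\le\|\mathbf{X}\|\I_{ML}$ and the general Hermitian case follows by writing $\mathbf{X}$ as a difference of two positive matrices. So the whole lemma reduces to: $\Phi_{\mathbf{T}_{\mathcal{B}}}^{(n)}(\I_{ML})\to\mathbf 0$, $\Phi_{\mathbf{T}_{\mathcal{B}}^{H}}^{(n)}(\I_{ML})\to\mathbf 0$, and the corresponding series converge. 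Next I would apply the fixed-point equation (\ref{eq:canonical-T}) for $\mathbf{T}(z)$. Writing $\mathbf{T}_{\mathcal{B}}=\mathcal{B}_L^{-1/2}\mathbf{T}\mathcal{B}_L^{-1/2}$, equation (\ref{eq:canonical-T}) rearranges to $-z\mathbf{T}_{\mathcal{B}}^{-1}=\mathcal{B}_L^{-1}+\Psi(\widetilde{\mathbf{T}}^T)$; taking imaginary parts and using that $\widetilde{\mathbf{T}}\in\mathcal{S}_N(\mathbb{R}^+)$ (so $\mathrm{Im}(z\widetilde{\mathbf{T}})\ge0$, hence $\mathrm{Im}(\Psi(\widetilde{\mathbf{T}}^T))$ controlled, via (\ref{eq:property_positive})) yields a lower bound of the form $\mathrm{Im}(\mathbf{T}_{\mathcal{B}})\ge\Phi_{\mathbf{T}_{\mathcal{B}}^{H}}(\text{something positive})+\mathrm{Im}(z)|z|^{-2}\Phi_{\mathbf{T}_{\mathcal{B}}^{H}}(\I)$-type inequality. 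The precise manipulation is the one in the proof of Lemma 5.2/5.3 of \cite{loubaton-mestre-2017}: one obtains $\Phi_{\mathbf{T}_{\mathcal{B}}^{H}}(\mathbf{Y})\le\mathbf{Y}-\theta$ for a strictly positive slack term $\theta$ when $\mathbf{Y}=\mathrm{Im}(\mathbf{T}_{\mathcal{B}})/\mathrm{Im}(z)$ (or a closely related matrix), which forces the monotone nonincreasing sequence $\Phi_{\mathbf{T}_{\mathcal{B}}^{H}}^{(n)}(\mathbf{Y})$ to converge and its limit to be annihilated by $\Phi_{\mathbf{T}_{\mathcal{B}}^{H}}$; combined with the strict positivity / irreducibility coming from $\inf_\nu\mathcal{S}_m(\nu)>0$ this limit must be $\mathbf 0$, and summability of the slacks gives convergence of $\sum_n\Phi_{\mathbf{T}_{\mathcal{B}}^{H}}^{(n)}(\mathbf{Y})$.

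For $\Phi_{\mathbf{T}_{\mathcal{B}}}$ (the non-Hermitian-index version) I would invoke the inequality of Proposition \ref{prop:Phi-Phi_S-Phi_TH} with the roles $\mathbf{S}=\mathbf{T}$, $\widetilde{\mathbf{S}}=\widetilde{\mathbf{T}}$, i.e.\ (\ref{eq:inequality-general-phi-phiS-phiTH}): the off-diagonal/non-self-adjoint iterate is dominated in quadratic form by the geometric mean of $\Phi_{\mathbf{T}_{\mathcal{B}}}^{(n)}(\I)$ and $\Phi_{\mathbf{T}_{\mathcal{B}}^{H}}^{(n)}(\I)$, so once the latter two are shown to converge to zero (and to be summable) the same follows for $\Phi_{\mathbf{T}_{\mathcal{B}}}^{(n)}$ and its series, via (\ref{eq:convergence-Phin-general})–(\ref{eqinequality-general-norm-sum-Phin-bis}). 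One still has to check the convergence of $\Phi_{\mathbf{T}_{\mathcal{B}}}^{(n)}(\I)\to\mathbf 0$; since $\Phi_{\mathbf{T}_{\mathcal{B}}}$ and $\Phi_{\mathbf{T}_{\mathcal{B}}^{H}}$ differ only by replacing $\mathbf{T}_{\mathcal{B}}$ on the right by $\mathbf{T}_{\mathcal{B}}^H$ on the left and conjugating $\widetilde{\mathbf{T}}$, the same Lyapunov argument applied to $\Phi_{\mathbf{T}_{\mathcal{B}}}$ directly (using $\mathrm{Im}(z\widetilde{\mathbf{T}})\ge0$ again) gives the analogous strict contraction; this is precisely the content of the twin lemmas in \cite{loubaton-mestre-2017} and I would simply transcribe that argument, flagging only the harmless insertion of $\mathcal{B}_L^{\pm1/2}$ factors.

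The step I expect to be the real obstacle — or at least the one requiring genuine care rather than bookkeeping — is producing the strict slack $\theta>0$ in the Lyapunov inequality $\Phi_{\mathbf{T}_{\mathcal{B}}^{H}}(\mathbf{Y})\le\mathbf{Y}-\theta$, because this is where the structure of the canonical equation, the positivity of the spectral densities (Assumption \ref{ass:bounds-spectral-densities}), and the lower bounds $\mathbf{T}\mathbf{T}^H\ge\frac{(\mathrm{Im}z)^2}{16(\eta^2+|z|^2)^2}\I$, $\widetilde{\mathbf{T}}\widetilde{\mathbf{T}}^H\ge\frac{(\mathrm{Im}z)^2}{16(\tilde\eta^2+|z|^2)^2}\I$ from Proposition \ref{proposition:existence_unicity} must be combined; in \cite{loubaton-mestre-2017} this is the heart of the uniqueness/convergence proof. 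Everything else — positivity of the operators, reduction to $\I_{ML}$, passage from the Hermitian-index to the non-Hermitian-index operator via Proposition \ref{prop:Phi-Phi_S-Phi_TH}, and the absorption of $\mathcal{B}_L^{\pm1/2}$ — is routine once that inequality is in hand. Accordingly I would state the proof as "an easy adaptation of the proof of Lemmas 5.2 and 5.3 in \cite{loubaton-mestre-2017}, the only change being the conjugation by the uniformly bounded and uniformly invertible block diagonal matrix $\mathcal{B}_L^{-1/2}$, which leaves all the estimates unchanged," and spell out only the steps where the $\mathcal{B}_L$ factors enter.
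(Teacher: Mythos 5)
Your proposal is correct and follows essentially the same route as the paper, which simply states that the proof "follows the same steps as the proof of Lemma 5.4 in \cite{loubaton-mestre-2017}" and omits it: the argument there is exactly the Lyapunov-type identity $\frac{\mathrm{Im}\,\mathbf{T}_{\mathcal{B}}}{\mathrm{Im}\,z}=\mathcal{B}_L^{-1/2}\mathbf{T}\mathbf{T}^{H}\mathcal{B}_L^{-1/2}+\Phi_{\mathbf{T}_{\mathcal{B}}^{H}}\bigl(\frac{\mathrm{Im}\,\mathbf{T}_{\mathcal{B}}}{\mathrm{Im}\,z}\bigr)$ iterated, with the strictly positive slack supplied by the lower bounds of Proposition \ref{proposition:existence_unicity} and the harmless $\mathcal{B}_L^{\pm 1/2}$ conjugation, just as you describe. (Your brief detour through the Cauchy--Schwarz bound of Proposition \ref{prop:Phi-Phi_S-Phi_TH} for $\Phi_{\mathbf{T}_{\mathcal{B}}}$ is unnecessary since that operator is itself positive and admits the same direct argument, which you correctly fall back on.)
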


\begin{proof}
The proof of the lemma follows the same steps as the proof of Lemma 5.4 in
\cite{loubaton-mestre-2017} and is therefore omitted. 
\end{proof}

As a consequence of all the above, Theorem \ref{thm:main_result} will be a
direct implication of the following result.

\begin{proposition}
\label{prop:convergence-Tr(Q-T)} We consider a sequence $(\mathbf{A}
_{N})_{N\geq1}$ of $ML\times ML$ deterministic matrices such that $\sup_{N}\Vert\mathbf{A}_{N}\Vert\leq a$ for some nice constant $a$. Then, for each
$z\in\mathbb{C}^{+}$, we have
\begin{equation}
\frac{1}{ML}\mathrm{Tr}\left(  \mathbf{A}_{N}(\mathbf{Q}_{N}(z)-
\mathbf{T}_{N}(z))\right)  \rightarrow0\label{eq:Tr-Q-R}
\end{equation}
almost surely. For any bounded continuous function $\phi$ we have 
\begin{equation}
    \label{eq:weakconv}
    \left\vert \frac{1}{ML}\mathrm{Tr}\left(  \phi(\overline{\mathcal{R}
}_{\mathrm{corr},L})\right)  -\int\phi(\lambda)d\mu_{N}(\lambda)\right\vert
\rightarrow 0
\end{equation}
almost surely. 

Assume, in addition, that $\frac{L^{3/2}}{MN}\rightarrow0$, i.e. that $\beta < \frac{4}{5}$. In this case, we have
\begin{equation}
\left|\frac{1}{ML}\mathrm{Tr}\left(  \mathbf{A}_{N}(\mathbb{E}\mathbf{Q}
_{N}(z)-\mathbf{T}_{N}(z))\right)\right|  \leq C(z)\frac{L}{MN}
\label{eq:-speed-Tr-E(Q)-T}
\end{equation}
when $z$ belongs to a set $E_{N}$ defined as
\[
E_{N}=\left\{z\in\mathbb{C}^{+},\frac{L^{3/2}}{MN}
P_{1}(|z|)P_{2}(1/\Imm {z})<1\right\}
\]
and where $P_{1}$ and $P_{2}$ are two nice polynomials. Finally, for each
compactly supported smooth function $\phi$, we have
\begin{equation}
\left\vert \frac{1}{ML}\mathbb{E}\mathrm{Tr}\left(  \phi(\overline{\mathcal{R}
}_{\mathrm{corr},L})\right)  -\int\phi(\lambda)d\mu_{N}(\lambda)\right\vert
\leq \kappa \frac{L}{MN}
\label{eq:convergence-rate-biais-lss-bis}
\end{equation}
for some nice constant $\kappa >0$.
\end{proposition}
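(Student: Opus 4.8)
The plan is to follow the strategy of \cite{loubaton-mestre-2017}, adapted to the present matrix model $\overline{\mathcal{R}}_{\mathrm{corr},L} = \mathcal{B}_L^{-1/2}\widehat{\mathcal{R}}_L\mathcal{B}_L^{-1/2}$. First I would introduce the resolvent $\mathbf{Q}_N(z)$ and, writing $\overline{\mathcal{R}}_{\mathrm{corr},L}=\mathcal{B}_L^{-1/2}\mathbf{W}_N\mathbf{W}_N^H\mathcal{B}_L^{-1/2}$ with $\mathbf{W}_N$ as in (\ref{eq:new-definition-W}), derive an approximate fixed-point relation for $\mathbb{E}\mathbf{Q}_N(z)$ using the integration-by-parts (Poincar\'e-Nash) machinery. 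The key intermediate object is $\mathrm{Bdiag}(\mathbb{E}\mathbf{Q}_N(z))$: one shows that $\mathbb{E}\mathbf{Q}_N(z)$ satisfies, up to an error term, the same equation as $\mathbf{T}_N(z)$ does exactly, where the associated $\widetilde{\mathbf{T}}_N$ is replaced by an expression built from $\overline{\Psi}^T(\mathcal{B}_L^{-1/2}\mathrm{Bdiag}(\mathbb{E}\mathbf{Q}_N)\mathcal{B}_L^{-1/2})$. The only structural modification relative to \cite{loubaton-mestre-2017} is the presence of the deterministic sandwiching matrix $\mathcal{B}_L^{-1/2}$, which propagates into the master equations (\ref{eq:canonical-T})--(\ref{eq:canonical-tildeT}) exactly as written; the variance control needed to justify these manipulations is already supplied by Lemma \ref{le:sumsquare-derivatives-trace} and its Corollary (\ref{eq:var_norm_trace}), which give $\mathrm{Var}(\frac{1}{ML}\mathrm{Tr}(\mathbf{A}_N\mathbf{Q})) = \mathcal{O}(\frac{1}{MN})\cdot C(z)$ for bounded $\mathbf{A}_N$.

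Next I would set up the difference $\boldsymbol{\Delta}_N(z) := \mathbb{E}\mathbf{Q}_N(z)-\mathbf{T}_N(z)$ and show it satisfies a perturbed fixed-point equation of the form $\boldsymbol{\Delta}_N = \Phi(\boldsymbol{\Delta}_N) + \boldsymbol{\mathcal{E}}_N$, where $\Phi$ is the linear operator defined in (\ref{eq:def-Phi-general}) with $\mathbf{S},\mathbf{T},\widetilde{\mathbf{S}},\widetilde{\mathbf{T}}$ taken as the relevant resolvent/deterministic quantities, and $\boldsymbol{\mathcal{E}}_N$ is an error matrix. Invoking Proposition \ref{prop:Phi-Phi_S-Phi_TH} together with Lemma \ref{le:convergence-series-PhinTB} (which guarantees $\sum_n \Phi_{\mathbf{T}_\mathcal{B}^H}^{(n)}(\mathbf{X})$ and $\sum_n\Phi_{\mathbf{S}_\mathcal{B}}^{(n)}(\mathbf{X})$ converge), one inverts $(\mathrm{Id}-\Phi)$ on the appropriate domain, so that $\|\boldsymbol{\Delta}_N\| \leq C(z)\|\boldsymbol{\mathcal{E}}_N\|$ when $z\in E_N$ — the set $E_N$ being precisely where the Neumann-type series for $(\mathrm{Id}-\Phi)^{-1}$ converges with a nice bound. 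The estimate $\|\boldsymbol{\mathcal{E}}_N\| \leq C(z)\frac{L}{MN}$ comes from two sources: the variance terms controlled by (\ref{eq:var_norm_trace}), which contribute $\mathcal{O}(\frac{1}{MN})$ after the normalized trace, and — crucially — the higher-order cumulant / ``bias of the bias'' terms that, after the Toeplitzization operators $\Psi,\overline{\Psi}$ are applied, carry an extra factor controlled by $\sqrt{L}$; this is where the condition $\frac{L^{3/2}}{MN}\to 0$, i.e. $\beta<4/5$, enters and is needed to keep the final bound at order $\frac{L}{MN}$. This yields (\ref{eq:-speed-Tr-E(Q)-T}) on $E_N$.

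To pass from the bound on $E_N$ to the full conclusions, I would argue as follows. The almost-sure convergence (\ref{eq:Tr-Q-R}) follows by combining the deterministic bound $|\frac{1}{ML}\mathrm{Tr}(\mathbf{A}_N(\mathbb{E}\mathbf{Q}_N-\mathbf{T}_N))|\to 0$ (which holds for all $z\in\mathbb{C}^+$, not only on $E_N$ — one first establishes $\mathrm{Bdiag}(\mathbb{E}\mathbf{Q}_N) - \mathrm{Bdiag}(\mathbf{T}_N)\to 0$ by a direct contraction/unicity argument as in Proposition \ref{proposition:existence_unicity}, then bootstraps) with the concentration estimate $|\frac{1}{ML}\mathrm{Tr}(\mathbf{A}_N\mathbf{Q}_N^\circ)|\prec \frac{1}{\sqrt{MN}}$ coming from (\ref{eq:var_norm_trace}) and Borel--Cantelli; the weak a.s. convergence (\ref{eq:weakconv}) then follows from (\ref{eq:Tr-Q-R}) applied with $\mathbf{A}_N=\I_{ML}$ via the usual Stieltjes-transform–to–weak-convergence argument, together with tightness ensured by Corollary \ref{cor:control-norm-calR} and Proposition \ref{prop:concentration-hatcorr-overlinecorr}. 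Finally, for (\ref{eq:convergence-rate-biais-lss-bis}), I would feed the bound (\ref{eq:-speed-Tr-E(Q)-T}) into the Helffer-Sj\"ostrand representation (\ref{eq:expre-hatphi-overlinephi-hs}) with $\mathbf{A}_N=\I_{ML}$: the contour integral over $\mathcal{D}=[-\delta,\alpha]\times[0,2]$ must be split into the part where $z\in E_N$ (handled directly) and the part where $z\notin E_N$, i.e. where $\Imm z$ is small; on the latter, following \cite[Lemma 5.5.5]{and-gui-zei-2010}, one uses the extra vanishing of $\bar{\partial}\Phi_k(\phi)(z) = \mathcal{O}((\Imm z)^k)$ near the real axis, choosing $k$ large enough, to absorb the blow-up of $C(z)$ and of $1/\Imm z$ and still obtain a net contribution of order $\frac{L}{MN}$. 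The main obstacle I anticipate is the sharp bookkeeping of the error matrix $\boldsymbol{\mathcal{E}}_N$: showing that the cumulant corrections genuinely scale as $\frac{L}{MN}$ rather than something larger requires carefully exploiting the Parseval-type inequalities (\ref{eq:parsevalintro}), (\ref{eq:parseval}) and property (\ref{eq:upperbound-mathcalDmathcalDH})-style bounds for the $\Psi$, $\overline{\Psi}$ operators, so that the $\sqrt{L}$ factors appear exactly once and no more — this is the delicate quantitative heart of the proof, even though the overall scheme is a verbatim adaptation of \cite{loubaton-mestre-2017}.
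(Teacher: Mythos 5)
Your plan follows essentially the same route as the paper: reduce to $\mathbb{E}\mathbf{Q}_N$ via the Poincar\'e--Nash variance bound of Lemma \ref{le:sumsquare-derivatives-trace}, compare $\mathbb{E}\mathbf{Q}_N$ with $\mathbf{T}_N$ through a perturbed fixed-point equation inverted with the operator machinery of Proposition \ref{prop:Phi-Phi_S-Phi_TH} and Lemma \ref{le:convergence-series-PhinTB} (the paper routes this through the intermediate matrix $\mathbf{R}_N(z)$ of (\ref{eq:def_Rztilde})--(\ref{eq:def_Rz}), but that is the same scheme), obtain the rate only on the set $E_N$ where the spectral-norm-level error is small, and pass to (\ref{eq:convergence-rate-biais-lss-bis}) by the Helffer--Sj\"ostrand/AGZ--type argument near the real axis. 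Two small corrections: in this Gaussian model the extra $\sqrt{L}$ forcing $\beta<4/5$ does not come from higher-order cumulants but from upgrading trace/Frobenius-level estimates to spectral-norm estimates of the Toeplitzified error (which is what defines $E_N$), and for the weak convergence (\ref{eq:weakconv}) you must also check tightness of the deterministic measures $\mu_N$ (the paper does this via $\int\lambda\,d\mu_N(\lambda)=1$ from Proposition \ref{prop:class-S}(v)), not only of $\overline{\mu}_N$.
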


\begin{proof}
The proof of (\ref{eq:Tr-Q-R}) can be established by essentially following the approach in \cite{loubaton-mestre-2017}. The main idea is to consider the resolvent in (\ref{eq:def_resolvent_barRcorr}) together with the co-resolvent, defined as 
\begin{equation*}
    \label{eq:def_coresolvent_barRcorr}
    \widetilde{\mathbf{Q}}_N(z) = \left( \mathbf{W}_N^H \mathcal{B}_L^{-1} \mathbf{W}_N - z \I_N \right)^{-1}.
\end{equation*}
Using a trivial modification of \cite[Lemma 3.1]{loubaton-mestre-2017} one can reduce the problem to the study of the expectations $\mathbb{E}{\mathbf{Q}}_N(z)$ and $\mathbb{E}\widetilde{\mathbf{Q}}_N(z)$. We can then introduce two matrix-valued functions $\mathbf{R}_N(z)$ and $\widetilde{\mathbf{R}}_N(z)$ defined as 
\begin{eqnarray}
\widetilde{\mathbf{R}}_N(z) & = & -\frac{1}{z}\left(  \mathbf{I}_{N}+c_{N}
\overline{\Psi}^{T}\left(  \mathcal{B}_{L}^{-1/2}\mathbb{E}\mathbf{Q}_N
(z)\mathcal{B}_{L}^{-1/2}\right)  \right)  ^{-1}\label{eq:def_Rztilde} \\
\mathbf{R}_N(z) & = & -\frac{1}{z}\left(  \mathbf{I}_{ML}+\mathcal{B}_{L}^{-1/2}
\Psi\left(  \widetilde{\mathbf{R}}_N^{T}(z)\right)  \mathcal{B}_{L}
^{-1/2}\right)  ^{-1}\label{eq:def_Rz}
\end{eqnarray}
which are the analogous of the same quantities in \cite[Section 4]{loubaton-mestre-2017}. In particular, one can establish that Lemma 4.1 and Proposition 4.3 in \cite{loubaton-mestre-2017} also hold true with these new definitions, so that
\[
\left|\frac{1}{ML} \mathrm{Tr} \left[ \mathbf{A}_N (\mathbb{E}\mathbf{Q}_N(z) - \mathbf{R}_N(z)) \right] \right| \leq C(z) \frac{L}{MN}
\]
for all $z \in \mathbb{C}^+$. In order to see this, we need to make explicit use of Assumption \ref{as:norm-vector-r}. 
At this point, in order to show (\ref{eq:Tr-Q-R}) and (\ref{eq:-speed-Tr-E(Q)-T}) one only needs to evaluate the quantity $\mathbf{R}_N(z) - \mathbf{T}_N(z)$ using the approach in Section 6 of \cite{loubaton-mestre-2017}, which essentially holds verbatim after replacing the operators $\Phi_1(\mathbf{X})$ and $\Phi_1^t(\mathbf{X})$ with
\begin{align}
\Phi_{\mathcal{B},1}\left(  \mathbf{X}\right) & = z^{2}c_{N}\mathbf{R}_{\mathcal{B}}
(z)\Psi\left(  \widetilde{\mathbf{R}}_{N}^{T}(z)\overline{\Psi}\left(
\mathbf{X}\right)  \widetilde{\mathbf{T}}_{N}^{T}(z)\right)  \mathbf{T}
_{\mathcal{B}}(z) \label{eq:def-Phi1} \\
    \label{eq:def-Phi1Bt}
  {\Phi}_{\mathcal{B},1}^{t}(\mathbf{X}) & = z^{2}c_{N}\,{\Psi}\left(  \widetilde{\mathbf{T}
}_{N}^{T}(z)\overline{{\Psi}}(\mathbf{T}_{\mathcal{B}}(z)\mathbf{X}\mathbf{R}_{\mathcal{B}}(z))\widetilde{\mathbf{R}}_{N}^{T}(z)\right)
\end{align}
where now $\mathbf{R}_{\mathcal{B}}(z) = \mathcal{B}_L^{-1/2} \mathbf{R}_{N}(z) \mathcal{B}_L^{-1/2}$. In particular, (\ref{eq:-speed-Tr-E(Q)-T}) will follow the arguments in \cite[Section 6.1]{loubaton-mestre-2017}, which basically requires the application of Montel's theorem. 
To see that (\ref{eq:Tr-Q-R}) implies (\ref{eq:weakconv}) we need to check that $(\bar{\mu}_N)_{N \geq 1}$ is almost surely tight and $({\mu}_N)_{N \geq 1}$ is tight (see \cite[Corollary 2.7]{hachem-loubaton-najim-aap-2007}). The fact that $(\bar{\mu}_N)_{N \geq 1}$ is almost surely tight follows from the fact that
\[
\int_{\mathbb{R}^{+}}\lambda d\bar{\mu}_{N}(\lambda)=\frac{1}{ML}
\mathrm{Tr}\mathcal{B}_{L}^{-1/2}\mathbf{W}_{N}\mathbf{W}_{N}^{H}
\mathcal{B}_{L}^{-1/2}=\frac{1}{M}\sum_{m=1}^{M}\frac{1}{L}\mathrm{Tr}\left[
\mathcal{R}_{m,L}^{-1/2}\hat{\mathcal{R}}_{m,L}\mathcal{R}_{m,L}
^{-1/2}\right]  .
\]
The identity in (\ref{eq:convergence-blocks}) implies that
\[
\sup_{m=1,\ldots,M}\left\vert \frac{1}{L}\mathrm{Tr}\left[  \mathcal{R}
_{m,L}^{-1/2}\hat{\mathcal{R}}_{m,L}\mathcal{R}_{m,L}^{-1/2}\right]
-1\right\vert \rightarrow0,\,a.s.
\]
Therefore, $\int_{\mathbb{R}^{+}}\lambda d\bar{\mu}_{N}(\lambda)\rightarrow1$
almost surely, and tightness holds with probability one. To verify that $({\mu}_N)_{N \geq 1}$ is tight, we evaluate $\int_{\mathbb{R}^{+}}\lambda
d\bs{\mu}_{N}(\lambda)$ using item (v) of Proposition \ref{prop:class-S} and
immediately obtain that $\int_{\mathbb{R}^{+}}\lambda d\bs{\mu}_{N}
(\lambda)=\I_{ML}$ and $\int_{\mathbb{R}^{+}}\lambda d\mu_{N}(\lambda)=1$, so tightness established.

To establish (\ref{eq:-speed-Tr-E(Q)-T}) when $\beta < 4/5$, we follow the corresponding arguments in \cite[Section 6.2]{loubaton-mestre-2017}. Regarding (\ref{eq:convergence-rate-biais-lss-bis}), it will be a direct consequence of \cite[Lemma 5.5.5]{capitaine2007freeness}\footnote{The statement of \cite[Lemma 5.5.5]{capitaine2007freeness} requires that the function $\phi$ vanishes on the support of $\mu_N$. However, the reader may check that this assumption is in fact not needed.} provided that we are able to show that, given two nice constants $C_0, C_0'$, there exist three nice constants $C_1, C_2, C_3$ and an integer $N_0$ such that 
$$
\left| \frac{1}{ML} \mathrm{Tr}\left(\mathbb{E}\mathbf{Q}_N(z) - 
 {\bf T}_N (z) \right)  \right| \leq C_2 \frac{L}{MN}\frac{1}{(\Imm z)^{C_3}}
$$
for all $z$ inside the domain $|\mathrm{Re}z|\leq C_0$, $N^{-C_1}\leq \Imm z \leq C_0'$ and $N > N_0$. For this, it is sufficient to to follow the arguments used to establish Theorem 10.1 in \cite{loubaton2016}. 
\end{proof}
\begin{remark}
\label{re:compactly-supported-phi-1}
We notice that (\ref{eq:convergence-rate-biais-lss-bis}) is just established for compactly 
supported functions $\phi$. In order to extend (\ref{eq:convergence-rate-biais-lss-bis}) 
to non compactly supported $\phi$, it would be necessary to establish that the support 
of $\mu_N$ is included for each $N$ large enough in a compact subset independent from $N$. While we feel that this property holds, its proof does not seem obvious. In Section \ref{sec:sims} we provide an example of non-compactly supported $\phi$ for which (\ref{eq:convergence-rate-biais-lss-bis}) still holds.

\end{remark}


\section{Approximation by a Marchenko-Pastur distribution}
\label{sec:approximation-MP}

Let us denote by $t_N(z)$ the Stieltjes transform of the Marcenko-Pastur law $\mu_{mp,c_N}$ associated to 
the parameter $c_N = \frac{ML}{N}$. In other words, for each $z \in \mathbb{C}^+$, 
$t_N(z)$ is the unique solution of the equation 
\begin{equation}
    \label{eq:equation-marcenko-pastur}
    t_N(z) = \frac{1}{-z + \frac{1}{1 + c_N t_N(z)}}
\end{equation}
for which ${\mathrm{Im}(t_N(z))} \geq 0$. 
If ${\bf T}_N(z)$ represents the deterministic equivalent of ${\bf Q}_N(z)$, solution of the equations 
(\ref{eq:canonical-T}, \ref{eq:canonical-tildeT}), the following theorem establishes that, for each $\gamma < \gamma_0$, $\gamma \neq 1$, the Stieltjes transform $\frac{1}{ML}\mathrm{Tr}\mathbf{T}_N(z)$ is well approximated by $t_N(z)$, up to an error of order $\mathcal{O}(L^{-2\min(1,\gamma)}$. 

The strategy of the proof follows two steps. In a first step, we will establish that the spectral norm of the error between the two Stieltjes transforms $ \| {\bf T}_N(z) - t_N(z) \I_{ML} \|$ is upper bounded by a term that decays as $L^{-\min(1,\gamma)}$ for each $\gamma < \gamma_0, \gamma \neq 1$. In a second stage, this result is used to obtain a refined convergence rate for the normalized trace of the result, so that, in fact
\begin{equation}
\label{eq:bound_ntrace_stieltjeswrtmp}
    \frac{1}{ML} \mathrm{Tr} \left( \mathbf{T}_N(z) - t_N(z) \I_{ML} \right) \leq \frac{1}{L^{2\min(\gamma,1)}}P_1(z)P_2\left( \frac{1}{\Imm z} \right)
\end{equation}
for each $z \in \mathbb{C}^+$ and for two nice polynomials $P_1(z)$, $P_2(z)$.

We observe here that a direct application of the above result to the Helffer-Sj\"ostrand formula implies (\ref{eq:convergencemu-MP}) in Theorem \ref{thm:main_result}. Indeed, observe that in this case we can write 
\begin{multline*}
\int_{\mathbb{R}^+} \phi(\lambda)  d\mu_N(\lambda) - \int_{\mathbb{R}^+} \phi(\lambda) d\mu_{mp,N}(\lambda) = \\ 
    = \frac{1}{\pi}\mathrm{Re}\int_{\mathcal{D}}dx \, dy \, \overline{\partial} \Phi_k (\phi) (z) \frac{1}{ML} \mathrm{Tr}\left( \mathbf{T}_N(z) - t_N(z)\I_{ML} \right).
\end{multline*}
If $k$ is taken to be larger than or equal to the degree of $P_2$ in (\ref{eq:bound_ntrace_stieltjeswrtmp}), this directly shows (\ref{eq:convergencemu-MP}). On the other hand, from the convergence of $\frac{1}{ML} \mathrm{Tr} \left(  {\bf T}_N(z) - t_N(z) \I_{ML}\right)$ for all $z \in \mathbb{C}^+$ to zero together with the fact that both $(\mu_N)_{N \geq 1}$ and  $(\mu_{mp,c_N})_{N \geq 1}$ are tight\footnote{Tightness of $(\mu_N)_{N \geq 1}$ has been established before, whereas tightness of $(\mu_{mp,c_N})_{N \geq 1}$ follows from the fact that $c_N \rightarrow c_\star$.}, we see that $\mu_N - \mu_{mp,c_N}$ converges weakly to zero. But since $\mu_{mp,c_N}$ in turn converges weakly to $\mu_{mp,c_\star}$, the proof of Theorem \ref{thm:main_result} is completed. 

  \begin{remark}
 \label{re:compactly-supported-phi-2}
 We again notice that (\ref{eq:convergencemu-MP}) is established for compactly supported smooth 
 functions $\phi$. As in the context of Remark \ref{re:compactly-supported-phi-1}, the 
 generalization of (\ref{eq:convergencemu-MP}) to non compactly supported functions would need 
 to prove that the support of $\mu_N$ is included in a compact independent of $N$. 
 \end{remark}

We will present the two stages of the proof in two separate subsections that follow.  In order to simplify the notation, we will drop from now on the subindex $N$ in all relevant quantities, i.e. $t_N(z), \tilde{t}_N(z), c_N, \mathbf{T}_N(z), \widetilde{\mathbf{T}}_N(z)$, etc. 

\subsection{Bounding the spectral norm  $\left\|  {\bf T}_N(z) - t_N(z) \I_{ML}\right\|$}
The objective of this section is to prove the following result.

\begin{theorem}
\label{th:T-t}
Under Assumptions \ref{as:asymptotic-regime}, \ref{ass:bounds-spectral-densities} and \ref{as:norm-r-omega}, there exist two nice polynomials $P_1$ and $P_2$ as given in Definition \ref{def:nice}, such that for each $\gamma < \gamma_0$, $\gamma \neq 1$, the inequality
\begin{equation}
    \label{eq:bound-norm-T-t}
    \| {\bf T}_N(z) - t_N(z) \I_{ML} \| \leq \frac{1}{L^{\min(\gamma,1)}} P_1(|z|) P_{2}\left(\frac{1}{\Imm z}\right)
\end{equation}
holds for each $z \in \mathbb{C}^+$. 
\end{theorem}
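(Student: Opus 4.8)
The plan is to compare the fixed-point equations satisfied by $\mathbf{T}_N(z)$ and by $t_N(z)\I_{ML}$, showing that the latter is an approximate solution of the former's master equations up to an error controlled by Lemma~\ref{lem:orthogonal-polynomials}, and then invert the corresponding linear operator using the contraction estimates of Proposition~\ref{prop:Phi-Phi_S-Phi_TH}. First I would write out explicitly what equations a scalar ansatz must satisfy: if one plugs $\mathbf{T} = t\,\I_{ML}$ and the associated $\widetilde{\mathbf{T}} = \tilde t\,\I_N$ into (\ref{eq:canonical-T})--(\ref{eq:canonical-tildeT}), the operator $\Psi\big(\tilde t^{T}\I_N\big)$ reduces to the block-diagonal matrix whose $m$th block is $\tilde t\,\Psi_L^{(m)}(\I_N)$, and $\Psi_L^{(m)}(\I_N) = \int_0^1 \mathcal{S}_m(\nu)\,\mathbf{d}_L(\nu)\mathbf{d}_L^H(\nu)\,d\nu = \mathcal{R}_{m,L}$. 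Hence $\mathcal{B}_L^{-1/2}\Psi(\tilde t^T\I_N)\mathcal{B}_L^{-1/2}$ would be \emph{exactly} $\tilde t\,\I_{ML}$ if we had $\mathcal{R}_{m,L}^{-1/2}\mathcal{R}_{m,L}\mathcal{R}_{m,L}^{-1/2}=\I_L$, which is trivially true — so that equation is satisfied identically. The discrepancy instead enters through the second equation: $\overline{\Psi}^T\big(\mathcal{B}_L^{-1/2} t\,\I_{ML}\,\mathcal{B}_L^{-1/2}\big) = \frac{t}{M}\sum_m \Psi_N^{(m)}(\mathcal{R}_{m,L}^{-1})$, and $\Psi_N^{(m)}(\mathcal{R}_{m,L}^{-1}) = \int_0^1 \mathcal{S}_m(\nu)\,\mathbf{a}_L^H(\nu)\mathcal{R}_{m,L}^{-1}\mathbf{a}_L(\nu)\,\mathbf{d}_N(\nu)\mathbf{d}_N^H(\nu)\,d\nu$. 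By Lemma~\ref{lem:orthogonal-polynomials}, $\mathcal{S}_m(\nu)\mathbf{a}_L^H(\nu)\mathcal{R}_{m,L}^{-1}\mathbf{a}_L(\nu) = 1 + O(L^{-\min(1,\gamma)})$ uniformly in $m,\nu$, so $\overline{\Psi}^T(\mathcal{B}_L^{-1}t) = t\,\I_N + \mathbf{E}_N$ with $\|\mathbf{E}_N\| = O(L^{-\min(1,\gamma)}) |t|$, whence $t\,\I_N$ and $\tilde t\,\I_N$ satisfy the Marchenko--Pastur relation (\ref{eq:equation-marcenko-pastur}) up to the same order.

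Next I would set $\bs{\Delta}(z) = \mathbf{T}_N(z) - t(z)\I_{ML}$ and $\widetilde{\bs{\Delta}}(z) = \widetilde{\mathbf{T}}_N(z) - \tilde t(z)\I_N$, subtract the two systems, and use the resolvent-type identity $\mathbf{A}^{-1} - \mathbf{B}^{-1} = \mathbf{A}^{-1}(\mathbf{B}-\mathbf{A})\mathbf{B}^{-1}$ to linearize. This produces a coupled linear system of the schematic form $\bs{\Delta} = \Phi(\bs{\Delta}) + \mathbf{R}_1$, where $\Phi$ is exactly the operator in (\ref{eq:def-Phi-general}) with the choice $\mathbf{S}=\mathbf{T}=\mathbf{T}_N(z)$ (or a mixture of $\mathbf{T}_N$ and $t\I$), $\widetilde{\mathbf{S}}=\widetilde{\mathbf{T}}=\widetilde{\mathbf{T}}_N(z)$, evaluated on block-diagonal matrices after the $\mathcal{B}_L^{-1/2}\cdot\mathcal{B}_L^{-1/2}$ conjugation, and $\mathbf{R}_1 = \mathbf{R}_1(z)$ is the residual of order $L^{-\min(1,\gamma)}$ coming from the error $\mathbf{E}_N$ above together with the Marchenko--Pastur mismatch. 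Formally solving, $\bs{\Delta} = \sum_{n\geq 0}\Phi^{(n)}(\mathbf{R}_1)$, and by (\ref{eqinequality-general-norm-sum-Phin-bis}) together with Lemma~\ref{le:convergence-series-PhinTB} (which guarantees $\sum_n \Phi_{\mathbf{S}}^{(n)}(\I_{ML})$ and $\sum_n \Phi_{\mathbf{T}^H}^{(n)}(\I_{ML})$ converge and are bounded by a $C(z)$-type quantity) we obtain $\|\bs{\Delta}\| \leq \|\mathbf{R}_1\|\cdot\|\sum_n\Phi_{\mathbf{S}}^{(n)}(\I)\|^{1/2}\|\sum_n\Phi_{\mathbf{T}^H}^{(n)}(\I)\|^{1/2} \leq \frac{1}{L^{\min(\gamma,1)}}P_1(|z|)P_2(1/\Imm z)$, which is (\ref{eq:bound-norm-T-t}). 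One must keep track that the series bounds are polynomial in $|z|$ and $1/\Imm z$; this follows from the explicit lower bounds (\ref{eq:lower-bound-TT*})--(\ref{eq:lower-bound-tildeTtildeT*}) on $\mathbf{T}\mathbf{T}^H$ and $\widetilde{\mathbf{T}}\widetilde{\mathbf{T}}^H$, exactly as in \cite[Section 6]{loubaton-mestre-2017}.

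The main obstacle I anticipate is twofold. First, the bookkeeping in the linearization step: the subtracted system does not close on its own because $\Phi$ depends on the unknown matrices $\mathbf{T}_N, \widetilde{\mathbf{T}}_N$ themselves, so one has to be careful that the ``coefficient'' matrices appearing in $\Phi$ are the actual solutions (whose norms are controlled only through $\mathcal{S}_L(\mathbb{R}^+)$ membership and the lower bounds), not $t\I, \tilde t\I$ — this is a routine but delicate substitution, and it is the place where one invokes Proposition~\ref{prop:Phi-Phi_S-Phi_TH} in the precise form where the hypotheses (\ref{eq:hyp-phiSn-0})--(\ref{eq:hyp-phiTHn-0}) have already been verified in Lemma~\ref{le:convergence-series-PhinTB}. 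Second, and more essentially, everything hinges on the $L^{-\min(1,\gamma)}$ rate in Lemma~\ref{lem:orthogonal-polynomials}, i.e.\ on the Szegő-asymptotics estimate for $\mathbf{a}_L(\nu)^H\mathcal{R}_{m,L}^{-1}\mathbf{a}_L(\nu)$ being uniform in both $m$ and $\nu$; the uniformity in $m$ is what forces the use of Assumption~\ref{as:norm-r-omega} in the strong Beurling-weight form, and propagating that uniformity cleanly through the averaged operator $\overline{\Psi}$ (which sums over $m$) without losing it is the real crux. Once that uniform estimate is in hand, the rest is the standard deterministic-equivalent perturbation argument transplanted from \cite{loubaton-mestre-2017}.
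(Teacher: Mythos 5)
Your overall route is the paper's: treat the scalar Marchenko--Pastur pair $(t,\tilde t)$ as an approximate fixed point of the canonical system (\ref{eq:canonical-T})--(\ref{eq:canonical-tildeT}), identify the residual with the Toeplitz error matrix $\mathbf{E}_N$ whose symbol is $\frac{1}{M}\sum_m\epsilon_{m,L}(\nu)$ so that Lemma \ref{lem:orthogonal-polynomials} yields the $L^{-\min(\gamma,1)}$ rate, linearize the difference as a fixed-point equation for a mixed-coefficient operator of the type (\ref{eq:def-Phi-general}), and sum the resulting Neumann series via Proposition \ref{prop:Phi-Phi_S-Phi_TH}. This is exactly the decomposition implemented in the paper through the one-step iterates $\widetilde{\mathbf{T}}_{mp},\mathbf{T}_{mp}$, the operator $\Phi_{\mathcal{B},2}$ of (\ref{eq:def-Phi2}) and the residual $\Delta_{\mathcal{B},mp}$.

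There is, however, a genuine gap at the point where you invoke Lemma \ref{le:convergence-series-PhinTB} to control \emph{both} series $\sum_n\Phi_{\mathbf{S}}^{(n)}(\I_{ML})$ and $\sum_n\Phi_{\mathbf{T}^{H}}^{(n)}(\I_{ML})$. That lemma is proved only for operators built from an exact solution $(\mathbf{T},\widetilde{\mathbf{T}})$ of the canonical equations; it says nothing about the side of the linearized operator built from the ansatz, i.e.\ from $\mathbf{T}_{\mathcal{B},mp}$ and $\widetilde{\mathbf{T}}_{mp}$, which satisfy the system only up to the $\mathcal{O}(L^{-\min(\gamma,1)})$ residual. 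The paper must supply a separate argument: from the identity $\mathrm{Im}\,\mathbf{T}_{\mathcal{B},mp}/\Imm z=\mathcal{B}_L^{-1/2}\mathbf{T}_{mp}\mathbf{T}_{mp}^{H}\mathcal{B}_L^{-1/2}+\Phi_{\mathbf{T}_{\mathcal{B},mp}}\left(\mathrm{Im}\,\mathbf{t}_{\mathcal{B}}/\Imm z\right)$ one gets $\mathrm{Im}\,\mathbf{t}_{\mathcal{B}}/\Imm z=\mathbf{Y}_1(z)+\Phi_{\mathbf{T}_{\mathcal{B},mp}}\left(\mathrm{Im}\,\mathbf{t}_{\mathcal{B}}/\Imm z\right)$, where $\mathbf{Y}_1(z)$ contains the term $\mathrm{Im}\,\Delta_{\mathcal{B},mp}/\Imm z$ (controlled via Lemma B.1 of \cite{hachem-loubaton-najim-vallet-jmva-2013}); the positivity of $\mathbf{Y}_1(z)$, which is what makes $\sum_n\Phi_{\mathbf{T}_{\mathcal{B},mp}}^{(n)}$ summable, requires the lower bound $\mathbf{T}_{mp}\mathbf{T}_{mp}^{H}\geq\I_{ML}/C(z)$ to dominate $\|\Delta_{\mathcal{B},mp}(z)\|$, and therefore holds only on the region $F_N=\{z\in\mathbb{C}^{+}:\,L^{-\min(\gamma,1)}P_1(|z|)P_2(1/\Imm z)\leq\kappa\}$. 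The bound for every $z\in\mathbb{C}^{+}$, as the theorem demands, is then recovered by the complementary trick: off $F_N$ one has $1\leq C(z)L^{-\min(\gamma,1)}$, so the trivial estimate $\|\mathbf{t}_{\mathcal{B}}(z)-\mathbf{T}_{\mathcal{B}}(z)\|\leq C(z)$ already has the required form. Without this two-region argument your series representation, and hence the final inequality, is not justified on all of $\mathbb{C}^{+}$; the rest of your outline (in particular the uniformity in $m$ of the Szeg\H{o} estimate passing through $\overline{\Psi}$, which is immediate since $\|\mathbf{E}_N\|$ is bounded by the sup of its symbol) is sound.
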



We devote the rest of this section to the proof of Theorem \ref{th:T-t}. First of all, it is well known that the 
function $\tilde{t}(z) = c t(z) - \frac{1-c}{z}$ coincides with the
Stieltjes transform of the probability measure $c \mu_{mp,c} + (1 - c) \delta_{0}$ and is equal to 
\begin{equation}
    \label{eq:equation-marcenko-pastur-tilde}
    \tilde{t}(z) = - \frac{1}{z(1 + c t(z))}
\end{equation}
so that $t(z)$ can also be written as 
$$
t(z) = - \frac{1}{z(1+\tilde{t}(z))}.
$$

Consider here the two matrix-valued functions 
$\widetilde{{\bf T}}_{mp}(z)$ and ${\bf T}_{mp}(z)$ defined by 
\begin{eqnarray}
\label{eq:def-tildeTmp}
\widetilde{{\bf T}}_{mp}(z) & = & -\frac{1}{z}\left(
\mathbf{I}_{N}+c_{N}\overline{\Psi}^{T}\left(\mathcal{B}_{L}^{-1/2} \,
t(z) \I_{ML} \, \mathcal{B}_{L}^{-1/2}\right)\right)^{-1} \\
{\bf T}_{mp}(z) & = & -\frac{1}{z}
\left(\mathbf{I}_{ML}+\mathcal{B}_{L}^{-1/2}\Psi\left(\widetilde{\mathbf{T}}_{mp}^{T}(z)\right)  \mathcal{B}_{L}^{-1/2}\right)^{-1}.
\end{eqnarray}
According to Proposition \ref{prop:Upsilon-tildeUpsilon}, these functions belong to $\mathcal{S}_N(\mathbb{R}^{+})$ and  $\mathcal{S}_{ML}(\mathbb{R}^{+})$
respectively, and verify the various properties of functions $\widetilde{\bs{\Upsilon}}(z)$
and $\bs{\Upsilon}(z)$ defined in the statement of that proposition. In order 
to establish Theorem \ref{th:T-t}, we define $\Delta_{mp}(z)$ by 
\begin{equation}
    \label{eq:def-Deltamp}
 \Delta_{mp}(z) = t(z) \I_{ML} - {\bf T}_{mp}(z)   
\end{equation}
and express $t(z) \I_{ML} - {\bf T}(z)$ as 
\begin{equation}
    \label{eq:diff_t_T}
t(z) \I_{ML} - {\bf T}(z) = \left({\bf T}_{mp}(z) - {\bf T}(z)\right) + \Delta_{mp}(z).
\end{equation}
We also define ${\bf t}_{\mathcal{B}}(z)$, ${\bf T}_{\mathcal{B},mp}(z)$
and $\Delta_{\mathcal{B},mp}(z)$ by 
${\bf t}_{\mathcal{B}}(z) =  \mathcal{B}_L^{-1/2}  \, t(z) \I_{ML}  \, \mathcal{B}_L^{-1/2}$, 
${\bf T}_{\mathcal{B},mp}(z) =  \mathcal{B}_L^{-1/2}   {\bf T}_{mp}(z)  \mathcal{B}_L^{-1/2}$
and $\Delta_{\mathcal{B},mp}(z) =  \mathcal{B}_L^{-1/2}   \Delta_{mp}(z)  \mathcal{B}_L^{-1/2}$ respectively. 
Using the definition of ${\bf T}_{mp}$ and $\widetilde{{\bf T}}_{mp}$ as well as 
the canonical equations (\ref{eq:canonical-T}, \ref{eq:canonical-tildeT}), we obtain easily that  \begin{equation}
    \label{eq:expre-TBmp-TB}
    {\bf T}_{\mathcal{B},mp}(z) - {\bf T}_{\mathcal{B}}(z) = \Phi_{\mathcal{B},2}\left( {\bf t}_{\mathcal{B}}(z)  - {\bf T}_{\mathcal{B}}(z) \right) 
\end{equation}
where $\Phi_{\mathcal{B},2}$ is the linear operator acting on $ML \times ML$ matrices defined as 
\begin{equation}
    \label{eq:def-Phi2}
    \Phi_{\mathcal{B},2}({\bf X}) = c z^{2}  {\bf T}_{\mathcal{B},mp}(z) 
\Psi\left(  \widetilde{\mathbf{T}}_{mp}^{T}(z)\overline{\Psi}\left(
\mathbf{X}\right)  \widetilde{\mathbf{T}}^{T}(z)\right)  \mathbf{T}
_{\mathcal{B}}(z).
\end{equation}
Using this definition, we can re-write (\ref{eq:diff_t_T}) as  
\begin{equation}
 \label{eq:expre-tB-TB}
{\bf t}_{\mathcal{B}}(z) - {\bf T}_{\mathcal{B}}(z) = \Phi_{\mathcal{B},2}\left( {\bf t}_{\mathcal{B}}(z)  - {\bf T}_{\mathcal{B}}(z) \right) + \Delta_{\mathcal{B},mp}(z).
\end{equation}
Our approach is to use Proposition \ref{prop:Phi-Phi_S-Phi_TH} in order to establish that 
\begin{equation}
    \label{eq:expre-somme-tB-TB}
    {\bf t}_{\mathcal{B}}(z) - {\bf T}_{\mathcal{B}}(z)  =  \sum_{n=0}^{+\infty} 
\Phi_{\mathcal{B},2}^{(n)}\left(\Delta_{\mathcal{B},mp}(z)\right)    
\end{equation}
and that $\|   {\bf t}_{\mathcal{B}}(z) - {\bf T}_{\mathcal{B}}(z) \| \leq C(z) \, 
\| \Delta_{\mathcal{B},mp}(z) \|$. The identity in (\ref{eq:bound-norm-T-t}) will then be established if we are able to show that $\| \Delta_{\mathcal{B},mp}(z) \| \leq C(z) \frac{1}{L^{\min(1,\gamma)}}$ if $\gamma < \gamma_0$, $\gamma \neq 1$.

We begin by evaluating the spectral norm of $ \Delta_{mp}(z)$ and  $ \Delta_{\mathcal{B},mp}(z)$. 
For this, we observe that $\widetilde{{\bf T}}_{mp}^{T}(z)$ is given by
$$
\widetilde{{\bf T}}_{mp}^{T}(z) = -\frac
{1}{z}\left(  \mathbf{I}_{N} + c t(z)\overline{\Psi}\left(  \mathcal{B}
_{L}^{-1}\right)  \right)  ^{-1}
$$
where we can express $\overline{\Psi}\left(  \mathcal{B}
_{L}^{-1}\right) $ as
$$
\overline{\Psi}\left(  \mathcal{B}
_{L}^{-1}\right) = \int_{0}^{1} \frac{1}{M} \sum_{m=1}^{M} \mathcal{S}_m(\nu) {\bf a}_L^{H}(\nu)
\mathcal{R}_{m,L}^{-1}  {\bf a}_L(\nu)  {\bf d}_N(\nu)  {\bf d}^{H}_N(\nu) \, d\nu.
$$
Let us denote by $\mathbf{E}_N$ the $N \times N$ matrix defined by 
\begin{equation}
\label{eq:def_errormatrix_E_N}
\mathbf{E}_N  =\int_{0}^{1}\left(  \frac{1}{M}\sum_{m=1}^{M}\epsilon
_{m,L}\left(  \nu\right)  \right)  \mathbf{d}_{N}\left(  \nu\right)
\mathbf{d}^{H}_{N} \left(  \nu\right)  d\nu
\end{equation}
where $\epsilon_{m,L}(\nu)$ is defined by 
\[
\epsilon_{m,L}\left(\nu\right)  =\mathcal{S}_{m}(\nu)\mathbf{a}_{L}^{H}
\left(\nu\right)  \mathcal{R}_{m,L}^{-1}
\mathbf{a}_{L}\left(\nu\right) - 1.
\]
It is clear that $\overline{\Psi}\left(  \mathcal{B}
_{L}^{-1}\right) = \I_{N} + \mathbf{E}_N$, so that 
$\widetilde{{\bf T}}_{mp}^{T}(z)$ can be written as 
$$
\widetilde{{\bf T}}_{mp}^{T}(z) = \left[-z(1+ct(z)) \left( \I_{N} + \frac{c t(z)}{1+ct(z)} \mathbf{E}_N \right) \right]^{-1}
$$
or equivalently as 
\begin{align*}
\widetilde{{\bf T}}_{mp}^{T}(z) & = \tilde{t}(z) \, \I_{N}  \, \left( \I_{N} - c \, z \, t(z) \, \tilde{t}(z) {\bf E}_N \right)^{-1} \\ & = \tilde{t}(z) \I_{N} + c z t(z) \tilde{t}^{2}(z) {\bf E}_N \left( \I_{N} - c \, z \, t(z) \, \tilde{t}(z) {\bf E}_N \right)^{-1}.
\end{align*}
In order to express ${\bf T}_{mp}(z)$ in a convenient way, we define ${\bs \Gamma(z)}$ as the 
$ML \times ML$ block diagonal matrix given by
\begin{equation}
    \label{eq:def-Gamma}
    \bs{\Gamma}(z) = \Psi\left(  {\bf E}_N \left( \I_{ML} - c \, z \, t(z) \, \tilde{t}(z) {\bf E}_N \right)^{-1} \right).
\end{equation}
Using that $\Psi(\I_{N}) = \mathcal{B}_L$, we obtain 
$$
{\bf T}_{mp}(z) = \left[ -z \left( (1+\tilde{t}(z)) \I_{ML} + c z t(z) \tilde{t}^{2}(z) \mathcal{B}_L^{-1/2} 
\bs{\Gamma} (z) \mathcal{B}_L^{-1/2} \right) \right]^{-1}
$$
or, equivalently, 
\begin{align*}
{\bf T}_{mp}(z) &= t(z) \left( \I_{ML} - c (z t(z) \tilde{t}(z))^{2} \bs{\Gamma}_{\mathcal{B}}(z) \right)^{-1}\\
&= t(z) \I_{ML} +  t(z) c (z t(z) \tilde{t}(z))^{2} \bs{\Gamma}_{\mathcal{B}}(z)  \left( \I_{ML} - c (z t(z) \tilde{t}(z))^{2} \bs{\Gamma}_{\mathcal{B}}(z) \right)^{-1}
\end{align*}
where $\bs{\Gamma}_{\mathcal{B}}(z) = \mathcal{B}_L^{-1/2} \bs{\Gamma} (z) \mathcal{B}_L^{-1/2}$. 
We eventually obtain that 
\begin{equation}
    \label{eq:expre-Deltamp}
    \Delta_{mp}(z) = -  t(z) c (z t(z) \tilde{t}(z))^{2} \bs{\Gamma}_{\mathcal{B}}(z)  \left( \I - c (z t(z) \tilde{t}(z))^{2} \bs{\Gamma}_{\mathcal{B}}(z) \right)^{-1}.
\end{equation}
The asymptotic behaviour of $\Delta_{mp}(z)$ depends on the behaviour of matrix 
${\bf E}_N$, which itself depends on the  properties of 
the terms $(\epsilon_m(\nu))_{m=1, \ldots, M}$. The following Lemma, 
established in the Appendix \ref{sec:orthogonal-polynomials}, is the key point 
of the proof of Theorem \ref{th:T-t}. 
\begin{lemma}
\label{lem:orthogonal-polynomials}
For each $\gamma < \gamma_0$, it holds that 
\begin{equation}
    \label{eq:control-epsilon_m}
    \sup_{m \geq 1} \sup_{\nu \in [0,1]} |\epsilon_{m,L}(\nu)| \leq \frac{\kappa}{L^{\min(\gamma,1)}}
\end{equation}
for some nice constant $\kappa$ (depending on $\gamma$) if $\gamma \neq 1$ while if $\gamma = 1$, 
\begin{equation}
    \label{eq:control-epsilon_m-gamma=1}
    \sup_{m \geq 1} \sup_{\nu \in [0,1]} |\epsilon_{m,L}(\nu)| \leq \kappa \, \frac{\log L}{L}.
\end{equation}
\end{lemma}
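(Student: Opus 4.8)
The strategy I would follow is to identify $\mathbf{a}_L(\nu)^{H}\mathcal{R}_{m,L}^{-1}\mathbf{a}_L(\nu)$ with a Christoffel function of the scalar measure $d\mu_m(\nu)=\mathcal{S}_m(\nu)\,d\nu$ on the unit circle, and then invoke the quantitative Szeg\H{o}/Baxter theory of \cite[Chapter 5]{barry-simon-book}. Write $\zeta=e^{2i\pi\nu}$ and observe that $\mathcal{R}_{m,L}$ is exactly the Gram matrix of the monomials $1,\zeta,\dots,\zeta^{L-1}$ in $L^{2}(d\mu_m)$, since $\{\mathcal{R}_{m,L}\}_{k,k'}=r_m(k-k')=\int_{0}^{1}e^{2i\pi(k-k')\nu}\mathcal{S}_m(\nu)\,d\nu$. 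Hence, writing $K_{n}^{(m)}$ for the reproducing (Christoffel--Darboux) kernel of $\mathcal{P}_n$ in $L^{2}(d\mu_m)$ and $\lambda_{n}^{(m)}$ for the associated Christoffel function, one has the exact identities
\[
\mathbf{a}_L(\nu)^{H}\mathcal{R}_{m,L}^{-1}\mathbf{a}_L(\nu)=\frac{1}{L}\,\mathbf{d}_L(\nu)^{H}\mathcal{R}_{m,L}^{-1}\mathbf{d}_L(\nu)=\frac{1}{L}\,K_{L-1}^{(m)}(\zeta,\zeta)=\frac{1}{L\,\lambda_{L-1}^{(m)}(\zeta)},
\]
the last one also following from the extremal characterization $\lambda_{L-1}^{(m)}(\zeta)=\min\{\int|g|^{2}d\mu_m:\ \deg g\le L-1,\ g(\zeta)=1\}$. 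Therefore, writing $(\varphi_j^{(m)})_j$ for the orthonormal polynomials of $d\mu_m$, $(\varphi_j^{*(m)})_j$ for their reversed polynomials, and $D_m$ for the Szeg\H{o} function of $\mathcal{S}_m$ (so that $|D_m(\zeta)|^{2}=\mathcal{S}_m(\nu)$ on $|\zeta|=1$ and $|\varphi_j^{(m)}(\zeta)|=|\varphi_j^{*(m)}(\zeta)|$ there), I would rewrite
\[
\epsilon_{m,L}(\nu)=\frac{\mathcal{S}_m(\nu)}{L}\sum_{j=0}^{L-1}\bigl|\varphi_j^{(m)}(\zeta)\bigr|^{2}-1=\frac{1}{L}\sum_{j=0}^{L-1}\Bigl(\bigl|D_m(\zeta)\,\varphi_j^{*(m)}(\zeta)\bigr|^{2}-1\Bigr).
\]

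The next step is a uniform control of $e_j^{(m)}(\zeta):=D_m(\zeta)\varphi_j^{*(m)}(\zeta)-1$. Because $\mathcal{S}_m$ is bounded above and below (Assumption \ref{ass:bounds-spectral-densities}) and its Fourier coefficients $r_m$ lie in $\ell_{\omega_0}$ with $\sup_m\|r_m\|_{\omega_0}<\infty$ (Assumption \ref{as:norm-r-omega}), Baxter's theorem and the attendant estimates of \cite[Chapter 5]{barry-simon-book} apply with all constants depending only on $s_{min}$, $s_{max}$ and $\sup_m\|r_m\|_{\omega_0}$: the Verblunsky coefficients $(\alpha_j^{(m)})_j$ and the Taylor coefficients of $D_m$ and $D_m^{-1}$ belong to $\ell_{\omega}$ for every $\omega(n)=(1+|n|)^{\gamma}$ with $\gamma<\gamma_0$, with $\ell_\omega$-norms bounded uniformly in $m$. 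Feeding this into the Szeg\H{o} recursion that passes from $\varphi_j^{*(m)}$ to $\varphi_{j+1}^{*(m)}$ through $\alpha_j^{(m)}$, and using $\varphi_j^{*(m)}\to D_m^{-1}$ uniformly on $\{|\zeta|\le1\}$, one obtains (the sup-norm being dominated by $\ell^{1}$-norms of polynomial coefficients, hence by $\ell_\omega$-tails)
\[
\sup_{m\ge1}\ \sup_{|\zeta|=1}\bigl|e_j^{(m)}(\zeta)\bigr|\le\frac{\kappa_\gamma}{(1+j)^{\gamma}}\qquad\text{for every }\gamma<\gamma_0 .
\]

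Finally, expanding $\bigl|1+e_j^{(m)}(\zeta)\bigr|^{2}-1=2\,\mathrm{Re}\,e_j^{(m)}(\zeta)+\bigl|e_j^{(m)}(\zeta)\bigr|^{2}$ in the formula for $\epsilon_{m,L}$ reduces matters to the elementary estimate for $\frac{1}{L}\sum_{j=0}^{L-1}(1+j)^{-\gamma}$, which is $O(L^{-\gamma})$ when $\gamma<1$, $O(L^{-1})$ when $\gamma>1$, and $O(L^{-1}\log L)$ when $\gamma=1$ — the quadratic term $\frac{1}{L}\sum_j|e_j^{(m)}|^{2}$ being of equal or smaller order throughout — which gives exactly the claimed bounds (\ref{eq:control-epsilon_m}) and (\ref{eq:control-epsilon_m-gamma=1}); the uniform lower bound $L\,\lambda_{L-1}^{(m)}(\zeta)\ge s_{min}/2>0$, needed only to pass back from $\epsilon_{m,L}$ to the quadratic form itself, comes from the same asymptotics. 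The principal obstacle is the \emph{uniformity in $m$}: one must revisit the proofs of Baxter's theorem and of the Szeg\H{o} asymptotics in \cite{barry-simon-book} to check that every constant can be taken to depend only on $s_{min}$, $s_{max}$ and $\sup_m\|r_m\|_{\omega_0}$; a secondary technical point is the bookkeeping of the Ces\`aro-type average above, which both produces the logarithmic correction at the critical exponent and is responsible for the restrictions $\gamma<\gamma_0$, $\gamma\neq1$ in the statement.
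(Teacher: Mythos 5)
Your proposal is correct and follows essentially the same route as the paper: the identity $\mathbf{a}_L(\nu)^{H}\mathcal{R}_{m,L}^{-1}\mathbf{a}_L(\nu)=\frac{1}{L}\sum_{l=0}^{L-1}|\phi_l^{(m)*}(e^{2i\pi\nu})|^{2}$, a Baxter-type comparison of $\phi_l^{(m)*}$ with the inverse Szeg\H{o} function at rate $O(l^{-\gamma})$ for each $\gamma<\gamma_0$, and the Ces\`aro average yielding $L^{-\min(\gamma,1)}$ with the $\log L$ correction at $\gamma=1$. The uniformity in $m$ that you flag as the principal obstacle is exactly what the paper's appendix carries out (a uniform Wiener--L\'evy lemma plus a re-derivation of the Baxter/truncated-Toeplitz inversion bounds with constants controlled by $s_{min}$, $s_{max}$ and $\sup_m\|r_m\|_{\omega_0}$, and a separate uniform bound on the finitely many low-degree polynomials via the Szeg\H{o} recursion), so your outline matches the actual proof.
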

In the following, we use Lemma \ref{lem:orthogonal-polynomials} for a value of $\gamma$ as close as 
possible to $\gamma_0$ in order to obtain the fastest speed of convergence for 
$\sup_{m \geq 1} \sup_{\nu \in [0,1]} |\epsilon_{m,L}(\nu)|$. If $\gamma_0 \leq 1$, 
$\gamma < \gamma_0 \leq 1$ cannot be equal to 1. If $\gamma_0 > 1$, we will of course consider 
a value of $\gamma$ for which $1 < \gamma < \gamma_0$. Therefore, in the following, we assume that $\gamma \neq 1$. If $\gamma_0 \leq 1$, we thus obtain 
that for each $\gamma < \gamma_0$
\begin{equation}
    \label{eq:bound-epsilonm-gamma0-less-1}
  \sup_{m \geq 1} \sup_{\nu \in [0,1]} |\epsilon_{m,L}(\nu)| \leq \frac{\kappa}{L^{\gamma}}   
\end{equation}
holds, while if $\gamma_0 > 1$, 
\begin{equation}
    \label{eq:bound-epsilonm-gamma0-larger-1}
  \sup_{m \geq 1} \sup_{\nu \in [0,1]} |\epsilon_{m,L}(\nu)| \leq \frac{\kappa}{L}.
\end{equation}
Noting that ${\bf E}_N$ is the $N \times N$ Toeplitz matrix with symbol
$\frac{1}{M} \sum_{m=1}^{M} \epsilon_{m,L}(\nu)$, we immediately infer from this discussion 
the following corollary.
\begin{corollary}
\label{cor:control-norm-E-Delta}
If $\gamma_0 \leq 1$, then, for each $\gamma < \gamma_0$, there exists a nice constant 
$\kappa$ depending on $\gamma$ for which $\| {\bf E}_N \| \leq \frac{\kappa}{L^{\gamma}}$. 
If $\gamma_0 > 1$, there exists a nice constant $\kappa$ 
such that $\| {\bf E}_N \| \leq \frac{\kappa}{L}$. 
\end{corollary}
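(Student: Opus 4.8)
The proof is a short consequence of Lemma~\ref{lem:orthogonal-polynomials} once the Toeplitz structure of $\mathbf{E}_N$ is exploited. The plan is as follows. First, from the definition (\ref{eq:def_errormatrix_E_N}) I would observe that $\mathbf{E}_N$ is the $N\times N$ Toeplitz matrix whose $(k,l)$ entry is the Fourier coefficient $\int_0^1 f_{M,L}(\nu)\,\mathrm{e}^{2i\pi(k-l)\nu}\,d\nu$ of the continuous function $f_{M,L}(\nu)=\frac{1}{M}\sum_{m=1}^{M}\epsilon_{m,L}(\nu)$; equivalently, $\mathbf{E}_N=\int_0^1 f_{M,L}(\nu)\,\mathbf{d}_N(\nu)\mathbf{d}_N^{H}(\nu)\,d\nu$ with $\mathbf{d}_N$ as in (\ref{eq:def-d_R}).

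Second, I would bound the spectral norm of $\mathbf{E}_N$ by the sup-norm of its symbol. For unit vectors $\mathbf{u},\mathbf{v}\in\mathbb{C}^{N}$, the Cauchy--Schwarz inequality applied to the above integral representation gives $|\mathbf{u}^{H}\mathbf{E}_N\mathbf{v}|\le \big(\sup_{\nu}|f_{M,L}(\nu)|\big)\big(\int_0^1|\mathbf{d}_N^{H}(\nu)\mathbf{u}|^2\,d\nu\big)^{1/2}\big(\int_0^1|\mathbf{d}_N^{H}(\nu)\mathbf{v}|^2\,d\nu\big)^{1/2}$, and since $\int_0^1\mathbf{d}_N(\nu)\mathbf{d}_N^{H}(\nu)\,d\nu=\mathbf{I}_N$ both integrals equal $1$; hence $\|\mathbf{E}_N\|\le\sup_{\nu\in[0,1]}|f_{M,L}(\nu)|$. (One works with the bilinear form rather than the quadratic one because $\mathbf{E}_N$ need not be Hermitian.) The triangle inequality then yields $\|\mathbf{E}_N\|\le\sup_{m\ge1}\sup_{\nu\in[0,1]}|\epsilon_{m,L}(\nu)|$.

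Third, I would simply insert the estimates on $\sup_{m}\sup_{\nu}|\epsilon_{m,L}(\nu)|$ already recorded in (\ref{eq:bound-epsilonm-gamma0-less-1}) and (\ref{eq:bound-epsilonm-gamma0-larger-1}), which are themselves consequences of Lemma~\ref{lem:orthogonal-polynomials}: if $\gamma_0\le1$ one picks an arbitrary $\gamma<\gamma_0$, necessarily $\gamma\neq1$, to obtain $\|\mathbf{E}_N\|\le\kappa L^{-\gamma}$, while if $\gamma_0>1$ one fixes $\gamma\in(1,\gamma_0)$ to obtain $\|\mathbf{E}_N\|\le\kappa L^{-1}$. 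This is precisely the statement of the corollary. There is no genuine obstacle at this stage: all the analytic difficulty --- the asymptotic expansion of $\mathbf{a}_L^{H}(\nu)\mathcal{R}_{m,L}^{-1}\mathbf{a}_L(\nu)$ via the Szegő orthogonal polynomials associated with the measure $\mathcal{S}_m(\nu)\,d\nu$ --- is concentrated in the proof of Lemma~\ref{lem:orthogonal-polynomials}, carried out in Appendix~\ref{sec:orthogonal-polynomials}.
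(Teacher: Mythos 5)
Your proposal is correct and follows essentially the same route as the paper: the paper also deduces the corollary directly from the fact that $\mathbf{E}_N$ is the Toeplitz matrix with symbol $\frac{1}{M}\sum_{m=1}^{M}\epsilon_{m,L}(\nu)$, so that $\|\mathbf{E}_N\|$ is bounded by the sup-norm of this symbol, which Lemma \ref{lem:orthogonal-polynomials} (through (\ref{eq:bound-epsilonm-gamma0-less-1}) and (\ref{eq:bound-epsilonm-gamma0-larger-1})) controls by $\kappa L^{-\gamma}$ or $\kappa L^{-1}$. Your only inessential slip is the parenthetical claim that $\mathbf{E}_N$ need not be Hermitian: since each $\epsilon_{m,L}(\nu)$ is real (it is a real quadratic form of the Hermitian matrix $\mathcal{R}_{m,L}^{-1}$ times $\mathcal{S}_m(\nu)$, minus one), the symbol is real and $\mathbf{E}_N$ is in fact Hermitian, although your bilinear-form argument is valid in any case.
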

In order to control the norm of $\bs{\Gamma}(z)$, we mention that for each 
$z \in \mathbb{C}^{+}$, then $c |z t(z) \tilde{t}(z)|^{2} < 1$ 
(see e.g. Lemma 1.1 in \cite{loubaton2016}).
Therefore, the inequalities  $|z t(z) \tilde{t}(z)| \leq \frac{1}{\sqrt{c}}$ and 
$c |z t(z) \tilde{t}(z)| \leq \sqrt{c}$ hold on $ \mathbb{C}^{+}$. Corollary \ref{cor:control-norm-E-Delta} 
thus implies that for $L$ large enough, 
$ \| \I_{N} - c \, z \, t(z) \, \tilde{t}(z) {\bf E}_N \| > 1 - \sqrt{c} \, \| {\bf E}_N \| > \frac{1}{2}$ and 
$ \| \left(\I_{N} - c \, z \, t(z) \, \tilde{t}(z) {\bf E}_N \right)^{-1} \| < 2$
hold for each $z \in  \mathbb{C}^{+}$. For $L$
large enough, we thus have $\| {\bf E}_N \left(\I_{N} - c \, z \, t(z) \, \tilde{t}(z) {\bf E}_N \right)^{-1} \| \leq \frac{\kappa}{L^{\min(\gamma,1)}}$ for some nice constant $\kappa$,  a property which 
also implies that $\| \bs{\Gamma}(z) \| \leq \frac{\kappa}{L^{\min(\gamma,1)}}$ because 
if $\widetilde{{\bf X}}$ is any $N \times N$ matrix, then 
$\| \Psi(\widetilde{{\bf X}})\| \leq s_{max} \, \| \widetilde{{\bf X}} \|$, where we recall that $s_{max}$ is an upper bound on the spectral densities (cf. Assumption \ref{ass:bounds-spectral-densities}). We also notice that for $L$ large enough, $ \| \I_{ML} - c (z t(z) \tilde{t}(z))^{2} \bs{\Gamma}_{\mathcal{B}}(z) \| > 1 - \|\bs{\Gamma}_{\mathcal{B}}(z) \| > \frac{1}{2}$ for each $z \in \mathbb{C}^{+}$, and therefore, that $\| \left(  \I_{ML} - c (z t(z) \tilde{t}(z))^{2} \bs{\Gamma}_{\mathcal{B}}(z) \right)^{-1} \| < 2$ on $\mathbb{C}^{+}$.  This, in turn, implies that 
\begin{equation}
\label{eq:bound_Delta_mp}
    \| \Delta_{mp}(z) \| \leq \frac{C(z)}{L^{\min(\gamma,1)}}
\end{equation}
 and also $\| \Delta_{\mathcal{B},mp}(z) \| \leq \frac{C(z)}{L^{\min(\gamma,1)}}$ for $L$ large enough, as we wanted to show.

We now establish that (\ref{eq:expre-somme-tB-TB}) holds. We first prove 
that for any $ML \times ML$ block matrix matrix ${\bf X}$ the series
$\sum_{n=0}^{+\infty} \Phi_{\mathcal{B},2}^{(n)}({\bf X})$ is convergent. 
For this, we use Proposition \ref{prop:Phi-Phi_S-Phi_TH}. According to Lemma \ref{le:convergence-series-PhinTB}, $\sum_{n=0}^{+\infty} \Phi_{{\bf T}_{\mathcal{B}}^{H}}^{(n)}({\bf Y}) < +\infty$ 
for each positive matrix ${\bf Y}$. In order to establish a similar property 
for operator $\Phi_{{\bf T}_{\mathcal{B},mp}}$, we notice that a simple calculation leads to the identity
$$
\frac{\mathrm{Im} {\bf T}_{\mathcal{B},mp}(z)}{\mathrm{Im} z} = 
\mathcal{B}_L^{-1/2} {\bf T}_{mp}(z) {\bf T}^{H}_{mp}(z)  \mathcal{B}_L^{-1/2} +  \Phi_{{\bf T}_{\mathcal{B},mp}} \left( \frac{\mathrm{Im} {\bf t}_{\mathcal{B}}}{\mathrm{Im} z} \right) 
$$
if $z \in \mathbb{C}^{+}$. 
This implies that 
$$
\frac{\mathrm{Im} {\bf t}_{\mathcal{B}}(z)}{\mathrm{Im} z} = 
\mathcal{B}_L^{-1/2} {\bf T}_{mp}(z) {\bf T}^{H}_{mp}(z) \mathcal{B}_L^{-1/2} + 
\frac{\mathrm{Im} \Delta_{\mathcal{B},mp}(z)}{\mathrm{Im} z} + \Phi_{{\bf T}_{\mathcal{B},mp}} \left( \frac{\mathrm{Im} {\bf t}_{\mathcal{B}}}{\mathrm{Im} z} \right). 
$$
Noting that $\| \Delta_{\mathcal{B},mp}(z) \| \leq \frac{C(z)}{L^{\min(\gamma,1)}}$, Lemma B.1 in \cite{hachem-loubaton-najim-vallet-jmva-2013} implies that  
$$
\left \| \frac{\mathrm{Im} \Delta_{\mathcal{B},mp}(z)}{\mathrm{Im} z} \right \| 
\leq \frac{C(z)}{L^{\min(\gamma,1)}}.
$$
Proposition \ref{prop:Upsilon-tildeUpsilon} implies  that ${\bf T}_{mp}(z) {\bf T}^{H}_{mp}(z) \geq \frac{1}{C(z)} \I_{ML}$ for each 
$z \in \mathbb{C}^{+}$. Therefore, if we denote by ${\bf Y}_1(z)$ 
the matrix ${\bf Y}_1(z) = \mathcal{B}_L^{-1/2} {\bf T}_{mp}(z) {\bf T}^{H}_{mp}(z)  \mathcal{B}_L^{-1/2} + 
\frac{\mathrm{Im} \Delta_{\mathcal{B},mp}(z)}{\mathrm{Im} z}$, then, 
${\bf Y}_1(z) > \frac{1}{C(z)} \, \I_{ML} > 0$ if $z \in F_N$ where $F_N$ is a subset of $\mathbb{C}^{+}$ defined by
\begin{equation}
    \label{eq:def-FN}
    F_N = \left \{ z \in \mathbb{C}^{+}, \frac{1}{L^{\min(1,\gamma)}} P_1(|z|) P_2\left(\frac{1}{\Imm z}\right) \leq \kappa \right\}
\end{equation}
for some nice constant $\kappa$. Using the same arguments as in \cite{loubaton-mestre-2017}, 
we obtain that for each $z \in F_N$, the series 
$\sum_{n=0}^{+\infty} \Phi_{{\bf T}_{\mathcal{B},mp}}^{(n)}\left( {\bf Y}_1(z) \right)$ 
is convergent. Proposition \ref{prop:Phi-Phi_S-Phi_TH} implies that for each positive matrix ${\bf Y}$, 
$\sum_{n=0}^{+\infty} \Phi_{{\bf T}_{\mathcal{B},mp}}^{(n)}\left( {\bf Y} \right) < +\infty$ and 
that for each matrix ${\bf X}$, the series $\sum_{n=0}^{+\infty} \Phi_{\mathcal{B},2}^{(n)}\left( {\bf X} \right)$ is convergent if $z \in F_N$. 
Therefore, (\ref{eq:expre-somme-tB-TB}) holds true for $z \in F_N$, and 
$$
\left\| \sum_{n=0}^{+\infty} 
\Phi_{\mathcal{B},2}^{(n)}\left(\Delta_{\mathcal{B},mp}(z)\right)  \right\| 
\leq \| \Delta_{\mathcal{B},mp}(z) \| \left\| \sum_{n=0}^{+\infty} \Phi_{{\bf T}_{\mathcal{B},mp}}^{(n)}\left(\I_{ML}\right) \right\|^{1/2}
 \left\| \sum_{n=0}^{+\infty} \Phi_{{\bf T}_{\mathcal{B}}}^{(n)}\left( \I_{ML}\right) \right\|^{1/2}.
$$
It is easy to check that $\sum_{n=0}^{+\infty} \Phi_{{\bf T}_{\mathcal{B},mp}}^{(n)}\left( \I_{ML} \right) < C(z)\I_{ML}$ for $z \in F_N$. Therefore, 
we obtain that $\| {\bf t}_{\mathcal{B}}(z) - {\bf T}_{\mathcal{B}}(z) \| \leq \frac{C(z)}{L^{\min(\gamma,1)}}$
for each $z \in F_N$. It remains to evaluate $\| {\bf t}_{\mathcal{B}}(z) - {\bf T}_{\mathcal{B}}(z) \|$ if $z$ does not belong to $F_N$. For this, we remark that  $\| {\bf t}_{\mathcal{B}}(z) - {\bf T}_{\mathcal{B}}(z) \| \leq 
\| {\bf t}_{\mathcal{B}} (z)\|  +\| {\bf T}_{\mathcal{B}} (z) \| \leq C(z)$. As $z$ does not belong to $F_N$, the inequality
$1 \leq \frac{C(z)}{L^{\min(1,\gamma)}} $ holds for a certain $C(z)$, from which we deduce that 
 $\| {\bf t}_{\mathcal{B}}(z) - {\bf T}_{\mathcal{B}}(z) \| \leq \frac{C(z)}{L^{\min(\gamma,1)}}$ as expected. 
 Since the matrix $\mathcal{B}_L^{-1/2}$ verifies $\mathcal{B}_L^{-1/2} > \frac{1}{\sqrt{s_{min}}} \I_{ML}$, we obtain (\ref{eq:bound-norm-T-t}) for each $z \in \mathbb{C}^{+}$.



\subsection{Bounding the term $\frac{1}{ML} \mathrm{Tr} \left( t_N(z) \I_{ML} - \mathbf{T}_N(z) \right) $} 

We begin by considering the identity in (\ref{eq:expre-tB-TB}), and obtain that 
\begin{equation}
\label{eq:expre-tB-TBbis}
t(z) {\bf I}_{ML}  - {\bf T}(z) = \mathcal{B}_L^{1/2} \Phi_{\mathcal{B},2}\left( \mathcal{B}_L^{-1/2} ( t(z) {\bf I}_{ML}  - {\bf T}(z)) \mathcal{B}^{-1/2}_L \right)  \mathcal{B}_L^{1/2} + \Delta_{mp}(z)
\end{equation}
which directly implies that 
\begin{multline}
\label{eq:expre-t-normalized-trace-T}
t (z) - \frac{1}{ML} \mathrm{Tr} ({\bf T}(z)) = \frac{1}{ML} \mathrm{Tr} \left( \Phi_{\mathcal{B},2} \left( \mathcal{B}_L^{-1/2} (t(z) {\bf I}_{ML}  - {\bf T}(z)) \mathcal{B}_L^{-1/2} \right) \mathcal{B}_L \right) + \\ + \frac{1}{ML} \mathrm{Tr}( \Delta_{mp}(z)).
\end{multline}
We introduce the operator $\Phi_{\mathcal{B},2}^{t}$ defined by the property that, for any two $ML \times ML$ matrices $\mathbf{X}$, $\mathbf{Y}$, we have
\begin{equation}
\label{eq:def-transpose-Phi}
\frac{1}{ML} \mathrm{Tr} \left( {\bf X} \Phi_{\mathcal{B},2}({\bf Y}) \right) = \frac{1}{ML} \mathrm{Tr} \left( {\bf Y} \Phi_{\mathcal{B},2}^{t}({\bf X}) \right).    
\end{equation}
This can be seen as a transpose operator of $\Phi_{\mathcal{B},2}$. Using (\ref{eq:transpose_ops_gen}) it can be expressed in closed form as
$$
\Phi_{B,2}^{t}({\bf X}) = c z^{2} \Psi\left( \widetilde{{\bf T}}^{T} \overline{\Psi}( {\bf T}_{\mathcal{B}} {\bf X} {\bf T}_{\mathcal{B},mp}) \widetilde{{\bf T}}_{mp}^{T}  \right).
$$
Using (\ref{eq:def-transpose-Phi}), the expression in (\ref{eq:expre-t-normalized-trace-T}) can be rewritten as 
\begin{multline}
\label{eq:expre-t-normalized-trace-T-bis}
t (z) - \frac{1}{ML} \mathrm{Tr} ({\bf T}(z)) = \frac{1}{ML} \mathrm{Tr} \left( (t(z) {\bf I}_{ML}- {\bf T}(z)) \mathcal{B}_L^{-1/2} \Phi_{\mathcal{B},2}^{t}(\mathcal{B}_L) \mathcal{B}_L^{-1/2} \right) \\ +  \frac{1}{ML} \mathrm{Tr}( \Delta_{mp}(z)).    
\end{multline}
In order to simplify (\ref{eq:expre-t-normalized-trace-T-bis}), we observe that there exists $C(z) = P_1(|z|) P_2(\frac{1}{\Imm z})$ for some nice polynomials $P_1$ and $P_2$ such that
\begin{eqnarray*}
 \| {\bf T}_{\mathcal{B}}(z)  - t(z) \mathcal{B}_L^{-1} \| & \leq & \frac{C(z)}{L^{\min(\gamma,1)}} \\
 \| {\bf T}_{\mathcal{B},mp}(z) - t(z) \mathcal{B}_L^{-1} \| & \leq & \frac{C(z)}{L^{\min(\gamma,1)}} 
\end{eqnarray*}
and
\begin{eqnarray*}
 \| z \widetilde{{\bf T}}(z) -  z \tilde{t}(z) {\bf I}_{N} \| & \leq & \frac{C(z)}{L^{\min(\gamma,1)}} \\
 \| z \widetilde{{\bf T}}_{mp}(z) - z \tilde{t}(z) {\bf I}_{N} \| & \leq & \frac{C(z)}{L^{\min(\gamma,1)}} 
\end{eqnarray*}
which follow directly from Theorem \ref{th:T-t} and (\ref{eq:bound_Delta_mp}). From this, 
it is easily checked that for each matrix ${\bf X}$, $\Phi_{B,2}^{t}({\bf X})$ can be written as
\begin{equation}
    \label{eq:approx-transpose-Phi-bis}
\Phi_{B,2}^{t}({\bf X}) = c (z t(z) \tilde{t}(z))^{2} \, \Psi \left( \overline{\Psi}(\mathcal{B}_L^{-1} {\bf X} \mathcal{B}_L^{-1}) \right)  + \boldsymbol{\Upsilon}({\bf X})
\end{equation} 
where $\boldsymbol{\Upsilon}$ is a linear operator verifying 
\begin{equation}
    \label{eq:norm-upsilon}
    \| \boldsymbol{\Upsilon}({\bf X}) \| \leq \kappa \; C(z) \frac{\| {\bf X} \|}{L^{\min(\gamma,1)}}
\end{equation}
for each $z \in \mathbb{C}^{+}$. By (\ref{eq:approx-transpose-Phi-bis}), and using the fact that $\overline{\Psi}(\mathcal{B}_L^{-1}) = {\bf I}_N + {\bf E}_N$ and that 
$\Psi({\bf I}_N) = \mathcal{B}_L$, we obtain
\begin{equation}
 \label{eq:approx-transpose-Phi-ter}
\Phi_{B,2}^{t}(\mathcal{B}) = u(z) \, \Psi \left( \overline{\Psi}(\mathcal{B}_L^{-1}) \right) + \boldsymbol{\Upsilon}(\mathcal{B}_L) 
= u(z) \, ( \mathcal{B}_L + \Psi(\mathbf{E}_N)) +  \boldsymbol{\Upsilon}(\mathcal{B}_L)
\end{equation}
where we have introduced the definition $u(z) =  c (z t(z) \tilde{t}(z))^{2} $. We can express the above equation as  
\begin{equation}
 \label{eq:approx-transpose-Phi-quatro}
\Phi_{B,2}^{t}(\mathcal{B}_L) = u(z) \mathcal{B}_L +  \boldsymbol{\Upsilon}( \mathcal{B}_L) + u(z) \Psi(\mathbf{E}_N).
\end{equation}
Plugging (\ref{eq:approx-transpose-Phi-quatro}) into (\ref{eq:expre-t-normalized-trace-T-bis}), 
we obtain 
$$
t (z) - \frac{1}{ML} \mathrm{Tr} ({\bf T}(z)) = u(z) \, \left( t (z) - \frac{1}{ML} \mathrm{Tr} ({\bf T}(z)) \right) +  \\ \frac{1}{ML} \mathrm{Tr}( \Delta_{mp}(z)) + \delta_1(z) 
$$
where $\delta_1(z)$ is the error term defined as 
$$
\delta_1(z) = \frac{1}{ML} \mathrm{Tr} \left(   (t(z) {\bf I}_{ML}- {\bf T}(z)) \mathcal{B}_L^{-1/2} ( \boldsymbol{\Upsilon}( \mathcal{B}_L) + u(z) \Psi(\mathbf{E}_N)) \mathcal{B}_L^{-1/2} \right).
$$
We recall (see e.g. \cite{loubaton2016}, Lemma 1.1) that $u(z)$ verifies 
$1 - |u(z)| > \frac{1}{C(z)}$ on $\mathbb{C}^{+}$, where $C(z) = P_1(|z|) P_2(\frac{1}{\Imm z})$ for some nice polynomials $P_1$ and $P_2$.  Therefore, we have the inequality 
\begin{equation}
    \label{eq:inequality-t-normalized-trace-T}
    \left| t (z) - \frac{1}{ML} \mathrm{Tr} ({\bf T}(z)) \right| \leq C(z) \, \left| \frac{1}{ML}  \mathrm{Tr}( \Delta_{mp}(z)) \right| + |\delta_1(z)|.
\end{equation}
The bound in (\ref{eq:bound-norm-T-t}) together with Corollary \ref{cor:control-norm-E-Delta}, and the properties of 
operator $\boldsymbol{\Upsilon}$ imply that $|\delta_1(z)| \leq \frac{C(z)}{L^{2\min(\gamma,1)}}$. 
As a consequence, in order to complete the proof of (\ref{eq:bound_ntrace_stieltjeswrtmp}), we only need to establish the following fundamental Lemma.
\begin{lemma}
Under the above assumptions and for any $z \in \mathbb{C}^+$, we have
\label{le:trace-Deltamp}
\begin{equation}
    \label{eq:evaluation-trace-Deltamp}
 \left| \frac{1}{ML}  \mathrm{Tr}( \Delta_{mp}(z)) \right| \leq  \frac{C(z)}{L^{2\min(\gamma,1)}}
\end{equation}
where $C(z) = P_1(|z|) P_2(\frac{1}{\Imm z})$ for two nice polynomials $P_1$ and $P_2$.
\end{lemma}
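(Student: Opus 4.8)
The bound $\| \Delta_{mp}(z) \| \leq C(z) L^{-\min(\gamma,1)}$ established above is not enough for (\ref{eq:evaluation-trace-Deltamp}), so the plan is to extract from $\frac{1}{ML}\mathrm{Tr}(\Delta_{mp}(z))$ a cancellation that improves the exponent to $2\min(\gamma,1)$. First I would peel off the leading term of the geometric series in (\ref{eq:expre-Deltamp}): setting $A(z) = c\,(z t(z)\tilde t(z))^{2}\,\bs{\Gamma}_{\mathcal{B}}(z)$, we have $\Delta_{mp}(z) = -t(z)\,A(z)\,(\I_{ML}-A(z))^{-1} = -t(z)\,A(z) - t(z)\,A(z)^{2}\,(\I_{ML}-A(z))^{-1}$, where the last summand has spectral norm bounded by $C(z) L^{-2\min(\gamma,1)}$ since $\|A(z)\| = \mathcal{O}(L^{-\min(\gamma,1)})$ and $\|(\I_{ML}-A(z))^{-1}\| \leq 2$ for $L$ large. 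As $|\frac{1}{ML}\mathrm{Tr}(\cdot)|$ is dominated by the spectral norm, it then suffices to bound $\frac{1}{ML}\mathrm{Tr}(A(z)) = c\,(z t(z)\tilde t(z))^{2}\,\frac{1}{ML}\mathrm{Tr}(\bs{\Gamma}_{\mathcal{B}}(z))$.

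Next I would reduce the trace of $\bs{\Gamma}_{\mathcal{B}}(z)$ to one involving only $\mathbf{E}_N$. By cyclicity, $\frac{1}{ML}\mathrm{Tr}(\bs{\Gamma}_{\mathcal{B}}(z)) = \frac{1}{ML}\mathrm{Tr}(\mathcal{B}_L^{-1}\bs{\Gamma}(z))$, and since $\bs{\Gamma}(z) = \Psi(\mathbf{M}(z))$ with $\mathbf{M}(z) = \mathbf{E}_N(\I_N - c\,z\,t(z)\tilde t(z)\,\mathbf{E}_N)^{-1}$, the duality identity (\ref{eq:transpose_ops_gen}) together with $\overline{\Psi}(\mathcal{B}_L^{-1}) = \I_N + \mathbf{E}_N$ give $\frac{1}{ML}\mathrm{Tr}(\bs{\Gamma}_{\mathcal{B}}(z)) = \frac{1}{N}\mathrm{Tr}((\I_N + \mathbf{E}_N)\,\mathbf{M}(z))$. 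Using that $\mathbf{E}_N$ commutes with $\mathbf{M}(z)$, I would write $(\I_N + \mathbf{E}_N)\,\mathbf{M}(z) = \mathbf{E}_N + \mathbf{E}_N\,\mathbf{E}'_N(z)$ with $\mathbf{E}'_N(z) = (\I_N + \mathbf{E}_N)(\I_N - c\,z\,t(z)\tilde t(z)\,\mathbf{E}_N)^{-1} - \I_N$, which satisfies $\|\mathbf{E}'_N(z)\| \leq \kappa\,\|\mathbf{E}_N\|$ for a nice constant $\kappa$ and $L$ large; hence $\frac{1}{ML}\mathrm{Tr}(\bs{\Gamma}_{\mathcal{B}}(z)) = \frac{1}{N}\mathrm{Tr}(\mathbf{E}_N) + \mathcal{O}(L^{-2\min(\gamma,1)})$, the error being uniform in $z$.

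The heart of the argument is the exact identity $\mathrm{Tr}(\mathbf{E}_N) = 0$. From (\ref{eq:def_errormatrix_E_N}) and $\mathrm{Tr}(\mathbf{d}_N(\nu)\mathbf{d}_N^H(\nu)) = N$ one gets $\frac{1}{N}\mathrm{Tr}(\mathbf{E}_N) = \frac{1}{M}\sum_{m=1}^{M}\int_0^1 \epsilon_{m,L}(\nu)\,d\nu$, and since $\mathbf{a}_L(\nu) = L^{-1/2}\mathbf{d}_L(\nu)$ while the Toeplitz matrix with symbol $\mathcal{S}_m$ is precisely $\mathcal{R}_{m,L}$, i.e. $\int_0^1 \mathcal{S}_m(\nu)\,\mathbf{d}_L(\nu)\mathbf{d}_L^H(\nu)\,d\nu = \mathcal{R}_{m,L}$ by (\ref{eq:defRmL})--(\ref{eq:defcovarianceseq}), we obtain $\int_0^1 \mathcal{S}_m(\nu)\,\mathbf{a}_L^H(\nu)\mathcal{R}_{m,L}^{-1}\mathbf{a}_L(\nu)\,d\nu = \frac{1}{L}\mathrm{Tr}(\mathcal{R}_{m,L}^{-1}\mathcal{R}_{m,L}) = 1$, so $\int_0^1 \epsilon_{m,L}(\nu)\,d\nu = 0$ for every $m$, whence $\mathrm{Tr}(\mathbf{E}_N) = 0$. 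Chaining the three reductions with $|t(z)| \leq 1/\Imm z$ and $c\,|z t(z)\tilde t(z)|^{2} \leq 1$ then yields (\ref{eq:evaluation-trace-Deltamp}).

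I do not expect a serious obstacle here; the only real care needed is to keep the remainder estimates uniform in $z \in \mathbb{C}^+$. To that end I would throughout invoke the invertibility of $\I_{ML}-A(z)$ and of $\I_N - c\,z\,t(z)\tilde t(z)\,\mathbf{E}_N$ via the $z$-independent bound $\|\mathbf{E}_N\| \leq \kappa\,L^{-\min(\gamma,1)}$ from Corollary \ref{cor:control-norm-E-Delta} combined with the estimates $|z t(z)\tilde t(z)| \leq 1/\sqrt{c}$ and $c\,|z t(z)\tilde t(z)| \leq \sqrt{c}$ valid on all of $\mathbb{C}^+$, rather than the $z$-dependent bound $\|\bs{\Gamma}_{\mathcal{B}}(z)\| \leq C(z) L^{-\min(\gamma,1)}$; this guarantees that ``$L$ large enough'' can be chosen independently of $z$ and that the final constant has the required form $C(z) = P_1(|z|)P_2(1/\Imm z)$.
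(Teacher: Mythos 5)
Your proposal is correct and follows essentially the same route as the paper: peel off the term linear in $\mathbf{E}_N$ from the Neumann expansion of $\Delta_{mp}(z)$, convert its normalized trace via the duality identity (\ref{eq:transpose_ops_gen}) into $\frac{1}{N}\mathrm{Tr}(\mathbf{E}_N(\I_N+\mathbf{E}_N))$, and conclude with the exact cancellation $\mathrm{Tr}(\mathbf{E}_N)=0$ while bounding all remainders by $\|\mathbf{E}_N\|^2\leq \kappa L^{-2\min(\gamma,1)}$. The only (harmless) difference is that you apply the duality to the full $\bs{\Gamma}_{\mathcal{B}}(z)$ before splitting off the quadratic part, whereas the paper first expands $\bs{\Gamma}_{\mathcal{B}}(z)$ as in (\ref{eq:expre-Gamma-bis}); your uniform-in-$z$ bookkeeping is also consistent with the paper's.
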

To justify (\ref{eq:evaluation-trace-Deltamp}), we consider Eq. (\ref{eq:expre-Deltamp})
and express $({\bf I}_{ML} - u(z) \boldsymbol{\Gamma}_{\mathcal{B}}(z))^{-1}$ as 
$$
({\bf I}_{ML} - u(z) \boldsymbol{\Gamma}_{\mathcal{B}}(z))^{-1} = {\bf I}_{ML} + 
u(z)  \boldsymbol{\Gamma}_{\mathcal{B}}(z) ({\bf I}_{ML} - u(z) \boldsymbol{\Gamma}_{\mathcal{B}}(z))^{-1}.
$$
Hence, $ \Delta_{mp}(z)$ can thus be rewritten as
$$
\Delta_{mp}(z) = -t(z) u(z) \boldsymbol{\Gamma}_{\mathcal{B}}(z) + \Delta_{mp,1}(z)
$$
where $ \Delta_{mp,1}(z)$ is now defined by 
$$
 \Delta_{mp,1}(z) = - t(z) u^{2}(z) \left(\boldsymbol{\Gamma}_{\mathcal{B}}(z) \right)^{2} ({\bf I}_{ML} - u(z) \boldsymbol{\Gamma}_{\mathcal{B}}(z))^{-1}.
$$
Using the fact that $\| \boldsymbol{\Gamma}_{\mathcal{B}}(z) \| \leq \frac{\kappa}{L^{\min(\gamma,1)}}$
on $\mathbb{C}^{+}$, we obtain immediately that $\| \Delta_{mp,1}(z) \| \leq \frac{C(z)}{L^{2\min(\gamma,1)}}$ for each $z \in \mathbb{C}^{+}$. 
We finally remark that 
$ \boldsymbol{\Gamma}_{\mathcal{B}}(z)$ can be written as
\begin{equation}
    \label{eq:expre-Gamma-bis}
 \boldsymbol{\Gamma}_{\mathcal{B}}(z) = \mathcal{B}_L^{-1/2} \Psi({\bf E}_N) \mathcal{B}_L^{-1/2} + c z t(z) \tilde{t}(z) \Psi\left( 
 {\bf E}_N^{2}({\bf I}_{N} - c z t(z) \tilde{t}(z) {\bf E}_N)^{-1} \right).
 \end{equation}
 The spectral norm of the right hand side of (\ref{eq:expre-Gamma-bis}) is clearly upper bounded 
 by a term such as $\frac{\kappa}{L^{2\min(\gamma,1)}}$ for each $z \in \mathbb{C}^{+}$. 
 Therefore, $\frac{1}{ML}  \mathrm{Tr}( \Delta_{mp}(z))$ can be written as 
 \begin{equation}
     \label{eq:expre-trace-Deltamp}
 \frac{1}{ML}  \mathrm{Tr}( \Delta_{mp}(z)) = -t(z) u(z) \frac{1}{ML} \mathrm{Tr}( \Psi({\bf E}_N) \mathcal{B}_L^{-1})  + \delta_2(z)
 \end{equation}
where $\delta_2(z)$ verifies $|\delta_2(z)| \leq \frac{C(z)}{L^{2\min(\gamma,1)}}$ for each $z \in \mathbb{C}^{+}$. Using (\ref{eq:transpose_ops_gen}), we notice that $\frac{1}{ML} \mathrm{Tr}( \Psi({\bf E}_N) \mathcal{B}_L^{-1})$
is equal to 
$$
\frac{1}{ML} \mathrm{Tr}( \Psi({\bf E}_N) \mathcal{B}_L^{-1}) = \frac{1}{N} \mathrm{Tr}( {\bf E}_N \overline{\Psi}(\mathcal{B}_L^{-1}))=  \frac{1}{N} \mathrm{Tr}\left( {\bf E}_N({\bf I}_N + {{\bf E}_N)} \right)
$$
so that $\frac{1}{ML}  \mathrm{Tr}( \Delta_{mp}(z))$ can in turn be rewritten as 
\begin{equation}
     \label{eq:expre-trace-Deltamp-bis}
 \frac{1}{ML}  \mathrm{Tr}( \Delta_{mp}(z)) = -t(z) u(z) \frac{1}{N} \mathrm{Tr}( {\bf E}_N) +
 \delta_3(z)
 \end{equation}
where  $|\delta_3(z)| \leq \frac{C(z)}{L^{2\min(\gamma,1)}}$ for each $z \in \mathbb{C}^{+}$. We complete the proof of (\ref{eq:evaluation-trace-Deltamp}) by simply noting that 
\begin{equation}
    \label{eq:trace-E}
    \mathrm{Tr}({\bf E}_N) = 0.
\end{equation}
This can be shown by noting that we can express $\mathcal{R}_{m,L}$ as
$$
\mathcal{R}_{m,L} = \int_{0}^{1} \mathcal{S}_m(\nu) \mathbf{d}_L(\nu) \mathbf{d}_L^H(\nu) d\nu.
$$
As a consequence of this, 
$$
\int_{0}^{1} \mathcal{S}_m(\nu) \mathbf{a}^H_L(\nu)\mathcal{R}^{-1}_{m,L} \mathbf{a}_L(\nu) d\nu = \frac{1}{L}  \mathrm{Tr} \left[  \mathcal{R}^{-1}_{m,L}  \mathcal{R}_{m,L}\right]  = 1
$$
which directly implies that 
$$
 \int_0^1 \epsilon_{m,L} (\nu) d\nu = \int_{0}^{1} \mathcal{S}_m(\nu) \mathbf{a}^H_L(\nu)\mathcal{R}^{-1}_{m,L} \mathbf{a}_L(\nu) d\nu  -1 = 0.
$$
However, from the definition of $\mathbf{E}_N$ we see that 
$$
    \mathrm{Tr}\left( \mathbf{E}_N \right) = \frac{1}{M} \sum_{m=1}^M \int_0^1 \epsilon_{m,L} (\nu) d\nu = 0 
$$
which completes the proof.

\section{Numerical Validation}
\label{sec:sims}

The aim of this section is to validate the asymptotic study carried out above
via simulations. To that effect, we consider a simple example in which the
$M$\ independent time series are all autoregressive processes of order one
with parameter $\rho$ and unit power. By this, we mean that we generate each
time series independently by the recursion $y_{m,n+1}=\rho y_{m,n}+e_{m,n}$
where $e_{m,n}\sim\mathcal{N}_{\mathbb{C}}( 0,1-|\rho| ^{2})  $.

Let us first compare the empirical eigenvalue distribution of the sample cross
correlation matrix $\widehat{\mathcal{R}}_{\mathrm{corr},L}$ with the measure
$\mu_{N}$ and the Marchenko-Pastur distribution with parameter $c_{N}$.
Figure \ref{fig:histvsMP} represents the histogram of the eigenvalues of $\widehat
{\mathcal{R}}_{\mathrm{corr},L}$ together with the Marchenko-Pastur
distribution $\mu_{mp,c_{N}}$ for different values of $M,N,L$. In general terms, the 
Marchenko-Pastur approximation provides a relatively good approximation of the actual 
eigenvalue density. In general terms, we observe that the Marchenko-Pastur law
is a very good approximation of the actual empirical eigenvalue distribution, even for
relatively low values of $M,L$.

\begin{figure}
    \begin{minipage}{0.5\textwidth}
    \centering
    \includegraphics[width=\textwidth]{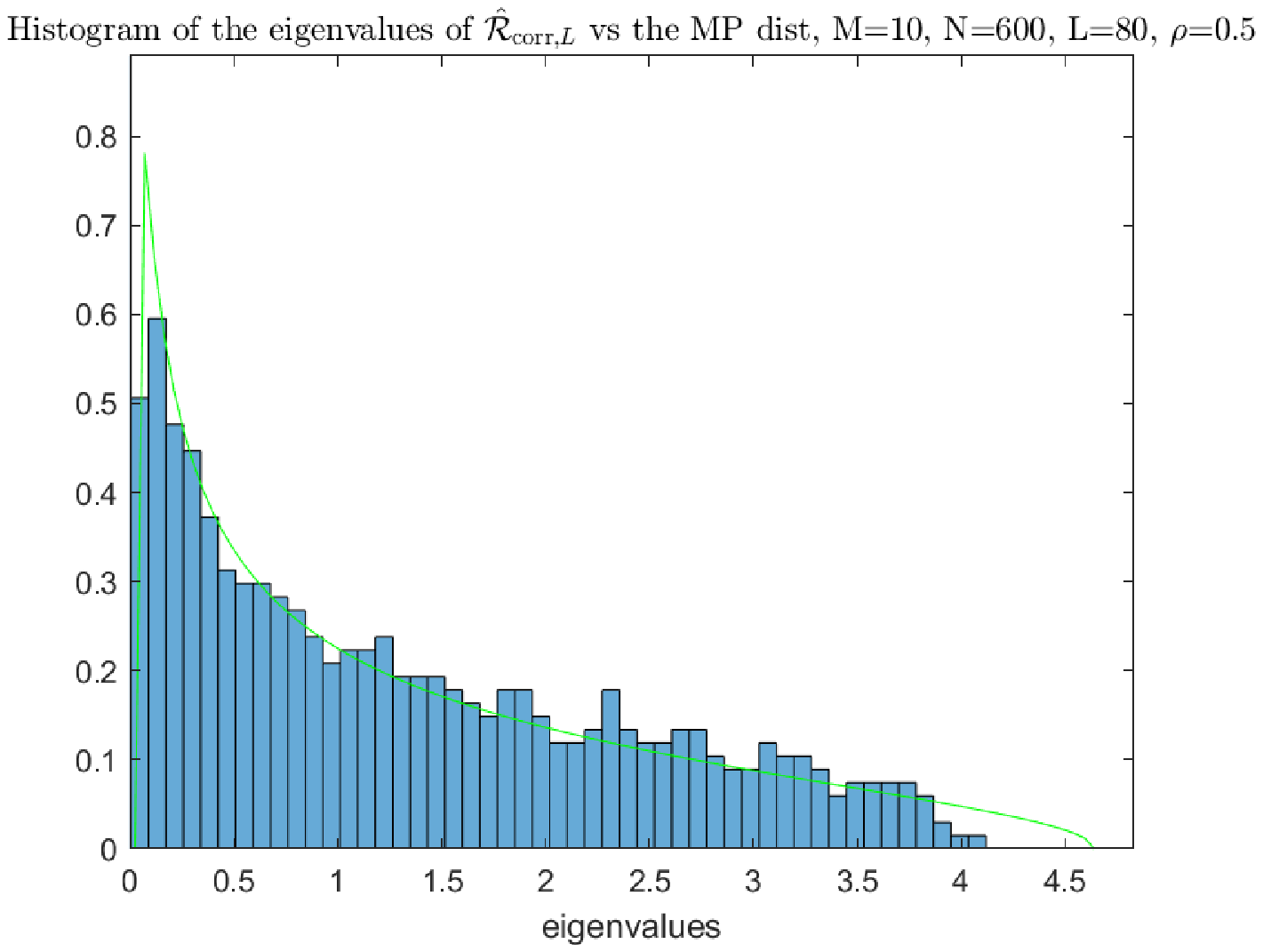}
    
   \tiny{ (a) $M=10, N=600, L=80$.}
    \end{minipage}%
    \begin{minipage}{0.5\textwidth}
    \centering
    \includegraphics[width=\textwidth]{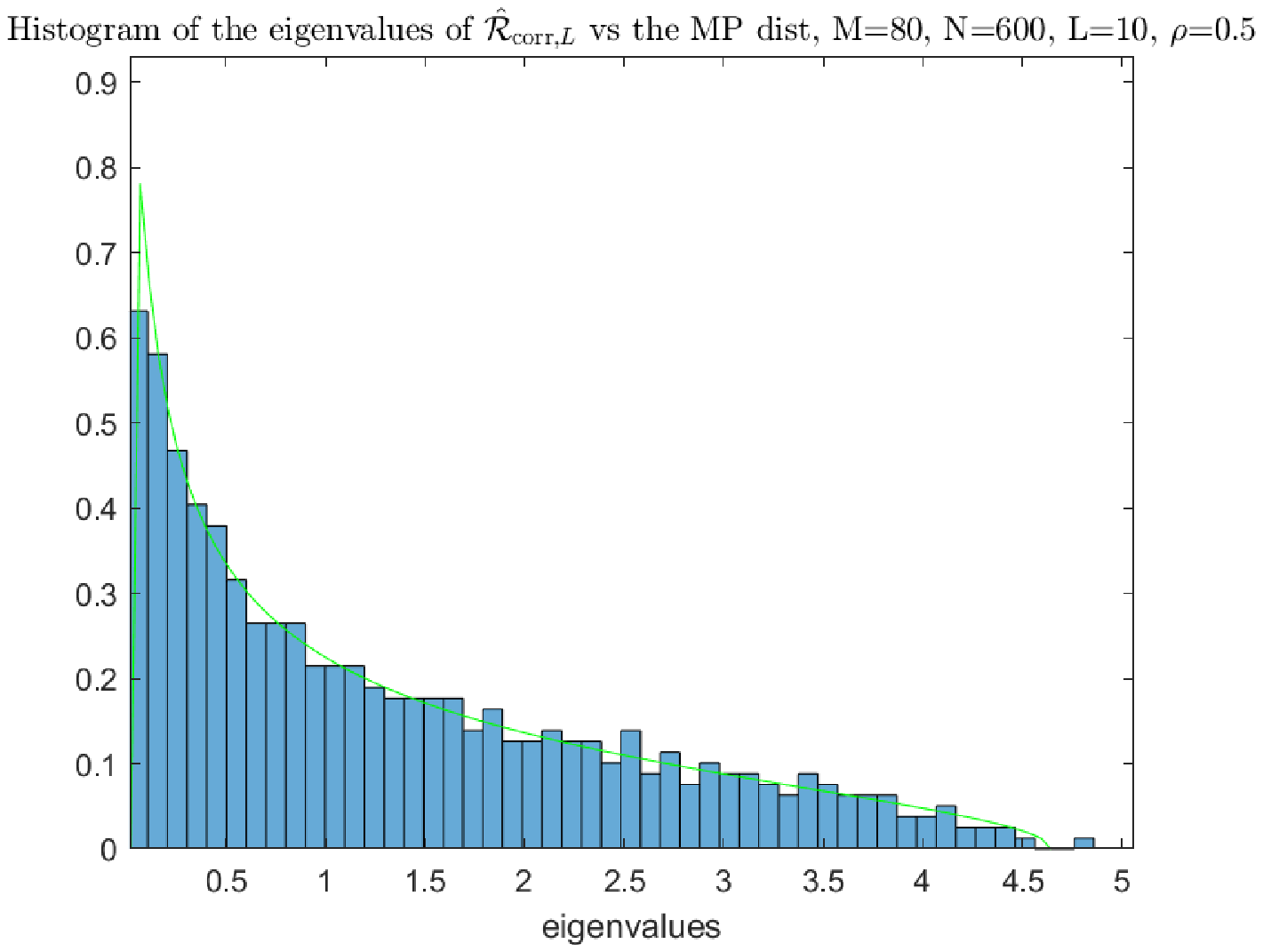}
    
    \tiny{(b) $M=80, N=600, L=10$.}
    \end{minipage}\\
    
    \begin{minipage}{0.5\textwidth}
    \centering
    \includegraphics[width=\textwidth]{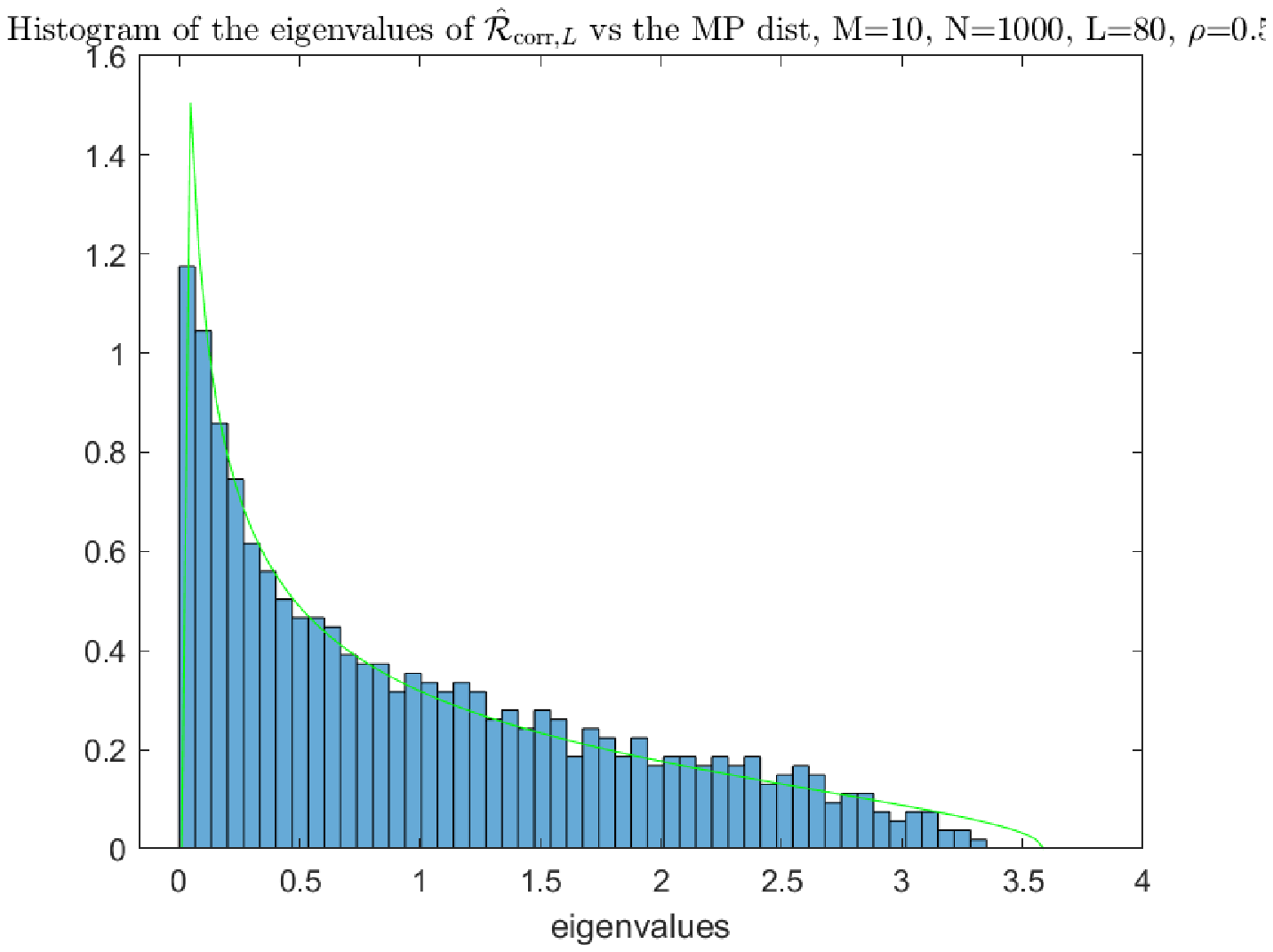}
    
    \tiny{(c) $M=10, N=1000, L=80$.}
    \end{minipage}%
    \begin{minipage}{0.5\textwidth}
    \centering
    \includegraphics[width=\textwidth]{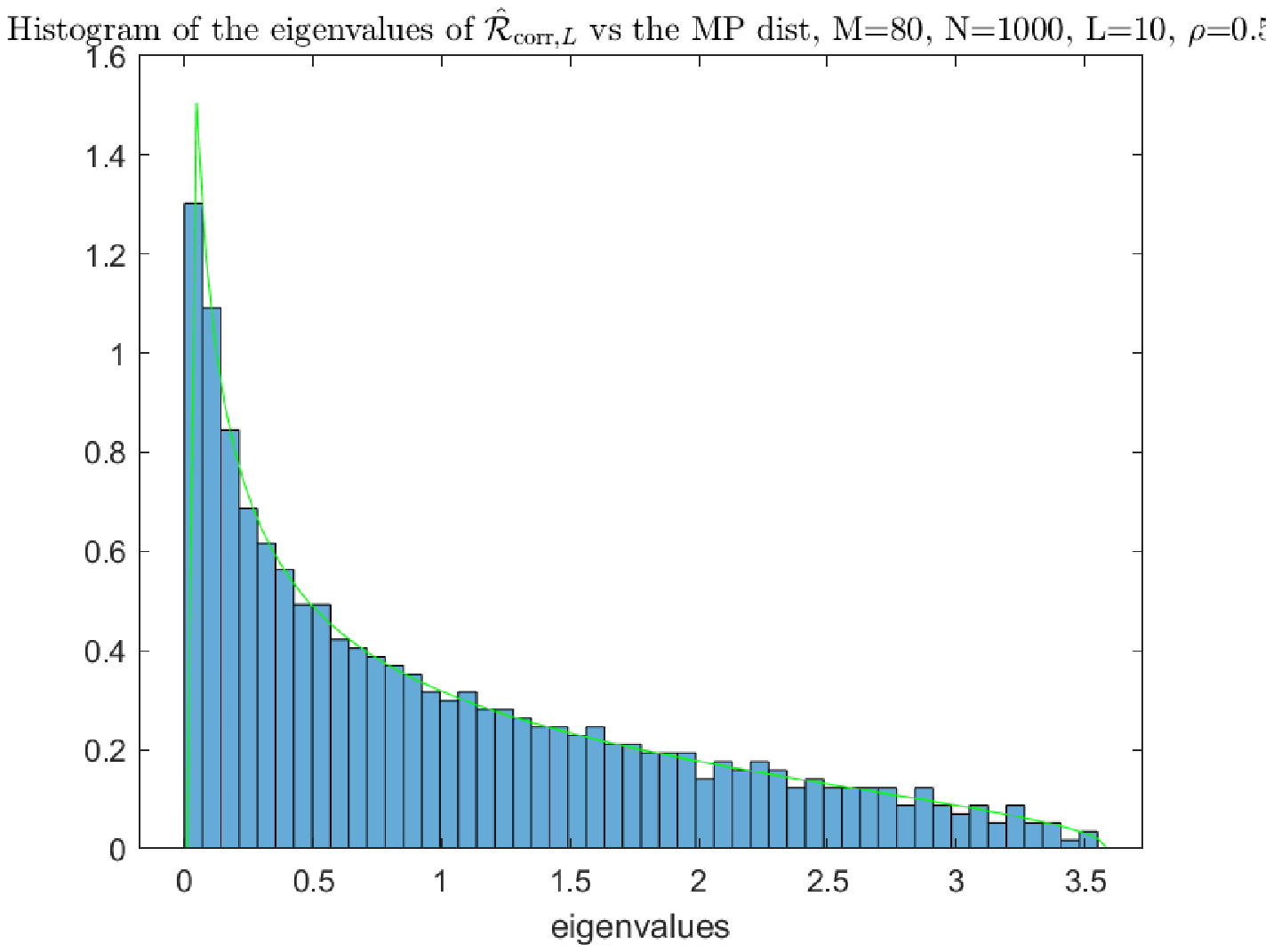}
    
    \tiny{(d) $M=80, N=1000, L=10$.}
    \end{minipage}
    \caption{Histogram of the eigenvalues of $\widehat{\mathcal{R}}_{\mathrm{corr},L}$ and Marchenko-Pastur law for different values of $M,N,L$ with $\rho=0.5$. Upper plots correspond to a situation where $c_N>1$ whereas lower plots deploy the case $c_N <1$.}
    \label{fig:histvsMP}
\end{figure}

Next, consider a correlation detection test statistic consisting of the sum of
the squared value of all the off-diagonal entries of $\widehat{\mathcal{R}
}_{\mathrm{corr},L}$. As mentioned in the introduction, this is reasonable
test since under $\mathrm{H}_{0}$ the true cross-correlation matrix
$\mathcal{R}_{\mathrm{corr},L}$ is equal to an identity. This corresponds to a
linear spectral statistic of $\widehat{\mathcal{R}}_{\mathrm{corr},L}$ built
with the function $\phi(\lambda)=(\lambda-1)^{2}$.

\begin{remark}
We observe that this function is not compactly supported so that in principle
the asymptotic rates predicted in items (ii) and (iii)\ of Theorem
\ref{thm:main_result} are not guaranteed to hold. However, we claim here that these two
items still hold for the choice $\phi(\lambda)=(\lambda-1)^{2}$. Indeed,
consider first item (ii) in the statement of this theorem. The only point in
the proof of this item where the hypothesis of compactly supported
$\phi(\lambda)$ is used is in order to establish
(\ref{eq:convergence-rate-biais-lss-bis}). However,  for this choice of
$\phi(\lambda)$ it is possible to compute $\frac{1}{ML}\mathbb{E}
\mathrm{Tr}\left(  \phi(\overline{\mathcal{R}}_{\mathrm{corr},L})\right)  $ in
closed form as well as $\int\phi(\lambda)d\mu_{N}(\lambda)$, and to establish
that
\begin{equation}
\frac{1}{ML}\mathbb{E}\mathrm{Tr}\left(  \phi(\overline{\mathcal{R}
}_{\mathrm{corr},L})\right)  =\int\phi(\lambda)d\mu_{N}(\lambda
)\label{eq:particular-phi-1}
\end{equation}
so that (\ref{eq:convergence-rate-biais-lss-bis}) is, in fact, trivial.
Indeed, the quantity on the left hand side can be computed by using
conventional formulas on the expectation of four Gaussian random vectors,
whereas the quantity on the right hand side can be evaluated by relating the
second order moment of the measure $\mu_{N}(\lambda)$ with its Stieltjes
transform. In both cases, we can establish that both quantities are equal to
\begin{equation}
\int\phi(\lambda)d\mu_{N}(\lambda)=c_{N}+c_{N}\frac{1}{ML}\mathrm{Tr}\left(
\mathcal{B}_{L}^{-1}{\boldsymbol\Psi}(\mathbf{E}_{N})\right)
\label{eq:expre-particular-phi}
\end{equation}
where we recall that $\mathbf{E}_{N}$ is defined in
(\ref{eq:def_errormatrix_E_N}). Regarding item (iii) in Theorem
\ref{thm:main_result}, we simply need to observe that $\int\phi(\lambda
)d\mu_{mp,c_{N}}(\lambda)=c_{N}$, so that
\begin{equation}
\int\phi(\lambda)d\mu_{N}(\lambda)-\int\phi(\lambda)d\mu_{mp,c_{N}}
(\lambda)=c_{N}\frac{1}{ML}\mathrm{Tr}\left(  \mathcal{B}_{L}^{-1}
{\boldsymbol\Psi}(\mathbf{E}_{N})\right)  .\label{eq:expre-particular-bias}
\end{equation}
Consequently, a direct application of Corollary \ref{cor:control-norm-E-Delta}
thus leads to the conclusion that (\ref{eq:convergencemu-MP}) also holds for
this particular choice of $\phi(\lambda).$ We may therefore consider this
statistic to validate the results of the paper. 
\end{remark}

In order to assess the error between $\hat{\phi}_{N}$ and the corresponding
integral of $\phi(\lambda)$ with respect to the Marchenko-Pastur distribution,
we considered here a set of $10^{4}$ realizations of the multivariate
autoregressive process described above. In each experiment, we fixed the three parameters $c_{\ast}$, $N$ and $\beta$ and considered a set of $M=[(c_{\ast}N)^{1-\beta}]$
independent time series, where $[x]$ here denotes the integer that is closest
to $x$. The number of time lags was therefore fixed to $L=[(c_{\ast
}N)^{\beta}]$. Figure \ref{fig:errors} represents the error between $\hat{\phi}_N$ and its
corresponding asymptotic limit as a function of $\beta$ for different values of $N$. 
The errors are represented as the square root of the empirical mean of the corresponding normalized difference,
averaged over the $10^4$ realizations. The plots on the left hand side represent the total error
$\hat{\phi}_N - \int \phi(\lambda)d\mu_{mp,N}$ whereas plot on the right hand side represent 
the two main constituent errors, namely: ``Error 1'' (solid lines) represents the square root of the empirical mean of the square of $\hat{\phi}_N - \int \phi(\lambda)d\mu_{N}$, and ``Error 2'' (dotted lines) represents $ \int \phi(\lambda)d\mu_{N} - \int \phi(\lambda)d\mu_{mp,N}$ as given in (\ref{eq:expre-particular-bias}).

These numerical results tend to confirm the fact that the error between the considered
statistic and its asymptotic deterministic approximation tends to be dominated by two different
phenomena depending on whether $M \ll L$ (large $\beta$) or $M \gg L$ (small $\beta$). 
In the fist case, the main contribution to the error corresponds to the term $\hat{\phi}_N - \int \phi(\lambda)d\mu_{N}$ (Error 1). We recall that, since the correlation sequence considered here decays exponentially to zero, this error term is dominated by $N^{-(1-\beta)}$, which in particular increases with $\beta$. Conversely, when $M \gg L$ (small $\beta$), the error is dominated by the difference between the two measures $\mu_{N}$ and $\mu_{mp,N}$.
We have seen that this error term is dominated by a term of order $N^{-2\beta}$, which in particular decreases with $\beta$. Observe also that the optimum choice of $\beta$ appears to be close to $1/3$, which corresponds to the case where the two error rates coincide.

\begin{figure}
    \begin{minipage}{0.5\textwidth}
    \centering
    \includegraphics[width=\textwidth]{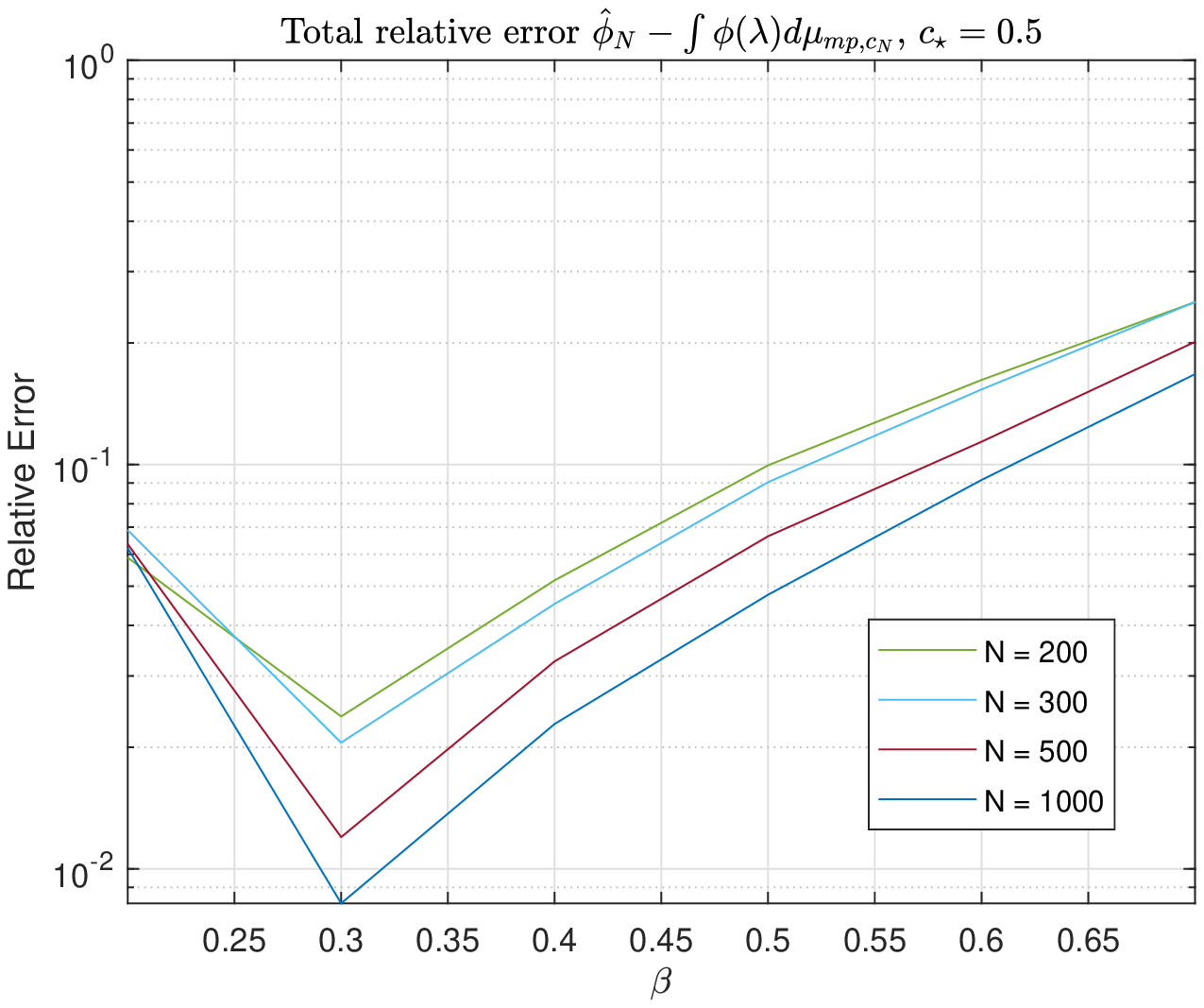}
    
    \tiny{ (a) Total error $c_\star = 0.5$.}
    \end{minipage}%
    \begin{minipage}{0.5\textwidth}
    \centering
    \includegraphics[width=\textwidth]{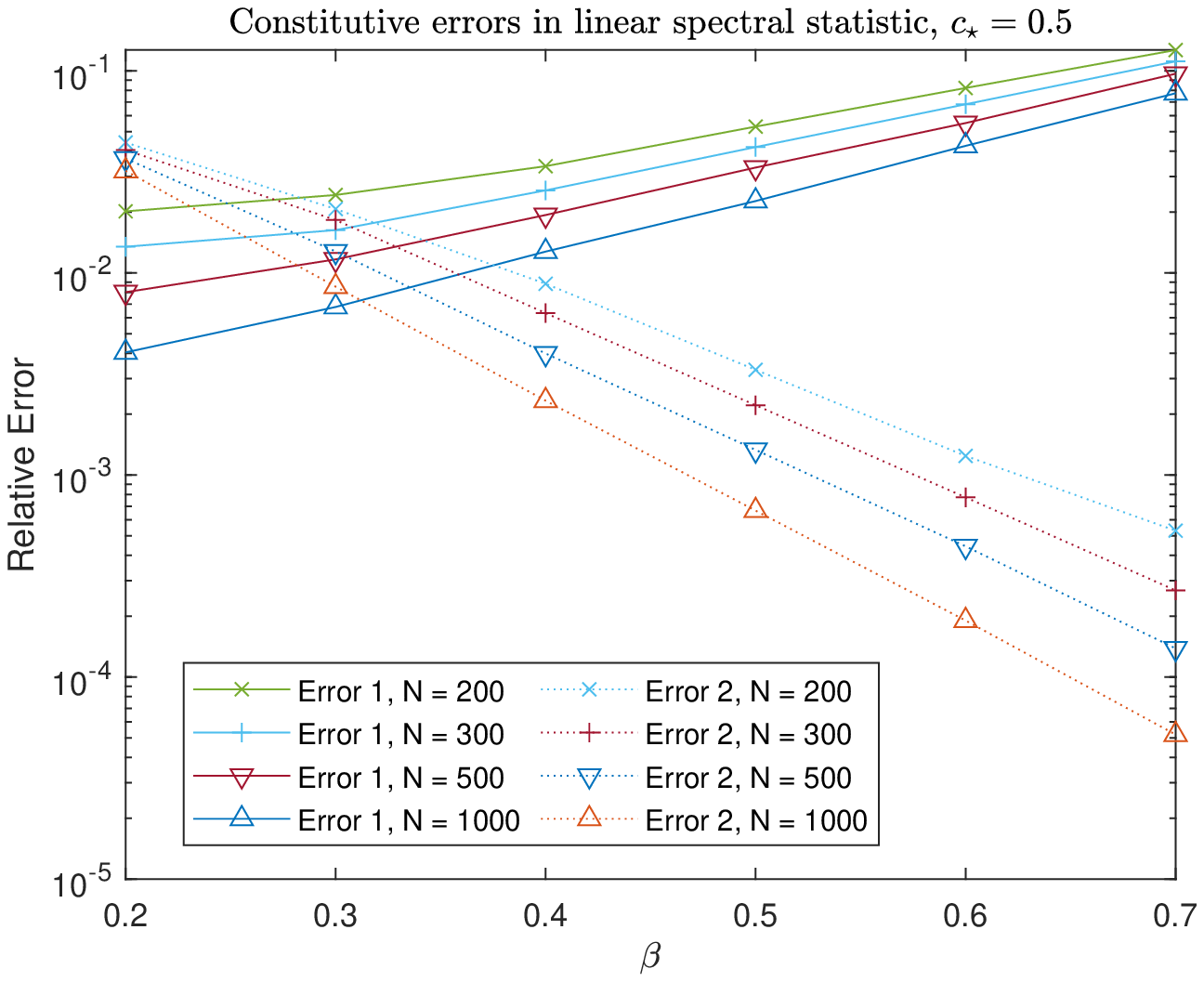}
    
   \tiny{ (b) Constituent errors $c_\star = 0.5$.}
    \end{minipage}\\
    
    \begin{minipage}{0.5\textwidth}
    \centering
    \includegraphics[width=\textwidth]{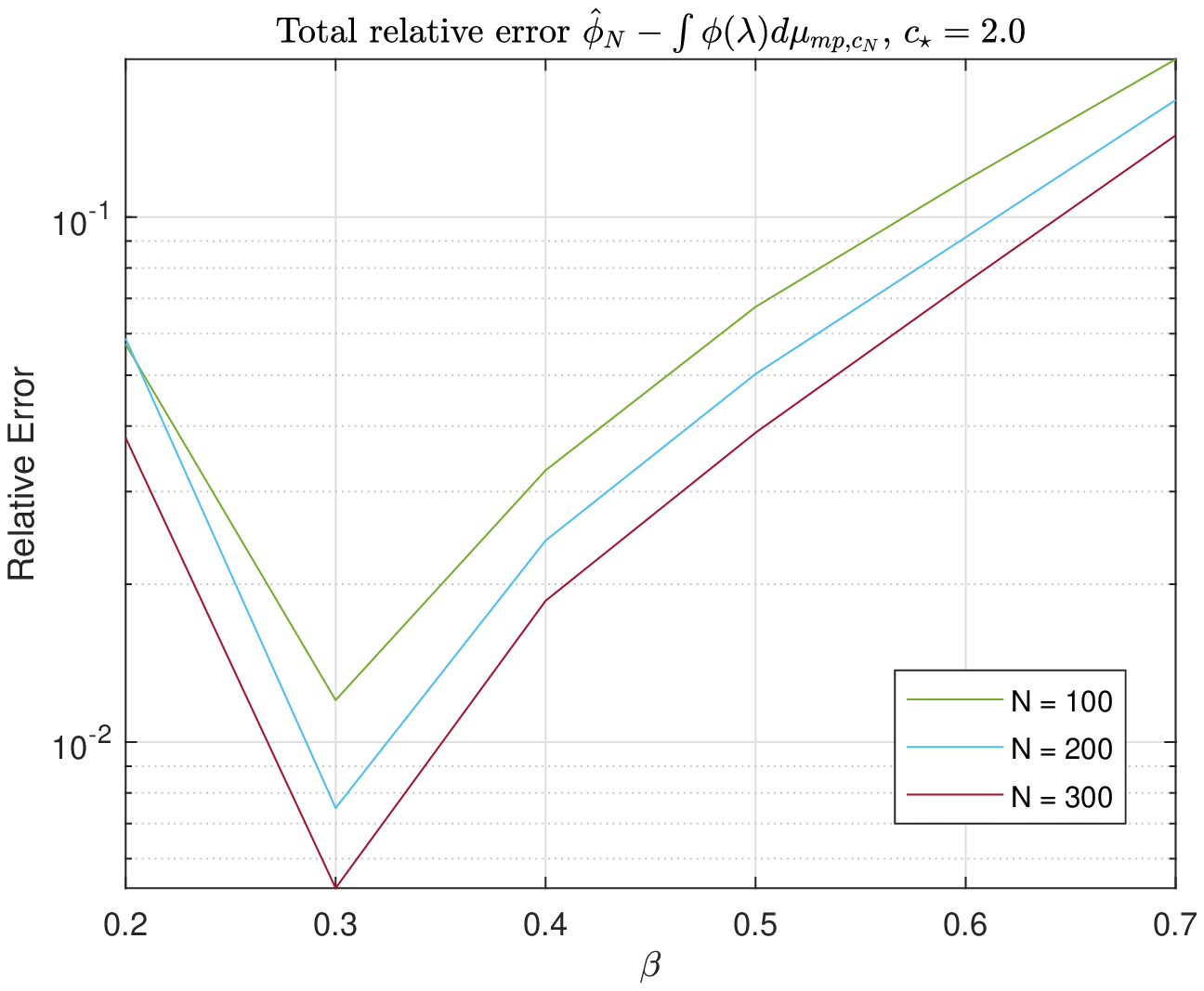}
    
    \tiny{(c) Total error $c_\star = 2$.}
    \end{minipage}%
    \begin{minipage}{0.5\textwidth}
    \centering
    \includegraphics[width=\textwidth]{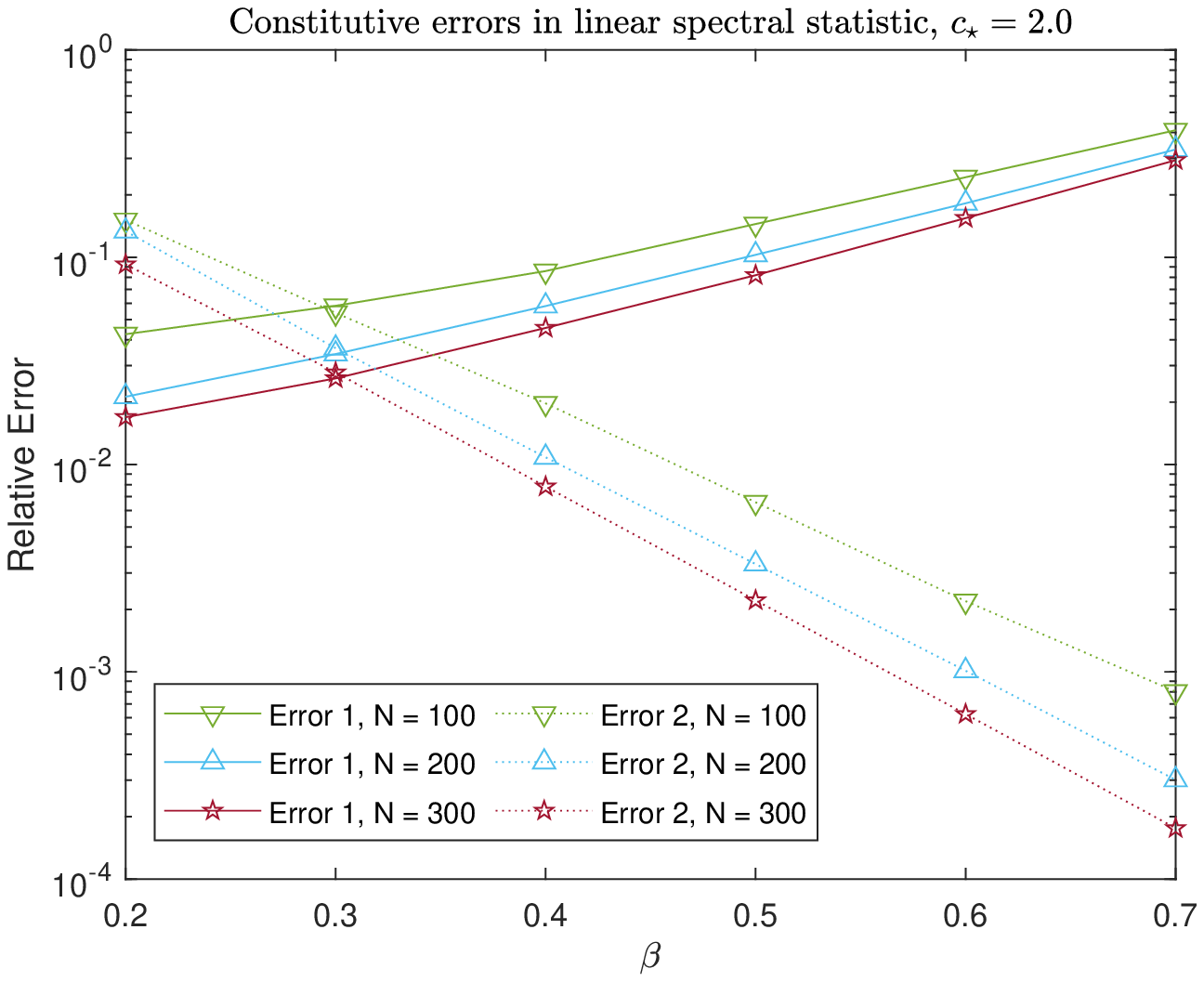}
    
    \tiny{(d) Constituent errors $c_\star = 2$.}
    \end{minipage}
    \caption{Evolution of the error of $\hat{\phi}_N$ with respect to the Marchenko-Pastur limit as a function of $\beta$. On the left hand side we represent the square root of the empirical mean of the square of $\hat{\phi}_N - \int \phi(\lambda)d\mu_{mp,N}$ over $10^4$ realizations of the statistic. On the right hand side, we represent the two main constituent errors. Error 1 (solid lines) represents the square root of the empirical mean of the square of $\hat{\phi}_N - \int \phi(\lambda)d\mu_{N}$. Error 2 (dotted lines) represents $ \int \phi(\lambda)d\mu_{N} - \int \phi(\lambda)d\mu_{mp,N}$. Upper plots correspond to a situation where $c_\star=0.5$ whereas lower plots deploy the case $c_\star =2$.}
    \label{fig:errors}
\end{figure}

\section*{Acknowledgments}
This work is partially funded by the B\'ezout Labex, funded by ANR, reference ANR-10-LABX-58, and by the ANR Project HIDITSA, reference ANR-17-CE40-0003.

\vspace{2em}
\appendix
\centerline{\textbf{APPENDICES}}

\section{Proof of Lemma \ref{le:expectation-periodogram}}
\label{sec:app_proof_lemma_period}
 A classical calculation (see e.g. Theorem 4.3.2 in \cite{brillinger} in the non Gaussian case) leads to 
     $$
     \mathbb{E}| \xi_{L,y_m}(\nu) |^{2} = \sum_{-(L-2)}^{L-2} (1 - |l|/L) r_m(l) e^{- 2 i \pi l \nu }.
     $$
     Taking into account that $\mathcal{S}_m(\nu) = \sum_{l} r_m(l) e^{- 2 i \pi l \nu }$, we obtain immediately that 
     $$
     \mathbb{E}| \xi_{L,y_m}(\nu) |^{2}  = \mathcal{S}_m(\nu) + \epsilon_{m,L}(\nu)
     $$
     where $\epsilon_{m,L}(\nu)$ is defined by 
     $$
     \epsilon_{m,L}(\nu)= - \sum_{|l| \geq L-1} r_m(l) e^{- 2 i \pi l \nu } - \frac{1}{L} \sum_{-(L-2)}^{L-2} |l| r_m(l) e^{- 2 i \pi l \nu }.
     $$
     It is clear that 
     $$
     | \epsilon_{m,L}(\nu) | \leq \sum_{|l| \geq L-1} |r_m(l)| + \frac{1}{L} \sum_{-(L-2)}^{L-2} |l| |r_m(l)|.
     $$
     Using the bound in (\ref{eq:uniform-control-reminder-rm}) we directly obtain an upper bound of the first term, namely
     $$
     \sum_{|l| \geq L-1} |r_m(l)| \leq \frac{\kappa}{(L-1)^{\gamma_0}}.
     $$
     If $\gamma_0 \geq 1$, $\sum_{-(L-2)}^{L-2} |l| |r_m(l)| \leq \|r_m\|_{\omega_0}$
     and it holds that $\frac{1}{L} \sum_{-(L-2)}^{L-2} |l| |r_m(l)| \leq \frac{\kappa}{L}$. Therefore, if $\gamma_0 \geq 1$, we obtain that 
     $$
     | \epsilon_{m,L}(\nu) | \leq \frac{\kappa}{L}.
     $$
     If $\gamma_0 < 1$, we equivalently have
     $$
     \sum_{-(L-2)}^{L-2} |l| |r_m(l)| \leq L^{1-\gamma_0} \|r_m\|_{\omega_0}.
     $$
    Therefore, the inequality
     $$
     \frac{1}{L} \sum_{-(L-2)}^{L-2} |l| |r_m(l)| \leq \frac{\kappa}{(L-1)^{\gamma_0}}
     $$
     holds, as well as 
     $$
     | \epsilon_{m,L}(\nu) | \leq \frac{\kappa}{L^{\gamma_0}}.
     $$
     This completes the proof of Lemma \ref{le:expectation-periodogram}.

\section{Proof of  Lemma \ref{le:perturbation}}
\label{sec:app_proof_sqrt}

In order to establish (\ref{eq:formula-perturbation}), we first recall that 
$\| \widehat{\mathcal{R}}_{m,L} - \mathcal{R}_{m,L} \| \prec \max{(M^{-1/2},L^{-\gamma_0})}$.
We consider some $\delta > 0$ for which $N^{\delta}  \max{(M^{-1/2},L^{-\gamma_0})} \rightarrow 0$ and introduce the event $\mathcal{E}_N$ defined by 
\begin{equation}
    \label{eq:def-mathcalEN}
 \mathcal{E}_N = \left\{  \max_{m=1, \ldots, M} \|  \widehat{\mathcal{R}}_{m,L} - \mathcal{R}_{m,L} \| <  N^{\delta}  \max{(M^{-1/2},L^{-\gamma_0})}  \right\}
\end{equation}
Then, the event $\mathcal{E}_N$ holds with exponentially high probability. 
 In order to establish (\ref{eq:domination-Upsilon}), we have to evaluate $\mathbb{P}( \| \bs{\Upsilon}_{m,L} \| > N^{\epsilon}  \max{(M^{-1},L^{-2\gamma_0})})$ for each $\epsilon > 0$. For this,  we express $\mathbb{P}( \| \bs{\Upsilon}_{m,L} \| > N^{\epsilon}  \max{(M^{-1},L^{-2\gamma_0})})$ as 
 $$
 \mathbb{P}\left( \| \bs{\Upsilon}_{m,L} \| > N^{\epsilon}  \max{(M^{-1},L^{-2\gamma_0})}, \mathcal{E}_N \right) + \mathbb{P}\left( \| \bs{\Upsilon}_{m,L} \| > N^{\epsilon}  \max{(M^{-1},L^{-2\gamma_0})}, \mathcal{E}_N^{c} \right).
 $$
 Therefore, it holds that 
\begin{multline*}
\mathbb{P}(\| \bs{\Upsilon}_{m,L} \| > N^{\epsilon}  \max{(M^{-1},L^{-2\gamma_0})}) \leq \\ \leq \mathbb{P}( \mathcal{E}_N^{c}) + \mathbb{P}\left( \| \bs{\Upsilon}_{m,L} \| > N^{\epsilon}  \max{(M^{-1},L^{-2\gamma_0})}, \mathcal{E}_N \right).
\end{multline*}
In order to establish (\ref{eq:domination-Upsilon}), we thus just need to prove that 
there exists a $\gamma > 0$ such that 
$\mathbb{P}\left( \| \bs{\Upsilon}_{m,L} \| > N^{\epsilon}  \max{(M^{-1/2},L^{-\gamma_0})}, \mathcal{E}_N \right) \leq \exp(-N^{\gamma})$ for each $N$ large enough. For this, we 
remark that for each $N$ large enough, on $\mathcal{E}_N$, all the eigenvalues of matrices $\widehat{\mathcal{R}}_{m,L}$ are enclosed by the contour $\mathcal{C}$. Therefore, on 
$\mathcal{E}_N$, the equality 
\begin{equation}
    \label{eq:expre-sqrthatmathcalRmL}
    \widehat{\mathcal{R}}_{m,L}^{-1/2} = \frac{1}{2 i \pi} \int_{\mathcal{C}_-} \frac{1}{\sqrt{\lambda}} \, \left(  \widehat{\mathcal{R}}_{m,L} - \lambda {\bf I}_L \right)^{-1} \, d \lambda  
\end{equation}
holds. We note here that $\left( \widehat{\mathcal{R}}_{m,L} - \lambda {\bf I}_L \right)^{-1}$ can be
written as 
\begin{multline*}
\left(\widehat{\mathcal{R}}_{m,L} - \lambda {\bf I}_L \right)^{-1} = 
\left(  \mathcal{R}_{m,L}  - \lambda {\bf I}_L \right)^{-1} + \\ - 
\left(\widehat{\mathcal{R}}_{m,L} - \lambda {\bf I}_L \right)^{-1} \left( \widehat{\mathcal{R}}_{m,L} -  \mathcal{R}_{m,L} \right) \left(  \mathcal{R}_{m,L} - \lambda {\bf I}_L \right)^{-1} 
\end{multline*}
so that, by iterating this formula, we obtain
\begin{align*}
\left(\widehat{\mathcal{R}}_{m,L} - \lambda {\bf I}_L \right)^{-1} = & \left(  \mathcal{R}_{m,L} - \lambda {\bf I}_L \right)^{-1} 
-\left(  \mathcal{R}_{m,L} - \lambda {\bf I}_L \right)^{-1} \bs{\Delta}_{m,L} \left(  \mathcal{R}_{m,L} - \lambda {\bf I}_L \right)^{-1} + \\  
  + & \left(\widehat{\mathcal{R}}_{m,L} - \lambda {\bf I}_L \right)^{-1}  \bs{\Delta}_{m,L} \left(  \mathcal{R}_{m,L} - \lambda {\bf I}_L \right)^{-1} \bs{\Delta}_{m,L} \left(  \mathcal{R}_{m,L} - \lambda {\bf I}_L \right)^{-1}.
\end{align*}
We deduce from this expression together with (\ref{eq:expre-sqrtmathcalRmL}) and (\ref{eq:expre-sqrthatmathcalRmL}) that on $\mathcal{E}_N$ we can write
\begin{align}
 \widehat{\mathcal{R}}_{m,L}^{-1/2} & - \mathcal{R}_{m,L}^{-1/2} =  -\frac{1}{2 i \pi} \int_{\mathcal{C}_-} \frac{1}{\sqrt{\lambda}} \, \left(  \mathcal{R}_{m,L} - \lambda {\bf I}_L \right)^{-1}  \,  \bs{\Delta}_{m,L} \, \left(  \mathcal{R}_{m,L} - \lambda {\bf I}_L \right)^{-1} \, d \lambda +  \nonumber \\
 \label{eq:intermediate-perturbation-formula}
+ & \frac{1}{2 i \pi} \int_{\mathcal{C}_-} \frac{1}{\sqrt{\lambda}} \left(\widehat{\mathcal{R}}_{m,L} - \lambda {\bf I}_L \right)^{-1}  \bs{\Delta}_{m,L} \left(  \mathcal{R}_{m,L} - \lambda {\bf I}_L \right)^{-1} \bs{\Delta}_{m,L} \left(  \mathcal{R}_{m,L} - \lambda {\bf I}_L \right)^{-1} \, d \lambda.
\end{align}
Now, it is clear that on the contour $\mathcal{C}$, $| \frac{1}{\sqrt{\lambda}} |$ and the spectral norm of $\left(  \mathcal{R}_{m,L} - \lambda {\bf I}_L \right)^{-1}$ are upper bounded by a nice constant. This property also 
holds for $( \widehat{\mathcal{R}}_{m,L} - \lambda {\bf I}_L )^{-1}$ on the event $\mathcal{E}_N$. Therefore, on $\mathcal{E}_N$, the spectral norm 
of the second term on the right hand side of (\ref{eq:intermediate-perturbation-formula}) is upper bounded by $\kappa \| \bs{\Delta}_{m,L} \|^{2}$, which is stochastically dominated by  $\max(M^{-1},{L^{-2 \gamma_0}})$. This, in turn, establishes that there exists a $\gamma > 0$ such that 
$\mathbb{P}\left( \| \bs{\Upsilon}_{m,L} \| > N^{\epsilon}  \max{(M^{-1},L^{-2\gamma_0})}, \mathcal{E}_N \right) \leq \exp(-N^{\gamma})$ for each $N$ large enough. This completes the proof of Lemma \ref{le:perturbation}. 
\section{Proof of Lemma \ref{le:sumsquare-derivatives-trace}}
\label{app:prove-lemma-trace}

We first express 
matrix ${\bf W}^{m}_N$ in terms of vector ${\bf x}_m$. For this, we observe that 
for each $l=1, \ldots, L$, the $N$--dimensional vector $({\bf y}_{m,l}, \ldots, {\bf y}_{m,N+l-1})$ 
can be written as
\begin{align*}
({\bf y}_{m,l}, \ldots, {\bf y}_{m,N+l-1}) & =  {\bf y}_m {\bf J}_{N+L-1}^{-(l-1)} \left( \begin{array}{c} \I_N \\ 0 \end{array} \right) \\
                                           & =   {\bf x}_m \mathcal{R}_{m,N+L-1}^{1/2}  {\bf J}_{N+L-1}^{-(l-1)}\left( \begin{array}{c} \I_N \\ 0 \end{array} \right)
\end{align*}
Therefore, matrix ${\bf W}_N^{m}$ can be written as 
\begin{equation}
    \label{eq:expre-Wm}
 {\bf W}_N^{m} = \frac{1}{\sqrt{N}} \; \left( \begin{array}{c} {\bf x}_m \mathcal{R}_{m,N+L-1}^{1/2}   \\ \vdots \\  {\bf x}_m \mathcal{R}_{m,N+L-1}^{1/2}  {\bf J}_{N+L-1}^{-(L-1)} \end{array} \right) \; 
 \left( \begin{array}{c} \I_N \\ 0 \end{array} \right)
\end{equation}
We recall that ${\bf W}_N$ is the matrix ${\bf W}_N = \left( ({\bf W}^{1}_N)^{T}, \ldots, ({\bf W}^{L}_N)^{T}\right)^{T}$, and that  $\widehat{\mathcal{R}}_L = {\bf W}_N {\bf W}_N^{H}$. Using this notation, we can write
\begin{multline*}
\frac{\partial {\bf Q}_N(z)}{\partial {\bf x}_{m_0,i}}  =  - {\bf Q}_N(z) \, \mathcal{B}_L^{-1/2} \, \frac{\partial \widehat{\mathcal{R}}_L }{\partial {\bf x}_{m_0,i}} \, \mathcal{B}_L^{-1/2} \, {\bf Q}_N(z)  \\
                                = - \frac{1}{\sqrt{N}} {\bf Q}_N(z) \, \mathcal{B}_L^{-1/2} \, {\bf E}_{m_0}  \; \left( \begin{array}{c} {\bf e}_i^{H} \mathcal{R}_{m_0,N+L-1}^{1/2}  \\ \vdots \\  {\bf e}_i^{H} \mathcal{R}_{m_0,N+L-1}^{1/2}  {\bf J}_{N+L-1}^{-(L-1)} \end{array} \right) \; \left( \begin{array}{c} \I_N \\ 0 \end{array} \right) {\bf W}_N^{H} \, 
                                 \mathcal{B}_L^{-1/2} \, {\bf Q}_N(z)
\end{multline*}
where we recall that $\mathbf{E}_{m_0}$ is an $ML \times L$ selection matrix with entries $(\mathbf{E}_{m_0})_{i,j} = \delta_{i = (m_0-1)M +j} $ and where $\mathbf{e}_i$ denotes the $i$th column of $\I_{N+L-1}$.
We introduce the matrix ${\bf H}_N(z)$ defined by 
$$
{\bf H}_N(z) = \frac{1}{\sqrt{N}} \left( \begin{array}{c} \I_N \\ 0 \end{array} \right) 
{\bf W}_N^{H} \mathcal{B}_L^{-1/2}  {\bf Q}_N(z) {\bf A}_N  {\bf Q}_N(z) \mathcal{B}_L^{-1/2}.
$$
It is easily seen that 
$$
 \frac{1}{ML} \mathrm{Tr}  \left( \frac{\partial {\bf Q}_N(z)}{\partial {\bf x}_{m_0,i}}  \, {\bf A}_N \right) = - \frac{1}{ML} \mathrm{Tr} \left( \begin{array}{c} {\bf e}_i^{H} \mathcal{R}_{m_0,N+L-1}^{1/2} \\ \vdots \\  {\bf e}_i^{H} \mathcal{R}_{m_0,N+L-1}^{1/2}  {\bf J}_{N+L-1}^{-(L-1)} \end{array} \right) \, {\bf H}_N(z) \, {\bf E}_{m_0}.
 $$
If we denote by $ {\bf f}_{l}^{m_0}$ the $l$-th column of $\mathbf{E}_{m_0}$, we can re-write the above expression as
$$
\left|  \frac{1}{ML} \mathrm{Tr}  \left( \frac{\partial {\bf Q}_N(z)}{\partial {\bf x}_{m_0,i}}  \, {\bf A}_N \right) \right|^{2} = \frac{1}{M^{2}} \left| \frac{1}{L} \sum_{l=1}^{L}  {\bf e}_i^{H} \mathcal{R}_{m_0,N+L-1}^{1/2}  {\bf J}_{N+L-1}^{-(l-1)} \; {\bf H}_N(z) \; {\bf f}_{l}^{m_0}  \right|^{2}.
$$
Consequently, a direct application of Jensen's inequality leads to 
\begin{multline*}
    \left|  \frac{1}{ML} \mathrm{Tr}  \left( \frac{\partial {\bf Q}_N(z)}{\partial {\bf x}_{m_0,i}}  \, {\bf A}_N \right) \right|^{2} \leq  \frac{1}{M^{2}} \frac{1}{L} \sum_{l=1}^{L} ({\bf f}_{l}^{m_0})^{H} \, {\bf H}_N^{H}(z) {\bf J}_{N+L-1}^{(l-1)} \mathcal{R}_{m_0,N+L-1}^{1/2} 
    {\bf e}_i \times \\ \times {\bf e}_i^{H}  \mathcal{R}_{m_0,N+L-1}^{1/2} {\bf J}_{N+L-1}^{-(l-1)} \; {\bf H}_N(z) \; {\bf f}_{l}^{m_0}.
\end{multline*}
Hence, using $\sum_{i} {\bf e}_i {\bf e}_i^{H} = \I_{N+L-1}$ and ${\bf J}_{N+L-1}^{(l-1)} \mathcal{R}_{m_0,N+L-1}
 {\bf J}_{N+L-1}^{-(l-1)} \leq \kappa \,  \I_{N+L-1}$, we obtain 
 $$
\sum_{m_0,i} \left|  \frac{1}{ML} \mathrm{Tr}  \left( \frac{\partial {\bf Q}_N(z)}{\partial {\bf x}_{m_0,i}}  \, {\bf A}_N \right) \right|^{2} \leq \kappa \,  \frac{1}{M} \frac{1}{ML} \mathrm{Tr} \left( {\bf H}_N^{H}(z)  {\bf H}_N(z)\right)
$$
so that, inserting the expression of ${\bf H}_N(z)$ above,
\begin{multline*}
    \sum_{m_0,i} \left|  \frac{1}{ML} \mathrm{Tr}  \left( \frac{\partial {\bf Q}_N(z)}{\partial {\bf x}_{m_0,i}}  \, {\bf A}_N \right) \right|^{2} \leq \\ \leq \frac{\kappa}{MN} \,  \frac{1}{ML} \mathrm{Tr} \left( 
    \mathcal{B}_L^{-1/2}  {\bf Q}_N^{H}(z) {\bf A}_N^{H}  {\bf Q}_N^{H}(z) \mathcal{B}_L^{-1/2} {\bf W}_N {\bf W}_N^{H} 
    \mathcal{B}_L^{-1/2}  {\bf Q}_N(z) {\bf A}_N  {\bf Q}_N(z) \mathcal{B}_L^{-1/2} \right).
\end{multline*}
Finally, using the resolvent identity $\mathcal{B}_L^{-1/2}  {\bf W}_N {\bf W}_N^{H}  \mathcal{B}_L^{-1/2}  {\bf Q}_N(z) = 
\I_{ML} + z \, {\bf Q}_N(z)$, we obtain
$$
 \| {\bf Q}_N^{H}(z) \mathcal{B}_L^{-1/2} {\bf W}_N {\bf W}_N^{H} 
\mathcal{B}_L^{-1/2}  {\bf Q}_N(z) \| \leq \frac{1}{\Imm z} \left( 1 + \frac{|z|}{\Imm z} \right) 
\leq \frac{1+|z|}{\Imm z}  \left( 1 + \frac{1}{\Imm z} \right) 
$$
so that (\ref{eq:sumsquare-derivatives-trace}) follows directly from
\begin{align*}
\sum_{m_0,i} \left|  \frac{1}{ML} \mathrm{Tr}  \left( \frac{\partial {\bf Q}_N(z)}{\partial {\bf x}_{m_0,i}}  \, {\bf A}_N \right) \right|^{2} & \leq  \frac{\kappa}{MN} \, \frac{1+|z|}{\Imm z}  \left( 1 + \frac{1}{\Imm z} \right) \,  \frac{1}{ML} \mathrm{Tr} \left( {\bf A}_N {\bf Q}_N(z) \mathcal{B}_L^{-1} {\bf Q}_N^{H}(z) {\bf A}_N^{H} \right) \\  &  \leq  \frac{\kappa}{MN} \, (1+|z|) \frac{1}{(\Imm z)^{3}}  \left( 1 + \frac{1}{\Imm z} \right) \,  \frac{1}{ML} \mathrm{Tr} \left( {\bf A}_N  {\bf A}_N^{H} \right).
\end{align*}

\section{Proof of Lemmas \ref{lem:orthogonal-polynomials}}

\label{sec:orthogonal-polynomials}
The proof of Lemma \ref{lem:orthogonal-polynomials} follows from the observation that the term 
${\bf a}_L^{H}(\nu) \mathcal{R}_{m,L}^{-1} {\bf a}_L(\nu)$ can be expressed in terms of 
the Szegö orthogonal polynomials associated to the scalar product 
\begin{equation}
\label{eq:def-scalarproduct-polynomial}
 \langle z^{k}, z^{l} \rangle = \int_{0}^{1} \mathcal{S}_m(\nu) e^{2 i \pi (k-l) \nu} \, d\nu .
\end{equation}
For each integer $l$, we introduce the monic orthogonal polynomial $\Phi_l(z)$ defined by 
\begin{equation}
    \label{eq:def-Phi}
    \Phi^{(m)}_{l}(z) = z^{l} - z^{l} | \mathrm{sp}(1,z, \ldots, z^{l-1})
\end{equation}
where the symbol $| A$ stands for the orthogonal projection over the space $A$ in the sense of the scalar product (\ref{eq:def-scalarproduct-polynomial}). We denote by $\sigma^{2,m}_{l}$ the norm square of $\Phi^{(m)}_{l}$, and define for each $l$ the normalized orthogonal polynomial 
$\phi^{(m)}_{l}(z)$ by 
\begin{equation}
\label{eq:def-phi}
\phi^{(m)}_{l}(z) = \frac{\Phi^{(m)}_{l}(z)}{\sigma^{m}_{l}}.
\end{equation}
It is well known that the sequence $(\sigma^{2,m}_{l})_{l \geq 0}$ is decreasing, 
that $\sigma^{2,m}_0 = r_m(0)$, and that $\lim_{l \rightarrow +\infty} \sigma^{2,m}_{l} = \sigma^{2,m}$
coincides with $\exp \int_{0}^{1} \log \mathcal{S}_m(\nu) d \nu$. It is clear that the normalized orthogonal polynomials satisfy 
$$
\langle \phi^{(m)}_{l}, \phi^{(m)}_{l^{'}} \rangle = \int_{0}^{1} \phi^{(m)}_{l}(e^{2 i \pi \nu}) \left(\phi^{(m)}_{l^{'}}(e^{2 i \pi \nu})\right)^{*} \mathcal{S}_m(\nu) d \nu = \delta_{l-l^{'}}.
$$
In the following, we also denote by $\Phi^{(m)*}_l(z)$ and $\phi^{(m)*}_l(z)$ the degree $l$ polynomials defined by 
$$
\Phi^{(m)*}_l(z) = z^{l} \left(\Phi^{(m)}_l(z^{-*})\right)^{*}, \; \phi^{(m)*}_l(z) = z^{l} \left(\phi^{(m)}_l(z^{-*})\right)^{*}.
$$
Noting that $\Phi_l$ is for each $l$ a monic polynomial, it is clear that $\Phi^{(m)*}_l(z)$ can be written as 
\begin{equation}
    \label{eq:expre-coeff-Phi*}
 \Phi^{(m)*}_l(z) = 1 + \sum_{k=1}^{l} a_{k,l}^{(m)} z^{k}   
\end{equation}
for some coefficients $( a_{k,l}^{(m)})_{k=1, \ldots, l}$. Moroever, $\Phi^{(m)*}_l(z)$
coincides with 
$$
\Phi^{(m)*}_l(z) = 1 - 1 | \mathrm{sp}(z, z^{2}, \ldots, z^{l})
$$
and the $l$--dimensional vector ${\bf a}_l^{(m)} = (a_{1,l}^{(m)}, \ldots, a_{l,l}^{(m)})^{T}$
is given by 
\begin{equation}
    \label{eq:yule-walker}
    \left( \begin{array}{c} 1 \\ {\bf a}_l^{(m)} \end{array} \right) = \sigma^{2,m}_l \,  \mathcal{R}_{m,l+1}^{-T} \, {\bf e}_1
\end{equation}
where ${\bf e}_{1}$ is the $l+1$--dimensional vector ${\bf e}_1 = (1, 0, \ldots, 0)^{T}$. It is moreover easily checked that 
\begin{equation}
    \label{eq:connection-linear-prediction}
    y_{m,n} - y_{m,n} | \mathrm{sp}(y_{m,n-1}, \ldots, y_{m,n-l}) = y_{m,n} + \sum_{k=1}^{l} 
    a_{k,l}^{(m)*} y_{m,n-k}
\end{equation}
where the orthogonal projection operator is this time defined on the space of all finite second moment complex valued random variables. For more details on these polynomials, we refer the reader to \cite{barry-simon-book} and
\cite{geronimus}. 

The matrix $\mathcal{R}_{m,L}^{-1}$ can be written as 
\begin{equation}
    \label{eq:cholevsky}
    \mathcal{R}_{m,L}^{-1} = {\bf A}_{m,L} \, \mathrm{Diag}\left(\frac{1}{\sigma^{2,m}_{0}}, \ldots, \frac{1}{\sigma^{2,m}_{L-1}} \right) \, {\bf A}_{m,L}^{H}
\end{equation}
where ${\bf A}_{m,L}$ is the upper-triangular matrix defined by 
\begin{equation}
    \label{eq:def-AmL}
    {\bf A}_{m,L} = \left( \begin{array}{ccccc} 1 & a_{1,1}^{(m)} & a_{2,2}^{(m)} & \ldots &  a_{L-1,L-1}^{(m)} \\
    0 & 1 & a_{1,2}^{(m)} & \ldots & a_{L-2,L-1}^{(m)} \\
    \vdots & \ddots & 1 & \ddots & \vdots \\
    \vdots & \ddots & \ddots & \ddots & \vdots \\
    0 & \ldots & \ldots & 0 & 1 \end{array} \right).
\end{equation}
In order to  see this, simply observe that $\mathcal{R}_{m,L}\mathbf{A}_{m,L}$ is lower triangular because of (\ref{eq:yule-walker}) and the fact that $\mathcal{R}_{m,l+1}^T = {\bf J}_{l+1} \mathcal{R}_{m,l+1} {\bf J}_{l+1}$. Since $\mathbf{A}^H_{m,L}$ is also lower triangular, so is the product $\mathbf{A}^H_{m,L}\mathcal{R}_{m,L}\mathbf{A}_{m,L}$. However, matrix $\mathbf{A}^H_{m,L}\mathcal{R}_{m,L}\mathbf{A}_{m,L}$ is also hermitian, which implies that it must be diagonal. Close examination of (\ref{eq:yule-walker}) reveals that its diagonal entries are equal to $\sigma^{2,m}_l$ for $l=0,\ldots,L-1$. Inverting the corresponding equation we obtain (\ref{eq:cholevsky}). 

Using the above decomposition of the matrix $\mathcal{R}_{m,L}^{-1}$ we immediately obtain that 
$$
{\bf a}_L(\nu)^{H}  {\bf A}_{m,L} = \frac{1}{\sqrt{L}} \left( 1, e^{-2 i \pi \nu} \Phi_1^{(m)*}(e^{2i\pi\nu}), \ldots,  e^{-2 i \pi (L-1)  \nu} \Phi_{L-1}^{(m)*}(e^{2i\pi\nu})\right)
$$
and consequently
\begin{equation}
    \label{eq:fundamental-equality}
    {\bf a}_L(\nu)^{H} \, \mathcal{R}_{m,L}^{-1}  {\bf a}_L(\nu) = 
    \frac{1}{L} \sum_{l=0}^{L-1} |\phi_{l}^{(m)*}(e^{2 i \pi \nu})|^{2}.
\end{equation}
We first explain informally why, for each $m$, $\mathcal{S}_m(\nu) {\bf a}_L(\nu)^{H} \, \mathcal{R}_{m,L}^{-1}  {\bf a}_L(\nu) - 1$ converges uniformly towards $0$. For this, 
we need to recall certain results that are summarized next. 

Since the spectral densities $\mathcal{S}_{m}\left(\nu\right) $ are
uniformly bounded from below, we can define the cepstrum coefficients $
(c_{m}(k))_{k \in \mathbb{Z}}$, namely
\begin{equation*}
c_{m}\left( k\right) =\int_{0}^{1}\log\mathcal{S}_{m}\left( \nu\right)
\mathrm{e}^{2\pi\mathrm{i}\nu k}d\nu.
\end{equation*}
We notice that $\lim_{l \rightarrow +\infty} \sigma^{2,m}_{l} = \sigma^{2,m}$ coincides 
with $\exp c_m(0)$.
Assumption  \ref{as:norm-r-omega} and a generalization of the Wiener-Lévy theorem (see e.g. \cite{barry-simon-book}) implies that for each $m$, $c_{m}\in\ell_{\omega}$ for each $\gamma \leq \gamma_0$. 
We define the function
 $\pi^{(m)}(z)$ given by 
$$
\pi^{(m)}(z) = \exp -\left(c_m(0)/2 + \sum_{n=1}^{+\infty} c_m(-n) z^{n} \right).
$$
Then, $\pi^{(m)}(z)$ and $\psi^{(m)}(z) = \frac{1}{\pi^{(m)}(z)}$ are analytic in the open unit disk $\mathbb{D}$ and continuous on the closed unit disk. In the following, 
we denote by $\pi^{(m)}(z) = \sum_{n=0}^{+\infty} \pi^{(m)}(n) z^{n}$ and $\psi^{(m)}(z) = \sum_{n=0}^{+\infty} \psi^{(m)}(n) z^{n}$ their expansion in $\mathbb{D}$. Moreover, functions $\nu \rightarrow \pi^{(m)}(e^{2 i \pi \nu})$ and 
$\nu \rightarrow \psi^{(m)}(e^{2 i \pi \nu})$ also belong to $\ell_{\omega_0}$. To check this, we denote by $(\tilde{c}_m(n))_{n \geq 0}$
the one-sided sequence defined by $\tilde{c}_m(0) = c_m(0)/2$ and $\tilde{c}_m(n) = c_m(-n)$ for $n \geq 1$. 
Then, the sequences $\pi^{(m)}$ and $\psi^{(m)}$ can be written as
$$
\pi^{(m)} = \sum_{k=0}^{+\infty} \frac{(-1)^{k}}{k!} (\tilde{c}_m)^{*(k)},\  \psi^{(m)} = \sum_{k=0}^{+\infty} \frac{1}{k!} (\tilde{c}_m)^{*(k)}
$$
where for a sequence $a$, $a^{*(k)}$ represents $\underbrace{a * a* \ldots * a}_{k}$. Observe, in particular, that both sequences are one-sided. Now, for each $\gamma \leq \gamma_0$, it holds that 
\begin{eqnarray}
\label{eq:upperbound-norm-pi}
\|\pi^{(m)}\|_{\omega} & \leq & \sum_{k=0}^{+\infty} \frac{1}{k!} \| \tilde{c}_m \|_{\omega}^{k} = \exp(\| \tilde{c}_m \|_{\omega}) \leq \exp(\| c_m \|_{\omega})\\
\label{eq:upperbound-norm-psi}
\|\psi^{(m)}\|_{\omega} & \leq & \sum_{k=0}^{+\infty} \frac{1}{k!}  \| \tilde{c}_m \|_{\omega}^{k} = \exp(\| \tilde{c}_m \|_{\omega}) \leq \exp(\| c_m \|_{\omega}).
\end{eqnarray}
In the following, we also need a version of (\ref{eq:upperbound-norm-pi}, \ref{eq:upperbound-norm-psi}) holding uniformly w.r.t. $m$. For this, we establish the following lemma,
which can be seen as a uniform version of the generalized Wiener-Lévy theorem. 
\begin{lemma}
\label{le:wiener-levy-uniform}
Consider a function $F(z)$ holomorphic in a neighbourhood of the interval $[s_{min}, s_{max}]$
where $s_{min}$ and $s_{max}$ are defined in Assumption \ref{ass:bounds-spectral-densities}. Then, for each $\gamma < \gamma_0$ and for each $m$, the function $F\, \mathrm{o} \,\mathcal{S}_m$ belongs to 
$\ell_{\omega}$ and \footnote{We make the slight abuse of notation by identifying the $\omega$-norm of a function on the unit circle as the corresponding norm of its Fourier coefficient sequence.}
\begin{equation}
    \label{eq:uniform-wiener-levy}
    \sup_{m \geq 1} \| F \, \mathrm{o} \, \mathcal{S}_m \|_{\omega} < +\infty.
\end{equation}
\end{lemma}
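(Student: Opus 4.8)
The plan is to obtain $F \circ \mathcal{S}_m$ through the holomorphic functional calculus in the Beurling algebra $\ell_\omega$ (with $\omega(n) = (1+|n|)^\gamma$, $0 \le \gamma < \gamma_0$), and then to make the resulting bound uniform in $m$ by a compactness argument. Recall that $\ell_\omega$ is a unital commutative Banach algebra under convolution with unit $\delta_0$, and --- this is where property (iv) of the Beurling weight, $n^{-1}\log\omega(n)\to 0$, is used --- its maximal ideal space is the unit circle, the Gelfand transform of $a = (a(n))_n$ being the function $a(e^{2i\pi\nu}) = \sum_n a(n) e^{2i\pi n\nu}$ (see \cite{barry-simon-book}, Chapter 5). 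In particular, $a$ is invertible in $\ell_\omega$ if and only if $a(e^{2i\pi\nu})\ne 0$ for every $\nu$, which is the generalized Wiener--Lévy theorem. Since $r_m \in \ell_{\omega_0} \subset \ell_\omega$ and its Gelfand transform is $\mathcal{S}_m$ by (\ref{eq:defcovarianceseq}), and $\mathcal{S}_m([0,1]) \subseteq [s_{min}, s_{max}]$, one can fix once and for all a closed contour $\Gamma$ contained in the open set on which $F$ is holomorphic, enclosing $[s_{min}, s_{max}]$ and disjoint from it. Then $\lambda \delta_0 - r_m$ is invertible in $\ell_\omega$ for every $\lambda \in \Gamma$ and every $m$, so that $F(r_m) := \frac{1}{2\pi i}\int_\Gamma F(\lambda)(\lambda \delta_0 - r_m)^{-1}\, d\lambda$ is a well-defined element of $\ell_\omega$; by the functional calculus its Gelfand transform equals $F \circ \mathcal{S}_m$, which therefore belongs to $\ell_\omega$ in the sense of the paper.

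It then remains to prove the uniform resolvent bound $\sup_{m\ge 1}\sup_{\lambda\in\Gamma}\|(\lambda\delta_0 - r_m)^{-1}\|_\omega < +\infty$. First I would observe that, because $\gamma < \gamma_0$ strictly, the closed ball $B_0 = \{a : \|a\|_{\omega_0} \le R_0\}$, with $R_0 := \sup_m \|r_m\|_{\omega_0} < \infty$ by Assumption \ref{as:norm-r-omega}, is compact in $\ell_\omega$: the $\omega$-tails are uniformly controlled via $\sum_{|n|>N}(1+|n|)^\gamma|a(n)| \le (1+N)^{\gamma-\gamma_0} R_0$, on each finite set of coordinates $B_0$ is bounded, so a diagonal extraction shows $B_0$ is totally bounded, and $\ell_\omega$ is complete. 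Consequently the closure $\mathcal{K}$ of $\{r_m : m \ge 1\}$ in $\ell_\omega$ is compact. Every $a \in \mathcal{K}$ is an $\ell_\omega$-limit, hence a uniform limit, of functions $\mathcal{S}_{m_k}$ with values in $[s_{min}, s_{max}]$, so its Gelfand transform has range in $[s_{min}, s_{max}]$; thus $\lambda\delta_0 - a$ is invertible for each $\lambda \in \Gamma$. Since inversion is continuous on the invertible elements of a Banach algebra, $(\lambda, a) \mapsto \|(\lambda\delta_0 - a)^{-1}\|_\omega$ is continuous on an open subset of $\mathbb{C}\times\ell_\omega$ containing the compact set $\Gamma \times \mathcal{K}$, hence bounded there, and in particular uniformly bounded over $\lambda \in \Gamma$ and $a = r_m$.

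Combining the two steps gives $\|F\circ\mathcal{S}_m\|_\omega = \|F(r_m)\|_\omega \le \frac{\mathrm{length}(\Gamma)}{2\pi}\big(\sup_{\Gamma}|F|\big)\sup_{m,\lambda\in\Gamma}\|(\lambda\delta_0 - r_m)^{-1}\|_\omega$, a bound independent of $m$, which is (\ref{eq:uniform-wiener-levy}). The uniform versions of (\ref{eq:upperbound-norm-pi}) and (\ref{eq:upperbound-norm-psi}) then follow by applying the lemma with $F = \log$ on a neighbourhood of $[s_{min}, s_{max}] \subset (0,+\infty)$, which yields $\sup_m \|c_m\|_\omega < \infty$, and feeding this into the series bounds for $\pi^{(m)}$ and $\psi^{(m)}$. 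The hard part of the argument is the uniform resolvent bound: it is precisely here that the strict inequality $\gamma < \gamma_0$ matters (at $\gamma = \gamma_0$ one would have boundedness of $\{r_m\}$ in $\ell_\omega$ but not compactness), and it also relies on having the Wiener--Lévy invertibility criterion available for $\ell_\omega$, i.e. on the identification of its maximal ideal space with the unit circle, which is the content of the Beurling condition on the weight.
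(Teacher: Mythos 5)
Your proof is correct, but it follows a genuinely different route from the paper's. The paper adapts Zygmund's proof of the Wiener--L\'evy theorem: it truncates the Fourier series of $\mathcal{S}_m$ at a level $n_0$ chosen uniformly in $m$ (this is where $\gamma<\gamma_0$ enters, through the tail estimate (\ref{eq:bound-remainder})), represents $F\circ\mathcal{S}_m$ by the Cauchy formula on circles of radius $\rho$ centred at the truncated symbol $\mathcal{S}_{m,n_0}(\nu)$, expands the kernel as a geometric series whose $\omega$-norm is at most $2$, and controls the $\omega$-norm of $\nu\mapsto F(\mathcal{S}_{m,n_0}(\nu)+\rho e^{i\theta})$ via the elementary $\mathcal{C}^p$-to-$\ell_{\omega_0}$ bound (\ref{eq:bound-norm-g}). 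You instead invoke the Riesz--Dunford functional calculus in the Banach algebra $\ell_\omega$, using the identification of its maximal ideal space with the unit circle (the generalized Wiener--L\'evy invertibility criterion, which the paper also takes from \cite{barry-simon-book}), and you obtain uniformity in $m$ by a soft compactness argument: the ball $\{\|a\|_{\omega_0}\le \sup_m\|r_m\|_{\omega_0}\}$ embeds compactly in $\ell_\omega$ precisely because $\gamma<\gamma_0$, hence the closure $\mathcal{K}$ of $\{r_m\}$ is compact in $\ell_\omega$, every element of $\mathcal{K}$ has symbol valued in $[s_{min},s_{max}]$ so the resolvent exists on $\Gamma\times\mathcal{K}$, and continuity of inversion gives a uniform resolvent bound there. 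Both arguments are sound, and both ultimately use $\gamma<\gamma_0$ to convert the uniform $\ell_{\omega_0}$ bound of Assumption \ref{as:norm-r-omega} into uniform smallness (respectively precompactness) in $\ell_\omega$. What the paper's computation buys is an explicit constant, expressible through $\sup_m\|r_m\|_{\omega_0}$, $\rho$ and the derivatives of $F$, without any appeal to compactness; your argument is shorter and isolates cleanly where the strict inequality $\gamma<\gamma_0$ is needed, at the price of a non-explicit constant, which is harmless since the lemma and its later uses only require finiteness of the supremum (the constant is still independent of $L,M,N$, as required of a nice constant). A small simplification of your step: instead of the closure of $\{r_m\}$ (where you must argue that limits keep their symbols in $[s_{min},s_{max}]$, as you do via uniform convergence), you could take $\mathcal{K}=\{a:\|a\|_{\omega_0}\le R_0,\ \hat a([0,1])\subset[s_{min},s_{max}]\}$, which is closed in $\ell_\omega$, compact, and contains all the $r_m$, making the bound manifestly independent of the particular family.
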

\begin{proof}
We adapt the proof of the Wiener-Levy theorem in \cite{zygmund} (Theorem 5.2, p. 245).
We first claim that if $p$ is an integer such that $p > 1 + \gamma_0$ and if $G(\nu) = \sum_{n \in \mathbb{Z}} g(n) e^{2 i \pi n \nu}$ belongs to $\mathcal{C}_p$, then, $g \in \ell_{\omega_0}$, and 
\begin{equation}
    \label{eq:bound-norm-g}
    \| g \|_{\omega_0} \leq \kappa \, \left( \sup_{\nu} |G(\nu)| + \sup_{\nu} |G^{(p)}(\nu)| \right)
    \end{equation}
    for some constant $\kappa$ depending only on $\gamma_0$.
To verify (\ref{eq:bound-norm-g}), we remark that $|G(0)| \leq \sup_{\nu} |G(\nu)|$. 
Moreover, for each $n \neq 0$, the integration by parts formula leads to 
$$
g(n) = \frac{1}{(2 i \pi n)^{p}} \, \int_{0}^{1} G^{(p)}(\nu) e^{-2 i \pi n \nu} \, d \nu
$$
and to $|g(n)| \leq \frac{1}{(2 \pi)^{p}} \, \frac{1}{|n|^{p}} \, \sup_{\nu} | G^{(p)}(\nu)|$. 
As $p > 1 + \gamma_0$, we obtain immediately that (\ref{eq:bound-norm-g}) holds. \\

Since $F$ is holomorphic in a neighbourhood of $[s_{min}, s_{max}]$, there exists a $\rho > 0$ 
for which $F$ is holomorphic in the open disk $\mathbb{D}(s, 2 \rho)$ for each $s \in [s_{min}, s_{max}]$. 
In particular,  for each $m$ and each $\nu$, $F$ is holomorphic in 
$\mathbb{D}(\mathcal{S}_m(\nu), 2 \rho)$. We consider a partial sum $\mathcal{S}_{m,n_0}(\nu) = \sum_{k=-n_0}^{n_0} r_m(k) e^{- 2 i \pi k \nu}$, and claim that for each $\gamma < \gamma_0$, we have
\begin{equation}
\label{eq:bound-remainder}
    \| \mathcal{S}_{m}(\nu)-\mathcal{S}_{m,n_0}(\nu) \|_{\omega} = \sum_{|k| \geq (n_0+1)} (1+|k|)^{\gamma} |r_m(k)| \leq \frac{\kappa}{n_0^{\gamma_0 - \gamma}}
\end{equation}
for some nice constant $\kappa$. To justify (\ref{eq:bound-remainder}), we remark that 
$$
\| r_m \|_{\omega_0} \geq \sum_{|k| \geq (n_0+1)} (1 + |k|)^{\gamma_0} |r_m(k)| \geq 
n_0^{\gamma_0 - \gamma} \, \sum_{|k| \geq (n_0+1)}  (1 + |k|)^{\gamma} |r_m(k)| = n_0^{\gamma_0 - \gamma} \, \| \mathcal{S}_{m}(\nu)-\mathcal{S}_{m,n_0}(\nu) \|_{\omega}.
$$
Assumption \ref{as:norm-r-omega} implies that $\sup_{m} \| r_m \|_{\omega_0} < +\infty$. This leads immediately to (\ref{eq:bound-remainder}). We choose $n_0$ in such a way that  
$\frac{\kappa}{n_0^{\gamma_0 - \gamma}} \leq \frac{\rho}{2}$, and notice that (\ref{eq:bound-remainder}) leads to $\sup_{\nu} |\mathcal{S}_m(\nu) - \mathcal{S}_{m,n_0}(\nu)| \leq \frac{\rho}{2} $ 
for each $m$. Therefore, the circle $\mathbb{C}(\mathcal{S}_{m,n_0}(\nu), \rho)$ with center $\mathcal{S}_{m,n_0}(\nu)$ and radius $\rho$ is included into $\mathbb{D}(\mathcal{S}_m(\nu), 2 \rho)$, 
and $\mathcal{S}_m(\nu)$ belongs to the disk $\mathbb{D}(\mathcal{S}_{m,n_0}(\nu), \rho)$. The Cauchy formula 
implies that 
\begin{equation}
    \label{eq:cauchy-formula}
\left(F \, \mathrm{o} \, \mathcal{S}_m \right)(\nu) = 
\frac{1}{2 \pi} \int_{0}^{2 \pi} \frac{F(\mathcal{S}_{m,n_0}(\nu) + \rho e^{i \theta})}{\mathcal{S}_m(\nu) - 
\mathcal{S}_{m,n_0}(\nu) - \rho e^{i \theta}} \, \rho \, e^{i \theta} \, d\theta. 
\end{equation}
Since $ |\mathcal{S}_m(\nu) - \mathcal{S}_{m,n_0}(\nu)| \leq \frac{\rho}{2} $, it holds that 
$$
\frac{\rho e^{i \theta}}{\mathcal{S}_m(\nu) - 
\mathcal{S}_{m,n_0}(\nu) - \rho e^{i \theta}} = - \sum_{k=0}^{+\infty} \rho^{-k} e^{- i k \theta} \left( \mathcal{S}_m(\nu) - 
\mathcal{S}_{m,n_0}(\nu) \right)^{k}
$$
and that 
$$
\left \| \frac{\rho e^{i \theta}}{\mathcal{S}_m(\nu) - 
\mathcal{S}_{m,n_0}(\nu) - \rho e^{i \theta}}  \right \|_{\omega} \leq \sum_{k=0}^{+\infty} \rho^{-k} \| \mathcal{S}_m - 
\mathcal{S}_{m,n_0} \|_{\omega}^{k} \leq 2.
$$
Using (\ref{eq:bound-norm-g}), it is easy to check that $G_m(\nu,\theta)$ defined 
by $G_m(\nu,\theta) = F(\mathcal{S}_{m,n_0}(\nu) + \rho e^{i \theta})$ verifies
$$
\sup_{m,\theta,\nu} \| G_m(\nu,\theta)\|_{\omega} \leq \kappa
$$
for each $\gamma \leq \gamma_0$ for some nice constant $\kappa$. We thus obtain 
that for some nice constant $\kappa$, it holds that 
$$
\left \|  \frac{F(\mathcal{S}_{m,n_0}(\nu) + \rho e^{i \theta})}{\mathcal{S}_m(\nu) - 
\mathcal{S}_{m,n_0}(\nu) - \rho e^{i \theta}} \, \rho \, e^{i \theta} \right \|_{\omega}
\leq \kappa
$$
for each $\gamma < \gamma_0$, each $m$ and each  $\theta$. (\ref{eq:cauchy-formula}) 
thus implies (\ref{eq:uniform-wiener-levy}). The proof of Lemma \ref{le:wiener-levy-uniform}
is thus complete. 
\end{proof}

The use of Lemma \ref{le:wiener-levy-uniform} for $f(x) = \log x$ shows that 
\begin{equation}
    \label{eq:uniform-control-norm-c}
    \sup_{m} \| c_m \|_{\omega} < +\infty
\end{equation}
for each $\gamma < \gamma_0$. Therefore, (\ref{eq:upperbound-norm-pi}, \ref{eq:upperbound-norm-psi}) imply that 
\begin{equation}
    \label{eq:control-norm-psi-pi}
\sup_{m}  \|\pi^{(m)}\|_{\omega} \leq \kappa, \,\,\,  \sup_{m}  \|\psi^{(m)}\|_{\omega} \leq \kappa.
\end{equation}

It also holds that $\mathcal{S}_m(\nu) = \left| \psi^{(m)}(e^{2 i \pi \nu}) \right|^{2}$
and therefore $\psi^{(m)}(z)$ coincides with the outer spectral factor of $\mathcal{S}_m$
in the sense that both $\psi^{(m)}(z)$ and $\frac{1}{\psi^{(m)}(z)} = \pi^{(m)}(z)$  are analytic in the unit disc. Theorem 5.1.8 in \cite{barry-simon-book} leads to the conclusion that $\| \phi_l^{(m)*} - \pi^{(m)} \|_{\omega} \rightarrow 0$ when $l \rightarrow +\infty$, a result which implies that 
\begin{equation}
    \label{eq:uniform-convergence-m-fixed}
    \sup_{\nu} \left| \phi_l^{(m)*}(e^{2 i \pi \nu}) - \pi^{(m)}(e^{2 i \pi \nu}) \right| \rightarrow 0.
\end{equation}
Given the fact that $\mathcal{S}_m(\nu) = \left| \frac{1}{\pi^{(m)}(e^{2 i \pi \nu})} \right|^{2}$, 
(\ref{eq:upperbound-S}) and (\ref{eq:lowerbound-S}) allow us to conclude that 
\begin{equation}
    \label{eq:lower-upper-bounds-pi}
0 < \inf_{m} \inf_{\nu} |\pi^{(m)}(e^{2 i \pi \nu})| \leq \sup_{m} \sup_{\nu} |\pi^{(m)}(e^{2 i \pi \nu})| < +\infty. 
\end{equation}
Therefore, (\ref{eq:uniform-convergence-m-fixed}) leads to 
$\sup_{\nu} | \frac{1}{\pi^{(m)}(e^{2 i \pi \nu})} \phi_l^{(m)*}(e^{2 i \pi \nu}) - 1 | \rightarrow 0$, and to $\sup_{\nu} \left| |\frac{1}{\pi^{(m)}(e^{2 i \pi \nu})}|^{2} |\phi_l^{(m)*}(e^{2 i \pi \nu})|^{2} - 1 \right| \rightarrow 0$, or equivalently, to 
\begin{equation}
    \label{eq:uniform-convergence-m-fixed-2}
 \sup_{\nu} \left| \mathcal{S}_m(\nu) |\phi_l^{(m)*}(e^{2 i \pi \nu})|^{2} - 1 \right| \rightarrow 0.
 \end{equation}
 This, in turn, implies that 
 \begin{equation}
    \label{eq:uniform-convergence-m-fixed-3}
 \sup_{\nu} \left| \mathcal{S}_m(\nu) \frac{1}{L} \sum_{l=1}^{L} |\phi_l^{(m)*}(e^{2 i \pi \nu})|^{2} - 1 \right| \rightarrow 0
 \end{equation}
when $L \rightarrow +\infty$ as expected. In order to complete the proof of Lemma 
\ref{lem:orthogonal-polynomials}, we have thus to prove that (\ref{eq:uniform-convergence-m-fixed-3}) holds uniformly w.r.t. $m$, and to evaluate the rate of 
convergence. For this, we can follow the proof of Theorem 5.1.8 in \cite{barry-simon-book}, adapting the corresponding arguments to our particular context. 

Theorem 5.1.8 in \cite{barry-simon-book} follows from general results concerning Wiener-Hopf operators defined on the Wiener algebra $\ell_{1}$. As explained below, we will show that $\sup_{m} \| \phi_l^{(m)*} - \pi^{(m)} \|_{1} \rightarrow 0$, and will only use that $\sup_{m} \| r_ m\|_{\omega} < +\infty$ and $\sup_{m} \| c_ m\|_{\omega} < +\infty$ for each $\gamma < \gamma_0$ in order to obtain an upper bound of the above term. In the following, 
we denote by $C^{(m)}$ the operator defined on the Wiener algebra $\ell_{1}$ by 
$$
C^{(m)} a = \overline{r}_m * a
$$
where $\overline{r}_m$ is the sequence defined by $\overline{r}_m(n) = r_m(-n)$ for each $n \in \mathbb{Z}$. 
$C^{(m)}$ can alternatively be defined in the Fourier transform domain as the multiplication operator
$$
\sum_{n \in \mathbb{Z}} a(n) e^{2 i \pi n \nu} \rightarrow \mathcal{S}_m(\nu)  \sum_{n \in \mathbb{Z}} a(n) e^{2 i \pi n \nu}.
$$
It is well known that $\|C^{(m)}\|_1 = \|\overline{r}_m \|_1 = \|{r}_m \|_1$. As $\mathcal{S}_m(\nu) = |\psi^{(m)}(e^{2 i \pi \nu})|^{2}$, the operator 
$C^{(m)}$ can be factorized as $C^{(m)} = L^{(m)} U^{(m)} = U^{(m)} L^{(m)}$ where $U^{(m)}$ and $L^{(m)}$ represent the multiplication operators by $\psi^{(m)}(e^{2 i \pi \nu})$ and $\left(\psi^{(m)}(e^{2 i \pi \nu})\right)^{*}$ defined on $\ell_1$ respectively. We denote by $P_{+}$
the projection operator defined on $\ell_{1}$ by 
$$
P_{+}\left( \{a(n), n \in \mathbb{Z} \} \right) = \{a(n), n \geq 0 \}
$$
or equivalently in the Fourier transform domain by 
$$
P_{+}\left(\sum_{n \in \mathbb{Z}} a(n) e^{2 i \pi n \nu} \right) = \sum_{n=0}^{+\infty} a(n) e^{2 i \pi n \nu}.
$$
The operator $P_{-}$ is defined by $P_{-} = I - P_{+}$. The operator $U^{(m)}$ is called upper triangular in 
the sense that $P_{-} U^{(m)} P_{+} = 0$ while $L^{(m)}$ is lower triangular because $P_{+} L^{(m)} P_{-} = 0$. 
Moreover, as $\pi^{(m)} = \frac{1}{\psi^{(m)}}$ belongs to $\ell_{1}$ and $\pi^{(m)}(e^{2 i \pi  \nu}) = 
\sum_{n=0}^{+\infty} \pi^{(m)}(n) \, e^{2 i \pi n \nu}$, the operators $U^{(m)}$ and $L^{(m)}$ are invertible, 
and $(U^{(m)})^{-1}$ and $(L^{(m)})^{-1}$ are upper triangular and lower triangular respectively.
In the Fourier domain,  $(U^{(m)})^{-1}$ and $(L^{(m)})^{-1}$ correspond 
respectively to the multiplication operator by $\pi^{(m)}(e^{2 i \pi \nu})$ 
and  $(\pi^{(m)}(e^{2 i \pi \nu}))^{*}$ These properties 
imply that the factorization $C^{(m} = L^{(m)} U^{(m)} = U^{(m)} L^{(m)}$ is a Wiener-Hopf factorization. In the following, we denote by $T^{(m)}$ the Toeplitz operator defined 
on $\ell_1$ by 
\begin{equation}
    \label{eq:def:T}
    T^{(m)} = P_+ C^{(m)} P_+.
    \end{equation}
    It is clear that if $j \geq 0$ and if $\delta_j$ is the sequence 
    $\delta_j$ defined by $\delta_j(n) = \delta_{n-j}$, then, 
    $<\delta_i, T^{(m)} \delta_j>$, defined as $\left(T^{(m)} \delta_j\right)(i)$, is equal to   $r_m(j-i)$. Therefore, the matrix representation of $T^{(m)}$ in the basis $(\delta_j)_{j \geq 0}$ is the infinite matrix $\mathcal{R}_{m,\infty}^{T}$. 
Theorem 5.1.1 in \cite{barry-simon-book} implies that, considered as an operator defined on 
$\mathrm{Range}(P_+)$,  $T^{(m)}$ is invertible, i.e. that for each $a \in \mathrm{Range}(P_{+})$, there exists a unique $b \in \mathrm{Range}(P_{+})$ such that 
$T^{(m)} b = a$. $\left(T^{(m)}\right)^{-1} b$ is of course defined as $a$. If an element $a$ does not belong to $\mathrm{Range}(P_+)$, $\left(T^{(m)}\right)^{-1} a$ is defined as $\left(T^{(m)}\right)^{-1} P_{+} a$. We also 
notice that $\left(T^{(m)}\right)^{-1} = P_+ \left(U^{(m)}\right)^{-1} P_+ \left(L^{(m)}\right)^{-1} P_+$.  
For each $n \geq 1$, we denote by $Q_n$ the projection operator defined  by 
\begin{equation}
    \label{eq:def-Qn}
    Q_n\left( \{ a(l), l \in \mathbb{Z} \} \right) =  \{ a(l), 0 \leq l \leq n \}
\end{equation}
or equivalently by
$$
Q_n \left( \sum_{l \in \mathbb{Z}} a(l) e^{2 i \pi l \nu} \right) = 
\sum_{l=0}^{n} a(l) e^{2 i \pi l \nu}.
$$
We also introduce the truncated Toeplitz operator $T^{(m)}_n$  defined by
\begin{equation}
    \label{eq:def-Tn}
    T^{(m)}_n = Q_n C^{(m)} Q_n = Q_n T^{(m)} Q_n .
\end{equation}
We note that in the basis $(\delta_{j})_{j=0, \ldots, n}$, the matrix representation of 
$T^{(m)}_n$ is the matrix $\mathcal{R}_{m,n+1}^{T}$. We now introduce the projection operator $R_n$ defined by $R_n = P_+ - Q_n$, and state the following Lemma which appears as an immediate  consequence of Theorem 5.1.2 and Theorem 5.1.3 in \cite{barry-simon-book}. 
\begin{lemma}
\label{le:adaptation-baxter-lemma}
For each $n \geq 0$, it holds that $R_n L^{(m)} Q_n = R_n L^{-(m)} Q_n = Q_n U^{(m)} R_n = 
Q_n U^{-(m)} R_n = 0$. Moreover, there exists an integer $n_0$ independent of $m$  such that for each $n \geq n_0$, $T^{(m)}_n$, considered as an operator defined on $\mathrm{Range}(Q_n)$, is invertible, in the sense that for each $a \in \mathrm{Range}(Q_n)$, it exists a unique $b \in \mathrm{Range}(Q_n)$, defined as $(T^{(m)}_n)^{-1} a$,  such that $T^{(m)}_n b = a$. If $a \in \mathrm{Range}(P_+)$, $(T^{(m)}_n)^{-1} a$ is defined as 
$(T^{(m)}_n)^{-1} a = (T^{(m)}_n)^{-1} Q_n a$. Moreover, there exists a nice constant $\alpha$ such that, for each $n \geq n_0$ and each $a \in \mathrm{Range}(P_+)$, the inequality 
\begin{equation}
    \label{eq:inequality-norm-inverse}
    \left\| \left(T^{(m)}_n\right)^{-1} a \right\|_1 \leq \alpha \,  \| a \|_1
\end{equation}
holds.
\end{lemma}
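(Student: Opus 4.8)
The plan is to follow the proof of Theorems 5.1.1--5.1.3 in \cite{barry-simon-book} (Baxter's theorem for Toeplitz operators on the Wiener algebra), while checking at every step that the constants that appear depend only on quantities already controlled uniformly in $m$: namely $\sup_{m}\|r_m\|_1$, $\sup_{m}\|\pi^{(m)}\|_{1}$ and $\sup_{m}\|\psi^{(m)}\|_{1}$ (consequences of (\ref{eq:control-norm-psi-pi})), together with the uniform two-sided bound on $|\pi^{(m)}(e^{2i\pi\nu})|$ in (\ref{eq:lower-upper-bounds-pi}) and the uniform bound on $\|c_m\|_\omega$ coming from Lemma \ref{le:wiener-levy-uniform}.

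First I would establish the triangularity identities. The operator $U^{(m)}$ is multiplication by $\psi^{(m)}(e^{2i\pi\nu})=\sum_{k\geq0}\psi^{(m)}(k)e^{2i\pi k\nu}$, so in the basis $(\delta_j)$ its matrix entries are $\psi^{(m)}(k-j)$ for $k\geq j$ and $0$ otherwise; hence $U^{(m)}\delta_j$ is supported on indices $\geq j$ and $Q_n U^{(m)} R_n=0$. Since $(U^{(m)})^{-1}=U^{-(m)}$ is multiplication by $\pi^{(m)}(e^{2i\pi\nu})$, which also has only nonnegative Fourier modes, the same argument gives $Q_n U^{-(m)} R_n=0$. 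Dually, $L^{(m)}$ and $L^{-(m)}$ are multiplication by $(\psi^{(m)})^{*}$ and $(\pi^{(m)})^{*}$, which have only nonpositive Fourier modes, so $L^{(m)}\delta_j$ and $L^{-(m)}\delta_j$ are supported on indices $\leq j$, giving $R_n L^{(m)} Q_n=R_n L^{-(m)} Q_n=0$. This proves the first assertion for every $n\geq0$ and every $m$.

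Next, inserting $P_+=Q_n+R_n$ into $T^{(m)}_n=Q_n C^{(m)} Q_n=Q_n U^{(m)} L^{(m)} Q_n$ and using $R_n L^{(m)} Q_n=0$ yields the exact factorization
\begin{equation}
\label{eq:baxter-factorization-plan}
T^{(m)}_n=(Q_n U^{(m)} Q_n)\,(Q_n L^{(m)} Q_n).
\end{equation}
Each factor is a triangular truncation with constant diagonal $\psi^{(m)}(0)\neq0$, hence invertible on the finite-dimensional space $\mathrm{Range}(Q_n)$; using the identities of the previous step one checks that $Q_n U^{-(m)} Q_n$ and $Q_n L^{-(m)} Q_n$ are, respectively, a right inverse of $Q_n U^{(m)} Q_n$ and a left inverse of $Q_n L^{(m)} Q_n$, hence (by finite-dimensionality) the two-sided inverses. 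Consequently $T^{(m)}_n$ is invertible on $\mathrm{Range}(Q_n)$ for all $n$ (in particular for $n\geq n_0$, with $n_0$ as in the cited Simon theorems), with
\[
(T^{(m)}_n)^{-1}=Q_n L^{-(m)} Q_n\,Q_n U^{-(m)} Q_n .
\]
Since a multiplication operator on $\ell_1$ has operator norm equal to the $\ell_1$-norm of its symbol and $\|Q_n A Q_n\|_1\leq\|A\|_1$, this gives $\|(T^{(m)}_n)^{-1}a\|_1\leq\|(\pi^{(m)})^{*}\|_1\,\|\pi^{(m)}\|_1\,\|a\|_1\leq\kappa\,\|a\|_1$ uniformly in $m$ and $n$ by (\ref{eq:control-norm-psi-pi}); when $a\notin\mathrm{Range}(P_+)$ one applies this to $Q_n a$, in accordance with the stated convention $(T^{(m)}_n)^{-1}a=(T^{(m)}_n)^{-1}Q_n a$. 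This establishes (\ref{eq:inequality-norm-inverse}) with $\alpha$ a nice constant.

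The only genuinely delicate point is the word \emph{uniform}: one must be sure that the invertibility threshold and the constant $\alpha$ produced by Simon's argument can be taken independent of $m$. In the clean form above this is automatic, because (\ref{eq:baxter-factorization-plan}) is an exact identity and every estimate reduces to $\sup_m\|\pi^{(m)}\|_{1}<\infty$, $\sup_m\|\psi^{(m)}\|_{1}<\infty$ (from (\ref{eq:control-norm-psi-pi})) and to $\inf_m|\psi^{(m)}(0)|>0$ (which follows from $\sup_m|c_m(0)|\leq\sup_m\|c_m\|_\omega<\infty$, Lemma \ref{le:wiener-levy-uniform}). Should one instead transcribe Simon's Baxter-type perturbation argument literally, the uniformity would come from the uniform tail decay $\sum_{|k|\geq n+1}\bigl(|\pi^{(m)}(k)|+|\psi^{(m)}(k)|\bigr)\leq\kappa\,n^{-\gamma}$ for any $\gamma\in(0,\gamma_0)$, again a consequence of $\sup_m\bigl(\|\pi^{(m)}\|_\omega+\|\psi^{(m)}\|_\omega\bigr)<\infty$. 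Thus the adaptation of the cited results is essentially bookkeeping once these uniform inputs are in place.
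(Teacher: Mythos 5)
Your triangularity identities are fine, but the key step of your argument, the claimed exact factorization $T^{(m)}_n=(Q_n U^{(m)} Q_n)(Q_n L^{(m)} Q_n)$, is false. Inserting $P_+=Q_n+R_n$ between $U^{(m)}$ and $L^{(m)}$ is not legitimate: $L^{(m)}Q_n a$ has in general nonzero negative Fourier modes, so the correct insertion is the full identity $I=P_-+Q_n+R_n$, and this leaves the cross term $Q_nU^{(m)}P_-L^{(m)}Q_n$, which does not vanish (and is not even small uniformly in $n$). A concrete counterexample: take $\psi^{(m)}(z)=1+\beta z$ with $0<|\beta|<1$ and $n=1$. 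Then $T^{(m)}_1$ is the Toeplitz matrix with diagonal entries $1+|\beta|^2$ and off-diagonal entries $\beta,\bar\beta$, whereas the product of the truncated triangular factors is
$\left(\begin{smallmatrix} 1 & 0 \\ \beta & 1\end{smallmatrix}\right)\left(\begin{smallmatrix}1 & \bar\beta \\ 0 & 1\end{smallmatrix}\right)=\left(\begin{smallmatrix} 1 & \bar\beta \\ \beta & 1+|\beta|^2\end{smallmatrix}\right)$,
which has the wrong corner entry (the other ordering fails in the opposite corner). Consequently the closed formula $(T^{(m)}_n)^{-1}=Q_nL^{-(m)}Q_n\,Q_nU^{-(m)}Q_n$ and the norm bound you deduce from it are unjustified. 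Indeed, if that formula were true, the inverse of a finite Toeplitz matrix would be a product of triangular Toeplitz truncations of the Wiener--Hopf factors, i.e.\ the finite predictor (Szeg\H{o}) coefficients $a^{(m)}_{k,n}$ would coincide with the truncated coefficients of $\pi^{(m)}$; this is exactly the statement that Baxter-type theorems prove only asymptotically, and it contradicts the known non-Toeplitz structure of finite Toeplitz inverses.

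The fallback you mention only in your last sentences is in fact the necessary (and the paper's) route: one shows, using the uniform tail bound $\sum_{l\geq n+1}|\pi^{(m)}(l)|\leq\kappa n^{-\gamma}$ coming from $\sup_m\|\pi^{(m)}\|_\omega<\infty$, that $\|P_-(L^{(m)})^{-1}R_nU^{(m)}\|_1\leq 1/2$ and $\|R_n(U^{(m)})^{-1}P_-L^{(m)}\|_1\leq 1/2$ for all $n\geq n_0$ with $n_0$ independent of $m$, and then invokes Theorem 5.1.2 of \cite{barry-simon-book}, whose constant $\alpha_{m,n}$ is expressed through $\|(U^{(m)})^{-1}\|_1$, $\|(L^{(m)})^{-1}\|_1$, etc., and is therefore uniformly bounded by (\ref{eq:control-norm-psi-pi}). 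This perturbative argument is not optional bookkeeping: it is the entire content of the lemma, it only gives invertibility with a uniform bound for $n\geq n_0$ (not a clean inverse for all $n$), and your proposal as written bypasses it with an identity that does not hold.
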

\begin{proof} We just verify that $R_n L^{(m)} Q_n = 0$, and omit the proof of the three other identities. For this, we have just to check that if $a(e^{2 i \pi \nu}) = \sum_{l=0}^{n} a(n) e^{2 i \pi l \nu}$, then $\left( \psi^{(m)}(e^{2 i \pi \nu}) \right)^{*} a(e^{2 i \pi \nu})$  
can be written as
$$
\left( \psi^{(m)}(e^{2 i \pi \nu}) \right)^{*} a(e^{2 i \pi \nu}) = \sum_{l=-\infty}^{n} 
b(l) e^{2 i \pi l \nu}
$$
for some coefficients $(b(l))_{l=-\infty, \ldots, n}$. This, of course, holds true because 
$\left( \psi^{(m)}(e^{2 i \pi \nu}) \right)^{*} = \sum_{l=0}^{\infty} (\psi^{(m)}(l))^{*} e^{- 2 i \pi l \nu}$. 

In order to be able to use Theorem 5.1.2 in \cite{barry-simon-book}, we establish that 
it exists an integer $n_0$ such that $\| P_{-} (L^{(m)})^{-1} R_n U^{(m)} \|_1\leq \frac{1}{2}$ and
$\| R_n (U^{(m)})^{-1} P_{-} L^{(m)} \|_1 \leq \frac{1}{2}$ for each $n \geq n_0$ and for each $m$. 
If $a \in \ell_{1}$, we evaluate $P_{-} (L^{(m)})^{-1} R_n U^{(m)} a$ in the Fourier transform domain, and denote $x^{(m)}_n(e^{2 i \pi \nu})$ the function defined by 
$x^{(m)}_n(e^{2 i \pi \nu}) = R_n \psi^{(m)}(e^{2 i \pi \nu}) a(e^{2 i \pi \nu})$, which, of course, can be written as 
$x^{(m)}_n(e^{2 i \pi \nu}) = \sum_{l=n+1}^{+\infty} x^{(m)}_n(l) e^{2 i \pi l \nu}$. The operation of $(L^{(m)})^{-1}$ is equivalent to the multiplication by $(\pi^{(m)}(e^{2 i \pi \nu}))^{*}$ in the Fourier transform domain, which is associated to a left-sided series. Therefore, 
\[
P_{-} \left(\pi^{(m)}(e^{2 i \pi \nu})\right)^{*} x^{(m)}_n(e^{2 i \pi \nu}) = P_{-} \left[ \sum_{l=n+1}^{+\infty} \left( \pi^{(m)}(l) \right)^{*} e^{-2 i \pi l \nu} x^{(m)}_n(e^{2 i \pi \nu}) \right].
\]
The norm of the right hand side can be bounded as
$$
\left \| P_{-} \left[\sum_{l=n+1}^{+\infty} \left( \pi^{(m)}(l) \right)^{*} e^{-2 i \pi l \nu} x^{(m)}_n(e^{2 i \pi \nu}) \right] \right \|_{1} \leq \left\| \sum_{l=n+1}^{+\infty} \left( \pi^{(m)}(l) \right)^{*} e^{-2 i \pi l \nu} \right\|_{1} \| \psi^{(m)} \|_1 \| a \|_{1}
$$
or equivalently, 
$$
\left\| P_{-} (L^{(m)})^{-1} R_n U^{(m)} \right\|_{1} \leq \left( \sum_{l=n+1}^{+\infty} |\pi^{(m)}(l)| \right) \, \| \psi^{(m)} \|_{1}.
$$
The bound in (\ref{eq:control-norm-psi-pi}) implies that $\sup_{m} \| \psi^{(m)} \|_{1} \leq \kappa$ and that 
$\sup_{m} \| \pi^{(m)} \|_{\omega} \leq \kappa$ for some nice constant $\kappa$. It is therefore clear that for each $\gamma < \gamma_0$ and for each $m$, we have
$$
\kappa \geq \| \pi^{(m)} \|_{\omega} \geq \sum_{l=n+1}^{+\infty} (1 + l)^{\gamma} | \pi^{(m)}(l) | \geq (1 + n)^{\gamma} 
\sum_{l=n+1}^{+\infty} | \pi^{(m)}(l) |.
$$
We conclude from this that 
\begin{equation}
    \label{eq:control-reminder-pi}
    \sum_{l=n+1}^{+\infty} | \pi^{(m)}(l) | \leq \frac{\kappa}{n^{\gamma}}
\end{equation}
and therefore
\begin{equation}
    \label{eq:control-norm-P_L-1RU}
  \left\| P_{-} (L^{(m)})^{-1} R_n U^{(m)} \right\|_{1} \leq \frac{\kappa}{n^{\gamma}}  
\end{equation}
for some nice constant $\kappa$. It can be shown similarly that 
\begin{equation}
    \label{eq:control-norm-RU-1P_L}
  \left\| R_n (U^{(m)})^{-1} P_{-} L^{(m)} \right\|_1  \leq \frac{\kappa}{n^{\gamma}}.
\end{equation}
This implies that it exists an integer $n_0$ such that $\| P_{-} (L^{(m)})^{-1} R_n U^{(m)} \|_1\leq \frac{1}{2}$ and
$\| R_n (U^{(m)})^{-1} P_{-} L^{(m)} \|_1 \leq \frac{1}{2}$ for each $n \geq n_0$ and for each $m$. Therefore, Theorem 5.1.2 in \cite{barry-simon-book}
implies that for each $n \geq n_0$ and for each $m$, it holds that $T^{(m)}_n$ is invertible and that for each $a \in \mathrm{Range}(Q_n)$, it holds that $\| (T^{(m)}_n)^{-1} a \| \leq \alpha_{m,n} \| a \|_1$ where $\alpha_{m,n}$ is given by 
\begin{align*}
\alpha_{m,n} &= \left\| (L^{(m)})^{-1} (U^{(m)})^{-1} \right\|_1 \\
&+ 2 \max \left( \left\| (U^{(m)})^{-1} \right\|_1,  \left\| (L^{(m)})^{-1} \right\|_1 \right) \left( \left\| P_{-} (L^{(m)})^{-1} \right\|_1 + \left\| R_n (U^{(m)})^{-1} \right\|_1 \right).
\end{align*}
The bounds in (\ref{eq:control-norm-psi-pi}) imply that for each $m$ and $n$, $\alpha_{m,n} \leq \alpha$ for some nice constant $\alpha$. 
Therefore, $\| (T^{(m)}_n)^{-1} a \| \leq \alpha \| a \|_1$ for each $n \geq n_0$, for each $m$, and for each $a \in \mathrm{Range}(Q_n)$. If $a \in \mathrm{Range}(P_+)$,  $(T^{(m)}_n)^{-1} a$ is equal to 
$(T^{(m)}_n)^{-1} Q_n a$. Therefore, $\| (T^{(m)}_n)^{-1} a \|_1 \leq \alpha \|Q_n a\|_1 \leq \alpha \|a\|_1$. This completes
the proof of the lemma. 
\end{proof}

Lemma \ref{le:adaptation-baxter-lemma} and Theorem 5.1.3 in \cite{barry-simon-book} imply the 
following corollary. 
\begin{corollary}
\label{cor:convergence-inverse}
For each integer $m$ and for each $a \in \mathrm{Range}(P_+)$, it holds that 
\begin{equation}
    \label{eq:convergence-inverse}
    \lim_{n \rightarrow +\infty} \left\| (T_n^{(m)})^{-1} a - (T^{(m)})^{-1} a \right\|_1 = 0.
\end{equation}
\end{corollary}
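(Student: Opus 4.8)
The plan is to derive Corollary \ref{cor:convergence-inverse} from Lemma \ref{le:adaptation-baxter-lemma} by the standard finite-section argument (which is precisely the content of Theorem 5.1.3 in \cite{barry-simon-book}, so one could also simply invoke that result applied to the Wiener-Hopf factorization $C^{(m)} = L^{(m)} U^{(m)} = U^{(m)} L^{(m)}$ established above). Fix $m$ and $a \in \mathrm{Range}(P_+)$, and set $b = (T^{(m)})^{-1} a \in \mathrm{Range}(P_+)$ and, for $n \geq n_0$, $b_n = (T^{(m)}_n)^{-1} a = (T^{(m)}_n)^{-1} Q_n a \in \mathrm{Range}(Q_n)$. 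The first step is to extract the error identity. Since $a = T^{(m)} b = P_+ C^{(m)} P_+ b = P_+ C^{(m)} b$ and $Q_n P_+ = Q_n$, we get $Q_n a = Q_n C^{(m)} b$; on the other hand $T^{(m)}_n Q_n b = Q_n C^{(m)} Q_n b$. Subtracting and using $b - Q_n b = R_n b$ (as $P_+ b = b$),
\begin{equation*}
T^{(m)}_n Q_n b - Q_n a = Q_n C^{(m)}(Q_n b - b) = - Q_n C^{(m)} R_n b,
\end{equation*}
and applying $(T^{(m)}_n)^{-1}$ to both sides, together with $b_n = (T^{(m)}_n)^{-1} Q_n a$ and $(T^{(m)}_n)^{-1} T^{(m)}_n Q_n b = Q_n b$, yields
\begin{equation*}
Q_n b - b_n = - (T^{(m)}_n)^{-1} Q_n C^{(m)} R_n b .
\end{equation*}

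Next I would estimate the right-hand side in $\ell_1$. The uniform bound (\ref{eq:inequality-norm-inverse}) of Lemma \ref{le:adaptation-baxter-lemma} gives $\| (T^{(m)}_n)^{-1} c \|_1 \leq \alpha \| c \|_1$ for $c \in \mathrm{Range}(P_+)$, while $C^{(m)}$ is the convolution operator by $\overline{r}_m$, so $\| C^{(m)} \|_1 = \| r_m \|_1$. Hence $\| Q_n b - b_n \|_1 \leq \alpha \, \| r_m \|_1 \, \| R_n b \|_1$. Since $b \in \ell_1$, the tail $\| R_n b \|_1 = \sum_{l \geq n+1} | b(l) |$ tends to $0$ as $n \to \infty$, and this also gives $\| b - Q_n b \|_1 = \| R_n b \|_1 \to 0$. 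Combining by the triangle inequality,
\begin{equation*}
\| b - b_n \|_1 \leq \| b - Q_n b \|_1 + \| Q_n b - b_n \|_1 \leq \bigl( 1 + \alpha \| r_m \|_1 \bigr) \, \| R_n b \|_1 ,
\end{equation*}
and letting $n \to \infty$ proves (\ref{eq:convergence-inverse}).

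The argument is essentially routine once Lemma \ref{le:adaptation-baxter-lemma} is available, so I do not anticipate a genuine obstacle here; the only points requiring care are the bookkeeping with the projections $P_+$, $Q_n$, $R_n$ (in particular the identities $Q_n a = Q_n C^{(m)} b$ and the collapse of the residual to $Q_n C^{(m)} R_n b$), and the convention that $(T^{(m)}_n)^{-1}$ acts on a general $a \in \mathrm{Range}(P_+)$ through $Q_n a$, which is what makes $b_n = (T^{(m)}_n)^{-1} Q_n a$ legitimate. Note that the constant $\alpha$ is in fact independent of $m$, so the same proof gives the uniform-in-$m$ statement should it be needed elsewhere, though only the fixed-$m$ version is claimed in Corollary \ref{cor:convergence-inverse}.
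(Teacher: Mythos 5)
Your proof is correct and follows essentially the same route as the paper: your identity $Q_n b - b_n = -(T_n^{(m)})^{-1} Q_n C^{(m)} R_n b$ is exactly the paper's decomposition $(T_n^{(m)})^{-1}-(T^{(m)})^{-1} = (T_n^{(m)})^{-1}(T^{(m)}-T_n^{(m)})(T^{(m)})^{-1}+(Q_n-I)(T^{(m)})^{-1}$ with the residual collapsed, and both arguments then combine the uniform bound (\ref{eq:inequality-norm-inverse}) with the vanishing $\ell_1$-tail of $b=(T^{(m)})^{-1}a$. Your quantitative bound $\|b-b_n\|_1\leq(1+\alpha\|r_m\|_1)\|R_n b\|_1$ is a harmless sharpening of the paper's qualitative limit; only your closing aside about uniformity in $m$ is loose, since that would additionally require a uniform tail bound on $(T^{(m)})^{-1}a$ (as the paper supplies for $a=\delta_0$ in Proposition \ref{prop:uniform-convergence-T-m_n-delta0}).
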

\begin{proof}
(\ref{eq:inequality-norm-inverse}) implies that $T_n^{(m)}$ is invertible for each $n \geq n_0$. We use the observation that $ (T_n^{(m)})^{-1}  T_n^{(m)} = Q_n$. Therefore, 
the operator $(T_n^{(m)})^{-1} - (T^{(m)})^{-1}$ can be written as
$$
(T_n^{(m)})^{-1} - (T^{(m)})^{-1} = (T^{(m)}_n)^{-1} \left( T^{(m)} - T_n^{(m)} \right)  (T^{(m)})^{-1} +
(Q_n - I)  (T^{(m)})^{-1}.
$$
We conclude from this and (\ref{eq:inequality-norm-inverse}) that for each $n \geq n_0$, 
it holds that 
\begin{equation}
    \label{eq:control-1}
\| (T_n^{(m)})^{-1} a - (T^{(m)})^{-1} a \|_1 \leq \alpha \, \| ( T^{(m)} - T_n^{(m)}) (T^{(m)})^{-1} a \|_1 
+ \| (T^{(m)})^{-1} a - Q_n  (T^{(m)})^{-1} a \|_1.
\end{equation}
It is clear that $\| (T^{(m)})^{-1} a - Q_n  (T^{(m)})^{-1} a \|_1 \rightarrow 0$ when $n \rightarrow +\infty$. Moreover, for each $b \in \mathrm{Range}(P_+)$, $ ( T^{(m)} - T_n^{(m)}) \, b$ can be expressed as 
\begin{equation}
\label{eq:control-2}
( T^{(m)} - T_n^{(m)}) \, b = - \left( Q_n C^{(m)} \left( Q_n - P_{+} \right) \, b +  \left( Q_n - P_{+} \right)
C^{(m)} P_{+} b  \right).
\end{equation}
From this, we obtain immediately that for each $m$, $\|( T^{(m)} - T_n^{(m)})\, b \|_1 \rightarrow 0$
when $n \rightarrow +\infty$. Taking $b =  (T^{(m)})^{-1} a$ leads to (\ref{eq:convergence-inverse}). 
\end{proof}

Corollary \ref{cor:convergence-inverse} implies that for each $m$, $\|(T_n^{(m)})^{-1} \delta_0 - (T^{(m)})^{-1} \delta_0 \|_{1}$ converges towards $0$ when $n \rightarrow +\infty$. Since the matrix representation 
of $T^{(m)}_n$ in the basis $(\delta_j)_{j=0, \ldots, n}$ coincides with matrix $\mathcal{R}_{m,n+1}^{T}$, (\ref{eq:yule-walker}) implies that $(T_n^{(m)})^{-1} \delta_0$
coincides with the sequence $\frac{1}{\sigma^{m}_n} (1, a^{(m)}_{1,n}, \ldots, a^{(m)}_{n,n}, 0, \ldots)$ whose Fourier transform coincides with 
$\frac{1}{\sigma^{m}_n} \phi^{*(m)}_n(e^{2 i \pi \nu})$. Therefore, the Fourier 
transform of the $\ell_1$ sequence  $(T^{(m)})^{-1} \delta_0$ is the limit of 
$\frac{1}{\sigma^{m}_n} \phi^{*(m)}_n(e^{2 i \pi \nu})$ in the $\ell_1$ metric. 
Theorem 5.1.8 in \cite{barry-simon-book} implies that for each $\gamma < \gamma_0$ and for each $m$, $\| \phi_n^{(m)*} - \pi^{(m)} \|_{\omega} \rightarrow 0$, and therefore that  $\| \phi_n^{(m)*} - \pi^{(m)} \|_{1} \rightarrow 0$ as $n \rightarrow +\infty$. As it is well known that $\sigma^{m}_n \rightarrow \sigma^{m} = \exp \frac{c_m(0)}{2}$, 
this discussion leads to the conclusion that for each $m$, 
\begin{equation}
    (T^{(m)})^{-1} \delta_0 = \frac{1}{\sigma^{m}} \, \pi^{(m)}.
\end{equation}
In the following, we establish the following proposition. 
\begin{proposition}
\label{prop:uniform-convergence-T-m_n-delta0}
If $\gamma < \gamma_0$, there exist an integer $n_1$ and a nice constant $\kappa$ such that 
\begin{equation}
    \label{eq:uniform-convergence-T-m_n-delta0}
    \sup_{m \geq 1} \| (T_n^{(m)})^{-1} \delta_0 - (T^{(m)})^{-1} \delta_0 \|_{1} \leq \frac{\kappa}{n^{\gamma}}
\end{equation}
for each $n \geq n_1$.
\end{proposition}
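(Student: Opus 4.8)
The plan is to follow the proof of Corollary \ref{cor:convergence-inverse} but to track quantitatively every error term, using the uniform-in-$m$ bounds already established, namely $\sup_m \|\pi^{(m)}\|_\omega \leq \kappa$ and $\sup_m \|\psi^{(m)}\|_\omega \leq \kappa$ from (\ref{eq:control-norm-psi-pi}), together with the uniform invertibility of the truncated Toeplitz operators $T_n^{(m)}$ on $\mathrm{Range}(Q_n)$ with the uniform bound $\|(T_n^{(m)})^{-1}a\|_1 \leq \alpha \|a\|_1$ from Lemma \ref{le:adaptation-baxter-lemma}. First I would apply $a = \delta_0$ in (\ref{eq:control-1}), which gives
\[
\| (T_n^{(m)})^{-1} \delta_0 - (T^{(m)})^{-1} \delta_0 \|_1 \leq \alpha \, \| ( T^{(m)} - T_n^{(m)}) (T^{(m)})^{-1} \delta_0 \|_1
+ \| (T^{(m)})^{-1} \delta_0 - Q_n  (T^{(m)})^{-1} \delta_0 \|_1.
\]
Since we have identified $(T^{(m)})^{-1}\delta_0 = \frac{1}{\sigma^m}\pi^{(m)}$, and $\sigma^m = \exp(c_m(0)/2)$ is bounded away from zero uniformly in $m$ by (\ref{eq:lowerbound-S}), both terms on the right can be controlled by tail sums of $\pi^{(m)}$.

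The second term is the easier one: $\| (T^{(m)})^{-1}\delta_0 - Q_n (T^{(m)})^{-1}\delta_0\|_1 = \frac{1}{\sigma^m}\sum_{l \geq n+1}|\pi^{(m)}(l)|$, which by (\ref{eq:control-reminder-pi}) is bounded by $\kappa/n^\gamma$ uniformly in $m$ for each $\gamma < \gamma_0$. For the first term, I would use the decomposition (\ref{eq:control-2}) with $b = (T^{(m)})^{-1}\delta_0 = \frac{1}{\sigma^m}\pi^{(m)}$: since $b \in \mathrm{Range}(P_+)$, the second summand in (\ref{eq:control-2}) vanishes, and one is left with $-Q_n C^{(m)}(Q_n - P_+)\frac{1}{\sigma^m}\pi^{(m)}$. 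The norm of this is at most $\frac{1}{\sigma^m}\|C^{(m)}\|_1 \|(I - Q_n)\pi^{(m)}\|_1 = \frac{1}{\sigma^m}\|r_m\|_1 \sum_{l \geq n+1}|\pi^{(m)}(l)|$, and once again (\ref{eq:control-reminder-pi}) together with $\sup_m \|r_m\|_1 \leq \sup_m \|r_m\|_{\omega_0} < +\infty$ (Assumption \ref{as:norm-r-omega}) and the uniform lower bound on $\sigma^m$ gives a bound of $\kappa/n^\gamma$ uniformly in $m$. Combining the two contributions and taking $n_1 = n_0$ (the integer from Lemma \ref{le:adaptation-baxter-lemma}) yields (\ref{eq:uniform-convergence-T-m_n-delta0}).

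I do not expect any real obstacle here, since all the hard analytic work — the uniform Wiener–Hopf factorization estimates and the uniform bound $\alpha_{m,n} \leq \alpha$ — has already been carried out in Lemma \ref{le:adaptation-baxter-lemma}, and the uniform cepstrum/outer-factor bounds are in place via Lemma \ref{le:wiener-levy-uniform} and (\ref{eq:control-norm-psi-pi}). The only point requiring a little care is to make sure that the bound $\|C^{(m)}\|_1 = \|r_m\|_1$ is indeed uniform in $m$, which follows from $\|r_m\|_1 \leq \|r_m\|_{\omega_0}$ and (\ref{eq:uniform-norm-omega0-rm}); and that $\sigma^m = \exp(c_m(0)/2)$ is uniformly bounded below, which follows because $c_m(0) = \int_0^1 \log \mathcal{S}_m(\nu)\,d\nu \geq \log s_{min}$ by (\ref{eq:lowerbound-S}). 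With these observations in hand the estimate is essentially a bookkeeping exercise. Finally, I would note that this proposition is exactly what is needed to upgrade (\ref{eq:uniform-convergence-m-fixed-3}) to a uniform-in-$m$ statement with the rate $L^{-\min(\gamma,1)}$ claimed in Lemma \ref{lem:orthogonal-polynomials}, since $\frac{1}{\sigma_n^m}\phi_n^{(m)*}$ is the Fourier transform of $(T_n^{(m)})^{-1}\delta_0$ and $\sigma_n^m \to \sigma^m$ with a controlled rate.
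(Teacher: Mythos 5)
Your overall architecture matches the paper's proof: you apply (\ref{eq:control-1}) with $a=\delta_0$, use the identification $(T^{(m)})^{-1}\delta_0=\frac{1}{\sigma^m}\pi^{(m)}$, the uniform bounds $\sup_m\|C^{(m)}\|_1=\sup_m\|r_m\|_1<+\infty$ and $\sup_m 1/\sigma^m<+\infty$, and the uniform tail estimate (\ref{eq:control-reminder-pi}). However, there is a genuine error at the decisive step: you claim that the second summand of (\ref{eq:control-2}), namely $(Q_n-P_+)C^{(m)}P_+b$, vanishes ``since $b\in\mathrm{Range}(P_+)$''. This is false as a general principle: $P_+b=b$ only gives $(Q_n-P_+)C^{(m)}b=-R_nC^{(m)}b$, and $C^{(m)}b$ is a two-sided convolution whose coefficients at indices $>n$ have no reason to vanish for a one-sided $b$ (take $b=\delta_0$: then $(C^{(m)}\delta_0)(k)=r_m(-k)$, generically nonzero for $k>n$). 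The quantity you discard, $\|(P_+-Q_n)C^{(m)}\pi^{(m)}\|_1=\sum_{k>n}|(C^{(m)}\pi^{(m)})(k)|$, is exactly the part of the argument that carries the analytic content in the paper's proof: there it is bounded by $\kappa/n^{\gamma}$ uniformly in $m$ by writing $(C^{(m)}\pi^{(m)})(k)=\sum_{l\geq 0}\overline{r}_m(k-l)\pi^{(m)}(l)$, splitting the sum over $u+v=k$ with $u\geq0$ and $u\leq-1$, and using the uniform tail decay of both $r_m$ (Assumption \ref{as:norm-r-omega}) and $\pi^{(m)}$ (cf. (\ref{eq:control-reminder-pi})). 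As written, your estimate only controls the first term of (\ref{eq:control-3}), so the proof is incomplete.

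It is worth noting that for the specific $b=\frac{1}{\sigma^m}\pi^{(m)}$ the discarded term does in fact vanish, but for a reason entirely different from the one you give: in the Fourier domain $C^{(m)}\pi^{(m)}$ corresponds to $\mathcal{S}_m(\nu)\,\pi^{(m)}(e^{2i\pi\nu})=|\psi^{(m)}(e^{2i\pi\nu})|^2\,\pi^{(m)}(e^{2i\pi\nu})=\left(\psi^{(m)}(e^{2i\pi\nu})\right)^{*}$, whose Fourier coefficients are supported on nonpositive indices, so that $R_nC^{(m)}\pi^{(m)}=0$ for every $n\geq0$. If you invoke this outer spectral factor identity explicitly, your proof closes (and is even slightly sharper than the paper's $\mathcal{O}(n^{-\gamma})$ bound for that term); but the justification you actually give is invalid, and the step as stated would fail for a general element of $\mathrm{Range}(P_+)$.
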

\begin{proof}
In order to establish (\ref{eq:uniform-convergence-T-m_n-delta0}), we use (\ref{eq:control-1}) and (\ref{eq:control-2}) for $a = \delta_0$ and $b = (T^{(m)})^{-1} \delta_0 = \frac{1}{\sigma^{m}} \, \pi^{(m)}$. We first evaluate $\| (T^{(m)})^{-1} \delta_0 - Q_n (T^{(m)})^{-1} \delta_0 \|_1$, or equivalently $\frac{1}{\sigma^{m}} \sum_{k=n+1}^{+\infty} |\pi^{(m)}(n)|$. In order to 
check that $\sup_{m} \frac{1}{\sigma^{m}} < +\infty $, we notice that (\ref{eq:lowerbound-S}) 
implies that $\inf_{m} c_0(m) > -\infty$, and that $\inf_{m} \exp \frac{c_0(m)}{2} > 0$. 
Therefore, it holds that  $\sup_{m} \frac{1}{\sigma^{m}} < +\infty $. The bound in (\ref{eq:control-reminder-pi})
thus implies that for each $n \geq n_0$ and for each $m$, it holds that 
$$
\left\| (T^{(m)})^{-1} \delta_0 - Q_n (T^{(m)})^{-1} \delta_0 \right\|_1 \leq \frac{\kappa}{n^{\gamma}}
$$
for some nice constant $\kappa$. It remains to control $\| (T^{(m)} - T^{(m)}_n)  (T^{(m)})^{-1} \delta_0 \|_1 $. As $\sup_{m} \frac{1}{\sigma^{m}} < +\infty$, it is sufficient to study 
$\| (T^{(m)} - T^{(m)}_n) \pi^{(m)} \|_1$. For this, we use (\ref{eq:control-2}) for 
$b = \pi^{(m)}$, and obtain that 
\begin{equation}
    \label{eq:control-3}
\left\| (T^{(m)} - T^{(m)}_n) \pi^{(m)} \right\|_1 \leq \left\| C^{(m)} \right\|_1 \left\| \pi^{(m)} - Q_n \pi^{(m)} \right\|_1 + \left\| (P_+ - Q_n) C^{(m)} \pi^{(m)} \right\|_1.
\end{equation}
The bound in (\ref{eq:control-reminder-pi}) implies that the first term of the right hand side of (\ref{eq:control-3}) is upper bounded by $\frac{\kappa}{n^{\gamma}}$ for some nice 
constant $\kappa$ for each $n$ and each $m$. The second term of the right hand side 
of (\ref{eq:control-3}) is given by
$$
\left\| (P_+ - Q_n) C^{(m)} \pi^{(m)} \right\|_1 = \sum_{k=n+1}^{\infty} \left| \left(C^{(m)} \pi^{(m)}\right)(k) \right|
$$
where it holds that 
$$
\left(C^{(m)} \pi^{(m)}\right)(k) = \sum_{l=0}^{+\infty} \overline{r}_m(k-l) \, \pi^{(m)}(l).
$$
Therefore, 
$$
\sum_{k=n+1}^{\infty} \left| \left(C^{(m)} \pi^{(m)}\right)(k) \right| \leq \sum_{k=n+1}^{+\infty} 
\sum_{l=0}^{+\infty} |r_m(k-l)| |\pi^{(m)}(l)|.
$$
We express the right hand side of the above inequality as 
$$
\sum_{k=n+1}^{+\infty} 
\sum_{l=0}^{+k} |r_m(k-l)| |\pi^{(m)}(l)| + \sum_{k=n+1}^{+\infty}  \sum_{l=k+1}^{+\infty} |r_m(k-l)| |\pi^{(m)}(l)|
$$
or equivalently as 
$$
\sum_{k=n+1}^{+\infty}  \sum_{u+v=k, u \geq 0, v \geq 0} |r_m(u)| |\pi^{(m)}(v)| + 
\sum_{k=n+1}^{+\infty}  \sum_{u+v=k, u \leq -1, v \geq 0} |r_m(u)| |\pi^{(m)}(v)|.
$$
It is clear that 
\begin{multline*}
\sum_{k=n+1}^{+\infty}  \sum_{u+v=k, u \geq 0, v \geq 0} |r_m(u)| |\pi^{(m)}(v)| \leq 
\left( \sum_{l=0}^{+\infty} |\pi^{(m)}(l)| \right) \left( \sum_{k=[(n+1)/2]}^{+\infty} |r_m(k)| \right) + \\
\left( \sum_{k=0}^{+\infty} |r_m(k)|  \right) \left( \sum_{l=[(n+1)/2]}^{+\infty}  |\pi^{(m)}(l)| \right)
\end{multline*}
and that
$$
\sum_{k=n+1}^{+\infty}  \sum_{u+v=k, u \leq -1, v \geq 0} |r_m(u)| |\pi^{(m)}(v)| \leq 
\left( \sum_{k \leq -1} |r_m(k)|  \right) \left( \sum_{l=n+1}^{+\infty}  |\pi^{(m)}(l)| \right).
$$
Using the fact that that $\sup_{m} \| r_m\|_{\omega} < +\infty$, we obtain, using the same arguments as in (\ref{eq:control-reminder-pi}), that 
$$
\sup_{m} \sum_{l=n+1}^{+\infty}  |\pi^{(m)}(l)| < \frac{\kappa}{n^{\gamma}}
$$
for some nice constant $\kappa$. We have thus shown that
$$
\sup_{m} \| (P_+ - Q_n) C^{(m)} \pi^{(m)} \|_1  \leq  \frac{\kappa}{n^{\gamma}}
$$
and this completes the proof of Proposition \ref{prop:uniform-convergence-T-m_n-delta0}. 
\end{proof}

Proposition \ref{prop:uniform-convergence-T-m_n-delta0} immediately allows 
to study the behaviour of $\| \phi^{*(m)}_n - \pi^{(m)}\|_1$ when $n \rightarrow +\infty$.
\begin{corollary}
If $\gamma < \gamma_0$, it exists an integer $n_2$ and a nice constant $\kappa$ for which 
\begin{equation}
    \label{eq:control-phi-pi}
 \| \phi^{(m)*}_n - \pi^{(m)}\|_1 \leq \frac{\kappa}{n^{\gamma}}   
\end{equation}
for each $n \geq n_2$ and each $m$.
\end{corollary}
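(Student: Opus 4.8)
The plan is to deduce this corollary directly from Proposition~\ref{prop:uniform-convergence-T-m_n-delta0}. Recall from the discussion preceding that proposition that the sequence $(T^{(m)}_n)^{-1}\delta_0$ has Fourier transform $\frac{1}{\sigma^{m}_n}\phi^{(m)*}_n(e^{2i\pi\nu})$, while $(T^{(m)})^{-1}\delta_0 = \frac{1}{\sigma^{m}}\pi^{(m)}$; so $\phi^{(m)*}_n$ and $\pi^{(m)}$ differ from $(T^{(m)}_n)^{-1}\delta_0$ and $(T^{(m)})^{-1}\delta_0$ only through the scalar normalizations $\sigma^{m}_n$ and $\sigma^{m}$. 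First I would write the algebraic identity
\begin{equation*}
\phi^{(m)*}_n - \pi^{(m)} = \sigma^{m}_n\left(\tfrac{1}{\sigma^{m}_n}\phi^{(m)*}_n - \tfrac{1}{\sigma^{m}}\pi^{(m)}\right) + \left(\tfrac{\sigma^{m}_n}{\sigma^{m}} - 1\right)\pi^{(m)},
\end{equation*}
take $\ell_1$ norms, and bound the two contributions separately.

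For the first contribution, I would use that $(\sigma^{2,m}_l)_{l\ge 0}$ is decreasing with $\sigma^{2,m}_0 = r_m(0) = \int_0^1\mathcal{S}_m(\nu)\,d\nu \le s_{max}$, so $\sigma^{m}_n \le \sqrt{s_{max}}$ uniformly in $m$ and $n$, and then invoke Proposition~\ref{prop:uniform-convergence-T-m_n-delta0} to bound $\|\frac{1}{\sigma^{m}_n}\phi^{(m)*}_n - \frac{1}{\sigma^{m}}\pi^{(m)}\|_1 \le \kappa n^{-\gamma}$ for $n\ge n_1$. For the second contribution, I would use $\sup_m\|\pi^{(m)}\|_1 \le \sup_m\|\pi^{(m)}\|_\omega \le \kappa$ from (\ref{eq:control-norm-psi-pi}) together with $\sigma^{m} \ge \sqrt{s_{min}}$, which holds since $\sigma^{2,m} = \exp\int_0^1\log\mathcal{S}_m(\nu)\,d\nu \ge s_{min}$ and $\sigma^{2,m}_l \ge \sigma^{2,m}$ for all $l$. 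After these reductions it only remains to produce a uniform estimate $|\sigma^{m}_n - \sigma^{m}| \le \kappa n^{-\gamma}$.

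That uniform rate for the normalization constants is the single point requiring a short argument, and I would obtain it by reading off one coordinate in Proposition~\ref{prop:uniform-convergence-T-m_n-delta0}: the $0$th Fourier coefficient of $(T^{(m)}_n)^{-1}\delta_0$ equals $1/\sigma^{2,m}_n$, while that of $(T^{(m)})^{-1}\delta_0 = \frac{1}{\sigma^{m}}\pi^{(m)}$ equals $\pi^{(m)}(0)/\sigma^{m} = 1/\sigma^{2,m}$ (recall $\pi^{(m)}(0) = \exp(-c_m(0)/2) = 1/\sigma^{m}$). Hence $|1/\sigma^{2,m}_n - 1/\sigma^{2,m}| \le \|(T^{(m)}_n)^{-1}\delta_0 - (T^{(m)})^{-1}\delta_0\|_1 \le \kappa n^{-\gamma}$; combining this with the two-sided bound $s_{min} \le \sigma^{2,m}_n,\sigma^{2,m} \le s_{max}$ gives $|\sigma^{2,m}_n - \sigma^{2,m}| \le s_{max}^2\,\kappa n^{-\gamma}$, and then $|\sigma^{m}_n - \sigma^{m}| = |\sigma^{2,m}_n - \sigma^{2,m}|/(\sigma^{m}_n + \sigma^{m}) \le \kappa n^{-\gamma}$, all uniformly in $m$. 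Plugging these bounds into the $\ell_1$ norm of the displayed identity yields $\|\phi^{(m)*}_n - \pi^{(m)}\|_1 \le \kappa n^{-\gamma}$ for every $m$ and every $n \ge n_2 := n_1$, with a new nice constant $\kappa$. I do not anticipate a genuine obstacle here: the only care required is bookkeeping of the uniform constants $s_{min}$, $s_{max}$ and of the bounds already established in (\ref{eq:control-norm-psi-pi}) and Proposition~\ref{prop:uniform-convergence-T-m_n-delta0}.
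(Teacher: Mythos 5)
Your proof is correct and follows essentially the same route as the paper: the same decomposition of $\phi^{(m)*}_n - \pi^{(m)}$ into a term controlled by Proposition \ref{prop:uniform-convergence-T-m_n-delta0} plus a term proportional to $\sigma^{m}_n - \sigma^{m}$, with the latter estimated through the zeroth Fourier coefficient of $(T^{(m)}_n)^{-1}\delta_0 - (T^{(m)})^{-1}\delta_0$ together with the uniform two-sided bounds on the prediction variances. Your bookkeeping of the distinction between $\sigma^{2,m}_n$ and $\sigma^{m}_n$ is in fact slightly more careful than the paper's (which writes $\sigma^{m}_n = \langle (T^{(m)}_n)^{-1}\delta_0,\delta_0\rangle^{-1}$ where the squared quantity is meant), but this does not alter the argument.
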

\begin{proof}
$\phi^{(m)*}_n - \pi^{(m)} $ coincides with $ \sigma^{m}_n (T^{(m)}_n)^{-1} \delta_0 - \sigma^{m} (T^{(m)})^{-1} \delta_0 $, which can also be written as 
$$
\phi^{(m)*}_n - \pi^{(m)}  = \sigma^{m}_n \left( (T^{(m)}_n)^{-1} \delta_0 - (T^{(m)})^{-1} \delta_0 \right) + ( \sigma^{m}_n - \sigma^{m})  (T^{(m)})^{-1} \delta_0
$$
or equivalently as 
\begin{equation}
\label{eq:expre-phi*- pi}
\phi^{(m)*}_n - \pi^{(m)}  = \sigma^{m}_n \left( (T^{(m)}_n)^{-1} \delta_0 - (T^{(m)})^{-1} \delta_0 \right) +  (\sigma^{m}_n - \sigma^{m}) \frac{\pi^{(m)}}{\sigma^{m}}.
\end{equation}
We notice that $\sigma^{m}_n = \langle (T^{(m)}_n)^{-1} \delta_0, \delta_0 \rangle^{-1}$ and that 
$\sigma^{m} = \langle (T^{(m)})^{-1} \delta_0, \delta_0 \rangle^{-1}$. We express $\sigma^{m}_n - \sigma^{m}$
as 
$$
\sigma^{m}_n - \sigma^{m} = \sigma^{m}_n \sigma^{m} \left( \frac{1}{\sigma^{m}} -  \frac{1}{\sigma^{m}_n} \right) = \sigma^{m}_n \sigma^{m} \left\langle (T^{(m)}_n)^{-1} \delta_0 -(T^{(m)})^{-1} \delta_0), \delta_0 \right\rangle.
$$
Noting that $\sup_{m,n} \sigma^{m}_n \leq \sup_{m} r_0(m) < +\infty$, we obtain that 
for each $n$ large enough and for each $m$, the inequality 
$$
\sigma^{m}_n - \sigma^{m} \leq \kappa \| (T^{(m)}_n)^{-1} \delta_0 -(T^{(m)})^{-1} \delta_0\|_1 \leq \frac{\kappa}{n^{\gamma}}
$$
holds for some nice constant $\kappa$. (\ref{eq:control-phi-pi}) thus follows immediately 
from Proposition \ref{prop:uniform-convergence-T-m_n-delta0}. 
 \end{proof}

We finally complete the proof of Lemma  \ref{lem:orthogonal-polynomials}. 
(\ref{eq:control-phi-pi}) implies that 
$$
\sup_{m} \sup_{\nu} |\phi^{(m)*}_n(e^{2 i \pi \nu}) - \pi^{(m)}(e^{2 i \pi \nu})| \leq \frac{\kappa}{n^{\gamma}}
$$
for each $n \geq n_2$. Using (\ref{eq:lower-upper-bounds-pi}) and 
$\mathcal{S}_m(\nu) = \frac{1}{|\pi^{(m)}(e^{2 i \pi \nu})|^{2}}$, we obtain that 
\begin{equation}
\label{eq:uniform-cv-S-phi2}
\sup_{m} \sup_{\nu} \left|\mathcal{S}_m(\nu) |\phi^{(m)*}_n(e^{2 i \pi \nu})|^{2} - 1 \right| \leq \frac{\kappa}{n^{\gamma}}
\end{equation}
for  each $n \geq n_2$. We recall that $\epsilon_m(\nu)$ is equal to 
$$
\epsilon_{m,L}(\nu) = \frac{1}{L} \sum_{n=0}^{L-1} \mathcal{S}_m(\nu) |\phi_{n}^{(m)*}(e^{2 i \pi \nu})|^{2} -1.
$$
Therefore, 
$$
|\epsilon_{m,L}(\nu)| \leq \frac{1}{L} \sum_{n=0}^{L-1} \left| \mathcal{S}_m(\nu) |\phi^{(m)*}_n(e^{2 i \pi \nu})|^{2} - 1 \right|.
$$
We express the right hand side as 
$$
 \frac{1}{L} \sum_{n=0}^{n_2-1} \left| \mathcal{S}_m(\nu) |\phi^{(m)*}_n(e^{2 i \pi \nu})|^{2} - 1 \right| + \frac{1}{L} \sum_{n=n_2}^{L} \left| \mathcal{S}_m(\nu) |\phi^{(m)*}_n(e^{2 i \pi \nu})|^{2} - 1 \right|
 $$
 and handle the two terms separately. On the one hand, (\ref{eq:uniform-cv-S-phi2}) implies that 
 $$
\frac{1}{L} \sum_{n=n_2}^{L} \left| \mathcal{S}_m(\nu) |\phi^{(m)*}_n(e^{2 i \pi \nu})|^{2} - 1 \right| \leq \kappa \, \frac{1}{L} \sum_{n=n_2}^{L} \frac{1}{n^{\gamma}}.
$$
If $\gamma > 1$, $\sum_{n=n_2}^{L} \frac{1}{n^{\gamma}}$ is a bounded term, and we obtain 
that 
 $$
\sup_{m} \sup_{\nu} \frac{1}{L} \sum_{n=n_2}^{L} \left| \mathcal{S}_m(\nu) |\phi^{(m)*}_n(e^{2 i \pi \nu})|^{2} - 1 \right| \leq \frac{\kappa}{L}.
 $$
If $\gamma = 1$, the above term is bounded by $\kappa \, \frac{\log L}{L}$, and if 
$0 < \gamma < 1$, it holds that 
$$
\sum_{n=n_2}^{L} \frac{1}{n^{\gamma}} \leq \kappa \, L^{1 - \gamma}
$$
and that 
$$
\sup_{m} \sup_{\nu} \frac{1}{L} \sum_{n=n_2}^{L} \left| \mathcal{S}_m(\nu) |\phi^{(m)*}_n(e^{2 i \pi \nu})|^{2} - 1 \right| \leq \frac{\kappa}{L^{\gamma}}.
$$
We finally justify that there exists a nice constant $\kappa$ such that 
$$
\sup_{m} \sup_{\nu} \, \sum_{n=0}^{n_2-1} \left| \mathcal{S}_m(\nu) |\phi^{(m)*}_n(e^{2 i \pi \nu})|^{2} - 1 \right| \leq \kappa.
$$
Indeed, since $n_2$ is a fixed integer, we have just to verify that for each $n \leq n_2$, 
$\sup_{m} \sup_{\nu} |\phi^{(m)*}_n(e^{2 i \pi \nu})| < +\infty$. For this, we recall that 
the non normalized polynomials $\Phi^{(m)}_n$ and $\Phi^{(m)*}_n$ verify the relation 
 the well known recursion formula
\begin{eqnarray}
\label{eq:levinson-1}
\Phi_{n+1}^{(m)}(z) & = & z \, \Phi_n^{(m)}(z) \, - \, \alpha^{(m)}_n \, \Phi_n^{(m)*}(z) \\
\label{eq:levinson-2}
\Phi_{n+1}^{(m)*}(z) & = & \Phi_{n}^{(m)*}(z) \, - \, \alpha^{(m)*}_n \, z \, \Phi_n^{(m)}(z).
\end{eqnarray}
Here, $(\alpha_m(n))_{n \geq 0}$ are the reflection coefficients sequence associated to 
autocovariance $(r_m(n))_{n \in \mathbb{Z}}$, also  called in  \cite{barry-simon-book} the Verblunsky coefficients. For each $n$, it holds that $|\alpha_m(n)| < 1$. 
It is obvious that $\| \Phi_n^{(m)} \|_1 = \| \Phi_n^{(m)*} \|_1 $. Therefore, 
(\ref{eq:levinson-1}) implies that 
$$
 \| \Phi_{n+1}^{(m)*} \|_1 \leq (1+ |\alpha_m(n)|)  \| \Phi_n^{(m)*} \|_1 \leq 2  \| \Phi_n^{(m)*} \|_1.
$$
Noting that $\| \Phi_0^{(m)*} \|_1 = 1$, we obtain that
$ \| \Phi_n^{(m)*} \|_1 \leq 2^{n}$, and that $\sup_{m} \sup_{\nu}  |\Phi_n^{(m)*}(e^{2 i \pi \nu})| \leq 2^{n}$. As $\inf_{m,n} \sigma^{m}_n > 0$, the normalized polynomials 
verify $\sup_{m} \sup_{\nu} |\phi^{(m)*}_n(e^{2 i \pi \nu})| < +\infty$. This completes the proof of 
Lemma \ref{lem:orthogonal-polynomials}.

\end{document}